\documentclass{article}  
%\usepackage[pdftex,
%bookmarksnumbered,
%bookmarksopen,
%%pagebackref,
%colorlinks,
%citecolor=blue,
%linkcolor=blue,]{hyperref}
% \usepackage{floatrow}
% \floatsetup[table]{capposition=top}
% \newfloatcommand{capbtabbox}{table}[][\FBwidth]
\usepackage{mathtools}
\usepackage[%pdftex,
bookmarksnumbered,
bookmarksopen,
%pagebackref,
colorlinks,
citecolor=blue,
linkcolor=blue,]{hyperref}
\usepackage{amsmath}
\usepackage{amssymb}
\usepackage{mathrsfs}
\usepackage{amsfonts}
\usepackage{amsthm}
\usepackage{algorithm}
\usepackage{booktabs}
\usepackage{listings}
\usepackage{boxedminipage}
\usepackage{algorithmic}
\usepackage{stmaryrd}
\usepackage{cite}
\usepackage{relsize}
\usepackage{lscape}
\usepackage[top=1.3in, bottom=1.5in, left=1.5in, right=1.5in]{geometry}
\usepackage{stmaryrd} 
\usepackage{relsize}%
\usepackage{url}
\usepackage{color,xcolor}
\usepackage{epsfig}
\usepackage{graphicx}
\usepackage{subfigure}

\newtheorem{theorem}{Theorem}[section]
\newtheorem{definition}{Definition}[section]
\newtheorem{proposition}{Proposition}[section]

\newtheorem{lemma}{Lemma}[section]

\newtheorem{remark}{Remark}[section]

\parskip 0.12in
\setlength{\floatsep}{2\floatsep}
\setlength{\textfloatsep}{2\textfloatsep}
\setlength{\intextsep}{2\intextsep}

\newenvironment{mytabular}{\bgroup\tiny\tabular}{\endtabular\egroup}

	  \newcommand\MC[1]{\ensuremath{\mathcal{#1}  }} 
    \newcommand{\tprod}{\ensuremath{  *     }}

    \newcommand{\bcirc}[1]{\ensuremath{ {\rm bcirc}\left(  #1 \right)  }}

            \newcommand{\fourieri}[1]{\ensuremath{  \left(  F_{l}\otimes I_{#1}    \right)   }}
                        \newcommand{\fourieriH}[1]{\ensuremath{  \left( F_{l}^{H}\otimes I_{#1} \right)       }}
     \newcommand{\trace}[1]{\ensuremath{ {\rm tr}\left(   #1 \right)  }}
          \newcommand{\st}[1]{\ensuremath{ {\rm St}\left(   #1\right)  }}
                    \newcommand{\conj}[1]{\ensuremath{ {\rm conj}\left(   #1\right)  }}

               \newcommand\M[2]{\ensuremath{\mathbb R^{#1\times #2}   }} 
                 \newcommand\C[2]{\ensuremath{\mathbb C^{#1\times #2}   }}

             \newcommand\T[3]{\ensuremath{\mathbb R^{#1\times #2 \times #3}   }} 
               \newcommand\TU[3]{\ensuremath{\mathbb{R}_{upp}^{#1\times #2 \times #3}  }} 
                \newcommand\TUP[3]{\ensuremath{\mathbb{R}_{upp+}^{#1\times #2 \times #3}  }} 
                
                \newcommand\TCU[3]{\ensuremath{\mathbb{C}_{upp}^{#1\times #2 \times #3}  }} 
                \newcommand\TCUP[3]{\ensuremath{\mathbb{C}_{upp+}^{#1\times #2 \times #3}  }}

  \newcommand\TS[3]{\ensuremath{\operatorname{Sym}(\mathbb R^{#1\times #2 \times #3} )  }} 
\newcommand\TSP[3]{\ensuremath{\operatorname{Sym}(\mathbb R_+^{#1\times #2 \times #3} )  }} 
\newcommand\TSPP[3]{\ensuremath{\operatorname{Sym}(\mathbb R_{++}^{#1\times #2 \times #3})   }} 

 \newcommand\TSkew[3]{\ensuremath{\operatorname{Skew}(\mathbb R^{#1\times #2 \times #3} )  }} 
                 
               \newcommand\TC[3]{\ensuremath{\mathbb C^{#1\times #2 \times #3}   }}

    \newcommand{\bigxiaokuohao}[1]{\ensuremath{ \left(  #1 \right) }}      
    \newcommand{\bigjueduizhi}[1]{\ensuremath{ \left|  #1 \right| }}   
    \newcommand{\bigdakuohao}[1]{\ensuremath{ \left\{  #1 \right\} }}         
    \newcommand{\bigzhongkuohao}[1]{\ensuremath{ \left[   #1 \right] }}      
    
     \newcommand{\bigfnorm}[1]{\ensuremath{ \left\|   #1 \right\|_F }}    
                \newcommand{\bignorm}[1]{\ensuremath{ \left\|   #1 \right\|  }}

    \newcommand{\innerprod}[2]{\ensuremath{ \left\langle   #1 , #2\right\rangle }}      
                     \newcommand{\R}{\mathbb{R}}
                      \newcommand{\ex}[1]{\ensuremath{ \operatorname{exp}\bigzhongkuohao{#1}}}       
          \newcommand{\Diag}[1]{\ensuremath{\operatorname{Diag}\left(#1: i \in[l]\right) }}      
           \newcommand{\Vector}[1]{\ensuremath{\operatorname{Vec}\left(#1: i \in[l]\right) }} 
             \newcommand{\Fold}[1]{\ensuremath{\operatorname{fold}\left(#1: i \in[l]\right) }}   

	\definecolor{darkgray}{rgb}{0.66, 0.66, 0.66}

\addtolength{\oddsidemargin}{-0.1 \textwidth}
\addtolength{\textwidth}{0.2 \textwidth}
\addtolength{\topmargin}{-0.1 \textheight}
\addtolength{\textheight}{0.2 \textheight}

\title{Computation over Tensor Stiefel Manifold: A Preliminary Study}

\author{Xianpeng Mao\thanks{School of Physical Science and Technology, Guangxi University, Nanning 530004, China},~ Ying Wang$^\dagger$, Yuning Yang\thanks{College of Mathematics and Information Science, Guangxi University, Nanning, 530004, China} \thanks{Corresponding author: Yuning Yang, yyang@gxu.edu.cn}                             }
%\date{}

\begin{document} %\large
\maketitle

%\includepdf{cover.pdf}

\begin{abstract}
\color{black}
Let $\tprod$ denote the t-product \cite{kilmer2011factorization} between two third-order tensors. The purpose of this work is to study    fundamental  computation  over the set $\st{n,p,l} := \{\MC{X}\in\T{n}{p}{l} \mid \MC{X}^{\top}\tprod \MC{X} = \mathcal I  \}$, where $\MC{X}$ is a third-order tensor of size $n\times p \times l$ ($n\geq p$) and $\mathcal I$  is the identity tensor.   It is first verified that $\st{n,p,l}$ endowed with the usual Frobenius norm forms  a Riemannian manifold, which is termed as the (third-order) \emph{tensor Stiefel manifold} in this work. We then derive the tangent space, Riemannian gradient, and Riemannian Hessian  on $\st{n,p,l}$.  In addition, formulas of various retractions based on t-QR, t-polar decomposition, t-Cayley transform, and t-exponential,  as well as vector transports, are presented.  It is expected that analogous to their matrix counterparts, the formulas derived in this study may serve as   building blocks for analyzing optimization problems over the tensor Stiefel manifold and     designing Riemannian algorithms %, such as   Riemannian gradient/conjugate gradient/(quasi-)Newton’s methods 
for them.
	
\noindent {\bf Keywords:}   tensor; t-product; Stiefel manifold; retraction; vector transport; manifold optimization
\end{abstract}

\noindent {\bf  AMS subject classifications.}   90C26, 15A69, 41A50, 65K05, 90C30
\hspace{2mm}\vspace{3mm}
\section{Introduction}
\color{black}
%this is test
%edit from xiamingKK
Higher-order tensors  play   important roles in linear and multilinear algebra, statistics, optimization, machine learning, and engineering \cite{kolda2010tensor,comon2014tensors,cichocki2015tensor,Sidiropoulos2016ten}. However, the notion of multiplication between tensors was unclear based on the traditional tensor computation; this   prevents the    extensions of several matrix  operations to higher-order tensors. Such a problem was addressed by Kilmer, Martin, Braman, and their coauthors, who proposed a type of multiplication, termed   the t-product, between third-order tensors \cite{braman2010third,kilmer2011factorization,kilmer2013third}. The t-product allows the possibility of   usual   notions and properties of matrices living in the tensor world.  For example, the authors of \cite{kilmer2011factorization,kilmer2013third} also defined notions such as  inverse tensors, orthogonal tensors, tensor transpose, and proposed t-SVD and t-QR decomposition.  \cite{lu2019tensor} proposed tensor spectral norm, nuclear norm in the sense of the t-product, and presented an efficient way for computing t-SVD; the authors used these tools to develop tensor robust PCA models. 
Based on the t-product, t‑Jordan canonical form and t‑Drazin inverse were generalized to third-order tensors  \cite{miao2021t}; the tensor t-functions were established in \cite{lund2020tensor,miao2020generalized}; the t-eigenvalues and related properties were studied in \cite{liu2020study}. Recently, the concepts of t-Hessian tensor, t-convexity, and   t-(semi)definiteness were defined in \cite{zheng2021t}. 
The t-SVD was further investigated in \cite{qi2021tubal}. Compared with other tensor (decomposition) models, the t-product based one allows us to deal with   tensors quite similar to their matrix counterparts; moreover, most basic operations can be efficiently implemented via FFT \cite{kilmer2011factorization}. 
\color{black}

Let $\tprod$ denote the t-product and   $\st{n,p,l}$   the set of partially orthogonal tensors:
\begin{align}\label{def:tensor_stiefel_manifold}
	\st{n,p,l} := \{\MC{X}\in\T{n}{p}{l} \mid \MC{X}^{\top}\tprod \MC{X} = \mathcal I, n\geq p  \},
\end{align}
where $\MC{X}$ is a third-order tensor of size $n\times p \times l$ ($n\geq p$% and $l$ arbitrary large
), $^{\top}$ denotes the transpose, and $\mathcal I$ represents the identity tensor that will be detailed later. Several (potential) tensor problems take the form:   
\begin{align}
	\label{prob:prototype}
	\min_{\MC{X}\in\T{n}{p}{l}}\nolimits ~f(\MC{X})~{\rm s.t.}~\MC{X} \in\st{n,p,l},
\end{align}
%where $f:\ \T{n}{p}{l} \rightarrow\mathbb R$, 
such as tensor approximation (with missing entries), joint diagonalization, joint t-SVD,   (sparse) tensor PCA, and beyond; these will be introduced in Sect. \ref{sec:appl}.  In fact,  when $l=1$,  \eqref{prob:prototype}  boils down to   optimization over the orthogonal matrix constraint, namely,   the Stiefel manifold,   which is a special Riemannian manifold. 
In recent years, Riemannian manifold optimization has drawn much attention; see, e.g., \cite{huang2018riemannian,hu2019structured,chen2020proximal,huang2021riemannian,gao2018new,hu2020brief}; fundamental concepts, tools, and algorithms can be found in \cite{absil2009optimization,tuIntroductionManifolds2011,boumal2022intromanifolds}. 
Classical methods     in the Euclidean
space, including the gradient descent/conjugate gradient/(quasi-)Newton’s method/trust region method, have been generalized to   Riemannian manifolds. 
It is known that the  orthogonal projection operator, Riemannian gradient and Riemannian Hessian, the retraction, and the vector transport are fundamental tools for Riemannian manifold optimization.

Riemannian structure and computation have also been studied in the context of tensors. For example, \cite{uschmajew2013geometry} investigated the geometry of  the hierarchical
Tucker format of tensors; \cite{holtz2012manifolds} considered the manifold of tensors of tensor-train (TT) format of fixed TT-rank; a Riemannian conjugate gradient was developed in \cite{kressner2014low} for tensor completion of Tucker format of fixed multilinear-rank; for the same task and the same format, \cite{heidel2018riemannian} proposed a Riemannian trust-region method, while in the TT format of fixed rank,  \cite{steinlechner2016riemannian}   proposed a Riemannian conjugate gradient;  such a method was also developed for the canonical polyadic approximation \cite{breiding2018riemannian}; just to name a few. On the other hand,  
in the t-product sense, \cite{gilman2022grassmannian} recently proposed a Grassmannian optimization based approach for online tensor completion and tracking, and  \cite{song2020riemannian} devised a Riemannian conjugate gradient descent over the manifold of fixed transformed multi-rank tensors for   tensor completion. 

\color{black}However, although optimization over the  (matrix) Stiefel manifold develops rapidly, over the set  $\st{n,p,l}$ in \eqref{def:tensor_stiefel_manifold}, it has not been studied yet. In view of the aforementioned progress on t-product based tensor theory and Riemannian optimization, as well as the real-world demand, this work intends to make a   study concerning $\st{n,p,l}$. % We do not consider specific algorithms; instead, we intend to derive some basic formulas.
Specifically, our progress is: \color{black}

1. We first show that $\st{n,p,l}$ endowed with the Frobenius norm forms a Riemannian manifold, which is termed as the (third-order) \emph{tensor Stiefel manifold} in this paper;

2. The parameterized form of the tangent space   of the tensor Stiefel manifold is established. Furthermore, the orthogonal projector operator  is studied, based on which we   deduce the  Riemannian gradient and Riemannian Hessian of an objective function over  $\st{n,p,l}$ from the  Euclidean gradient and Hessian of an extended objective function on the ambient
Euclidean space;

\color{black}
3. Several retractions based on different tensor decompositions, such as t-QR and t-polar decomposition, t-Cayley transform, and the geodesic based on t-exponential, which map points from the tangent space of $\st{n,p,l}$ to   $\st{n,p,l}$, are derived. The vector transports upon various retractions, which compare tangent vectors at distinct points on the manifold, are also obtained;\color{black}

4. As byproducts, we define skew-symmetric tensors, t-polar decomposition, and related properties; the analytical solution of the tensor Sylvester equation is derived.

Owing to the nice properties of the t-product, the derived formulas have similar forms as their matrix counterparts. It is expected that these formulas can be useful for analyzing optimization problems over the tensor Stiefel manifold and designing    Riemannian algorithms for them. In particular, as these formulas are consistent with their matrix counterparts,
the recently developed algorithms over the matrix Stiefel manifold, such as \cite{chen2020proximal,gao2018new,hu2019structured}, might be parallelly transplanted to the tensor setting without many modifications.

The rest of this work is organized as follows. In Sect. \ref{sec:pre_t_prod}, we summarize   preliminaries on t-product, t-exponential and tensor decompositions, such as t-polar decomposition and t-QR decomposition, 
which are used throughout this paper, while preliminaries on Riemannian manifold are introduced in Sect. \ref{sec:pre_rieman}. 
In Sect. \ref{sec:tensor_stiefel_manifold}, we study the tensor Stiefel manifold \eqref{def:tensor_stiefel_manifold}, the tangent space of \eqref{def:tensor_stiefel_manifold},  the orthogonal projector operator, various retractions and vector transports. %, which provide the most basic tools for unconstrained optimization algorithms on Riemannian manifolds. 
%In Section 5, we present several synthetic data and imaging data sets to demonstrate
%the performance of the proposed algorithms. 
In Sect. \ref{sec:appl}, several examples of \eqref{prob:prototype} and related optimization problems are presented.
In Sect. \ref{sec:experiments}, we conducted preliminary numerical experiments to verify the derived formulas.
Finally, some concluding remarks are given in Sect. \ref{sec:conclusion}.
\color{black}

\section{t-Product based Tensor Computation}
\label{sec:pre_t_prod}
%\subsection{Notation}
\paragraph{Notation.} Throughout this paper,  scalars are written as small letters $(a,b,\cdots)$, 
vectors are written as boldface lowercase letters $(\mathbf{x}, \mathbf{y},\cdots)$, 
matrices correspond to
italic capitals $(A,B,\cdots)$, tensors are written as calligraphic capitals $(\mathcal{A}, \mathcal{B},\cdots)$, and manifolds are written as Ralph Smith's formal script font $(\mathscr{A}, \mathscr{B},\cdots)$. $\mathbb{R}^{n\times p \times l} (\mathbb{C}^{n\times p \times l})$ denotes the
space of $n \times p \times l$ real (complex) tensors. The $(i,j,k)$ entry of $\mathcal{A}$ is denoted as
$a_{ijk}$.
For any positive integer $l$, denote
$[l] := \{1, 2,\cdots, l\}$. 
For a third-order tensor $\mathcal{A}\in \mathbb{R}^{n\times p \times l}$,  $A^{(i)} := \mathcal{A}(:,:,i)$, $i \in [l]$ represents each frontal slice, which is defined by fixing the third index  and varying the first two. 
%The $(i,k)$-th tube fiber in $\mathcal{A}$ is written as $\mathbf{a}_{ik} := \mathcal{A}(i, k, :)$. 
The inner product $\innerprod{\mathcal A}{\mathcal B}$ between two real tensors $\mathcal A$ and $\mathcal B$ of the same size is the sum of entry-wise product and the Frobenius norm $\bigfnorm{\mathcal A} = \innerprod{\mathcal A}{\mathcal A}^{1/2}$. 
%We use the subscript $p$ to emphasize that the dimension of the square matrix $A_p$ is $p\times p$.
% Specifically, 
$I_p(O_p)$ denotes a unit (zero) matrix of dimension $p\times p$.  $A^{\top}(A^H)$,   $\operatorname{conj(A)}$, and $A^{\dagger }$ represent  transpose, conjugate transpose,   conjugate, and   Moore-Penrose generalized inverse of the matrix $A$, respectively.    %represents  conjugate of the matrix $A$. denotes the  of matrix $A$.
% $\operatorname{diag}(x_1,\cdots,x_n)$ means the diagonal matrix with $(x_1,\cdots,x_n)$ as the diagonal elements. 
The diagonal of  $A\in\mathbb R^{n\times n}$ is defined as $\operatorname{diag}(A) \in\mathbb R^{n\times n}$, with all the non-diagonal entries of $A$ zeroed out. $\otimes$ means the Kronecker product of matrices.  $\odot$ represents the   Khatri-Rao product for partitioned matrices \cite{zhang2002inequalities}.
% The (skew-)symmetric part of the tensor $\MC{A}$ is defined by $\operatorname{sym}(\MC{A}) = \frac{\MC{A}+\MC{A}^{\top}}{2}(\operatorname{skew}(\MC{A}) = \frac{\MC{A}-\MC{A}^{\top}}{2})$. 
% and $\lfloor x \rfloor$
% as the one less than  or equal to $x$.

$\mathcal A$ is called f-square if $n=p$.
A tensor $\MC{A}\in \TC{n}{p}{l}$ with $n\geq p$ is called  “f-full rank $p$” if each frontal slice of $\mathcal A$ is of full rank $p$.  The sets of f-full rank $p$ complex tensors  are denoted as $\TC{n}{p}{l}_*$. A tensor $\MC{A}\in \T{n}{p}{l} (\TC{n}{p}{l})$ is called ``f-diagonal'' or ``f-upper triangular'', if each frontal slice of $\mathcal A$ is  diagonal or upper
triangular, respectively. The sets of f-upper triangular real tensors (with strictly positive diagonal elements) are denoted as $\TU{n}{n}{l}(\TUP{n}{n}{l})$. The sets of f-upper triangular complex tensors (with strictly positive diagonal elements) are denoted as $\TCU{n}{n}{l}(\TCUP{n}{n}{l})$.

% \begin{definition}\cite{kilmer2011factorization}\label{def:f-diag and f-upper tri}
	% A tensor $\MC{A}\in \T{n}{p}{l} (\TC{n}{p}{l})$ is called ``f-diagonal'' or ``f-upper triangular'', if each frontal slice of $\mathcal A$ is  diagonal or upper
	% triangular, respectively. The sets of f-upper triangular real tensors (with strictly positive diagonal elements) are denoted as $\TU{n}{n}{l}(\TUP{n}{n}{l})$. The sets of f-upper triangular complex tensors (with strictly positive diagonal elements) are denoted as $\TCU{n}{n}{l}(\TCUP{n}{n}{l})$.
	% \end{definition}
% We need the following definition concerning certain full rankness on the Fourier  domain.
% \begin{definition}\label{def:f-full rank p}
	% A tensor $\MC{A}\in \TC{n}{p}{l}$ with $n\geq p$ is called  “f-full rank $p$” if each frontal slice of $\mathcal A$ is of full rank $p$.  The sets of f-full rank $p$ complex tensors  are denoted as $\TC{n}{p}{l}_*$.
	% \end{definition}

\subsection{t-Product for Third-Order Tensors}

Before giving the definition of the t-product, some preparations are needed first. 
\begin{definition}\cite{kilmer2011factorization}
	The 	``$\operatorname{unfold}$'' command is anchored to the frontal slices of the tensor $\mathcal{A}\in \T{n}{p}{l}$, i.e.,
	\begin{equation*}\small
		\operatorname{unfold}(\mathcal{A}) := \begin{bmatrix}
			\begin{smallmatrix}
				A^{(1)}\\ 
				%			A^{(2)}\\ 
				\vdots \\ 
				A^{(l)}
			\end{smallmatrix}
		\end{bmatrix}\in \M{nl}{p}. 
		%	\color{red}:= [A^{(1)}, A^{(2)}, \cdots, A^{(n)}]^{\top}
	\end{equation*}
	%	where $A^{(i)} = \mathcal{A}(:,:,i)$ for $i \in [l]$.
	%\begin{remark}
	And	the operation takes ``$\operatorname{unfold}$'' back to tensor form is the $\operatorname{fold}$ command:
	$
	\operatorname{fold}(\operatorname{unfold}(\mathcal{A})) = \mathcal{A}.
	$ 
\end{definition}

\begin{definition}\cite{kilmer2011factorization}
	Let $\mathcal{A}\in \mathbb{R}^{n\times p \times l}$; then its circulant matrices is
	$$
	\operatorname{bcirc}(\mathcal{A}) 
	%		:= \operatorname{bcirc}(	\operatorname{unfold}(\mathcal{A})) 
	:= \begin{bmatrix}
		\begin{smallmatrix}
			A^{(1)} & A^{(l)}   & \cdots  & A^{(2)}\\ 
			A^{(2)} & A^{(1)} & \cdots & A^{(3)}\\ 
			\vdots  & \ddots  & \ddots & \vdots \\ 
			A^{(l)}  & \cdots & A^{(2)} & A^{(1)}
		\end{smallmatrix}
	\end{bmatrix}
	\in\mathbb{R}^{nl\times pl}.
	$$
	%where $A^{(i)} = \mathcal{A}(:,:,i)$ for $i \in [l]$.
\end{definition}

\begin{definition}\cite{kilmer2011factorization}\label{def:t-product}
	The t-product 
	between  $\mathcal{A}\in \mathbb{R}^{n\times p \times l}$ and $\mathcal{B}\in \mathbb{R}^{p\times m \times l}$ is defined as
	\begin{equation}\label{eq:t_prod_formular}
		\mathcal{A}\ast\mathcal{B} := \operatorname{fold}\bigxiaokuohao{\operatorname{bcirc}(\mathcal{A})\cdot \operatorname{unfold}(\mathcal{B})}
		%		= \operatorname{bcirc}^{-1}(\operatorname{bcirc}(\mathcal{A})\cdot \operatorname{bcirc}(\mathcal{B}))
		\in \mathbb{R}^{n\times m \times l}.
	\end{equation}
	% 	namely, their last dimension should be coincide and the second dimension of $\mathcal A$ and the first one of $\mathcal B$ should be the same.
\end{definition}

\begin{definition}\cite{kilmer2011factorization}\label{def:tranpose}
	Let $\mathcal{A}\in \mathbb{R}^{n\times p \times l}$; its transpose tensor $\mathcal{A^{\top}}$ is defined as
	\begin{small}
		\begin{equation*}
			\mathcal{A^{\top}}:=\operatorname{fold}
			\begin{bmatrix}
				\begin{smallmatrix}
					(A^{(1)})^{\top}\\ 
					(A^{(l)})^{\top}\\
					\vdots\\ 
					(A^{(2)})^{\top}
				\end{smallmatrix}
			\end{bmatrix}\in \mathbb{R}^{p\times n \times l}.
		\end{equation*} 
	\end{small}
	% i.e., its second to the last frontal slices are placed in a reversing order.
	
\end{definition}

\begin{definition}\cite{kilmer2011factorization}\label{def:inverse}
	The identity tensor $\mathcal{I}\in \mathbb{R}^{n\times n \times l}$ is the tensor whose first frontal slice is the $n\times n$ identity matrix and other frontal faces are zero. For a  f-square tensor $\MC{A}\in \T{n}{n}{l}$,  an inverse $\MC{B}$ exists  if it satisfies $\MC{A}\tprod\MC{B} = \MC{B}\tprod\MC{A} = \MC{I}$. $\MC{B}$ is denoted as $\MC{A}^{-1}$.
\end{definition}
% \begin{proposition}(cf. \cite[Lem. 3]{miao2021t})
	% 	\label{prop:bcirc}
	% 	For any $\mathcal A,\mathcal B\in \T{n}{p}{l}$, 
	% 	\begin{enumerate}
		% 		\item $ \bcirc{\mathcal A  \tprod  \mathcal B} =  \bcirc{\mathcal A}\bcirc{\mathcal B} $;
		% 		\item $\bcirc{\mathcal A}^{\top} = \bcirc{\mathcal A^{\top}}.
		% 		$
		% 	\end{enumerate}
	% \end{proposition}
\subsection{Fourier domain representation}

The t-product based computation can   be efficiently implemented by using   fast Fourier transform (FFT) instead of directly computing \eqref{eq:t_prod_formular}; see \cite{kilmer2011factorization}. Fourier transform is not only useful in implementation, but also important in our later analysis. For this reason, we need the representation of $\mathcal A\in \mathbb R^{n\times p\times l}$ in the Fourier domain, which will be given in Definition \ref{def:DFT}. Before that, the following notations are  introduced first.

For any $\mathcal{A}_{i} \in \mathbb{R}^{m_{i} \times n_{i} \times p}$ and $\mathcal{V}_{i} \in \mathbb{R}^{m_{i} \times n \times p}$ with $i \in[l]$, we denote
$$\small
\operatorname{Diag}\left(\mathcal{A}_{1},  \cdots, \mathcal{A}_{l}\right):=\left[\begin{array}{cccc}
	\begin{smallmatrix}
		\mathcal{A}_{1} & &  \\
		& \ddots & \\
		& & \mathcal{A}_{l}
	\end{smallmatrix}
\end{array}\right], \quad \operatorname{Vec}\left(\mathcal{V}_{1},  \cdots, \mathcal{V}_{l}\right):=\left[\begin{array}{c}
	\begin{smallmatrix}
		\mathcal{V}_{1} \\
		\vdots \\
		\mathcal{V}_{l}
	\end{smallmatrix}
\end{array}\right],
$$
and sometimes, they are abbreviated as $\operatorname{Diag}\left(\mathcal{A}_{i}: i \in[l]\right)$ and $\operatorname{Vec}\left(\mathcal{V}_{i}: i \in[l]\right)$, respectively. When all $\mathcal{A}_{i}\left(\mathcal{V}_{i}\right)$ become matrices (or vectors or scalars), similar symbols are also used.

Using this notation, 
$	\MC{A}=\operatorname{fold}(\Vector{A^{(i)}})
$
is sometimes abbreviated as $\MC{A} =\operatorname{fold}\left({A}^{(i)}: i \in[l]\right)$.

\begin{definition}\cite{kilmer2011factorization}\label{def:DFT}\label{DFT}
	For  $\mathcal{A}\in \mathbb{R}^{n\times p \times l}$,
	the discrete Fourier transform (DFT) of $	\mathcal{A}
	%	= [A^{(1)}, A^{(2)}, \cdots, A^{(n)}]^{\top}
	$ is defined as
	%	\begin{equation*}
		%		\operatorname{unfold}(\hat{\mathcal{A}}) =  {\sqrt{l}}(F_l\otimes I_n)\operatorname{unfold}(\mathcal{A}),
		%	\end{equation*}
	\begin{equation}\label{eq:DFT}
		\hat{\mathcal{A}} 
		:= \operatorname{fold}\bigxiaokuohao{ {\sqrt{l}}(F_l\otimes I_n)\operatorname{unfold}(\mathcal{A})}
		= \operatorname{fold}\bigxiaokuohao{ {\sqrt{l}}(F_l\otimes I_n)\Vector{A^{(i)}}} \in \mathbb C^{n\times p\times l},
	\end{equation}
	where $F_l$ is the normalized Fourier operator and its $(i,j)$-th entry is  $(F_l)_{ij}=\frac{1}{\sqrt{l}} \omega^{(i-1)\cdot (j-1)}$ and  $\omega=e^{-\frac{2\pi \mathrm{i}}{l}}$ is the primitive $l$th root of unity and $\mathrm{i}^2 = -1$. There hold  $\hat{A}^{(1)}\in \M{n}{p}, \hat{A}^{(i)}\in \C{n}{p}, \hat{A}^{(i)} = \operatorname{conj}(\hat{A}^{(l+2-i)}), i\in[l]\setminus \{ 1 \}$.  
	
	Throughout this paper, we represent the linear transform \eqref{eq:DFT} as $\hat{\mathcal A} = L(\mathcal A)$, i.e., we will use the notation $L(\cdot)$ to denote the  DFT in \eqref{eq:DFT}.

	The inverse  discrete Fourier transform (IDFT) of $\hat{\mathcal{A}}
	%	=[\hat{A}^{(1)},
	%	\hat{A}^{(2)},
	%	\hat{A}^{(3)},
	%	\cdots,
	%	\hat{A}^{(n)}]^{\top}
	$ is defined as
	%	\begin{equation*}
		%		\operatorname{unfold}({\mathcal{A}})=\frac{1}{\sqrt{l}}(F^H_l\otimes I_n) \operatorname{unfold}(\hat{\mathcal{A}}),
		%	\end{equation*}
	\begin{equation}\label{eq:IDFT}
		{\mathcal{A}}
		=\operatorname{fold}\bigxiaokuohao{\frac{1}{\sqrt{l}}(F^H_l\otimes I_n) \operatorname{unfold}(\hat{\mathcal{A}})}
		=\operatorname{fold}\bigxiaokuohao{\frac{1}{\sqrt{l}}(F^H_l\otimes I_n) \Vector{\hat{A}^{(i)}}
		},
	\end{equation}
	where $F_l^{H} = F^{-1}_l$.
	
	Likewise, we will use $\mathcal A = L^{-1}(\hat{\mathcal A})$ to represent the IDFT in \eqref{eq:IDFT}. 
	%	where $F_l^H$ is  the conjugate transpose of $F_l$. 
\end{definition}
\begin{remark} It follows from
	\cite{kilmer2011factorization} that obtaining $\hat{\mathcal A} $ can be efficiently done by using FFT in  Matlab:  $\hat{\mathcal{A}} = \texttt{fft}(\mathcal{A},[~],3)$. % as the result of DFT  on all frontal slices  $\mathcal{A}(:,:,i)$. 
	Similarly, one can compute $\mathcal{A}$ from $\hat{\mathcal{A}}$ using the command $\mathcal{A} = \texttt{ifft}(\hat{\mathcal{A}},[~],3)$.
	%	For the
	%	remainder of the paper, hat notation is used to indicate that we are referencing the
	%	object after having taken an DFT along the third dimension.
\end{remark}
\begin{remark}
	Throughout this paper, the notation $\hat{\mathcal A}$ is always referred to the DFT of $\mathcal A$, and $\hat A^{(i)}$ is always referred to the $i$-th frontal slice of $\hat{\mathcal A}$.
\end{remark}

% \color{red}\begin{remark}\label{remark:equality}
	% 	According to  \cite{kernfeld2015tensor}, $\hat{\MC{A}} = L(\MC{A})$ denotes the third-order tensor obtained by applying (non-normalized)
	% 	DFT  to each tube fiber $\MC{A}(j, k, :)$ and $\MC{A} = L^{-1}(\hat{\MC{A}}) $ denotes 
	% 	IDFT of $\MC{A}$ along the each tube fiber.
	% \end{remark} \color{black}
\begin{remark}\label{remark:Linear relationship}
	\cite{zheng2021t} pointed out the following relations between $\mathcal A$ and $\hat{\mathcal A}$:
	\begin{align}
		\left\{\begin{matrix}
			% \left\{\begin{split}
				\hat{A}^{(k)}={\omega}^{(k-1)\cdot 0}{A}^{(1)}+{\omega}^{(k-1)\cdot 1}{A}^{(2)}+\cdots+{\omega}^{(k-1)\cdot (l-1)}{A}^{(l)}\\
				~~~~~A^{(k)}=\frac{1}{l}\bigxiaokuohao{\bar{\omega}^{(k-1)\cdot 0}\hat{A}^{(1)}+\bar{\omega}^{(k-1)\cdot 1}\hat{A}^{(2)}+\cdots+\bar{\omega}^{(k-1)\cdot (l-1)}\hat{A}^{(l)}}
			\end{matrix}\right.,
			%\end{split}\right.
		\end{align}
		where $k
		\in [l]$ and $\bar{\omega} = \omega^{-1}$. %The above relations have also been explicitly pointed out in \cite{zheng2021t}. 
	\end{remark}

	\begin{proposition}(cf. \cite[Sect. 2.3]{lu2019tensor})
		\label{prop:bcirc_properties}
		For any $\mathcal A,\mathcal B\in \T{n}{p}{l}$, 
		\begin{enumerate}
			% 		\item $ \bcirc{\mathcal A  \tprod  \mathcal B} =  \bcirc{\mathcal A}\bcirc{\mathcal B} $;
			\item $
			(F_l\otimes I_n)\operatorname{bcirc}(\mathcal{A})(F^H_l\otimes I_p) 
			= \Diag{\hat{A}^{(i)}}
			%			 =\begin{bmatrix}
				%				\hat{A}^{(1)} &  & \\ 
				%				& \ddots  & \\ 
				%				&  & \hat{A}^{(l)}
				%			\end{bmatrix}
			%		 = \operatorname{diag}(\hat{A}^{(1)},
			%		\cdots,
			%		\hat{A}^{(n)})
			%		=\operatorname{blockdiag}(\mathcal{\hat {A}})
			=:\hat A
			$;
			\item $\mathcal C = \mathcal A\tprod\mathcal B \Leftrightarrow {\rm Diag}\bigxiaokuohao{\hat C^{(i)}:i\in [l]} = {\rm Diag}\bigxiaokuohao{\hat A^{(i)}\hat B^{(i)}: i\in [l] }   \Leftrightarrow \hat C^{(i)} = \hat A^{(i)}\hat B^{(i)}, i \in [l]$.
			% 		\item $\bcirc{\mathcal A}^{\top} = \bcirc{\mathcal A^{\top}}.
			% %		, \bcirc{\mathcal A}^H = \bcirc{\mathcal A^H}   
			% 		$
		\end{enumerate}
	\end{proposition}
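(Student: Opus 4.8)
The plan is to deduce both claims from a single algebraic fact: the scalar Fourier matrix $F_l$ diagonalizes every circulant matrix, and through the Kronecker structure this lifts to a \emph{block}-diagonalization of $\bcirc{\MC A}$. Part 1 is the substantive statement, and Part 2 is bookkeeping built on top of it, so I would prove Part 1 first.

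For Part 1, I would write $\bcirc{\MC A} = \sum_{k=0}^{l-1} S^k \otimes A^{(k+1)}$, where $S\in\M{l}{l}$ is the cyclic down-shift permutation matrix. This is immediate from the definition of $\bcirc{\cdot}$, because the $(s,t)$ block of $\bcirc{\MC A}$ is $A^{(j)}$ with $j-1 \equiv s-t \pmod l$, and $(S^k)_{st}=1$ exactly when $s-t\equiv k\pmod l$. Conjugating termwise by the block Fourier matrices and using $\fourieri{n}(S^k\otimes A^{(k+1)})\fourieriH{p} = (F_l S^k F_l^H)\otimes A^{(k+1)}$, the problem reduces to computing $F_l S^k F_l^H$. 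A direct calculation from $(F_l)_{ij}=\tfrac{1}{\sqrt l}\omega^{(i-1)(j-1)}$ shows this is the diagonal matrix with $(i,i)$-entry $\omega^{(i-1)k}$; hence each term contributes only to the $i$-th diagonal block, and summing over $k$ the $i$-th block equals $\sum_{k=0}^{l-1}\omega^{(i-1)k}A^{(k+1)}$, which is exactly $\hat A^{(i)}$ by Remark \ref{remark:Linear relationship}. The off-diagonal blocks vanish because every $F_l S^k F_l^H$ is diagonal. This yields $\fourieri{n}\bcirc{\MC A}\fourieriH{p}=\Diag{\hat A^{(i)}}$, which is Part 1.

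Given Part 1, Part 2 follows by conjugation and unitarity. Starting from Definition \ref{def:t-product}, $\MC C = \MC A\tprod\MC B$ is equivalent to $\unfold{\MC C}=\bcirc{\MC A}\unfold{\MC B}$. Multiplying on the left by $\sqrt l\,\fourieri{n}$, inserting the identity $\fourieriH{p}\fourieri{p}=I_{pl}$ between $\bcirc{\MC A}$ and $\unfold{\MC B}$, and recognizing from \eqref{eq:DFT} that $\sqrt l\,\fourieri{n}\unfold{\MC C}=\unfold{\hat{\MC C}}$ (and likewise for $\MC B$), Part 1 turns the right-hand side into $\Diag{\hat A^{(i)}}\,\unfold{\hat{\MC B}}$. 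Reading off the block structure gives $\hat C^{(i)}=\hat A^{(i)}\hat B^{(i)}$ for every $i\in[l]$, which is the same as the displayed $\operatorname{Diag}$ identity. Each manipulation is reversible because $F_l$ is invertible, so all three statements are equivalent.

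Finally I would note where the work concentrates. The genuine content is the eigen-structure computation $F_l S^k F_l^H=\operatorname{diag}(\omega^{(i-1)k})$ in Part 1; keeping the sign/convention of $\omega$ consistent with the DFT in Definition \ref{def:DFT} and Remark \ref{remark:Linear relationship} is the one place an error could creep in, so I would verify it by the explicit entrywise sum $\frac1l\sum_b \omega^{(i-1)(b+k-1)}\omega^{-(i-1)(b-1)}=\omega^{(i-1)k}$. Everything after that, namely the Kronecker bookkeeping and the unitarity cancellation in Part 2, is routine.
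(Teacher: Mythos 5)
Your proof is correct: the decomposition $\bcirc{\MC A}=\sum_{k=0}^{l-1}S^k\otimes A^{(k+1)}$, the eigen-computation $F_lS^kF_l^H=\operatorname{diag}\bigxiaokuohao{\omega^{(i-1)k}: i\in[l]}$, and its identification with Remark \ref{remark:Linear relationship} all check out against the paper's DFT convention (the $\sqrt{l}$ normalization in Definition \ref{def:DFT} exactly cancels the $\frac{1}{\sqrt{l}}$ in $F_l$), and Part 2 follows by the reversible unitary conjugation exactly as you describe. Note that the paper offers no proof of Proposition \ref{prop:bcirc_properties} at all --- it is quoted from \cite[Sect.~2.3]{lu2019tensor} --- and your argument is precisely the standard block-circulant diagonalization that the cited source relies on, so your reconstruction matches the intended proof rather than diverging from it.
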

	\begin{proposition}\label{prop:DFT tran}
		Let $\mathcal{A}\in \mathbb{R}^{n\times p \times l}$; then
		% 	 \begin{equation*}
			$
			\sqrt{l}(F_l\otimes I_p)\operatorname{unfold}(\mathcal{A}^{\top}) =
			\Vector{(\hat{A}^{(i)})^H}
			%		 = \begin{bmatrix}
				%			(\hat{A}^{(1)})^H\\ 
				%			\vdots\\ 
				%			(\hat{A}^{(l)})^H
				%		\end{bmatrix}
			, $
			% 	\end{equation*}
		which means $L(\MC{A}^{\top}) = \Fold{(\hat{A}^{(i)})^{H}}.$
	\end{proposition}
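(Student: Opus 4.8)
The plan is to prove the equivalent slice-wise identity $(\widehat{\mathcal A^\top})^{(i)} = (\hat A^{(i)})^{H}$ for every $i\in[l]$. This suffices: since $\mathcal A^\top\in\mathbb R^{p\times n\times l}$, Definition \ref{def:DFT} applied to $\mathcal A^\top$ (using $I_p$) gives $\sqrt{l}\,\fourieri{p}\unfold{\mathcal A^\top} = \unfold{\widehat{\mathcal A^\top}} = \Vector{(\widehat{\mathcal A^\top})^{(i)}}$, so once the slices are identified with $(\hat A^{(i)})^{H}$ the displayed formula $\sqrt{l}\,\fourieri{p}\unfold{\mathcal A^\top} = \Vector{(\hat A^{(i)})^{H}}$, and hence $L(\mathcal A^\top) = \Fold{(\hat A^{(i)})^{H}}$, follows at once. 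Rather than manipulating Fourier sums directly, I would route the argument through the block-circulant diagonalization in Proposition \ref{prop:bcirc_properties}(1), which already packages the slice-wise DFT as a single matrix factorization and converts the slice conjugate-transpose into an ordinary matrix conjugate transpose.

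First I would establish the auxiliary identity $\bcirc{\mathcal A^\top} = \bcirc{\mathcal A}^{\top}$. By Definition \ref{def:tranpose}, the frontal slices of $\mathcal A^\top$ are $(A^{(1)})^\top$ together with the reversed, transposed tail $(A^{(l)})^\top,(A^{(l-1)})^\top,\dots,(A^{(2)})^\top$. Writing out $\bcirc{\mathcal A}$ and transposing it block by block, the $(i,j)$ block of $\bcirc{\mathcal A}^{\top}$ is the transpose of the $(j,i)$ block of $\bcirc{\mathcal A}$; one checks that this is exactly the transposed, circulantly shifted slice produced in the $(i,j)$ block of $\bcirc{\mathcal A^\top}$, so the two block-circulant matrices coincide.

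Next I would take the conjugate transpose of the factorization $\fourieri{n}\bcirc{\mathcal A}\fourieriH{p} = \Diag{\hat A^{(i)}}$. Using $\fourieri{n}^{H} = \fourieriH{n}$ and $\fourieriH{p}^{H} = \fourieri{p}$, together with the fact that $\bcirc{\mathcal A}$ is real so $\bcirc{\mathcal A}^{H}=\bcirc{\mathcal A}^{\top}=\bcirc{\mathcal A^\top}$, the left side becomes $\fourieri{p}\bcirc{\mathcal A^\top}\fourieriH{n}$ while the right side becomes $\Diag{(\hat A^{(i)})^{H}}$. On the other hand, Proposition \ref{prop:bcirc_properties}(1) applied to $\mathcal A^\top$ (with the roles of $n$ and $p$ swapped) reads $\fourieri{p}\bcirc{\mathcal A^\top}\fourieriH{n} = \Diag{(\widehat{\mathcal A^\top})^{(i)}}$. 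Equating the two block-diagonal matrices and reading off the $i$-th diagonal block yields $(\widehat{\mathcal A^\top})^{(i)} = (\hat A^{(i)})^{H}$, which completes the proof after the translation back to $\operatorname{unfold}$/$\operatorname{Vec}$ form described in the first paragraph.

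I expect the main obstacle to be the bookkeeping in the auxiliary identity $\bcirc{\mathcal A^\top}=\bcirc{\mathcal A}^{\top}$: one must verify that the index reversal $j\mapsto l+2-j$ built into Definition \ref{def:tranpose} is precisely what aligns the transposed blocks with the circulant shift pattern, and that the exceptional first slice $(A^{(1)})^\top$ behaves consistently. A purely computational alternative avoids this identity by starting from Remark \ref{remark:Linear relationship}, writing $(\widehat{\mathcal A^\top})^{(k)}=\sum_{j}\omega^{(k-1)(j-1)}(\mathcal A^\top)^{(j)}$, substituting $m=l+2-j$, and using $\omega^{l}=1$ to turn the forward transform into $\sum_{m}\omega^{-(k-1)(m-1)}(A^{(m)})^\top=(\hat A^{(k)})^{H}$; there the same index-reversal subtlety resurfaces as the hidden source of the conjugation, so either route ultimately hinges on this one combinatorial check.
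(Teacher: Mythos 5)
Your proposal is correct and takes essentially the same route as the paper: both arguments conjugate-transpose the block-circulant diagonalization in item 1 of Proposition \ref{prop:bcirc_properties} and rely on the identity $\bcirc{\mathcal A^{\top}}=\bcirc{\mathcal A}^{\top}$ (which the paper invokes implicitly, citing \cite[Lem. 3]{miao2021t} elsewhere, while you propose to verify it directly). The only cosmetic difference is the finish: you apply the diagonalization a second time to $\mathcal A^{\top}$ and match diagonal blocks before translating back through Definition \ref{def:DFT}, whereas the paper reaches the $\operatorname{unfold}$ identity in one step by extracting the first block column, using that the first block column of $F_l\otimes I_n$ equals $\frac{1}{\sqrt{l}}\Vector{I_n}$.
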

	The proof of Proposition \ref{prop:DFT tran} is left to Appendix \ref{apx:1}. 
	\begin{remark}\label{remark:DFT equality}
		Combing Proposition \ref{prop:DFT tran} and item 2 of Proposition \ref{prop:bcirc_properties} leads to
		$$\mathcal C = \mathcal A^{\top}\tprod\mathcal B
		% \Leftrightarrow \hat C = \hat A^{H}\hat B
		\Leftrightarrow \hat C^{(i)} = (\hat A^{(i)})^H\hat B^{(i)}, i \in [l].
		$$
	\end{remark}

	\subsection{Trace, t-positive (semi)definiteness, skew-symmetric, and orthogonality}

	\begin{definition}(cf. \cite[Def. 7]{zheng2021t}, \cite[Prop. 1 (b)]{zheng2021t})
		\label{def:trace_tprod}
		Let $\mathcal A\in\T{n}{n}{l}$. The trace of $\mathcal A$, denoted by $\trace{\mathcal A}$, is defined as $\trace{\mathcal A} := \sum^l_{i=1} \trace{\hat A^{(i)}}$ = \trace{\bcirc{\mathcal A}}.
		% 	, where $\hat A^{(i)}$'s are given in item 2 of Proposition \ref{prop:bcirc_properties}.
	\end{definition}
	%According to \cite[Prop. 1 (b)]{zheng2021t}, $\trace{\mathcal A} = \trace{\bcirc{\mathcal A}}$ is always real if $\mathcal A$ is real.
	A symmetric version of the following relation was given in \cite[Rmk. 9]{zheng2021t}. Here we need a nonsymmetric version. 
	\begin{proposition}
		\label{prop:trace_inner_t_prod}
		For any $\mathcal A,\mathcal B\in \T{n}{p}{l}$, 
		$
		\trace{\mathcal A^{\top}\tprod\mathcal B} = 	l\innerprod{\mathcal A}{\mathcal B} .
		$
	\end{proposition}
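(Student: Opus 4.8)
The plan is to pass to the Fourier domain, where the t-product turns into slice-wise matrix multiplication, and then invoke a Parseval-type identity to recover the factor $l$. Write $\mathcal{C} := \mathcal{A}^{\top}\tprod\mathcal{B}\in\T{p}{p}{l}$, which is f-square so that $\trace{\mathcal{C}}$ is well defined. By Definition \ref{def:trace_tprod}, $\trace{\mathcal{C}} = \sum_{i=1}^{l}\trace{\hat{C}^{(i)}}$, and by Remark \ref{remark:DFT equality} each frontal slice satisfies $\hat{C}^{(i)} = (\hat{A}^{(i)})^{H}\hat{B}^{(i)}$. Hence $\trace{\mathcal{C}} = \sum_{i=1}^{l}\trace{(\hat{A}^{(i)})^{H}\hat{B}^{(i)}}$, and recognizing $\trace{X^{H}Y}$ as the (complex) Frobenius inner product reduces the claim to the identity $\sum_{i=1}^{l}\innerprod{\hat{A}^{(i)}}{\hat{B}^{(i)}} = l\innerprod{\mathcal{A}}{\mathcal{B}}$.

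To establish this identity, I would stack the slices back into unfold form. Put $a := \unfold{\mathcal{A}}$ and $b := \unfold{\mathcal{B}}$, so that by Definition \ref{def:DFT} the unfoldings of the transforms are $\hat{a} = \sqrt{l}\fourieri{n}a$ and $\hat{b} = \sqrt{l}\fourieri{n}b$. Because $\unfold{\hat{\mathcal{A}}}$ stacks the slices $\hat{A}^{(1)},\dots,\hat{A}^{(l)}$ vertically, one has $\hat{a}^{H}\hat{b} = \sum_{i=1}^{l}(\hat{A}^{(i)})^{H}\hat{B}^{(i)}$, hence $\sum_{i=1}^{l}\innerprod{\hat{A}^{(i)}}{\hat{B}^{(i)}} = \trace{\hat{a}^{H}\hat{b}}$. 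Substituting the two DFT expressions and using that $a,b$ are real gives $\hat{a}^{H}\hat{b} = l\, a^{\top}\fourieriH{n}\fourieri{n}\, b$.

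Here is where the key step enters: since $F_l$ is unitary, $\fourieriH{n}\fourieri{n} = (F_l^{H}F_l)\otimes I_n = I_{nl}$, so $\hat{a}^{H}\hat{b} = l\, a^{\top}b$, a real quantity, and therefore $\trace{\hat{a}^{H}\hat{b}} = l\,\trace{a^{\top}b} = l\innerprod{a}{b}$. Finally, because $\unfold{\cdot}$ merely stacks the frontal slices, it preserves entrywise sums, so $\innerprod{a}{b} = \innerprod{\mathcal{A}}{\mathcal{B}}$; chaining the equalities yields $\trace{\mathcal{A}^{\top}\tprod\mathcal{B}} = l\innerprod{\mathcal{A}}{\mathcal{B}}$.

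The computation is essentially routine once the slice-wise relation of Remark \ref{remark:DFT equality} is in hand; the only point requiring genuine care is the provenance of the factor $l$, which comes entirely from the $\sqrt{l}$ normalization built into the DFT of Definition \ref{def:DFT} together with the unitarity of $F_l$, so I would make sure the two $\sqrt{l}$ factors are tracked consistently. As an alternative route that avoids the Fourier domain, one could instead use $\trace{\mathcal{C}} = \trace{\bcirc{\mathcal{C}}}$, the multiplicativity $\bcirc{\mathcal{A}^{\top}\tprod\mathcal{B}} = \bcirc{\mathcal{A}}^{\top}\bcirc{\mathcal{B}}$, and the fact that each frontal slice of $\mathcal{A}$ occurs exactly $l$ times inside $\bcirc{\mathcal{A}}$; this produces the factor $l$ by a block-counting argument and gives a useful cross-check on the result.
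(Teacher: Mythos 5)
Your proof is correct, and it reaches the conclusion by a somewhat different route than the paper. Both arguments start identically --- passing to the Fourier domain via Definition \ref{def:trace_tprod} and the slice-wise relation $\hat{C}^{(i)} = (\hat{A}^{(i)})^{H}\hat{B}^{(i)}$ of Remark \ref{remark:DFT equality} --- but they diverge in how the factor $l$ is produced. The paper (Appendix \ref{apx:4}) converts the block-diagonal Fourier product \emph{back} to block-circulant form, writing $\trace{\hat{C}} = \trace{\bcirc{\mathcal A^{\top}}\bcirc{\mathcal B}}$ via the unitary similarity of item 1 of Proposition \ref{prop:bcirc_properties}, then invokes the cited facts $\bcirc{\mathcal A^{\top}} = \bcirc{\mathcal A}^{\top}$ and $\innerprod{\bcirc{\mathcal A}}{\bcirc{\mathcal B}} = l\innerprod{\mathcal A}{\mathcal B}$, so there the factor $l$ comes from each frontal slice appearing $l$ times inside $\bcirc{\cdot}$. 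You instead stay in unfolded coordinates and prove the Parseval-type identity $\trace{\hat{a}^{H}\hat{b}} = l\,a^{\top}b$ directly from the $\sqrt{l}$ normalization in Definition \ref{def:DFT} and the unitarity of $F_l$ (via $(F_l^{H}F_l)\otimes I_n = I_{nl}$); in effect you give a self-contained proof of item 3 of Proposition \ref{prop:trace_switch} en route, which is the right move since in the paper that item is a \emph{consequence} of this proposition and could not be assumed without circularity. Your observation that the complex slice traces need not be individually real but that $\hat{a}^{H}\hat{b} = l\,a^{\top}b$ is manifestly real is a point the paper glosses over. Amusingly, the ``alternative route'' you sketch at the end --- trace of bcirc products plus block-counting --- is essentially the paper's actual proof, so your two arguments together cover both. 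What each buys: your main argument is more elementary and dispenses with the external citations to \cite{miao2021t} and \cite{zheng2021t}, making the provenance of the factor $l$ transparent; the paper's is shorter given those borrowed facts and fits the bcirc machinery reused elsewhere (e.g., in Theorem \ref{th:sylvester}).
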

	The proof of Proposition \ref{prop:trace_inner_t_prod} is left to Appendix \ref{apx:4}. 
	Proposition \ref{prop:trace_inner_t_prod} immediately gives that:
	\begin{proposition}
		\label{prop:trace_switch}
		For any 
		$\mathcal A,\mathcal B\in\T{n}{p}{l}$, 
		\begin{enumerate}
			\item 
			$\trace{\mathcal A^{\top}\tprod\mathcal B} = \trace{\mathcal A\tprod\mathcal B^{\top}} = \trace{\mathcal B^{\top}\tprod\mathcal A} = \trace{\mathcal B\tprod\mathcal A^{\top}}$;
			\item $
			\langle\mathcal{A}\ast \mathcal{B},\mathcal{C}	\rangle~ =~ 	\langle\mathcal{A},\mathcal{C}\ast \mathcal{B}^{\top}	\rangle
			~ =~ 	\langle\mathcal{B},\mathcal{A}^{\top}\ast \mathcal{C}	\rangle
			$;
			\item $	\langle\mathcal{\hat A},\mathcal{\hat B}	\rangle ~=~ l	\langle\mathcal{A},\mathcal{B}	\rangle
			$.
		\end{enumerate}
	\end{proposition}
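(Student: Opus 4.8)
The plan is to obtain all three items as quick corollaries of Proposition~\ref{prop:trace_inner_t_prod}, which reads $\trace{\mathcal A^{\top}\tprod\mathcal B}=l\innerprod{\mathcal A}{\mathcal B}$; with this identity in hand every claimed equality collapses either to a symmetry of the Frobenius inner product or to an elementary rearrangement of matrices inside a trace.

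First I would dispatch item~3, since it also fixes the inner-product convention used on the complex side. Expanding the trace through its Fourier form (Definition~\ref{def:trace_tprod}) and using Remark~\ref{remark:DFT equality} to write $\widehat{\mathcal A^{\top}\tprod\mathcal B}^{(i)}=(\hat A^{(i)})^{H}\hat B^{(i)}$, one gets $\trace{\mathcal A^{\top}\tprod\mathcal B}=\sum_{i}\trace{(\hat A^{(i)})^{H}\hat B^{(i)}}=\sum_{i}\innerprod{\hat A^{(i)}}{\hat B^{(i)}}=\innerprod{\hat{\mathcal A}}{\hat{\mathcal B}}$, where the right-hand inner product is the Hermitian one summed over the slices. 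Comparing with Proposition~\ref{prop:trace_inner_t_prod} then gives $\innerprod{\hat{\mathcal A}}{\hat{\mathcal B}}=l\innerprod{\mathcal A}{\mathcal B}$. Equivalently, this is just Parseval's identity, since $\operatorname{unfold}(\hat{\mathcal A})=\sqrt{l}(F_{l}\otimes I_{n})\operatorname{unfold}(\mathcal A)$ with $F_{l}\otimes I_{n}$ unitary.

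For item~1 I would apply Proposition~\ref{prop:trace_inner_t_prod} to each of the four traces. The first and third equal $l\innerprod{\mathcal A}{\mathcal B}$ and $l\innerprod{\mathcal B}{\mathcal A}$ respectively, hence coincide by symmetry of $\innerprod{\cdot}{\cdot}$. For the remaining two I would write $\mathcal A\tprod\mathcal B^{\top}=(\mathcal A^{\top})^{\top}\tprod\mathcal B^{\top}$ so that Proposition~\ref{prop:trace_inner_t_prod} yields $l\innerprod{\mathcal A^{\top}}{\mathcal B^{\top}}$, and likewise for $\mathcal B\tprod\mathcal A^{\top}$; the only point to verify is that transposition preserves the Frobenius inner product, $\innerprod{\mathcal A^{\top}}{\mathcal B^{\top}}=\innerprod{\mathcal A}{\mathcal B}$, which holds because the tensor transpose transposes each frontal slice (leaving each slicewise product unchanged) and permutes the slices of $\mathcal A$ and $\mathcal B$ by the same permutation (leaving the total sum unchanged). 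Thus all four traces equal $l\innerprod{\mathcal A}{\mathcal B}$.

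Item~2 is the only place needing care, and I would prove it in the Fourier domain to keep it self-contained. By item~3 and Proposition~\ref{prop:bcirc_properties}(2), $l\innerprod{\mathcal A\tprod\mathcal B}{\mathcal C}=\innerprod{\widehat{\mathcal A\tprod\mathcal B}}{\hat{\mathcal C}}=\sum_{i}\innerprod{\hat A^{(i)}\hat B^{(i)}}{\hat C^{(i)}}$, after which the elementary matrix adjoint identities $\innerprod{MN}{P}=\innerprod{N}{M^{H}P}=\innerprod{M}{P N^{H}}$ are applied slicewise; recognizing $(\hat A^{(i)})^{H}\hat C^{(i)}$ as the $i$-th slice of $\widehat{\mathcal A^{\top}\tprod\mathcal C}$ and $\hat C^{(i)}(\hat B^{(i)})^{H}$ as that of $\widehat{\mathcal C\tprod\mathcal B^{\top}}$ (via Proposition~\ref{prop:DFT tran} and Remark~\ref{remark:DFT equality}), and summing back with item~3, produces the two equalities. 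The obstacle I would flag is that the purely tensor-algebraic derivation---starting from $l\innerprod{\mathcal A\tprod\mathcal B}{\mathcal C}=\trace{(\mathcal A\tprod\mathcal B)^{\top}\tprod\mathcal C}$ and shuffling factors---requires the reverse-order transpose law $(\mathcal A\tprod\mathcal B)^{\top}=\mathcal B^{\top}\tprod\mathcal A^{\top}$, associativity of $\tprod$, and a cyclic trace identity $\trace{\mathcal X\tprod\mathcal Y}=\trace{\mathcal Y\tprod\mathcal X}$, none of which is explicitly recorded in the excerpt. The Fourier route sidesteps all of these; and in any case the cyclic identity itself drops out of Definition~\ref{def:trace_tprod} and Proposition~\ref{prop:bcirc_properties}(2), since $\trace{\mathcal X\tprod\mathcal Y}=\sum_{i}\trace{\hat X^{(i)}\hat Y^{(i)}}=\sum_{i}\trace{\hat Y^{(i)}\hat X^{(i)}}$.
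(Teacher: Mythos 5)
Your proof is correct and follows the paper's intended route: the paper offers no separate argument for this proposition, stating only that it is an immediate consequence of Proposition~\ref{prop:trace_inner_t_prod}, and your derivation simply fills in the details of exactly that reduction (with the Fourier-domain slicewise computations supplying the transpose-invariance of the inner product and the adjoint identities needed for items~1 and~2). Your observation that the purely tensor-algebraic shuffle would require the unstated reverse-order law $(\mathcal A\tprod\mathcal B)^{\top}=\mathcal B^{\top}\tprod\mathcal A^{\top}$, and your Fourier-side workaround, is a legitimate tightening rather than a departure from the paper's approach.
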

	
	\begin{definition}(c.f. \cite{zheng2021t,kilmer2013third})
		A tensor $\mathcal A\in\T{n}{n}{l}$ is called symmetric  if $\mathcal A = \mathcal A^{\top}$. The set of symmetric 
		tensors is denoted as $\TS{n}{n}{l}$.	 $\mathcal A\in\TS{n}{n}{l}$ is called symmetric t-positive (semi)definite if $\innerprod{\mathcal X}{\mathcal A\tprod\mathcal X} > (\geq )0$ for any $\mathcal X\in\mathbb R^{n\times 1\times l}\setminus \{ \MC{O} \}$ $\bigxiaokuohao{\mathcal X\in \mathbb R^{n\times 1\times l}}$. The sets of symmetric t-positive (semi)definite tensors is denoted as $\TSPP{n}{n}{l}~ (\TSP{n}{n}{l})$. 
	\end{definition}
	\begin{remark}
		\label{rmk:t_psd}
		$\mathcal A\in \TSPP{n}{n}{l}$ ($\TSP{n}{n}{l}$)  if and only if every frontal slice $\hat{{A}}^{(i)}$ of $\hat{\MC{A}}$ in the Fourier domain is Hermitian positive (semi)definite.
	\end{remark}
	% \begin{definition}
		% 	A tensor $\mathcal A\in \TC{n}{p}{l}$ is called Hermitian if $\mathcal A = \mathcal A^{H}$. The set of Hermitian
		% 	tensors are denoted as $\HTS{n}{n}{l}$.	 $\mathcal A\in\HTS{n}{n}{l}$ is called Hermitian t-positive (semi)definite if $\innerprod{\mathcal X}{\mathcal A\Delta\mathcal X} > (\geq )0$ for any $\mathcal X\in\mathbb C^{n\times 1\times l}\setminus \{ \MC{O} \}$ $\bigxiaokuohao{\mathcal X\in \mathbb C^{n\times 1\times l}}$. The sets of Hermitian t-positive (semi)definite tensors are denoted as $\HTSPP{n}{n}{l} (\HTSP{n}{n}{l})$. 
		% \end{definition}
	Next,  similar to skew-matrices, we define skew-symmetric tensors. Skew-symmetric tensors are important in deriving retractions.
	\begin{definition}
		A tensor $\mathcal A\in\T{n}{n}{l}$ is called skew-symmetric if $\mathcal A = -\mathcal A^{\top}$. The set of skew-symmetric 
		tensors are denoted as $\TSkew{n}{n}{l}$.
	\end{definition}
	% \begin{lemma}\label{lem:positive definite}
		%  $ \mathcal{A}\in \TSPP{n}{n}{l}$ if and only if $\hat{\MC{A}}\in \HTSPP{n}{p}{l}$.
		% \end{lemma}
	\begin{lemma}\label{lem:positive definite}
		$ \mathcal{I} + \mathcal{V}^{\top}\ast\mathcal{V}\in \TSPP{n}{n}{l}$ for all $\MC{V}\in \T{n}{p}{l}$.
	\end{lemma}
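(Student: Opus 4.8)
The plan is to pass to the Fourier domain and verify membership in $\TSPP{n}{n}{l}$ slice-by-slice, leveraging the characterization of Remark \ref{rmk:t_psd}. Write $\MC{M} := \MC{I} + \MC{V}^{\top}\tprod\MC{V}$ and $\MC{W} := \MC{V}^{\top}\tprod\MC{V}$. Since $\MC{M}$ is assembled from real tensors by the real operations of transpose, t-product, and addition, it is itself a real tensor in $\T{n}{n}{l}$, so it is legitimate to examine it through its Fourier slices $\hat{M}^{(i)}$. The point of working in the Fourier domain is that Remark \ref{rmk:t_psd} bundles both defining requirements of $\TSPP{n}{n}{l}$ (symmetry and t-positive definiteness) into the single statement that every $\hat{M}^{(i)}$ is Hermitian positive definite; this spares me from verifying symmetry separately.

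First I would compute the slices $\hat{M}^{(i)}$. The DFT of the identity tensor has all slices equal to $I_n$: since only the first frontal slice of $\MC{I}$ is nonzero and equals $I_n$, the relation in Remark \ref{remark:Linear relationship} gives $\hat{I}^{(i)} = I_n$ for every $i\in[l]$. Combining the linearity of $L(\cdot)$ with the transpose-product rule of Remark \ref{remark:DFT equality}, which yields $\hat{W}^{(i)} = (\hat{V}^{(i)})^{H}\hat{V}^{(i)}$, I obtain
\[
\hat{M}^{(i)} = \hat{I}^{(i)} + \hat{W}^{(i)} = I_n + (\hat{V}^{(i)})^{H}\hat{V}^{(i)}, \qquad i\in[l].
\]
It then remains to check that each $\hat{M}^{(i)}$ is Hermitian positive definite. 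Hermitianity is immediate, as $(\hat{M}^{(i)})^{H} = I_n + (\hat{V}^{(i)})^{H}\hat{V}^{(i)} = \hat{M}^{(i)}$. For definiteness, for any nonzero $\mathbf{x}\in\mathbb{C}^{n}$ one has $\mathbf{x}^{H}\hat{M}^{(i)}\mathbf{x} = \|\mathbf{x}\|^{2} + \|\hat{V}^{(i)}\mathbf{x}\|^{2} \geq \|\mathbf{x}\|^{2} > 0$. Invoking Remark \ref{rmk:t_psd} for all $i\in[l]$ then concludes that $\MC{M}\in\TSPP{n}{n}{l}$.

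I expect essentially no hard step here; the only points requiring care are confirming that the identity tensor's Fourier slices are all $I_n$ and applying the transpose-product rule in its correct conjugated form $(\hat{V}^{(i)})^{H}\hat{V}^{(i)}$. As an alternative that avoids the Fourier domain, one could verify the definition directly: symmetry follows from $\MC{I}^{\top}=\MC{I}$ together with $(\MC{V}^{\top}\tprod\MC{V})^{\top} = \MC{V}^{\top}\tprod\MC{V}$ (using $(\MC{V}^{\top})^{\top}=\MC{V}$), and for any $\MC{X}\in\mathbb{R}^{n\times 1\times l}\setminus\{\MC{O}\}$ the inner-product switching identity of Proposition \ref{prop:trace_switch} gives $\innerprod{\MC{X}}{\MC{M}\tprod\MC{X}} = \bigfnorm{\MC{X}}^{2} + \bigfnorm{\MC{V}\tprod\MC{X}}^{2} > 0$, where $\MC{I}\tprod\MC{X}=\MC{X}$ is used. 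Both routes are short; I would present the Fourier-domain argument as the main proof since it handles symmetry and positive-definiteness simultaneously.
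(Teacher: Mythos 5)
Your Fourier-domain argument is correct and is essentially the paper's own proof, which reduces the claim via the slice-wise characterization (citing \cite[Thm. 5]{zheng2021t}) to each $I_p+(\hat{V}^{(i)})^{H}\hat{V}^{(i)}$, $i\in[l]$, being Hermitian positive definite; you simply spell out the Hermitianity and positive-definiteness checks that the paper leaves implicit (your direct alternative via Proposition \ref{prop:trace_switch} is also sound). One cosmetic remark: for $\MC{V}\in\T{n}{p}{l}$ the tensor $\MC{V}^{\top}\tprod\MC{V}$ is $p\times p\times l$, so the slices are $I_p+(\hat{V}^{(i)})^{H}\hat{V}^{(i)}$ with test vectors $\mathbf{x}\in\mathbb{C}^{p}$ and the conclusion lives in $\TSPP{p}{p}{l}$ --- your use of $I_n$ merely inherits a dimension typo already present in the lemma statement, as the paper's appendix proof itself uses $p$.
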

	% The proof of Lemma \ref{lem:positive definite} is left to Appendix \ref{apx:2}. 
	\begin{lemma}\label{lemma:orthogonal complement}
		The orthogonal complement of $\TSkew{p}{p}{l}$ is $\TS{p}{p}{l}$.
	\end{lemma}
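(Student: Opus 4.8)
The plan is to mimic the classical matrix argument: realize the ambient space $\T{p}{p}{l}$ as the (internal) direct sum of the symmetric and skew-symmetric tensors, show that these two subspaces are mutually orthogonal under the Frobenius inner product $\innerprod{\cdot}{\cdot}$, and then close the loop with a short orthogonal-decomposition argument. The single structural fact driving everything is that the tensor transpose $\mathcal{A}\mapsto\mathcal{A}^{\top}$ is a linear involution that is moreover an isometry for $\innerprod{\cdot}{\cdot}$. I would first record the decomposition: since $(\mathcal{A}^{\top})^{\top}=\mathcal{A}$, every $\mathcal{A}\in\T{p}{p}{l}$ splits as $\mathcal{A}=\tfrac12(\mathcal{A}+\mathcal{A}^{\top})+\tfrac12(\mathcal{A}-\mathcal{A}^{\top})$, with the first summand in $\TS{p}{p}{l}$ and the second in $\TSkew{p}{p}{l}$, so these two subspaces span the ambient space.

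Next I would establish the isometry. By Definition \ref{def:tranpose}, the frontal slices of $\mathcal{A}^{\top}$ are $(A^{(\sigma(i))})^{\top}$, where $\sigma$ is the involution of $[l]$ given by $\sigma(1)=1$ and $\sigma(i)=l+2-i$ for $i\geq 2$. Applying the elementary matrix identity $\innerprod{M^{\top}}{N^{\top}}=\innerprod{M}{N}$ slicewise and reindexing the sum by the bijection $j=\sigma(i)$, one gets $\innerprod{\mathcal{A}^{\top}}{\mathcal{B}^{\top}}=\sum_{i\in[l]}\innerprod{(A^{(\sigma(i))})^{\top}}{(B^{(\sigma(i))})^{\top}}=\sum_{j\in[l]}\innerprod{A^{(j)}}{B^{(j)}}=\innerprod{\mathcal{A}}{\mathcal{B}}$. (Alternatively, one could route this through Proposition \ref{prop:DFT tran} together with item 3 of Proposition \ref{prop:trace_switch}, but the slicewise computation is the most direct.)

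From the isometry, orthogonality is immediate: for $\mathcal{S}\in\TS{p}{p}{l}$ and $\mathcal{K}\in\TSkew{p}{p}{l}$ we have $\innerprod{\mathcal{S}}{\mathcal{K}}=\innerprod{\mathcal{S}^{\top}}{\mathcal{K}^{\top}}=\innerprod{\mathcal{S}}{-\mathcal{K}}=-\innerprod{\mathcal{S}}{\mathcal{K}}$, forcing $\innerprod{\mathcal{S}}{\mathcal{K}}=0$; this already yields $\TS{p}{p}{l}\subseteq(\TSkew{p}{p}{l})^{\perp}$. For the reverse inclusion I would take any $\mathcal{A}\in(\TSkew{p}{p}{l})^{\perp}$, write $\mathcal{A}=\mathcal{A}_{s}+\mathcal{A}_{k}$ as in the decomposition above, and pair with $\mathcal{A}_{k}\in\TSkew{p}{p}{l}$: using $\innerprod{\mathcal{A}_{s}}{\mathcal{A}_{k}}=0$ and $\mathcal{A}\perp\mathcal{A}_{k}$ gives $0=\innerprod{\mathcal{A}}{\mathcal{A}_{k}}=\bigfnorm{\mathcal{A}_{k}}^{2}$, hence $\mathcal{A}_{k}=\MC{O}$ and $\mathcal{A}=\mathcal{A}_{s}\in\TS{p}{p}{l}$. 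Combining the two inclusions proves $(\TSkew{p}{p}{l})^{\perp}=\TS{p}{p}{l}$.

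I do not anticipate a genuine obstacle; the one step deserving care is the isometry, because the tensor transpose permutes the frontal slices (via $\sigma$) in addition to transposing each of them, unlike the trivial matrix case. The whole argument hinges on checking that $\sigma$ is indeed an involution, so that $j=\sigma(i)$ is a bijection of $[l]$ and the slicewise Frobenius products line up correctly; a correct bookkeeping of this permutation is precisely what makes the cancellation $\innerprod{\mathcal{S}}{\mathcal{K}}=-\innerprod{\mathcal{S}}{\mathcal{K}}$ legitimate.
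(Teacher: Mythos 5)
Your proof is correct and reaches the same conclusion as the paper, but it is organized differently, so a comparison is in order. The paper's proof (Appendix A.3) is a three-line computation establishing only the inclusion $(\TSkew{p}{p}{l})^{\perp}\subseteq\TS{p}{p}{l}$: for $\MC{A}\in(\TSkew{p}{p}{l})^{\perp}$ it expands $\innerprod{\MC{A}-\MC{A}^{\top}}{\MC{A}-\MC{A}^{\top}}$, rewrites $\innerprod{\MC{A}-\MC{A}^{\top}}{\MC{A}^{\top}}$ as $\innerprod{\MC{A}^{\top}-\MC{A}}{\MC{A}}$ by moving the transpose across the inner product, and notes that both remaining terms vanish because $\MC{A}-\MC{A}^{\top}\in\TSkew{p}{p}{l}$, forcing $\bigfnorm{\MC{A}-\MC{A}^{\top}}^2=0$ and hence $\MC{A}=\MC{A}^{\top}$. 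Your route instead goes through the sym/skew direct-sum decomposition plus an explicitly proved transpose isometry. This buys two things the paper leaves tacit: (i) the paper silently invokes the adjoint identity $\innerprod{\MC{X}}{\MC{Y}^{\top}}=\innerprod{\MC{X}^{\top}}{\MC{Y}}$, whereas you actually verify the slicewise bookkeeping with the slice permutation $\sigma(1)=1$, $\sigma(i)=l+2-i$, which is the one genuinely t-product-specific point (the tensor transpose both transposes and cyclically reindexes the frontal slices); (ii) the paper proves only one inclusion, while the statement as phrased requires both — the reverse inclusion $\TS{p}{p}{l}\subseteq(\TSkew{p}{p}{l})^{\perp}$ follows either from your orthogonality computation $\innerprod{\MC{S}}{\MC{K}}=-\innerprod{\MC{S}}{\MC{K}}$ or from a dimension count via the direct sum, and you supply it explicitly. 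The trade-off: the paper's argument is shorter because, once one trusts the adjoint identity, the conclusion drops out directly without introducing the components $\MC{A}_s,\MC{A}_k$; yours is more self-contained and complete.
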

	% The proof of Lemma \ref{lemma:orthogonal complement} is left to Appendix \ref{apx:3}. 
	The proofs of Lemma \ref{lem:positive definite} and \ref{lemma:orthogonal complement} are left to Appendix \ref{apx:2} $-$ \ref{apx:3}, respectively.

	\begin{definition} \cite{kilmer2011factorization}
		A tensor $\MC{X}\in \mathbb{R}^{n\times n \times l }$ is orthogonal if  $\MC{X}^{\top}\ast\MC{X} = \MC{X}\ast\MC{X}^{\top} = \mathcal{I}\in \mathbb{R}^{n\times n \times l }$. And $\MC{X}\in\mathbb R^{n\times p\times l}$ ($n\geq p$) is partially orthogonal if  $\MC{X}^{\top}\ast\MC{X} = \mathcal{I}\in \mathbb{R}^{p\times p \times l }$. As noted in the introduction, the set of partially orthogonal tensors of size $n\times p\times l$ is denoted as $\st{n,p,l}$.
	\end{definition}
	$\MC{X}$ being orthogonal   implies that every $\hat{X}^{(i)}$ in the Fourier domain is unitary \cite{kilmer2011factorization}. 
	
	Based on Proposition \ref{prop:trace_inner_t_prod} and the definition of the trace,  if
	$\MC{X}\in\st{n,p,l}$ where $n\geq p$,  then $\bigfnorm{\MC{X}}$ is a constant over $\st{n,p,l}$. This means the following: 
	\begin{proposition}\label{prop:for_realization}
		For any given $\mathcal A\in\T{n}{p}{l}$ with $n\geq q$,	$\min_{ \MC{X}\in \st{n,p,l}  } \bigfnorm{\mathcal A-\MC{X}}^2 $ and $\max_{\MC{X}\in \st{n,p,l}}\innerprod{\mathcal A}{\MC{X}}$ are equivalent. 
	\end{proposition}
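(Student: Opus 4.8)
The plan is to reduce the two optimization problems to one another by expanding the squared Frobenius distance and exploiting the fact, recorded just above the statement, that $\bigfnorm{\MC{X}}$ is constant on $\st{n,p,l}$. Concretely, for any feasible $\MC{X}\in\st{n,p,l}$ I would first write
$$\bigfnorm{\mathcal A-\MC{X}}^2 = \bigfnorm{\mathcal A}^2 - 2\innerprod{\mathcal A}{\MC{X}} + \bigfnorm{\MC{X}}^2,$$
using the bilinearity of the Frobenius inner product together with $\bigfnorm{\mathcal Y}^2 = \innerprod{\mathcal Y}{\mathcal Y}$.

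Next I would argue that the first and third terms on the right-hand side are constants over the feasible set. The term $\bigfnorm{\mathcal A}^2$ does not involve $\MC{X}$ at all, so it is trivially independent of $\MC{X}$. For the third term, I invoke Proposition \ref{prop:trace_inner_t_prod}: since $\MC{X}^{\top}\tprod\MC{X} = \mathcal I\in\mathbb R^{p\times p\times l}$ for every $\MC{X}\in\st{n,p,l}$, we obtain $l\bigfnorm{\MC{X}}^2 = \trace{\MC{X}^{\top}\tprod\MC{X}} = \trace{\mathcal I}$. Evaluating $\trace{\mathcal I}$ from Definition \ref{def:trace_tprod} together with the linear relation in Remark \ref{remark:Linear relationship} — each Fourier slice $\hat I^{(i)}$ equals $I_p$ because only the first frontal slice of $\mathcal I$ is nonzero — yields $\trace{\mathcal I} = lp$, hence $\bigfnorm{\MC{X}}^2 = p$ is constant on $\st{n,p,l}$.

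Finally, combining these observations, on the feasible set we have $\bigfnorm{\mathcal A-\MC{X}}^2 = \bigfnorm{\mathcal A}^2 + p - 2\innerprod{\mathcal A}{\MC{X}}$, in which $\bigfnorm{\mathcal A}^2 + p$ is a fixed constant. Therefore $\MC{X}$ minimizes $\bigfnorm{\mathcal A-\MC{X}}^2$ over $\st{n,p,l}$ if and only if it maximizes $\innerprod{\mathcal A}{\MC{X}}$ over the same set, and the two problems share the same set of optimal solutions; this is the claimed equivalence. (I also note that the hypothesis written as ``$n\geq q$'' in the statement should read $n\geq p$.)

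There is no genuine obstacle here: the entire argument is a one-line expansion once the constancy of $\bigfnorm{\MC{X}}$ is available, and that constancy was already established in the paragraph preceding the proposition. The only point requiring a little care is the explicit evaluation $\bigfnorm{\MC{X}}^2 = p$, which is not strictly needed for the equivalence but makes the reduction fully transparent.
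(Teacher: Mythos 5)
Your proof is correct and follows essentially the same route as the paper, which treats the proposition as an immediate consequence of the sentence preceding it: by Proposition \ref{prop:trace_inner_t_prod} and the definition of the trace, $\bigfnorm{\MC{X}}$ is constant over $\st{n,p,l}$, so expanding $\bigfnorm{\mathcal A-\MC{X}}^2=\bigfnorm{\mathcal A}^2-2\innerprod{\mathcal A}{\MC{X}}+\bigfnorm{\MC{X}}^2$ gives the equivalence. Your explicit evaluation $\bigfnorm{\MC{X}}^2=p$ (via $\trace{\mathcal I}=lp$) and your note that ``$n\geq q$'' should read $n\geq p$ are both correct details the paper leaves implicit.
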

	
	\subsection{t-SVD}
	
	SVD in the t-product sense is important.
	\begin{theorem}(t-SVD, \cite[Thm. 4.1]{kilmer2011factorization}, \cite[Thm. 2.2]{lu2019tensor})
		\label{th:t_svd}
		Let $\mathcal A\in\T{n}{p}{l}$. Then it can be factorized as $\mathcal A = \mathcal U\tprod\mathcal S\tprod\mathcal V^{\top}$, where $\mathcal U\in\T{n}{n}{l}$ and $\mathcal V\in\T{p}{p}{l}$ are orthogonal tensors, and $\mathcal S\in\T{n}{p}{l}$ is a f-diagonal tensor.
	\end{theorem}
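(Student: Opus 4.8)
The plan is to reduce the tensor factorization to $l$ ordinary matrix singular value decompositions carried out in the Fourier domain, and then to transport the result back by the inverse transform $L^{-1}(\cdot)$, taking care that the assembled pieces remain real.

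First I would pass to the Fourier domain by setting $\hat{\mathcal A} = L(\mathcal A)$, whose frontal slices are $\hat{A}^{(i)} \in \C{n}{p}$, $i \in [l]$. For each $i$, take a matrix SVD $\hat{A}^{(i)} = \hat{U}^{(i)} \hat{S}^{(i)} (\hat{V}^{(i)})^{H}$, with $\hat{U}^{(i)} \in \C{n}{n}$ and $\hat{V}^{(i)} \in \C{p}{p}$ unitary and $\hat{S}^{(i)}$ an $n\times p$ diagonal matrix with nonnegative entries. Collecting these slices yields candidate Fourier-domain tensors $\hat{\mathcal U}, \hat{\mathcal S}, \hat{\mathcal V}$. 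By item 2 of Proposition~\ref{prop:bcirc_properties} and Remark~\ref{remark:DFT equality}, the slice-wise identities $(\hat{U}^{(i)})^{H}\hat{U}^{(i)} = I_n$, $\hat{U}^{(i)}(\hat{U}^{(i)})^{H} = I_n$ (and likewise for $\hat{V}^{(i)}$) are exactly the Fourier-domain statements that the intended $\mathcal U, \mathcal V$ are orthogonal, while $\hat{U}^{(i)} \hat{S}^{(i)} (\hat{V}^{(i)})^{H} = \hat{A}^{(i)}$ is the Fourier-domain form of $\mathcal U \tprod \mathcal S \tprod \mathcal V^{\top} = \mathcal A$.

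The step requiring care, and the one I expect to be the main obstacle, is guaranteeing that the tensors obtained by applying $L^{-1}(\cdot)$ are real. By Definition~\ref{def:DFT}, a real tensor is characterized in the Fourier domain by $\hat{A}^{(1)} \in \M{n}{p}$ together with $\hat{A}^{(i)} = \conj{\hat{A}^{(l+2-i)}}$ for $i \in [l]\setminus\{1\}$. The arbitrary per-slice SVDs above need not respect this symmetry, so I would fix the choice as follows. The self-conjugate slices, namely $i=1$ and, when $l$ is even, also $i = l/2+1$ (the unique index with $i = l+2-i$), have real $\hat{A}^{(i)}$, so I take real SVDs there. For each remaining conjugate pair $\{i,\, l+2-i\}$ I choose an SVD of $\hat{A}^{(i)}$ freely and then set $\hat{U}^{(l+2-i)} = \conj{\hat{U}^{(i)}}$, $\hat{S}^{(l+2-i)} = \hat{S}^{(i)}$, and $\hat{V}^{(l+2-i)} = \conj{\hat{V}^{(i)}}$. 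A short computation, using that $\hat{S}^{(i)}$ is real and that $\conj{(\hat{V}^{(i)})^{H}} = (\conj{\hat{V}^{(i)}})^{H}$, shows this is indeed an SVD of $\hat{A}^{(l+2-i)} = \conj{\hat{A}^{(i)}}$. Hence the conjugate-symmetry relations hold for $\hat{\mathcal U}, \hat{\mathcal S}, \hat{\mathcal V}$, and $\mathcal U := L^{-1}(\hat{\mathcal U})$, $\mathcal S := L^{-1}(\hat{\mathcal S})$, $\mathcal V := L^{-1}(\hat{\mathcal V})$ are real.

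Finally I would collect the verifications. Orthogonality of $\mathcal U$ and $\mathcal V$ follows from Remark~\ref{remark:DFT equality} applied slice-wise to the unitary relations above. The tensor $\mathcal S$ is f-diagonal because, by the IDFT formula in Remark~\ref{remark:Linear relationship}, each frontal slice $S^{(k)}$ is a linear combination of the diagonal matrices $\hat{S}^{(j)}$ and is therefore itself diagonal. The factorization $\mathcal A = \mathcal U \tprod \mathcal S \tprod \mathcal V^{\top}$ follows from item 2 of Proposition~\ref{prop:bcirc_properties} together with Remark~\ref{remark:DFT equality} and the injectivity of $L(\cdot)$, since both sides have identical Fourier-domain slices $\hat{A}^{(i)} = \hat{U}^{(i)} \hat{S}^{(i)} (\hat{V}^{(i)})^{H}$ for every $i \in [l]$.
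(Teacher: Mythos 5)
Your proof is correct and follows essentially the same route the paper takes: the paper states Theorem \ref{th:t_svd} by citation to \cite[Thm. 4.1]{kilmer2011factorization} and \cite[Thm. 2.2]{lu2019tensor}, and the remark immediately after it (as well as the analogous appendix proof of t-QR in Theorem \ref{th:t_qr}) sketches exactly your construction --- slice-wise matrix SVDs of $\hat{A}^{(i)}$ for $i = 1,\dots,\lceil \frac{l+1}{2}\rceil$, extension by the conjugate symmetry $\hat U^{(i)} = \conj{\hat U^{(l+2-i)}}$, $\hat S^{(i)} = \hat S^{(l+2-i)}$, $\hat V^{(i)} = \conj{\hat V^{(l+2-i)}}$, and recovery of real factors via $L^{-1}$. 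Your treatment of the self-conjugate slices ($i=1$, and $i = l/2+1$ for even $l$) and the verification that f-diagonality survives the IDFT are exactly the details the cited argument relies on, so nothing is missing.
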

	\begin{remark}
		%From Definition \ref{DFT}, we would only need to compute individual matrix t-SVD's for
		%about half the frontal slice of $\hat{\MC{A}}$, the remainder can be obtained from the conjugate symmetry of the Fourier transform.
		\cite{lu2019tensor} pointed out that to keep $\MC{U},\MC{S}$ and $\MC{V}$ to be real, one need to perform \cite[Alg. 2]{lu2019tensor}.
		Specifically, $\text{for}~ i = 1,\cdots,\lceil  \frac{l+1}{2}  \rceil$, let $\hat A^{(i)} = \hat U^{(i)} \hat S^{(i)} (\hat V^{(i)})^H$ be the SVD of $\hat A^{(i)}$, where $\lceil x \rceil$ is denoted as the nearest integer greater than or equal to $x$. For $i=1+ \lceil  \frac{l+1}{2}  \rceil,\ldots,l$, 
		$
		\hat A^{(i)} = \conj{\hat A^{(l+2-i)}}, \hat U^{(i)} = \conj{\hat U^{(l+2-i)}}, \hat V^{(i)} = \conj{\hat V^{(l+2-i)}}, \hat S^{(i)} = \hat S^{(l+2-i)}. 
		$
	\end{remark}
	As in the matrix case, the frontal slices $\hat S^{(i)}$ of $\hat{\MC{S}}$ are all diagonal, and their diagonal entries can be chosen nonnegative. 
	The compact t-SVD was mentioned in \cite{kilmer2011factorization,lu2019tensor,miao2020generalized}. We formally present it here for later use. 
	\begin{theorem}\cite{kilmer2011factorization,lu2019tensor,miao2020generalized}
		\label{th:t_svd_compact}
		Let $\mathcal A\in\T{n}{p}{l}$ with $n\geq q$. Then it can be factorized as $\mathcal A = \mathcal U\tprod\mathcal S\tprod\mathcal V^{\top}$, where $\mathcal U\in\T{n}{p}{l}$ is partially orthogonal,   
		$\mathcal V\in\T{p}{p}{l}$ is orthogonal, and $\mathcal S\in\T{p}{p}{l}$ is f-diagonal. 
	\end{theorem}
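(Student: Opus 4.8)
The plan is to work slice-by-slice in the Fourier domain, where, by item~2 of Proposition~\ref{prop:bcirc_properties}, the t-product reduces to ordinary matrix multiplication. First I would apply the DFT $L(\cdot)$ of Definition~\ref{def:DFT} to $\mathcal A$, producing frontal slices $\hat A^{(1)}\in\mathbb R^{n\times p}$ and $\hat A^{(i)}\in\mathbb C^{n\times p}$ for $i\in[l]\setminus\{1\}$, each an $n\times p$ matrix with $n\geq p$. For each such slice I would take the thin (compact) matrix SVD $\hat A^{(i)} = \hat U^{(i)}\hat S^{(i)}(\hat V^{(i)})^H$, where $\hat U^{(i)}\in\mathbb C^{n\times p}$ has orthonormal columns, $\hat V^{(i)}\in\mathbb C^{p\times p}$ is unitary, and $\hat S^{(i)}\in\mathbb C^{p\times p}$ is diagonal with nonnegative entries.

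To guarantee that the recovered factors are \emph{real} tensors, I would not compute all $l$ slice-SVDs independently; instead, following the reality convention recorded after Theorem~\ref{th:t_svd}, I would compute the SVD only for $i=1,\dots,\lceil (l+1)/2\rceil$ and define the remaining slices by conjugation, $\hat U^{(i)}=\conj{\hat U^{(l+2-i)}}$, $\hat V^{(i)}=\conj{\hat V^{(l+2-i)}}$, and $\hat S^{(i)}=\hat S^{(l+2-i)}$. This enforces the conjugate-symmetry pattern demanded by Definition~\ref{def:DFT}, so that applying the IDFT $L^{-1}(\cdot)$ to $\hat{\mathcal U}=\Fold{\hat U^{(i)}}$, $\hat{\mathcal S}=\Fold{\hat S^{(i)}}$, and $\hat{\mathcal V}=\Fold{\hat V^{(i)}}$ yields real tensors $\mathcal U\in\T{n}{p}{l}$, $\mathcal S\in\T{p}{p}{l}$, and $\mathcal V\in\T{p}{p}{l}$.

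It then remains to verify the four asserted properties, each of which transfers a slice-wise matrix fact back to the tensor world through Proposition~\ref{prop:bcirc_properties} and Remark~\ref{remark:DFT equality}. For the factorization, $\hat U^{(i)}\hat S^{(i)}(\hat V^{(i)})^H=\hat A^{(i)}$ for every $i$ gives $\mathcal U\tprod\mathcal S\tprod\mathcal V^{\top}=\mathcal A$ by item~2 of Proposition~\ref{prop:bcirc_properties} together with Remark~\ref{remark:DFT equality}. Partial orthogonality of $\mathcal U$ follows because $(\hat U^{(i)})^H\hat U^{(i)}=I_p$ for all $i$, which by Remark~\ref{remark:DFT equality} is exactly $\mathcal U^{\top}\tprod\mathcal U=\mathcal I$; orthogonality of $\mathcal V$ follows identically from each $\hat V^{(i)}$ being unitary. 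That $\mathcal S$ is f-diagonal follows since, by the IDFT in Definition~\ref{def:DFT}, each frontal slice $S^{(k)}$ is a scalar linear combination of the diagonal matrices $\hat S^{(i)}$ and hence is itself diagonal.

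I expect the only genuinely delicate point to be the reality/conjugate-symmetry bookkeeping: in the matrix case the compact SVD is immediate, but here one must arrange the per-slice factorizations so that the Hermitian-symmetry constraint of Definition~\ref{def:DFT} holds, without which $L^{-1}$ need not return real tensors. Everything else is a mechanical transfer of standard matrix SVD facts through the Fourier-domain dictionary. As an alternative that avoids recomputation, I note the statement can also be obtained by truncating the full t-SVD of Theorem~\ref{th:t_svd}: partitioning the orthogonal $\mathcal U\in\T{n}{n}{l}$ into its first $p$ lateral slices and discarding the zero horizontal slices of the f-diagonal $\mathcal S\in\T{n}{p}{l}$ leaves a partially orthogonal $n\times p\times l$ factor and a $p\times p\times l$ f-diagonal middle factor; there the main thing to check is that such block partitions are compatible with the t-product.
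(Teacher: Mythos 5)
The paper states this theorem by citation only and gives no proof of its own; your slice-wise Fourier-domain argument, with the conjugate-symmetric bookkeeping of the remark following Theorem~\ref{th:t_svd} (i.e., \cite[Alg.~2]{lu2019tensor}), is exactly the standard construction in the cited references, and your alternative route via truncating the full t-SVD of Theorem~\ref{th:t_svd} is likewise sound. The only point worth making explicit is that for the self-conjugate slices ($i=1$, and $i=l/2+1$ when $l$ is even) the matrices $\hat A^{(i)}$ are real and you must choose \emph{real} SVD factors there, since enforcing conjugation across the pairs $(i,\,l+2-i)$ alone places no constraint on those slices; with that choice the symmetry pattern of Definition~\ref{def:DFT} holds and $L^{-1}$ indeed returns real tensors.
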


	\subsection{t-QR decomposition}\label{subsec:t-QR}

	\begin{theorem}[t-QR, \cite{kilmer2013third}] \label{th:t_qr}
		Let $\mathcal A\in\T{n}{p}{l}$ with $n\geq p$. Then $\mathcal A$ can be written as
		\begin{align}\label{eq:t_qr}
			\mathcal A = \mathcal Q\tprod\mathcal R,
		\end{align}
		where $\mathcal Q\in \st{n,p,l}$ and $\mathcal R\in\TU{p}{p}{l}$.  
		If $\mathcal A \in L^{-1}\bigxiaokuohao{\mathbb{C}_*^{n\times p \times l}}:= \bigdakuohao{L^{-1}(\hat{\MC{A}})|\hat{\MC{A}}\in \TC{n}{p}{l}_*}$  and we require that  $\mathcal{R}\in L^{-1}\bigxiaokuohao{\TCUP{p}{p}{l}}:= \bigdakuohao{L^{-1}(\hat{\MC{A}})|\hat{\MC{A}}\in \TCUP{n}{p}{l}}$,  then the decomposition \eqref{eq:t_qr} is unique.
		Factorization of the form \eqref{eq:t_qr} is called the t-QR decomposition (t-QR for short). 
	\end{theorem}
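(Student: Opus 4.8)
The plan is to lift the factorization to the Fourier domain, where item 2 of Proposition \ref{prop:bcirc_properties} turns the t-product into a slicewise matrix product: the desired identity $\mathcal A = \mathcal Q \tprod \mathcal R$ is equivalent to $\hat A^{(i)} = \hat Q^{(i)} \hat R^{(i)}$ for every $i \in [l]$. So I would first run an ordinary economy-size matrix QR decomposition on each Fourier slice, $\hat A^{(i)} = \hat Q^{(i)} \hat R^{(i)}$ with $\hat Q^{(i)} \in \mathbb{C}^{n\times p}$ having orthonormal columns and $\hat R^{(i)} \in \mathbb{C}^{p\times p}$ upper triangular; this is always available since $n \geq p$. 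Assembling the tensors $\hat{\mathcal Q}$, $\hat{\mathcal R}$ from these slices and setting $\mathcal Q = L^{-1}(\hat{\mathcal Q})$, $\mathcal R = L^{-1}(\hat{\mathcal R})$ recovers $\mathcal A = \mathcal Q \tprod \mathcal R$ by Proposition \ref{prop:bcirc_properties}.

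It then remains to verify three facts: that $\mathcal Q, \mathcal R$ are real, that $\mathcal Q \in \st{n,p,l}$, and that $\mathcal R$ is f-upper triangular. For reality, I would not compute the QR independently on all slices but instead respect the conjugate symmetry $\hat A^{(i)} = \conj{\hat A^{(l+2-i)}}$ from Definition \ref{def:DFT}: compute QR only for $i = 1, \ldots, \lceil (l+1)/2 \rceil$ and set $\hat Q^{(i)} = \conj{\hat Q^{(l+2-i)}}$, $\hat R^{(i)} = \conj{\hat R^{(l+2-i)}}$ for the remaining indices, exactly as in the real t-SVD construction recalled after Theorem \ref{th:t_svd}. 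The self-conjugate slices (the DC slice, and the Nyquist slice when $l$ is even) are real, so their matrix QR factors are real and consistent with this pattern. Since $\hat{\mathcal Q}$ and $\hat{\mathcal R}$ then obey the conjugate-symmetry pattern that characterizes DFTs of real tensors, $\mathcal Q$ and $\mathcal R$ are real. Orthogonality is immediate from Remark \ref{remark:DFT equality}: $\mathcal Q^{\top}\tprod\mathcal Q = \mathcal I$ holds iff $(\hat Q^{(i)})^H \hat Q^{(i)} = I_p$ for all $i$, which is true by construction, so $\mathcal Q \in \st{n,p,l}$.

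The upper-triangular claim is the one genuinely tensor-specific point, and where I would be most careful, because f-upper triangularity is a property of the \emph{original} frontal slices $R^{(k)}$, not of the Fourier slices $\hat R^{(i)}$. The bridge is the IDFT relation in Remark \ref{remark:Linear relationship}, which writes each $R^{(k)}$ as a fixed linear combination $\frac{1}{l}\sum_j \bar\omega^{(k-1)(j-1)} \hat R^{(j)}$ of the $\hat R^{(j)}$. Upper triangularity is closed under linear combinations (and under conjugation, which also preserves it in the reality step), so every $R^{(k)}$ is upper triangular and hence $\mathcal R \in \TU{p}{p}{l}$. This completes existence.

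Finally, for uniqueness under the hypotheses $\mathcal A \in L^{-1}(\mathbb{C}_*^{n\times p\times l})$ and $\mathcal R \in L^{-1}(\TCUP{p}{p}{l})$, I would reduce again to the slicewise matrix statement. The first hypothesis means every $\hat A^{(i)}$ has full column rank $p$, and requiring $\hat{\mathcal R} \in \TCUP{p}{p}{l}$ forces each $\hat R^{(i)}$ to be upper triangular with strictly positive diagonal. For a full-column-rank matrix this normalization makes the thin QR factors $\hat Q^{(i)}$ and $\hat R^{(i)}$ unique; since the transform $L$ is a bijection, $\mathcal Q$ and $\mathcal R$ are uniquely determined, which finishes the argument.
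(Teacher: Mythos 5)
Your proposal is correct and takes essentially the same route as the paper's proof: matrix QR on the first $\lceil (l+1)/2\rceil$ Fourier slices extended to the rest by conjugate symmetry, Remark \ref{remark:DFT equality} for partial orthogonality, Remark \ref{remark:Linear relationship} for reality and f-upper triangularity of the recovered tensors, and slicewise uniqueness of thin QR with positive-diagonal $R$ combined with bijectivity of $L$ for the uniqueness claim. Your explicit note that upper triangularity survives the linear combinations in the IDFT merely spells out what the paper dismisses as a ``direct computation,'' so there is no substantive difference.
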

	%The proof of Theorem \ref{th:t_qr} is left to Appendix \ref{apx:5}. 
	\begin{lemma}\label{lemma:upp_dim}
		% Denote   $L^{-1}\bigxiaokuohao{\TCUP{p}{p}{l}}$  as the set of $\MC{R}$ factor of t-QR decomposition of $\MC{A}\in \T{n}{p}{l}$. Then
		$L^{-1}\bigxiaokuohao{\TCUP{p}{p}{l}}$	is isomorphic to $\mathbb{R}^{\frac{p^2l+p}{2}}$ if $l$ is odd, and $L^{-1}\bigxiaokuohao{\TCUP{p}{p}{l}}$	is isomorphic to
		$\mathbb{R}^{\frac{p^2l}{2}+p}$ if $l$ is even.
		%$\dim{(\TU{p}{p}{l})}=p\bigxiaokuohao{p\frac{l}{2}+\frac{1}{2^{|\sin(\frac{l\pi}{2})|}}}.$
	\end{lemma}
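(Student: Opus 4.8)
The plan is to transport the problem to the Fourier domain, where the f-upper-triangular and positivity conditions become slice-wise, and then to count real degrees of freedom. Since the DFT $L$ of Definition~\ref{def:DFT} is a real-linear bijection, a real tensor $\mathcal R$ lies in $L^{-1}\bigxiaokuohao{\TCUP{p}{p}{l}}$ precisely when its transform $\hat{\mathcal R}=(\hat R^{(1)},\dots,\hat R^{(l)})$ has each slice $\hat R^{(i)}$ a $p\times p$ upper-triangular complex matrix with strictly positive (hence real) diagonal, and, because $\mathcal R$ is real, these slices satisfy the conjugate-symmetry relation $\hat R^{(i)}=\operatorname{conj}\bigxiaokuohao{\hat R^{(l+2-i)}}$ for $i\in[l]\setminus\{1\}$ recorded in Definition~\ref{def:DFT}. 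Hence the set is parameterized bijectively by such conjugate-symmetric tuples of upper-triangular slices, and it suffices to compute the real dimension of this parameter space.

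First I would analyze the involution $\sigma\colon i\mapsto l+2-i$ on $[l]$ (indices read modulo $l$) that the conjugation constraint induces. Its fixed points are $i=1$ for every $l$, together with $i=\tfrac{l}{2}+1$ when $l$ is even; the remaining indices fall into disjoint pairs $\{i,\sigma(i)\}$. On a fixed slice the relation forces $\hat R^{(i)}=\operatorname{conj}\bigxiaokuohao{\hat R^{(i)}}$, so $\hat R^{(i)}$ is a \emph{real} upper-triangular matrix with positive diagonal and contributes $\tfrac{p(p+1)}{2}$ real parameters ($p$ positive diagonal entries and $\tfrac{p(p-1)}{2}$ free off-diagonal reals). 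On each pair, the slice $\hat R^{(\sigma(i))}$ is fixed as the conjugate of the free slice $\hat R^{(i)}$ (conjugation preserves both upper-triangularity and positivity of the diagonal), so the pair is parameterized by a single complex upper-triangular matrix with positive real diagonal, contributing $p+p(p-1)=p^{2}$ real parameters.

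Next I would assemble the total according to the parity of $l$. For $l$ odd there is one fixed slice and $\tfrac{l-1}{2}$ pairs, giving
\[
\frac{p(p+1)}{2}+\frac{l-1}{2}\,p^{2}=\frac{p^{2}l+p}{2},
\]
whereas for $l$ even there are two fixed slices and $\tfrac{l-2}{2}$ pairs, giving
\[
p(p+1)+\frac{l-2}{2}\,p^{2}=\frac{p^{2}l}{2}+p,
\]
as claimed. To promote this count to the asserted isomorphism with a Euclidean space, I would note that upper-triangularity and conjugate symmetry are linear constraints while strict positivity of each diagonal entry is an open half-line condition; composing the resulting global coordinates with the logarithm on every positive diagonal entry yields a diffeomorphism from $L^{-1}\bigxiaokuohao{\TCUP{p}{p}{l}}$ onto the full real vector space of the stated dimension.

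The routine real-dimension counts are harmless; the delicate part is the combinatorial bookkeeping of $\sigma$. The hard part will be tracking its fixed points correctly as the parity of $l$ changes, and reconciling strict positivity with the reality constraint: on a self-conjugate slice the diagonal must itself be real and positive, while on a paired slice a positive real diagonal is automatically consistent with its conjugate partner. Once the fixed-point structure of $\sigma$ is settled, the two summations above complete the proof.
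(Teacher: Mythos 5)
Your proposal is correct and follows essentially the same route as the paper's own proof: both pass to the Fourier domain, use the conjugate-symmetry relation $\hat R^{(i)}=\operatorname{conj}\bigl(\hat R^{(l+2-i)}\bigr)$ to split the slices into real fixed slices (contributing $\tfrac{p(p+1)}{2}$ each) and conjugate pairs (contributing $p^{2}$ each), and sum according to the parity of $l$. Your explicit remark that the strict positivity of the diagonal entries is handled by composing with a logarithm to obtain a genuine diffeomorphism onto $\mathbb{R}^{d}$ is a small refinement the paper leaves implicit, but it does not change the argument.
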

	%The proof of Lemma \ref{lemma:upp_dim} is left to Appendix \ref{apx:6}. 
	The proofs of Theorem \ref{th:t_qr} and Lemma \ref{lemma:upp_dim} are left to Appendix \ref{apx:5} and \ref{apx:6}, respectively. 
	\subsection{t-polar decomposition}\label{t-PD}
	Similar to the matrix counterpart, based on t-SVD, we can define t-polar decomposition (t-PD for short). 
	\begin{theorem}[t-PD] \label{th:t_pd}
		Let $\mathcal A\in\T{n}{p}{l}$ with $n\geq p$. Then $\mathcal A$ can be written as
		\begin{align}\label{eq:t_pd}
			\mathcal A = \mathcal P\tprod\mathcal H,
		\end{align}
		where $\mathcal P\in \st{n,p,l}$ and $\mathcal H\in\TSP{p}{p}{l}$.  $\mathcal H$ is unique. Furthermore, if $\mathcal A^{\top}\tprod \mathcal A\in \TSPP{p}{p}{l}$, then $\mathcal P$ is unique and $\mathcal H\in\TSPP{p}{p}{l}$.  
		
		%	Factorization of the form \eqref{eq:t_pd} is called the t-polar decomposition (t-PD). 
	\end{theorem}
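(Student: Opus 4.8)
The plan is to imitate the matrix polar decomposition, using the compact t-SVD (Theorem~\ref{th:t_svd_compact}) as the engine and discharging every verification in the Fourier domain via Proposition~\ref{prop:bcirc_properties}, Remark~\ref{remark:DFT equality}, and Remark~\ref{rmk:t_psd}.

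For \emph{existence}, I would first apply Theorem~\ref{th:t_svd_compact} to write $\MC{A} = \MC{U}\tprod\MC{S}\tprod\MC{V}^{\top}$ with $\MC{U}\in\st{n,p,l}$, $\MC{V}\in\T{p}{p}{l}$ orthogonal, and $\MC{S}\in\T{p}{p}{l}$ f-diagonal whose diagonal slices may be taken nonnegative. Setting $\MC{P} := \MC{U}\tprod\MC{V}^{\top}$ and $\MC{H} := \MC{V}\tprod\MC{S}\tprod\MC{V}^{\top}$, I would check $\MC{P}\tprod\MC{H} = \MC{A}$ by associativity together with $\MC{V}^{\top}\tprod\MC{V} = \MC{I}$; verify $\MC{P}^{\top}\tprod\MC{P} = \MC{V}\tprod\MC{U}^{\top}\tprod\MC{U}\tprod\MC{V}^{\top} = \MC{I}$, so that $\MC{P}\in\st{n,p,l}$; and confirm $\MC{H} = \MC{H}^{\top}$ from $\MC{S}^{\top} = \MC{S}$. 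To see that $\MC{H}$ is symmetric t-positive semidefinite I would pass to the Fourier domain, where $\hat H^{(i)} = \hat V^{(i)}\hat S^{(i)}(\hat V^{(i)})^{H}$ is Hermitian positive semidefinite because each $\hat V^{(i)}$ is unitary and $\hat S^{(i)}$ is diagonal with nonnegative entries; Remark~\ref{rmk:t_psd} then yields $\MC{H}\in\TSP{p}{p}{l}$.

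For \emph{uniqueness of $\MC{H}$}, suppose $\MC{A} = \MC{P}\tprod\MC{H}$ is any decomposition of the required type. Then $\MC{A}^{\top}\tprod\MC{A} = \MC{H}\tprod\MC{P}^{\top}\tprod\MC{P}\tprod\MC{H} = \MC{H}\tprod\MC{H}$. Translating to Fourier slices through Remark~\ref{remark:DFT equality} gives $(\hat A^{(i)})^{H}\hat A^{(i)} = (\hat H^{(i)})^{2}$, where each $\hat H^{(i)}$ is Hermitian positive semidefinite by Remark~\ref{rmk:t_psd}. Since a Hermitian positive semidefinite matrix possesses a \emph{unique} Hermitian positive semidefinite square root, each $\hat H^{(i)}$ is determined by $\MC{A}$, and hence so is $\MC{H}$ after the IDFT. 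One should also note that the conjugate-symmetry $\hat H^{(i)} = \conj{\hat H^{(l+2-i)}}$ is inherited from that of $\MC{A}^{\top}\tprod\MC{A}$ because the positive semidefinite square root commutes with conjugation, so the reconstructed $\MC{H}$ is a genuine real tensor.

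For the \emph{full-rank case}, assume $\MC{A}^{\top}\tprod\MC{A}\in\TSPP{p}{p}{l}$, i.e.\ every $(\hat A^{(i)})^{H}\hat A^{(i)}$ is Hermitian positive definite. Its unique Hermitian positive semidefinite square root $\hat H^{(i)}$ is then positive definite, hence invertible, so $\MC{H}\in\TSPP{p}{p}{l}$ and $\MC{H}$ is t-invertible. From $\MC{A} = \MC{P}\tprod\MC{H}$ it then follows that $\MC{P} = \MC{A}\tprod\MC{H}^{-1}$ is forced, giving uniqueness of $\MC{P}$. The step needing the most care is the uniqueness argument: one must correctly identify $\MC{A}^{\top}\tprod\MC{A} = \MC{H}\tprod\MC{H}$ inside the t-product and then invoke the classical slice-wise uniqueness of the Hermitian positive (semi)definite square root in the Fourier domain; the remaining verifications are routine consequences of Proposition~\ref{prop:bcirc_properties} and the properties of the compact t-SVD.
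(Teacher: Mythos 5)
Your proposal is correct and follows essentially the same route as the paper: existence via the compact t-SVD with $\mathcal P = \mathcal U\tprod\mathcal V^{\top}$ and $\mathcal H = \mathcal V\tprod\mathcal S\tprod\mathcal V^{\top}$, uniqueness of $\mathcal H$ from $\mathcal A^{\top}\tprod\mathcal A = \mathcal H\tprod\mathcal H$ via the unique symmetric t-positive semidefinite square root, and $\mathcal P = \mathcal A\tprod\mathcal H^{-1}$ when $\mathcal A^{\top}\tprod\mathcal A\in\TSPP{p}{p}{l}$. The only difference is presentational: where the paper cites the tensor-level square-root results of \cite{zheng2021t} (Thms.\ 7 and 8), you verify the same facts slice-wise in the Fourier domain, and your explicit check that conjugate symmetry of the slices is preserved (so the reconstructed $\mathcal H$ is real) is a correct detail the paper leaves implicit.
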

	%The proof of Theorem \ref{th:t_pd} is left to Appendix \ref{apx:7}. 
	\begin{proposition}
		\label{prop:t_pd_for_retraction}
		If $\mathcal A^{\top}\tprod\mathcal A \in\TSPP{p}{p}{l}$, then $\mathcal P$ and $\mathcal H$ defined in Theorem \ref{th:t_pd} are given by
		\[
		\mathcal P = \mathcal A\tprod\bigxiaokuohao{\mathcal A^{\top}\tprod\mathcal A}^{-\frac{1}{2}},~{\rm and}~\mathcal H = \bigxiaokuohao{\mathcal A^{\top}\tprod\mathcal A  }^{\frac{1}{2}},
		\]
		where the square root notation on tensors was defined in \cite[Sect. 4.4]{zheng2021t}.
		\begin{proposition}\label{prop:sym equal}
			If $\mathcal A \in L^{-1}\bigxiaokuohao{\mathbb{C}_*^{n\times p \times l}}$, then $\mathcal A^{\top}\tprod\mathcal A \in\TSPP{p}{p}{l}$.
		\end{proposition}
	\end{proposition}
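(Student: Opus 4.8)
The plan is to start from the t-PD furnished by Theorem \ref{th:t_pd}, namely $\mathcal A = \mathcal P\tprod\mathcal H$ with $\mathcal P\in\st{n,p,l}$ and $\mathcal H\in\TSP{p}{p}{l}$, and to pin down $\mathcal H$ as the unique symmetric t-positive semidefinite square root of $\mathcal A^{\top}\tprod\mathcal A$; the formula for $\mathcal P$ then follows by solving $\mathcal A=\mathcal P\tprod\mathcal H$ for $\mathcal P$ once $\mathcal H$ is known to be invertible. The entire argument is most transparent in the Fourier domain, where by Proposition \ref{prop:bcirc_properties} and Remark \ref{remark:DFT equality} the t-product and t-transpose act blockwise on the frontal slices $\hat A^{(i)}, \hat P^{(i)}, \hat H^{(i)}$.

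First I would translate the factorization $\mathcal A=\mathcal P\tprod\mathcal H$ into the slicewise identities $\hat A^{(i)}=\hat P^{(i)}\hat H^{(i)}$, $i\in[l]$. Partial orthogonality $\mathcal P^{\top}\tprod\mathcal P=\mathcal I$ becomes $(\hat P^{(i)})^H\hat P^{(i)}=I_p$, and symmetry $\mathcal H=\mathcal H^{\top}$ becomes $\hat H^{(i)}=(\hat H^{(i)})^H$, both via Remark \ref{remark:DFT equality}. Multiplying out then gives, for each $i$, the relation $(\hat A^{(i)})^H\hat A^{(i)} = (\hat H^{(i)})^H(\hat P^{(i)})^H\hat P^{(i)}\hat H^{(i)} = (\hat H^{(i)})^2$, which is precisely the slicewise form of the tensor identity $\mathcal A^{\top}\tprod\mathcal A=\mathcal H\tprod\mathcal H$.

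Next I would invoke the hypothesis $\mathcal A^{\top}\tprod\mathcal A\in\TSPP{p}{p}{l}$. By Remark \ref{rmk:t_psd} this means every slice $(\hat A^{(i)})^H\hat A^{(i)}$ is Hermitian positive definite; since $(\hat H^{(i)})^2$ equals this slice and $\hat H^{(i)}$ is Hermitian, $\hat H^{(i)}$ must itself be Hermitian positive definite, so $\mathcal H\in\TSPP{p}{p}{l}$ and $\mathcal H$ is invertible. The key step is uniqueness: the principal square root of a Hermitian positive definite matrix is the \emph{unique} Hermitian positive definite matrix whose square equals it, so each $\hat H^{(i)}$ is forced to equal $((\hat A^{(i)})^H\hat A^{(i)})^{1/2}$. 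Applying the IDFT together with the definition of the tensor square root in \cite[Sect. 4.4]{zheng2021t} identifies $\mathcal H=\bigxiaokuohao{\mathcal A^{\top}\tprod\mathcal A}^{1/2}$. This matrix-level uniqueness, transported through the DFT/IDFT correspondence, is where the real content lies; everything else is algebraic bookkeeping.

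Finally, since $\mathcal H$ is invertible, solving $\mathcal A=\mathcal P\tprod\mathcal H$ yields $\mathcal P=\mathcal A\tprod\mathcal H^{-1}=\mathcal A\tprod\bigxiaokuohao{\mathcal A^{\top}\tprod\mathcal A}^{-1/2}$, matching the asserted formula and recovering the uniqueness of $\mathcal P$ guaranteed by Theorem \ref{th:t_pd}. As a consistency check I would verify $\mathcal P^{\top}\tprod\mathcal P=\mathcal H^{-1}\tprod(\mathcal A^{\top}\tprod\mathcal A)\tprod\mathcal H^{-1}=\mathcal H^{-1}\tprod\mathcal H^2\tprod\mathcal H^{-1}=\mathcal I$, using $(\mathcal H^{-1})^{\top}=\mathcal H^{-1}$, which confirms $\mathcal P\in\st{n,p,l}$ and closes the argument.
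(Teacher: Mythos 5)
Your proposal is correct, and its skeleton matches the paper's: both rest on the identity $\mathcal H\tprod\mathcal H=\mathcal A^{\top}\tprod\mathcal A$ extracted from the factorization of Theorem \ref{th:t_pd}, followed by uniqueness of the square root and the solve $\mathcal P=\mathcal A\tprod\mathcal H^{-1}$. The difference is in how the key uniqueness step is discharged. The paper's proof is a one-line pointer back to the proof of Theorem \ref{th:t_pd}, which constructs $\mathcal H=\mathcal V\tprod\mathcal S\tprod\mathcal V^{\top}$ from the compact t-SVD and then cites \cite[Thms.~7--8]{zheng2021t} at the tensor level for existence, uniqueness, and nonsingularity of the symmetric t-positive definite square root. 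You instead unpack everything into the Fourier domain via Proposition \ref{prop:bcirc_properties} and Remark \ref{remark:DFT equality}, reducing the problem to the classical fact that a Hermitian positive definite matrix has a unique Hermitian positive definite square root, slice by slice. Your route is more self-contained and elementary (it replaces the external tensor-level theorems by standard matrix analysis), at the cost of having to trust that the tensor square root of \cite[Sect.~4.4]{zheng2021t} is indeed the slicewise principal square root in the Fourier domain --- which is consistent with the framework but is the one point where your argument leans on the cited definition rather than proving it.

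One step needs a slight repair in its wording. You write that since $(\hat H^{(i)})^2=(\hat A^{(i)})^H\hat A^{(i)}$ is Hermitian positive definite and $\hat H^{(i)}$ is Hermitian, $\hat H^{(i)}$ ``must itself be Hermitian positive definite.'' As a standalone implication this is false: $\hat H^{(i)}=-I_p$ is Hermitian with positive definite square. What rescues the step is the fact, stated in your own opening sentence, that Theorem \ref{th:t_pd} delivers $\mathcal H\in\TSP{p}{p}{l}$, so by Remark \ref{rmk:t_psd} each $\hat H^{(i)}$ is Hermitian positive \emph{semi}definite; its eigenvalues are nonnegative, and positive definiteness of $(\hat H^{(i)})^2$ rules out zero eigenvalues, forcing $\hat H^{(i)}$ positive definite. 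You should invoke the semidefiniteness explicitly at that point; with that single clause inserted, the argument is complete and the consistency check $\mathcal P^{\top}\tprod\mathcal P=\mathcal I$ confirming $\mathcal P\in\st{n,p,l}$ closes it cleanly.
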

	%The proof of Proposition \ref{prop:t_pd_for_retraction} is left to Appendix \ref{apx:8}. 
	\begin{theorem}
		\label{th:max_sol_related_pd}
		Let $\mathcal A\in\T{n}{p}{l}$ with $n\geq q$, admit the compact t-SVD $\mathcal A = \mathcal U\tprod\mathcal S\tprod\mathcal V^{\top}$. Then the optimal solution to $\max_{\mathcal P\in \st{n,p,l}}\innerprod{\mathcal A}{\mathcal P}$ is given by the t-PD of $\mathcal A$, namely, $\mathcal P = \mathcal U\tprod\mathcal V^{\top}$.
	\end{theorem}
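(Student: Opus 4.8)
The plan is to pass to the Fourier domain, where the t-product diagonalizes and the problem splits into $l$ independent complex matrix problems. By item 3 of Proposition \ref{prop:trace_switch}, for any $\mathcal P\in\st{n,p,l}$ we have $\innerprod{\mathcal A}{\mathcal P} = \frac1l\innerprod{\hat{\mathcal A}}{\hat{\mathcal P}} = \frac1l\sum_{i=1}^l \operatorname{tr}\bigl((\hat A^{(i)})^H\hat P^{(i)}\bigr)$, and since the left-hand side is real this equals $\frac1l\sum_{i=1}^l \operatorname{Re}\operatorname{tr}\bigl((\hat A^{(i)})^H\hat P^{(i)}\bigr)$. On the constraint side, the identity tensor satisfies $\hat I^{(i)} = I_p$ for every $i$, so by Remark \ref{remark:DFT equality} the membership $\mathcal P\in\st{n,p,l}$, i.e. $\mathcal P^{\top}\tprod\mathcal P=\mathcal I$, is equivalent to $(\hat P^{(i)})^H\hat P^{(i)} = I_p$ for all $i\in[l]$; that is, each frontal slice $\hat P^{(i)}$ lies on the complex Stiefel manifold in $\C{n}{p}$.

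The heart of the argument is then a slicewise bound. First I would substitute the slicewise SVD $\hat A^{(i)} = \hat U^{(i)}\hat S^{(i)}(\hat V^{(i)})^H$ (the Fourier-domain form of the compact t-SVD of Theorem \ref{th:t_svd_compact}) and write, using cyclicity of the trace, $\operatorname{Re}\operatorname{tr}\bigl((\hat A^{(i)})^H\hat P^{(i)}\bigr) = \operatorname{Re}\operatorname{tr}\bigl(\hat S^{(i)} Z_i\bigr)$ with $Z_i := (\hat U^{(i)})^H\hat P^{(i)}\hat V^{(i)}\in\C{p}{p}$. Since $\hat U^{(i)}$ and $\hat P^{(i)}$ have orthonormal columns and $\hat V^{(i)}$ is unitary, $Z_i$ has operator norm at most $1$, hence $|(Z_i)_{kk}|\le 1$ for every $k$. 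As $\hat S^{(i)}$ is diagonal with nonnegative entries $\sigma^{(i)}_k$, this yields $\operatorname{Re}\operatorname{tr}(\hat S^{(i)} Z_i)=\operatorname{Re}\sum_k \sigma^{(i)}_k (Z_i)_{kk}\le \sum_k\sigma^{(i)}_k=\trace{\hat S^{(i)}}$. Summing over $i$ gives the global upper bound $\innerprod{\mathcal A}{\mathcal P}\le \frac1l\sum_{i=1}^l\trace{\hat S^{(i)}}$, valid for every feasible $\mathcal P$. This complex Procrustes estimate is the main obstacle, but it is exactly the von Neumann trace inequality specialized to matrices with orthonormal columns and reduces to the elementary operator-norm bound above.

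Finally I would exhibit a feasible tensor attaining the bound. Set $\mathcal P^{\star} := \mathcal U\tprod\mathcal V^{\top}$. Using $\mathcal U^{\top}\tprod\mathcal U = \mathcal I$ and $\mathcal V\tprod\mathcal V^{\top} = \mathcal I$ together with the standard t-product identity $(\mathcal A\tprod\mathcal B)^{\top}=\mathcal B^{\top}\tprod\mathcal A^{\top}$, one checks $(\mathcal P^{\star})^{\top}\tprod\mathcal P^{\star} = \mathcal V\tprod\mathcal U^{\top}\tprod\mathcal U\tprod\mathcal V^{\top}=\mathcal I$, so $\mathcal P^{\star}\in\st{n,p,l}$. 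By Proposition \ref{prop:bcirc_properties} and Remark \ref{remark:DFT equality} its slices are $\hat P^{\star(i)} = \hat U^{(i)}(\hat V^{(i)})^H$, whence $\operatorname{Re}\operatorname{tr}\bigl((\hat A^{(i)})^H\hat P^{\star(i)}\bigr)=\operatorname{tr}\bigl(\hat V^{(i)}\hat S^{(i)}(\hat V^{(i)})^H\bigr)=\trace{\hat S^{(i)}}$ for each $i$. Hence $\innerprod{\mathcal A}{\mathcal P^{\star}}=\frac1l\sum_i\trace{\hat S^{(i)}}$ meets the upper bound, so $\mathcal P^{\star}=\mathcal U\tprod\mathcal V^{\top}$ is optimal. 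I note that one never needs to track the conjugate symmetry $\hat P^{(i)}=\operatorname{conj}(\hat P^{(l+2-i)})$ that encodes realness of $\mathcal P$: the upper bound holds slicewise for all feasible $\mathcal P$ regardless, and the optimizer $\mathcal P^{\star}$ produced from the real compact t-SVD is manifestly a real tensor, so feasibility and attainment alone suffice to conclude.
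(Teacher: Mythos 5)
Your proposal is correct and follows essentially the same route as the paper's proof: both pass to the Fourier domain, reduce $\innerprod{\mathcal A}{\mathcal P}$ to $\frac1l\sum_i\trace{\hat S^{(i)}\hat W^{(i)}}$ with your $Z_i$ being exactly the paper's $\hat W^{(i)} = (\hat U^{(i)})^H\hat P^{(i)}\hat V^{(i)}$, bound the diagonal entries by $1$ in modulus via the orthonormality of the factors, and verify attainment at $\mathcal P = \mathcal U\tprod\mathcal V^{\top}$. Your explicit handling of the real part (and your remark that conjugate symmetry of the slices need not be tracked) is a slightly more careful bookkeeping of the same argument, not a different approach.
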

	%The proof of Theorem \ref{th:max_sol_related_pd} is left to Appendix \ref{apx:9}. 
	The proofs of Theorem \ref{th:t_pd}, Proposition \ref{prop:t_pd_for_retraction}, and Theorem \ref{th:max_sol_related_pd} are left to Appendix \ref{apx:7} $-$ \ref{apx:9}, respectively. 
	
	\subsection{t-exponential}\label{subsect:t-exp}
	In \cite{lund2020tensor}, the author defined tensor t-functions for a third-order f-square
	tensor \(\mathcal{A} \in \R^{n \times n \times l}\) based on t-product. 
	In particular, the exponential of a third-order tensor
	\(\mathcal{A} \in \R^{n \times n \times l}\) is defined as follows:
	\begin{equation}\label{eq:t-exp2}
		\ex{\mathcal{A}}= \mathrm{fold}\bigxiaokuohao{
			\ex{\mathrm{bcirc}(\mathcal{A})}\mathrm{unfold}\bigxiaokuohao{\mathcal{I}}
		}.\end{equation}
	%where $\mathcal I\in\mathbb R^{n\times n\times l}$. 
	In \cite{miao2020generalized}, the authors extended the definition of tensor t-functions to arbitrary third-order tensors (not necessarily f-square). We present an equivalent definition of the t-exponential of third-order tensors for later use.
	\begin{definition}\label{def:t-exponential}
		The exponential of tensor \(\mathcal{A} \in \R^{n \times n \times l}\) based on t-product ( t-exponential for short) is 
		\begin{equation}\label{eq:exp well defined}
			%e^{\mathcal{A}} =
			\ex{\mathcal{A}} := \sum\nolimits_{k=0}^{\infty} \frac{1}{k!} \mathcal{A}^k,\end{equation}
		where \(\mathcal{A}^k = \mathcal{A} * \mathcal{A} * \cdots * \mathcal{A}\) (\(k\) copies) 
		with the convention that \(\mathcal{A}^0 = \mathcal{I}\).
	\end{definition}
	\begin{remark}
		This is well defined, i.e., the series is convergent. The proof of  well-defined property of \eqref{eq:exp well defined} is left to Appendix \ref{apx:10}. According to the proof, \eqref{eq:exp well defined} can be further written as
		\begin{equation}\label{eq:t-exp}
			\ex{\mathcal{A}} = L^{-1}\bigxiaokuohao{
				\Fold{\ex{\hat{{A}}^{(i)}}}} 
			= L^{-1}\bigxiaokuohao{
				\Fold{\ex{(L(\MC{A}))^{(i)}}}}.
		\end{equation}
	\end{remark}
	The proof of equivalence of \eqref{eq:t-exp} and \eqref{eq:t-exp2} is left to Appendix \ref{apx:11}. 
	
	Properties of the matrix exponential, such as those mentioned above, 
	can be extended to the  t-exponential. We list those that are needed later.
	
	\begin{proposition}\label{prop:exp smooth}
		The exponential map \(\operatorname{exp}: \R^{n\times n \times l} \to \R^{n \times n \times l}, \mathcal{A} \mapsto \ex{\mathcal{A}}\) is smooth.
	\end{proposition}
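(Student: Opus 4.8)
The plan is to realize the t-exponential as a finite composition of maps, each of which is manifestly $C^\infty$, and thereby reduce the claim to the classical smoothness of the ordinary matrix exponential. Throughout I regard $\T{n}{n}{l}$ and $\M{nl}{nl}$ as finite-dimensional real inner-product spaces, so that ``smooth'' means infinitely real-differentiable.

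First I would invoke the representation \eqref{eq:t-exp2}, which writes
\[
\ex{\MC{A}} = \fold{\ex{\bcirc{\MC{A}}}\,\unfold{\MC{I}}}.
\]
This exhibits the map $\MC{A}\mapsto\ex{\MC{A}}$ as the composition of four pieces: (i) $\MC{A}\mapsto\bcirc{\MC{A}}$; (ii) the matrix exponential $M\mapsto\ex{M}$ on $\M{nl}{nl}$; (iii) right multiplication by the fixed matrix $\unfold{\MC{I}}$; and (iv) the map $\operatorname{fold}$. Pieces (i), (iii), and (iv) are linear maps between finite-dimensional real vector spaces, hence $C^\infty$, and smoothness is preserved under composition; so the whole claim reduces to the smoothness of the matrix exponential in step (ii).

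Second, I would establish that $M\mapsto\ex{M}=\sum_{k\ge 0} M^k/k!$ is $C^\infty$. The standard argument is that this power series, together with all its formal term-by-term derivatives, converges absolutely and uniformly on every bounded subset of $\M{nl}{nl}$: in any submultiplicative norm the $k$-th term is dominated by $\bignorm{M}^k/k!$, whose sum is $e^{\bignorm{M}}$. Uniform convergence of the partial sums and of their derivatives on compact sets then guarantees that the limit is smooth and that derivatives may be taken term by term; in fact $\operatorname{exp}$ is real-analytic. Alternatively one may simply cite this classical fact.

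Combining the two steps, $\ex{\cdot}$ is a composition of $C^\infty$ maps and is therefore smooth. The only genuinely substantive ingredient is the smoothness of the matrix exponential in step (ii); there is no real obstacle beyond it, since everything else is the routine observation that $\operatorname{bcirc}$, $\operatorname{fold}$, $\operatorname{unfold}$, and right multiplication by a constant matrix are linear. One could equally run the argument in the Fourier domain via \eqref{eq:t-exp}, writing $\ex{\MC{A}}=L^{-1}\bigxiaokuohao{\Fold{\ex{\hat{A}^{(i)}}}}$: since $L$ and $L^{-1}$ are linear and the matrix exponential is smooth on each complex slice viewed as a real map, the same conclusion follows.
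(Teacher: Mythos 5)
Your proof is correct, and your primary route differs from the paper's in the decomposition it uses. The paper works in the Fourier domain: it writes $\ex{\MC{A}} = L^{-1}\bigxiaokuohao{\Fold{\ex{(L(\MC{A}))^{(i)}}}}$ as in \eqref{eq:t-exp} and observes this is a composition of the linear maps $L$, $L^{-1}$ with the slicewise matrix exponential, citing the classical smoothness of the latter --- which is exactly the ``alternative'' you sketch in your last sentence. Your main argument instead uses the block-circulant representation \eqref{eq:t-exp2}, $\ex{\MC{A}} = \fold{\ex{\bcirc{\MC{A}}}\,\unfold{\MC{I}}}$, factoring the map through $\bcirc{\cdot}$, the exponential on $\M{nl}{nl}$, right multiplication by the constant matrix $\unfold{\MC{I}}$, and $\operatorname{fold}$. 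Both reductions are legitimate, since the paper proves the equivalence of \eqref{eq:t-exp} and \eqref{eq:t-exp2} with the series definition (Appendix \ref{apx:11} and the remark after Definition \ref{def:t-exponential}), and in both cases the only substantive input is smoothness of the matrix exponential. What your route buys: it stays entirely in real arithmetic, invoking the real exponential on a single $nl\times nl$ matrix, whereas the Fourier route requires (as you correctly flag, and the paper leaves implicit) viewing the exponential of each complex slice $\hat A^{(i)}$ as a smooth map after identifying $\C{n}{n}$ with a real vector space. What the paper's route buys: it matches the formula \eqref{eq:t-exp} actually used elsewhere (e.g., in the proofs of Propositions \ref{prop:exp der}--\ref{prop:exp addition}), and it avoids any appeal to the equivalence with \eqref{eq:t-exp2}. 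Your inline justification of the smoothness of $M\mapsto\ex{M}$ via locally uniform convergence of the series and its term-by-term derivatives is standard and acceptable (the limit is in fact real-analytic), though citing the classical fact, as the paper does, suffices.
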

	%The proof of Proposition \ref{prop:exp smooth} is left to Appendix \ref{apx:12}. 
	\begin{proposition}\label{prop:exp der}
		Let \(\mathcal{A} \in \R^{n \times p \times l}\). Then
		$\frac{\mathrm{d}}{\mathrm{d}t} \ex{t\mathcal{A}}=\ex{t\mathcal{A}}*\mathcal{A}=\mathcal{A}*\ex{t\mathcal{A}}.$
	\end{proposition}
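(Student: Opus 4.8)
The plan is to push everything into the Fourier domain, where the t-exponential decouples into ordinary matrix exponentials acting slice-wise, and then to invoke the classical matrix identity $\frac{\mathrm{d}}{\mathrm{d}t}\ex{tM}=\ex{tM}M=M\ex{tM}$. Throughout I treat $\MC{A}$ as f-square, as required for $\ex{\MC{A}}$ to be defined.

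First I would record that $L$ is a fixed linear isomorphism, so that differentiation in $t$ commutes with both $L$ and $L^{-1}$; in particular $\frac{\mathrm{d}}{\mathrm{d}t}L^{-1}(\cdot)=L^{-1}\bigxiaokuohao{\frac{\mathrm{d}}{\mathrm{d}t}(\cdot)}$. By linearity of the DFT, the $i$-th frontal slice of $L(t\MC{A})$ is $t\hat A^{(i)}$, and hence by \eqref{eq:t-exp} the $i$-th frontal slice of $L\bigxiaokuohao{\ex{t\MC{A}}}$ is exactly $\ex{t\hat A^{(i)}}$, the ordinary exponential of the square matrix $\hat A^{(i)}$.

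Next I would differentiate slice-wise. Applying the classical matrix result to each $\hat A^{(i)}$ gives $\frac{\mathrm{d}}{\mathrm{d}t}\ex{t\hat A^{(i)}}=\ex{t\hat A^{(i)}}\hat A^{(i)}=\hat A^{(i)}\ex{t\hat A^{(i)}}$ for every $i\in[l]$. On the other hand, by item 2 of Proposition \ref{prop:bcirc_properties}, the $i$-th frontal slice of $L\bigxiaokuohao{\ex{t\MC{A}}\tprod\MC{A}}$ equals $\ex{t\hat A^{(i)}}\hat A^{(i)}$, while the $i$-th slice of $L\bigxiaokuohao{\MC{A}\tprod\ex{t\MC{A}}}$ equals $\hat A^{(i)}\ex{t\hat A^{(i)}}$. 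Matching these three families of slices for all $i\in[l]$ and applying $L^{-1}$ yields the claimed chain of equalities.

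I do not expect a serious obstacle: once \eqref{eq:t-exp} and Proposition \ref{prop:bcirc_properties} are in hand, the statement is simply the slice-wise shadow of a standard matrix fact. The only points requiring a word of care are the interchange of differentiation with $L^{-1}$ (immediate, since $L^{-1}$ is a constant linear map), and, should one prefer to argue directly from the series \eqref{eq:exp well defined} rather than from \eqref{eq:t-exp}, the justification of term-by-term differentiation. The latter follows from the uniform convergence on compact $t$-intervals already established in proving that \eqref{eq:exp well defined} is well defined, together with the observation that each power $\MC{A}^{k}$ commutes with $\MC{A}$ under the associative t-product, which delivers the left--right symmetry for free.
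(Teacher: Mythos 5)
Your proposal is correct and takes essentially the same route as the paper's own proof in Appendix \ref{apx:13}: both pass to the Fourier domain via \eqref{eq:t-exp}, interchange $\frac{\mathrm{d}}{\mathrm{d}t}$ with the constant linear map $L^{-1}$, apply the classical matrix identity $\frac{\mathrm{d}}{\mathrm{d}t}\ex{tM}=\ex{tM}M=M\ex{tM}$ to each frontal slice $\hat{A}^{(i)}$, and recombine the slices into a t-product using item 2 of Proposition \ref{prop:bcirc_properties}. Your side remarks (insisting that $\mathcal{A}$ be f-square, and the optional term-by-term differentiation of the series \eqref{eq:exp well defined}) are sound but do not alter the substance of the argument.
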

	%The proof of Proposition \ref{prop:exp der} is left to Appendix \ref{apx:13}. 
	\begin{proposition}\label{prop:exp decomp}
		Consider \(\mathcal{A} \in \R^{n \times n \times l}\) and \(\MC{X} \in \st{m,n,l} \) with \(m \ge n\). Then 
		\[\ex{\MC{X}*\mathcal{A}*\MC{X}^{\top}}=\MC{X}*\ex{\mathcal{A}}*\MC{X}^{\top}.\]
	\end{proposition}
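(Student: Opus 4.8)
The plan is to argue directly from the power-series definition \eqref{eq:exp well defined}, pushing the conjugating factors $\MC{X}$ and $\MC{X}^{\top}$ through the series one power at a time. The engine of the proof is the telescoping identity
\begin{equation*}
	\bigxiaokuohao{\MC{X}\ast\mathcal{A}\ast\MC{X}^{\top}}^{k} = \MC{X}\ast\mathcal{A}^{k}\ast\MC{X}^{\top},\qquad k\ge 1,
\end{equation*}
which I would prove by induction on $k$. The case $k=1$ is trivial; for the step I would write $\bigxiaokuohao{\MC{X}\ast\mathcal{A}\ast\MC{X}^{\top}}^{k+1}=\bigxiaokuohao{\MC{X}\ast\mathcal{A}^{k}\ast\MC{X}^{\top}}\ast\bigxiaokuohao{\MC{X}\ast\mathcal{A}\ast\MC{X}^{\top}}$, invoke associativity of the t-product, and use the defining relation $\MC{X}^{\top}\ast\MC{X}=\mathcal{I}$ of $\st{m,n,l}$ to collapse the interior factor $\MC{X}^{\top}\ast\MC{X}$ to the identity, leaving $\MC{X}\ast\mathcal{A}^{k+1}\ast\MC{X}^{\top}$. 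Every positive power therefore conjugates cleanly, and the mechanism is entirely the partial orthogonality of $\MC{X}$.

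With the telescoping established, the plan is to substitute it into \eqref{eq:exp well defined} and interchange the linear map $\mathcal{Y}\mapsto\MC{X}\ast\mathcal{Y}\ast\MC{X}^{\top}$ with the infinite sum. To make this interchange rigorous I would move to the Fourier domain via \eqref{eq:t-exp}: combining Proposition \ref{prop:bcirc_properties}, Proposition \ref{prop:DFT tran}, and Remark \ref{remark:DFT equality} shows that the $i$-th frontal slice of $L\bigxiaokuohao{\MC{X}\ast\mathcal{A}\ast\MC{X}^{\top}}$ is $\hat{X}^{(i)}\hat{A}^{(i)}\bigxiaokuohao{\hat{X}^{(i)}}^{H}$, with $\bigxiaokuohao{\hat{X}^{(i)}}^{H}\hat{X}^{(i)}=I_{n}$ for each $i$, while \eqref{eq:t-exp} converts the t-exponential into the slicewise matrix exponential. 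The claim then reduces to the family of matrix identities $\ex{\hat{X}^{(i)}\hat{A}^{(i)}\bigxiaokuohao{\hat{X}^{(i)}}^{H}}=\hat{X}^{(i)}\ex{\hat{A}^{(i)}}\bigxiaokuohao{\hat{X}^{(i)}}^{H}$, and absolute convergence of the matrix exponential series licenses the term-by-term rearrangement.

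I expect the one genuinely delicate point to be the reconciliation of the constant ($k=0$) term, rather than any of the positive powers. On the left the series contributes $\mathcal{A}^{0}=\mathcal{I}$ (of size $m\times m\times l$), whereas conjugating $\mathcal{A}^{0}$ on the right yields $\MC{X}\ast\mathcal{I}\ast\MC{X}^{\top}=\MC{X}\ast\MC{X}^{\top}$; these constant terms coincide exactly when $\MC{X}\ast\MC{X}^{\top}=\mathcal{I}$, equivalently when every $\hat{X}^{(i)}$ is unitary. When $m=n$ the relation $\MC{X}^{\top}\ast\MC{X}=\mathcal{I}$ forces $\MC{X}\ast\MC{X}^{\top}=\mathcal{I}$, so $\MC{X}$ is orthogonal and the induction together with the Fourier reduction delivers the stated identity at once. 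For strictly tall $\MC{X}$ (with $m>n$) the two constant terms differ by $\mathcal{I}-\MC{X}\ast\MC{X}^{\top}$, so I would either confine the statement to the orthogonal case in which it is subsequently applied (the t-exponential retraction and geodesic computations) or record this correction term explicitly; pinning down this constant-term discrepancy is the step I would treat most carefully.
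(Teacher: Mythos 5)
Your proposal is correct, and it is in fact more careful than the paper's own argument. The paper proves Proposition \ref{prop:exp decomp} by the same Fourier reduction you sketch in your second paragraph: it passes to slices $\hat{X}^{(i)}\hat{A}^{(i)}(\hat{X}^{(i)})^{H}$ via \eqref{eq:t-exp} and then invokes the matrix identity $\ex{\hat{X}^{(i)}\hat{A}^{(i)}(\hat{X}^{(i)})^{H}}=\hat{X}^{(i)}\ex{\hat{A}^{(i)}}(\hat{X}^{(i)})^{H}$, citing Hall's Prop.~2.3(6). But that cited property is the similarity identity $\ex{CAC^{-1}}=C\ex{A}C^{-1}$ for invertible $C$; it does not hold for a strictly tall partial isometry, and your constant-term analysis pinpoints exactly why. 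Your telescoping computation $(\MC{X}\tprod\mathcal{A}\tprod\MC{X}^{\top})^{k}=\MC{X}\tprod\mathcal{A}^{k}\tprod\MC{X}^{\top}$ for $k\geq 1$ (which needs only $\MC{X}^{\top}\tprod\MC{X}=\mathcal{I}$) gives, for $m>n$, the corrected identity $\ex{\MC{X}\tprod\mathcal{A}\tprod\MC{X}^{\top}}=\MC{X}\tprod\ex{\mathcal{A}}\tprod\MC{X}^{\top}+\mathcal{I}-\MC{X}\tprod\MC{X}^{\top}$, which collapses to the stated formula precisely when $\MC{X}\tprod\MC{X}^{\top}=\mathcal{I}$, i.e.\ when $m=n$. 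A one-slice counterexample confirms the statement is false as written for $m>n$: take $l=1$, $\hat{X}=e_{1}\in\mathbb{R}^{2\times 1}$, $\hat{A}=0$, so the left side is $I_{2}$ while the right side is $e_{1}e_{1}^{\top}$. Your closing observation is also the right repair: the proposition's only use in the paper is the third equality in the proof of Theorem \ref{t-exp based retraction}, where the conjugating tensor is the square orthogonal $(\MC{X}\;\,\mathcal{Q})\in\st{n,n,l}$, so restricting the statement to $m=n$ (or recording the correction term $\mathcal{I}-\MC{X}\tprod\MC{X}^{\top}$) leaves all downstream results intact. In short, where the paper imports a matrix property in a generality in which it fails, your series-plus-telescoping route derives the identity from first principles and thereby exposes, and fixes, the gap.
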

	%The proof of Proposition \ref{prop:exp decomp} is left to Appendix \ref{apx:14}. 
	\begin{proposition}\label{prop:exp Diag tensor}
		Let $\MC{D}_j\in \T{m_j}{n_j}{l}, j\in[p]$. Then
		\[\ex{
			\operatorname{Diag}\left(\mathcal{D}_j: j \in[p]\right) 
			%	\begin{pmatrix}
				%		\mathcal{D}_1 & & &\\
				%		& \mathcal{D}_2 & &\\
				%		& & \ddots & \\
				%		& & & \mathcal{D}_m
				%	\end{pmatrix}
		} = 
		\operatorname{Diag}\left(\ex{\mathcal{D}_j}: j \in[p]\right) 
		%	\begin{pmatrix}
			%		\ex[\mathcal{D}_1] & & &\\
			%		& \ex[\mathcal{D}_2] & &\\
			%		& & \ddots & \\
			%		& & & \ex[\mathcal{D}_m]
			%	\end{pmatrix}
		.\]
	\end{proposition}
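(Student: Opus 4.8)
The plan is to reduce everything to the Fourier domain via the identity \eqref{eq:t-exp}, namely $\ex{\mathcal{A}} = L^{-1}(\Fold{\ex{\hat{A}^{(i)}}})$, which expresses the t-exponential as a slice-wise matrix exponential sandwiched between a DFT and an IDFT. Throughout I would assume each $\MC{D}_j$ is f-square (i.e. $m_j=n_j$), since this is needed for $\ex{\MC{D}_j}$ and for $\operatorname{Diag}(\mathcal{D}_j: j\in[p])$ to be f-square so that the t-exponential is defined at all. Write $\MC{T} := \operatorname{Diag}(\mathcal{D}_j: j \in[p])$; by construction its $i$-th frontal slice is the block-diagonal matrix $T^{(i)} = \operatorname{Diag}(D_j^{(i)}: j\in[p])$.

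First I would show that $L$ commutes with the block-diagonal construction. By Remark \ref{remark:Linear relationship}, each Fourier slice is a fixed linear combination of the frontal slices, $\hat{T}^{(k)} = \sum_{j'=1}^{l}\omega^{(k-1)(j'-1)} T^{(j')}$; since every $T^{(j')}$ shares the same block-diagonal pattern, so does any linear combination of them, and the $j$-th diagonal block of $\hat{T}^{(k)}$ equals $\sum_{j'=1}^{l}\omega^{(k-1)(j'-1)} D_j^{(j')} = \hat{D}_j^{(k)}$. Hence $\hat{T}^{(k)} = \operatorname{Diag}(\hat{D}_j^{(k)}: j\in[p])$ for all $k$; that is, $L$ and $\operatorname{Diag}$ commute slice-wise, and the same holds for $L^{-1}$ by invertibility.

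Next I would invoke the elementary matrix fact $\ex{\operatorname{Diag}(\hat{D}_j^{(i)}: j\in[p])} = \operatorname{Diag}(\ex{\hat{D}_j^{(i)}}: j\in[p])$, i.e. that the matrix exponential of a block-diagonal matrix is block diagonal with the exponentiated blocks, which follows directly from the defining power series. Combining this with the previous step, the $i$-th Fourier slice of $\ex{\MC{T}}$ is $\operatorname{Diag}(\ex{\hat{D}_j^{(i)}}: j\in[p])$. On the other hand, applying the commutation of $L$ with $\operatorname{Diag}$ to $\operatorname{Diag}(\ex{\mathcal{D}_j}: j\in[p])$ and then \eqref{eq:t-exp} to each $\MC{D}_j$, the $i$-th Fourier slice of $\operatorname{Diag}(\ex{\mathcal{D}_j}: j\in[p])$ equals $\operatorname{Diag}(L(\ex{\MC{D}_j})^{(i)}: j\in[p]) = \operatorname{Diag}(\ex{\hat{D}_j^{(i)}}: j\in[p])$. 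The two tensors therefore have identical Fourier representations, and since $L^{-1}$ is well defined (Definition \ref{def:DFT}) this forces them to be equal, proving the claim.

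The only non-routine point—and the step I would be most careful about—is the commutation of $L$ with $\operatorname{Diag}$: the DFT acts along the third mode whereas $\operatorname{Diag}$ arranges blocks along the first two, so they act on independent modes and commute, but this should be spelled out via Remark \ref{remark:Linear relationship} rather than taken for granted. An alternative route avoiding the Fourier domain would start from the series \eqref{eq:exp well defined}: one shows that $\operatorname{Diag}$ respects the t-product, so that $\MC{T}^k = \operatorname{Diag}(\mathcal{D}_j^k: j\in[p])$ and the series decouples block-wise; establishing that block-diagonal t-products multiply block-wise reduces, through item 2 of Proposition \ref{prop:bcirc_properties}, to exactly the same Fourier-domain observation.
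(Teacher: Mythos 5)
Your proof is correct and takes essentially the same route as the paper's: pass to the Fourier domain via \eqref{eq:t-exp}, apply the block-diagonal property of the matrix exponential slice by slice, and transform back with $L^{-1}$. The only difference is one of care rather than substance—you spell out (via Remark \ref{remark:Linear relationship}) the commutation $\hat{T}^{(k)}=\operatorname{Diag}\bigl(\hat{D}_j^{(k)}: j\in[p]\bigr)$, which the paper's displayed chain uses silently, and you rightly note that the statement implicitly requires each $\MC{D}_j$ to be f-square ($m_j=n_j$) for the t-exponential to be defined.
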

	%The proof of Proposition \ref{prop:exp Diag tensor} is left to Appendix \ref{apx:15}.
	\begin{proposition}\label{prop:exp tranpose}
		Let \(\mathcal{A} \in \R^{n \times p \times l}\). Then
		$(\ex{\mathcal{A}})^{\top}=\ex{\mathcal{A}^{\top}}.$
	\end{proposition}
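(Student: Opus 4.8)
The plan is to reduce the identity \((\ex{\mathcal{A}})^{\top} = \ex{\mathcal{A}^{\top}}\) to the elementary matrix fact \((\ex{M})^{H} = \ex{M^{H}}\) by passing to the Fourier domain, where the t-exponential acts slicewise. A parallel, more hands-on route is available directly from the series definition; I describe both and favour the Fourier argument, since it sidesteps any interchange-of-limit bookkeeping.

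\emph{Fourier-domain route.} By the representation \eqref{eq:t-exp}, the \(i\)-th frontal slice of the DFT of \(\ex{\mathcal{A}}\) is exactly \(\ex{\hat{A}^{(i)}}\); that is, \((\widehat{\ex{\mathcal{A}}})^{(i)} = \ex{\hat{A}^{(i)}}\). Applying Proposition \ref{prop:DFT tran} to the tensor \(\ex{\mathcal{A}}\), the slices of the DFT of \((\ex{\mathcal{A}})^{\top}\) are the conjugate transposes of these, namely \((\ex{\hat{A}^{(i)}})^{H}\). On the other hand, Proposition \ref{prop:DFT tran} gives \((\widehat{\mathcal{A}^{\top}})^{(i)} = (\hat{A}^{(i)})^{H}\), so \eqref{eq:t-exp} applied to \(\mathcal{A}^{\top}\) yields slices \(\ex{(\hat{A}^{(i)})^{H}}\) for the DFT of \(\ex{\mathcal{A}^{\top}}\). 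The two families of slices agree because, for each \(i\), the standard matrix identity \((\ex{\hat{A}^{(i)}})^{H} = \ex{(\hat{A}^{(i)})^{H}}\) holds, conjugation and transposition commuting with the absolutely convergent matrix power series. Since the DFT \(L(\cdot)\) is a bijection, equality of all slices forces \((\ex{\mathcal{A}})^{\top} = \ex{\mathcal{A}^{\top}}\).

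\emph{Series route (alternative).} Starting from Definition \ref{def:t-exponential}, \((\ex{\mathcal{A}})^{\top} = \big(\sum_{k\geq 0} \tfrac{1}{k!}\mathcal{A}^{k}\big)^{\top}\). First I would record the antihomomorphism property \((\mathcal{B}\tprod\mathcal{C})^{\top} = \mathcal{C}^{\top}\tprod\mathcal{B}^{\top}\), which follows by comparing Fourier slices via Proposition \ref{prop:bcirc_properties}, Proposition \ref{prop:DFT tran}, and Remark \ref{remark:DFT equality}; an easy induction then gives \((\mathcal{A}^{k})^{\top} = (\mathcal{A}^{\top})^{k}\). Because the transpose is linear and Frobenius-norm preserving (hence continuous), it may be moved inside the convergent series established in the well-definedness remark, giving \((\ex{\mathcal{A}})^{\top} = \sum_{k\geq 0}\tfrac{1}{k!}(\mathcal{A}^{\top})^{k} = \ex{\mathcal{A}^{\top}}\).

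The computations are all routine; the only points needing care are the two commuting facts, that transposition reverses a t-product and that it passes through the limit of the series. Both are dispatched cheaply: the Fourier route bypasses the limit issue entirely and collapses the whole statement onto the one-line matrix identity \((\ex{M})^{H} = \ex{M^{H}}\), which I expect to be the crux of a clean writeup.
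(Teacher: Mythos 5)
Your Fourier-domain argument is correct and is essentially the paper's own proof: the paper likewise writes $(\ex{\mathcal{A}})^{\top}=L^{-1}\bigl(\operatorname{fold}\bigl((\ex{\hat{A}^{(i)}})^{H}:i\in[l]\bigr)\bigr)$ via Proposition \ref{prop:DFT tran} together with \eqref{eq:t-exp}, and then invokes the matrix identity $(\ex{M})^{H}=\ex{M^{H}}$ slicewise. Your alternative series route is a fine backup but adds nothing the paper needs, since the Fourier route already disposes of the limit-interchange issue.
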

	%The proof of Proposition \ref{prop:exp tranpose} is left to Appendix \ref{apx:16}.
	\begin{proposition}\label{prop:exp addition}
		If \(\mathcal{A}*\mathcal{B}=\mathcal{B}*\mathcal{A}\), then 
		$\ex{\mathcal{A}}*\ex{\mathcal{B}}=\ex{\mathcal{A}+\mathcal{B}}.$
	\end{proposition}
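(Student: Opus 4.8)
The plan is to reduce the statement to its well-known matrix counterpart by passing to the Fourier domain, where the t-product block-diagonalizes into slice-wise matrix multiplication and, by \eqref{eq:t-exp}, the t-exponential acts as the ordinary matrix exponential on each frontal slice.

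First I would translate the hypothesis. By item 2 of Proposition \ref{prop:bcirc_properties}, the identity $\mathcal{A}\tprod\mathcal{B}=\mathcal{B}\tprod\mathcal{A}$ holds if and only if $\hat A^{(i)}\hat B^{(i)}=\hat B^{(i)}\hat A^{(i)}$ for every $i\in[l]$; that is, commutativity under the t-product is equivalent to the commutativity of each pair of frontal slices of $\hat{\mathcal A}$ and $\hat{\mathcal B}$ as ordinary matrices. Next, I would apply the Fourier-domain expression \eqref{eq:t-exp}, which says that the $i$-th frontal slice of $L(\ex{\mathcal A})$ is exactly $\ex{\hat A^{(i)}}$, and likewise for $\mathcal B$. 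Combining this with item 2 of Proposition \ref{prop:bcirc_properties} once more, the $i$-th Fourier slice of $\ex{\mathcal A}\tprod\ex{\mathcal B}$ equals the matrix product $\ex{\hat A^{(i)}}\,\ex{\hat B^{(i)}}$.

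Then I would invoke the classical matrix fact that $e^{X}e^{Y}=e^{X+Y}$ whenever $XY=YX$. Applying it with $X=\hat A^{(i)}$ and $Y=\hat B^{(i)}$, which commute by the first step, gives $\ex{\hat A^{(i)}}\,\ex{\hat B^{(i)}}=\ex{\hat A^{(i)}+\hat B^{(i)}}$. Since the DFT $L$ is linear, the $i$-th Fourier slice of $\mathcal A+\mathcal B$ is $\hat A^{(i)}+\hat B^{(i)}$, so by \eqref{eq:t-exp} the quantity $\ex{\hat A^{(i)}+\hat B^{(i)}}$ is precisely the $i$-th Fourier slice of $\ex{\mathcal A+\mathcal B}$. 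Thus $\ex{\mathcal A}\tprod\ex{\mathcal B}$ and $\ex{\mathcal A+\mathcal B}$ have identical frontal slices in the Fourier domain for all $i\in[l]$, and because $L$ is a bijection I conclude $\ex{\mathcal A}\tprod\ex{\mathcal B}=\ex{\mathcal A+\mathcal B}$.

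The only substantive point is the commutativity translation in the first step; everything else is a routine transfer through the linear, invertible DFT together with the standard matrix exponential identity. I would remark that a direct proof straight from the series \eqref{eq:exp well defined}, expanding $\ex{\mathcal A}\tprod\ex{\mathcal B}$ as a Cauchy product and collecting terms via a t-product binomial expansion of $(\mathcal A+\mathcal B)^{k}$ (which requires exactly the commutativity hypothesis), is also available; however, the Fourier-domain route is cleaner and sidesteps the convergence and reindexing bookkeeping, so I would present that as the main argument.
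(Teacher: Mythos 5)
Your proposal is correct and follows essentially the same route as the paper's proof in Appendix \ref{apx:17}: pass to the Fourier domain via \eqref{remark:equality} and \eqref{eq:t-exp}, where the hypothesis becomes slice-wise commutativity $\hat A^{(i)}\hat B^{(i)}=\hat B^{(i)}\hat A^{(i)}$, apply the matrix identity $e^{X}e^{Y}=e^{X+Y}$ for commuting matrices on each frontal slice, and conclude by linearity and invertibility of $L$. Your write-up is in fact slightly more explicit than the paper's, which leaves the commutativity translation implicit.
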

	%The proof of Proposition \ref{prop:exp addition} is left to Appendix \ref{apx:17}.
	The proofs of Proposition \ref{prop:exp smooth} $-$ \ref{prop:exp addition} are left to Appendix \ref{apx:12} $-$ \ref{apx:17},  respectively.

\section{Preliminaries on Riemannian Manifold}
Basic definitions and properties concerning the Riemannian manifold can be found in the books \cite{absil2009optimization,tuIntroductionManifolds2011,boumal2022intromanifolds}. To be more convenient and to make the paper self-contained, we summarize the necessary ones in this section.
\label{sec:pre_rieman}
%\begin{definition}\cite{tuIntroductionManifolds2011}\label{def:locally Euclide}
%	A topological space $\MC{M}$ is locally Euclide dimension $n$ if every point $p$ in $\MC{M}$ has a neighborhood $U$ such that there is a homeomorphism $\phi$ from $U$ onto an open subset of $\mathbb{R}^{n}$. The pair $\left(U, \phi: U \rightarrow \mathbb{R}^{n}\right)$ is called a chart.
%\end{definition}
\begin{definition}\cite{tuIntroductionManifolds2011}\label{def:manifold}
	A topological space $\mathscr{M}$ is locally Euclide dimension $n$ if every point $p$ in $\mathscr{M}$ has a neighborhood $U$ such that there is a homeomorphism $\phi$ from $U$ onto an open subset of $\mathbb{R}^{n}$.
	%		 The pair $\left(U, \phi: U \rightarrow \mathbb{R}^{n}\right)$ is called a chart.
	A topological manifold of dimension $n$ is a Hausdorff, second countable, locally Euclidean dimension $n$ space. 
	%	It is said to be of dimension $n$ if it is locally Euclidean of dimen$\operatorname{sion} n$. 
	Especially, every vector space is a linear manifold.
\end{definition}
%\begin{remark}
%	Every vector space is a linear manifold.
%\end{remark}
\begin{definition}\cite{absil2009optimization}\label{def:embeded submanifold}
	Let $\mathscr{N}$ be a submanifold of $\mathscr{M}$. If the mani­fold topology of $\mathscr{N}$ coincides with its subspace topology induced from the topological space $\mathscr{M}$, then $\mathscr{N}$ is called an embedded submanifold 
	% 	or a regular submanifold 
	of the manifold $\mathscr{M}$.
\end{definition}
%A $d$-dimensional manifold $\mathcal{M}$ is a Hausdorff and second-countable topological space, which is homeomorphic to the $d$-dimensional Euclidean space locally via a family of charts. 
%When the transition maps of intersecting charts are smooth, the manifold $\mathcal{M}$ is called a smooth manifold. 
%Intuitively, the tangent space $T_{x} \mathcal{M}$ at a point $x$ of a manifold $\mathcal{M}$ is the set of the tangent vectors of all the curves at $x$. 
\begin{definition}\cite{absil2009optimization}\label{def:tangent vector}
	A tangent vector $\xi_{x}$ to $\mathscr{M}$ at $x$ is a mapping such that there exists a curve $\gamma$ on $\mathscr{M}$ with $\gamma(0)=x$, satisfying
	$$
	\xi_{x} f:=\left.\dot{\gamma}(0) f \triangleq \frac{\mathrm{d}(f(\gamma(t)))}{\mathrm{d} t}\right|_{t=0}, \quad \forall f \in \mathfrak{F}_{x}(\mathscr{M}),
	$$
	where $\mathfrak{F}_{x}(\mathscr{M})$ is the set of all real-valued functions $f$ defined in a neighborhood of $x$ in $\mathscr{M}$. The tangent space $T_{x} \mathscr{M}$ to $\mathscr{M}$ is defined as the set of all tangent vectors to $\mathscr{M}$ at $x$. 
	% 	The union of all tangent spaces 
	$T\mathscr{M}  :=\bigcup_{x \in \mathscr{M}} T_{x} \mathscr{M}.$	is called the tangent bundle of the manifold.
	% 	, denoted as $T\mathcal{M}$.
\end{definition}
\begin{definition}\cite{absil2009optimization}\label{def:differential}
	The differential of $F: \mathscr{M} \rightarrow \mathscr{N}$ at $x$ is a linear operator $\mathrm{D}F(x): T_{x} \mathscr{M} \rightarrow T_{F(x)} \mathscr{N}$ defined by:
	$$
	\mathrm{D} F(x)[v]:=\left.\frac{\mathrm{d}}{\mathrm{d} t} F(\gamma(t))\right|_{t=0},
	$$
	where $\gamma(t)$ is any curve on the manifold that satisfies $\gamma(0)=x$ and $\dot{\gamma}(0)=v$. 
\end{definition}
\begin{definition}\cite{absil2009optimization}\label{def:Riemannian}
	A Riemannian metric $g$ is defined on each tangent space of $x$ as an inner product $g_{x}: T_{x} \mathscr{M} \times T_{x} \mathscr{M} \rightarrow \mathbb{R}$. 
	$
	g_{x}(\eta, \xi)=\langle\eta, \xi\rangle_{x}
	$
	where $\eta, \xi \in T_{x} \mathscr{M}$ and the $x$ is dropped when context permits.  
	A Riemannian manifold is the combination $(\mathscr{M}, g)$.
	%If $\mathcal{M}$ is equipped with a smoothly varied inner product $g_{x}(\cdot, \cdot):=\langle\cdot, \cdot\rangle_{x}$ on the tangent space, then $(\mathcal{M}, g)$ is a Riemannian manifold. 
\end{definition}
\begin{definition}\cite{absil2009optimization}\label{defin:vector field}
	A smooth scalar field on a manifold $\mathscr{M}$ is a smooth function $f: \mathscr{M} \rightarrow  \mathbb{R}$. 
	% If $f$ is a smooth function, we say it is a smooth scalar field. 
	The set of smooth scalar fields on $\mathscr{M}$ is denoted by $\mathfrak{F}(\mathscr{M})$. A smooth vector field $\xi$ on a manifold $\mathscr{M}$ is a smooth function from $\mathscr{M}$ to the tangent bundle $T\mathscr{M}$ that assigns to each point $x\in\mathscr{M}$ a tangent vector $\xi_x\in T_x\mathscr{M}$.
	Let $\mathfrak{X}(\mathscr{M})$ denote the set of smooth vector fields on $\mathscr{M}$.
\end{definition}
\begin{definition}\cite{absil2009optimization} An affine connection $\nabla$ on a manifold $\mathscr{M}$ is a mapping 
	$
	\nabla: \mathfrak{X}(\mathscr{M}) \times \mathfrak{X}(\mathscr{M}) \rightarrow \mathfrak{X}(\mathscr{M}),
	$
	which is denoted by $(\eta, \xi) \stackrel{\nabla}{\longrightarrow} \nabla_{\eta} \xi$ and satisfies the following properties. 
	For $\eta,  \xi, \zeta \in \mathfrak{X}(\mathscr{M}), f, g \in \mathfrak{F}(\mathscr{M})$, and $a, b \in \mathbb{R}$: $(i) ~\nabla_{f \eta+g \chi} \xi=f \nabla_{\eta} \xi+g \nabla_{\chi} \xi$,   $ (ii)~\nabla_{\eta}(a \xi+b \zeta)=a \nabla_{\eta} \xi+b \nabla_{\eta} \zeta$, and  $(iii)~\nabla_{\eta}(f \xi)=(\eta f) \xi+f \nabla_{\eta} \xi.$
	% \begin{itemize}
		% \item[(1)]
		% % $\mathfrak{F}(\mathcal{M})$-linearity in $\eta$ :
		% $\nabla_{f \eta+g \chi} \xi=f \nabla_{\eta} \xi+g \nabla_{\chi} \xi$,
		% \item[(2)]
		% % $\mathbb{R}$-linearity in $\xi: \quad
		% $\nabla_{\eta}(a \xi+b \zeta)=a \nabla_{\eta} \xi+b \nabla_{\eta} \zeta,$
		% \item[(3)] 
		% % Product rule (Leibniz' law ) : \quad
		% $\nabla_{\eta}(f \xi)=(\eta f) \xi+f \nabla_{\eta} \xi.$
		% \end{itemize}
	
	The vector field $\nabla_{\eta} \xi$ is called the covariant derivative of $\xi$ with respect to $\eta$ for the affine connection $\nabla$. For a Riemannian manifold, one of the affine connections, called Riemannian connection or Levi-Civita connection, uniquely satisfies the following two additional conditions: $(i)~\left(\nabla_{\eta} \xi-\nabla_{\xi} \eta\right) f=\eta(\xi f)-\xi(\eta f)$, and $(ii)~\zeta\langle\eta, \xi\rangle=\left\langle\nabla_{\zeta} \eta, \xi\right\rangle+\left\langle\eta, \nabla_{\zeta} \xi\right\rangle$.
	% \begin{itemize}
		% \item[(1)]$\left(\nabla_{\eta} \xi-\nabla_{\xi} \eta\right) f=\eta(\xi f)-\xi(\eta f)$,
		%  \item[(2)] $\zeta\langle\eta, \xi\rangle=\left\langle\nabla_{\zeta} \eta, \xi\right\rangle+\left\langle\eta, \nabla_{\zeta} \xi\right\rangle$.
		% %  (compatibility with the Riemannian metric).
		% \end{itemize}
\end{definition}
\begin{definition}
	Let $c: I \rightarrow \mathscr{M}$ be a smooth curve on a manifold equipped with a connection $\nabla$. There exists a unique operator $\frac{\mathrm{D}}{\mathrm{d} t}: \mathfrak{X}(c) \rightarrow \mathfrak{X}(c)$ which satisfies the following properties for all $Y, Z \in \mathfrak{X}(c), U \in \mathfrak{X}(\mathcal{M}), g \in \mathfrak{F}(I)$, and $a, b \in \mathbb{R}$ :
	(i) $\mathbb{R}$-linearity: $\frac{\mathrm{D}}{\mathrm{d} t}(a Y+b Z)=a \frac{\mathrm{D}}{\mathrm{d} t} Y+b \frac{\mathrm{D}}{\mathrm{d} t} Z$;
	(ii) Leibniz rule: $\frac{\mathrm{D}}{\mathrm{d} t}(g Z)=g^{\prime} Z+g \frac{\mathrm{D}}{\mathrm{d} t} Z$;
	(iii) Chain rule: $\left(\frac{\mathrm{D}}{\mathrm{d} t}(U \circ c)\right)(t)=\nabla_{c^{\prime}(t)} U$ for all $t \in I$.
	$\frac{\mathrm{D}}{\mathrm{d} t}$ is called the induced covariant derivative.
	%	 If moreover $\mathcal{M}$ is a Riemannian manifold and $\nabla$ is compatible with its metric $\langle\cdot, \cdot\rangle$ (e.g., if $\nabla$ is the Riemannian connection), then the induced covariant derivative also satisfies:
	%	4. Product rule: $\frac{\mathrm{d}}{\mathrm{d} t}\langle Y, Z\rangle=\left\langle\frac{\mathrm{D}}{\mathrm{d} t} Y, Z\right\rangle+\left\langle Y, \frac{\mathrm{D}}{\mathrm{d} t} Z\right\rangle$, where $\langle Y, Z\rangle \in \mathfrak{F}(I)$ is defined by $\langle Y, Z\rangle(t)=\langle Y(t), Z(t)\rangle_{c(t)}$.
\end{definition}
\begin{definition}\cite{huang2013optimization}
	%	Let $c: I \rightarrow \mathcal{M}$ be a smooth curve. The acceleration of $c$ is the smooth vector field $c^{\prime \prime} \in \mathfrak{X}(c)$ defined by:
	%	$$
	%	c^{\prime \prime}=\frac{\mathrm{D}}{\mathrm{d} t} c^{\prime} .
	%	$$
	%	$A$ geodesic is a smooth curve $c: I \rightarrow \mathcal{M}$ such that $c^{\prime \prime}(t)=0$ for all $t \in I$.
	The geodesic $\gamma(t)$ defined by an affine connection is a curve that satisfies
	$$
	\ddot{\gamma}(t):=\nabla_{\dot{\gamma}(t)} \dot{\gamma}(t)
	:=\frac{\mathrm{D}^{2}}{\mathrm{d} t^{2}} \gamma(t):=\frac{\mathrm{D}}{\mathrm{d} t} \dot{\gamma}(t)
	=0.
	$$
\end{definition}
\begin{definition}\cite{absil2009optimization}
	The Riemannian gradient $\operatorname{grad}f(x)$ of a function $f$ at $x$ is an unique vector in $T_x\mathscr{M}$ satisfying
	$\langle\operatorname{grad} f(x), \xi_x\rangle_{x}=\mathrm{D} f(x)[\xi_x], \quad \forall \xi_x \in T_{x} \mathscr{M}.$
\end{definition}
\begin{definition}\cite{absil2009optimization}
	The Riemannian Hessian  $	\operatorname{Hess}f(x)$ is a mapping from the tangent space $T_{x} \mathscr{M}$ to the tangent space $T_{x} \mathscr{M}$ :
	$
	\operatorname{Hess} f(x)[\xi]:={\nabla}_{\xi} \operatorname{grad} f(x),
	$
	where ${\nabla}$ is the Riemannian connection on $\mathscr{M}$.
\end{definition}
\begin{lemma}\label{Lemma:grad Hess proj}
	\cite{boumal2022intromanifolds}
	For a function $f$ defined on a submanifold $\mathscr{M}$ with the Euclidean metric on its tangent space, if it can be extended to the ambient Euclidean space denoted as $\bar{f}$, one has 
	% 	its Riemannian gradient grad $f$ and Riemannian Hessian Hess $f$ :
	$$
	\begin{aligned}
		\operatorname{grad} f(x) &=\mathbf{P}_x(\operatorname{grad} \bar f(x)), \\
		\operatorname{ Hess } f(x)[u] &=\mathbf{P}_x(\mathrm{D} \bar G(x)[u]), u \in T_{x} \mathscr{M},
	\end{aligned}
	$$
	where $\mathrm{D}$ is the Euclidean derivative, $\mathbf{P}_x(y)
	%:=\arg \min _{z \in T_{x} %\mathcal{M}}\|y-z\|^{2}
	$ denotes the orthogonal projection operator from Euclidean space $\mathscr{E}$ to $T_{x} \mathscr{M}$, and $\bar G(x)$ denote a smooth extension of the $\operatorname{grad} f(x)$ to a neighborhood of $\mathscr{M}$ in the ambient Euclidean space.
\end{lemma}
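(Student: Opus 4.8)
The plan is to exploit two structural facts about a Riemannian submanifold $\mathscr{M}$ of the ambient Euclidean space $\mathscr{E}$ equipped with the induced metric: first, that the Riemannian inner product $\innerprod{\cdot}{\cdot}_x$ on $T_x\mathscr{M}$ is exactly the restriction of the ambient Euclidean inner product; second, that the Levi-Civita connection of $\mathscr{M}$ is the tangential projection of the ambient (flat) connection. Each formula in the statement then follows by combining one of these facts with the relevant definition.

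For the gradient identity, I would start from the defining property $\innerprod{\operatorname{grad} f(x)}{\xi}_x = \mathrm{D} f(x)[\xi]$, valid for all $\xi \in T_x\mathscr{M}$. Since $f = \bar f|_{\mathscr{M}}$ and each $\xi \in T_x\mathscr{M}$ is a vector of $\mathscr{E}$, the chain rule gives $\mathrm{D} f(x)[\xi] = \mathrm{D} \bar f(x)[\xi] = \innerprod{\operatorname{grad}\bar f(x)}{\xi}$ in the ambient inner product. Writing $\operatorname{grad}\bar f(x) = \mathbf{P}_x(\operatorname{grad}\bar f(x)) + \eta$ with $\eta$ in the normal space and using $\innerprod{\eta}{\xi} = 0$ for every $\xi \in T_x\mathscr{M}$, I obtain $\innerprod{\operatorname{grad}\bar f(x)}{\xi} = \innerprod{\mathbf{P}_x(\operatorname{grad}\bar f(x))}{\xi}$. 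Because the two inner products coincide on $T_x\mathscr{M}$, this reads $\innerprod{\operatorname{grad} f(x)}{\xi}_x = \innerprod{\mathbf{P}_x(\operatorname{grad}\bar f(x))}{\xi}_x$ for all $\xi$; since both vectors lie in $T_x\mathscr{M}$, the uniqueness clause in the definition of the Riemannian gradient forces $\operatorname{grad} f(x) = \mathbf{P}_x(\operatorname{grad}\bar f(x))$.

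For the Hessian, by definition $\operatorname{Hess} f(x)[u] = \nabla_u \operatorname{grad} f$, where $\nabla$ is the Levi-Civita connection of $(\mathscr{M}, g)$. The crux is the Gauss-type identity $\nabla_u V = \mathbf{P}_x(\bar\nabla_u \bar V)$, where $\bar\nabla$ is the ambient connection and $\bar V$ is any smooth extension of $V$ to a neighborhood of $\mathscr{M}$ in $\mathscr{E}$. I would establish this by verifying that the map $(u,V) \mapsto \mathbf{P}_x(\bar\nabla_u \bar V)$ is a well-defined affine connection that is torsion-free and compatible with the induced metric, and then invoke the uniqueness of the Levi-Civita connection. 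On Euclidean space the ambient connection is just the directional derivative, so $\bar\nabla_u \bar G = \mathrm{D}\bar G(x)[u]$; applying the identity with $V = \operatorname{grad} f$ and $\bar V = \bar G$ yields $\operatorname{Hess} f(x)[u] = \mathbf{P}_x(\mathrm{D}\bar G(x)[u])$.

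The main obstacle is the Gauss-type connection identity, namely checking that $(u,V)\mapsto\mathbf{P}_x(\bar\nabla_u\bar V)$ is independent of the chosen extension $\bar V$ and satisfies the axioms characterizing the Levi-Civita connection. Well-definedness holds because $\bar\nabla_u\bar V$ depends on $\bar V$ only through its restriction to a curve on $\mathscr{M}$ tangent to $u$; torsion-freeness and metric compatibility then descend from the corresponding properties of the flat ambient connection, using that $\mathbf{P}_x$ is the orthogonal projector onto $T_x\mathscr{M}$. As this identity is classical in Riemannian geometry and the ambient space here is flat, the remaining verifications are routine, which is precisely why the result can be quoted from \cite{boumal2022intromanifolds} rather than reproved in full.
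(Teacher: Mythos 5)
Your proof is correct. The paper itself states this lemma without proof, quoting it from \cite{boumal2022intromanifolds}, and your argument is essentially the one given in that reference: the gradient identity follows from the tangential--normal decomposition of $\operatorname{grad}\bar f(x)$ combined with the uniqueness clause in the definition of the Riemannian gradient, while the Hessian identity rests on the Gauss-type fact that the Levi-Civita connection of a submanifold with the induced metric is the orthogonal projection of the ambient flat connection (whose covariant derivative is the ordinary directional derivative $\mathrm{D}\bar G(x)[u]$), applied to a smooth extension $\bar G$ of $\operatorname{grad} f$ --- including your correct observations that the projected connection is extension-independent and is characterized by torsion-freeness and metric compatibility.
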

%For Riemannian optimization, in each step, the need arises to map points from a tangent space to the manifold in order to generate the new iterate. 
%%The early methods used to tackle with the problem exploit the underlying geometry of that optimization problem by relying on mainstream differential-geometric concepts, such as
% The Riemannian exponential map is used to tackle with the problem, which moves a point on the manifold along the Riemannian geodesic locally defined by a vector in the tangent space. 
%However,  it is defined as the solution of a nonlinear ordinary differential equation, resulting that computing is prohibitively expensive. As shown in \cite{absilProjectionlikeRetractionsMatrix2012},  retractions, which  approximates the exponential map to the first order, is sufficient for many convergence results.
Retraction provides a method to map the tangent vector to the next iterate on the manifold.
\begin{definition}(cf. \cite[Def. 4.1.1]{absil2009optimization})\label{def:retraction}
	A retraction on a manifold $\mathscr{M}$ is a smooth mapping $R$ from the tangent bundle $T\mathscr{M} $
	onto $\mathscr{M}$ with the following properties. Let $R_x$ denote the restriction of $R$ to $T_x\mathscr{M}$, $(i)~R_x(0_x) = x$, where $0_x$ denotes the zero element of $T_x\mathscr{M} $, and $(ii)~\mathrm{D} R_x(0_x):T_x\mathscr{M}\mapsto T_x\mathscr{M}$
	is the identity map: $\mathrm{D} R_x(0_x)[v] = v$.
	% 	\begin{itemize}
		% 		\item [(1)] $R_x(0_x) = x$, where $0_x$ denotes the zero element of $T_x\mathscr{M} $.
		% 		\item [(2)] $D R_x(0_x):T_x\mathscr{M}\mapsto T_x\mathscr{M}$
		% 		is the identity map: $D R_x(0_x)[v] = v$.
		% 	\end{itemize}
\end{definition}
For the embedded submanifold of a vector space, there is a simple way to construct retractions,  as specified in the following lemma.
\begin{lemma}(cf. \cite[Prop. 4.1.2]{absil2009optimization})\label{lemm:retraciton}
	Let $\mathscr{M}$ be an embedded manifold of a vector space $\mathscr{E}$ and let $\mathscr{N}$ be an
	abstract manifold such that $\dim{\mathscr M}+\dim{\mathscr N}=\dim{\mathscr E}$. Assume that
	there is a diffeomorphism
	$
	\phi :\mathscr{M} \times \mathscr{N}\rightarrow \mathscr{E}_{*}:(F,G)\mapsto\phi(F,G),
	$
	where $\mathscr{E}_{*}$ is an open subset of $\mathscr{E}$(thus  $\mathscr{E}_{*}$ is an open submanifold of $\mathscr{E}$), with a neutral element $I \in \mathscr{N}$ satisfying 
	$
	\phi(F,I) = F,~~ \forall F \in \mathscr{M}.
	$
	Then the retraction is 
	$
	R_{X}(\xi) = \pi_1\bigxiaokuohao{\phi^{-1}(X+\xi)},
	$
	where $	\pi_1:\mathscr{M} \times \mathscr{N}\rightarrow \mathscr{M}:(F,G)\mapsto F$ is the projection onto the first
	component, defines a retraction on $\mathscr{M}$.
\end{lemma}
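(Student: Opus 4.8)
The plan is to verify directly that the map $R_X(\xi) = \pi_1\bigxiaokuohao{\phi^{-1}(X+\xi)}$ fulfills the two defining properties of a retraction in Definition \ref{def:retraction}, after first checking that it is a well-defined smooth map on a neighborhood of the zero section of $T\mathscr{M}$. First I would settle well-definedness and smoothness: since $X = \phi(X,I) \in \mathscr{E}_*$ and $\mathscr{E}_*$ is open in $\mathscr{E}$, continuity of vector addition guarantees $X+\xi \in \mathscr{E}_*$ for every $\xi \in T_X\mathscr{M}$ sufficiently small, so $\phi^{-1}(X+\xi)$ is defined; the map $R$ is then the composition of the smooth maps $\xi \mapsto X+\xi$, the diffeomorphism $\phi^{-1}$, and the smooth projection $\pi_1$, hence smooth, and by construction takes values in $\mathscr{M}$.

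Property $(i)$ is immediate: evaluating at $\xi = 0_X$ gives $R_X(0_X) = \pi_1\bigxiaokuohao{\phi^{-1}(X)}$, and because the neutral-element hypothesis $\phi(X,I) = X$ together with injectivity of $\phi$ forces $\phi^{-1}(X) = (X,I)$, we obtain $\pi_1(X,I) = X$.

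The main work, and where I expect the only real difficulty, is property $(ii)$: $\mathrm{D}R_X(0_X)[v] = v$ for every $v \in T_X\mathscr{M}$. Writing $\Psi := \phi^{-1}$ and using the chain rule at $\Psi(X) = (X,I)$, one has $\mathrm{D}R_X(0_X)[v] = \mathrm{D}\pi_1(X,I)\bigl[\mathrm{D}\Psi(X)[v]\bigr]$, and since $\Psi = \phi^{-1}$ its differential is $\mathrm{D}\Psi(X) = \bigxiaokuohao{\mathrm{D}\phi(X,I)}^{-1}$, which is invertible because $\phi$ is a diffeomorphism and, consistently with the dimension hypothesis $\dim\mathscr{M}+\dim\mathscr{N}=\dim\mathscr{E}$, its domain $T_X\mathscr{M}\times T_I\mathscr{N}$ and codomain $\mathscr{E}$ have equal dimension. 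The crucial observation is that the identity $\phi(F,I)=F$ for all $F\in\mathscr{M}$ states exactly that $\phi(\cdot,I)$ is the inclusion $\mathscr{M}\hookrightarrow\mathscr{E}$, so differentiating in the first slot yields the partial differential $\mathrm{D}_1\phi(X,I)[u]=u$ for $u\in T_X\mathscr{M}$ (regarded as a subspace of $\mathscr{E}$ via the embedding). Consequently, for the pair $(v,0)\in T_X\mathscr{M}\times T_I\mathscr{N}$ one computes $\mathrm{D}\phi(X,I)[(v,0)] = \mathrm{D}_1\phi(X,I)[v] + \mathrm{D}_2\phi(X,I)[0] = v$, so $\bigxiaokuohao{\mathrm{D}\phi(X,I)}^{-1}[v] = (v,0)$. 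Applying the differential of $\pi_1$, which under the identification $T_{(X,I)}(\mathscr{M}\times\mathscr{N}) = T_X\mathscr{M}\times T_I\mathscr{N}$ is the canonical projection $(u,w)\mapsto u$, gives $\mathrm{D}R_X(0_X)[v] = \pi_1(v,0) = v$, establishing $(ii)$.

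The delicate point throughout is the bookkeeping of tangent spaces rather than any genuine computation: one must consistently identify $T_X\mathscr{E}$ with $\mathscr{E}$ (legitimate since $\mathscr{E}$ is a vector space), view $T_X\mathscr{M}$ as a subspace of $\mathscr{E}$ through the embedding, and use the product identification $T_{(X,I)}(\mathscr{M}\times\mathscr{N}) = T_X\mathscr{M}\times T_I\mathscr{N}$ so that $\pi_1$, $\mathrm{D}_1\phi$, and $\mathrm{D}_2\phi$ all line up correctly. Once these identifications are fixed, the neutral-element condition does all the real work, collapsing $\bigxiaokuohao{\mathrm{D}\phi(X,I)}^{-1}[v]$ to $(v,0)$ and leaving nothing further to verify.
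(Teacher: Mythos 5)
Your proof is correct and takes essentially the same approach as the paper's source for this statement: the paper states the lemma without proof, deferring to \cite[Prop.~4.1.2]{absil2009optimization}, whose argument is exactly yours — well-definedness and smoothness near the zero section from openness of $\mathscr{E}_{*}$ and smoothness of $\phi^{-1}$ and $\pi_1$, property $(i)$ from the neutral-element identity, and property $(ii)$ from the chain rule together with the observation that $\phi(\cdot,I)$ is the inclusion, so that $\bigxiaokuohao{\mathrm{D}\phi(X,I)}^{-1}[v]=(v,0)$ and $\mathrm{D}R_X(0_X)[v]=v$. Your bookkeeping of the identifications $T_X\mathscr{E}\cong\mathscr{E}$ and $T_{(X,I)}(\mathscr{M}\times\mathscr{N})\cong T_X\mathscr{M}\times T_I\mathscr{N}$ is exactly the care the cited proof requires, and there is nothing to add.
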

To  compare tangent vectors at distinct points on the manifold, the vector transport upon retraction $R$
% , as specified in the following definition,  
gives us a way to transport a tangent vector $\xi \in T_x\mathscr{M}$ to the tangent space
$T_{R_x(\eta)}\mathscr{M} $ for some $\eta \in T_x\mathscr{M} $.
% and some retraction $R$.
\begin{definition}(cf. \cite[Def. 8.1.1]{absil2009optimization})\label{def:vector_transport}
	A vector transport $\MC{T}:	T\mathscr{M} \oplus T\mathscr{M} \rightarrow T\mathscr{M} :(\eta_x ,\xi_x ) \mapsto \mathcal{T}_{\eta_x}\xi_x$ associated with a retraction $R$ is a smooth mapping
	% 	\begin{equation*}
		% 		T\mathscr{M} \oplus T\mathscr{M} \rightarrow T\mathscr{M} :(\eta_x ,\xi_x ) \mapsto \mathcal{T}_{\eta_x}\xi_x,
		% 	\end{equation*}
	satisfying the following properties for all $x\in\mathscr{M}$:  $(i)~\MC{T}_{\eta_x}\xi_x\in T_{R_x(\eta_x)}\mathscr{M}$,   $(ii)~\MC{T}_{0_{x}}\xi_x = \xi_x$ for all $\xi_x\in T_x\mathscr{M}$, and $(iii)~\MC{T}_{\eta_{x}}(a\xi_x+b\zeta) = a\mathcal{T}_{\eta_{x}}\xi_x+b\mathcal{T}_{\eta_{x}}\zeta$.
	% 	\begin{itemize}
		% 		\item [(1)] 
		% %		(Associated retraction)
		% % 		 There exists a retraction $R$ associated with $\MC{T}$, i.e.,
		% %		such that, for all $\eta_x ,\xi_x$, it holds that
		% 		 $\MC{T}_{\eta_x}\xi_x\in T_{R_x(\eta_x)}\mathscr{M}$.
		% 		\item [(2)] 
		% %		(Consistency)
		%  $\MC{T}_{0_{x}}\xi_x = \xi_x$ for all $\xi_x\in T_x\mathscr{M}$.
		% 		\item [(3)] 
		% 		 $\MC{T}_{\eta_{x}}(a\xi_x+b\zeta) = a\mathcal{T}_{\eta_{x}}\xi_x+b\mathcal{T}_{\eta_{x}}\zeta$
		% %		(Linearity) 
		% %		The mapping $\mathcal T_{\eta_x}:T_x\MC{M} \rightarrow T_{R_x(\eta_x)}\MC{M},\xi_x\mapsto \mathcal T_{\eta_x}\xi_x$ is linear.
		% 	\end{itemize}
	Vector transport by differentiated retraction is defined as 
	\begin{equation}\label{eq:vec transport}
		\mathcal{T}_{\eta_x}\xi_x := D R_{x}(\eta_x)[\xi_x] = \frac{\mathrm{d}}{\mathrm{d}t}R_{x}(\eta_x+t\xi_x)\big|_{t=0}.
	\end{equation}
\end{definition}
%\begin{figure}[H] 
%	\centering
%	\includegraphics[width=3.5in]{vector_transport.png} 
%	\caption{The vector transport $\mathcal{T}_{\eta_x}(\xi_x)$}\label{fig:1} 
%\end{figure}
\begin{lemma}(cf. \cite[Sect. 8.1.3]{absil2009optimization})\label{lemma:vector transport}
	A vector transport on $\mathscr{M}$
	associated with a retraction $R$ is given by the orthogonal projection onto the tangent space, i.e.,
	$
	\mathcal{T}_{\eta_x}{\xi_x} = \mathbf{P}_{R_x(\eta_x)}\xi_x,
	$
	where $\mathbf{P}_x$ denotes the orthogonal projector onto $T_x\mathscr{M}$.
\end{lemma}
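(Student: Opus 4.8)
The plan is to verify that the candidate assignment $\mathcal{T}_{\eta_x}\xi_x := \mathbf{P}_{R_x(\eta_x)}\xi_x$ satisfies the three defining conditions of a vector transport in Definition \ref{def:vector_transport}, and that it is a smooth map on $T\mathscr{M}\oplus T\mathscr{M}$.

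First I would dispatch the three structural properties, each of which follows directly from the meaning of the orthogonal projector. Property (i) is immediate: by definition $\mathbf{P}_{R_x(\eta_x)}$ maps into $T_{R_x(\eta_x)}\mathscr{M}$, so $\mathcal{T}_{\eta_x}\xi_x\in T_{R_x(\eta_x)}\mathscr{M}$, which is exactly the required consistency with the retraction. For property (ii), set $\eta_x=0_x$; the retraction axiom $R_x(0_x)=x$ from Definition \ref{def:retraction} gives $\mathbf{P}_{R_x(0_x)}=\mathbf{P}_x$, and since $\xi_x\in T_x\mathscr{M}$ is already tangent, the projector fixes it, i.e.\ $\mathbf{P}_x\xi_x=\xi_x$. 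Property (iii), linearity in the transported vector, is inherited verbatim from the linearity of the orthogonal projector $\mathbf{P}_{R_x(\eta_x)}$ at a fixed base point.

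The one substantive step is smoothness of $(\eta_x,\xi_x)\mapsto\mathbf{P}_{R_x(\eta_x)}\xi_x$. I would factor the dependence into three pieces. The map $\eta_x\mapsto R_x(\eta_x)=:y$ is smooth because $R$ is a retraction (Definition \ref{def:retraction}); viewing $\xi_x$ as an element of the ambient space, the action $\xi_x\mapsto\mathbf{P}_{y}\xi_x$ is linear, hence smooth, in $\xi_x$; and the base-point dependence $y\mapsto\mathbf{P}_y$ is smooth because $\mathscr{M}$ is an embedded submanifold, for which the tangent space admits a smooth local frame and the induced orthogonal projector (e.g.\ through a pseudoinverse built from that frame) depends smoothly on $y$. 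Composing these three smooth dependencies establishes smoothness of $\mathcal{T}$.

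I expect the smoothness of $y\mapsto\mathbf{P}_y$ to be the only genuine obstacle, the other properties being essentially definitional. If a fully self-contained argument is preferred over invoking the generic embedded-submanifold fact, I would instead exhibit an explicit closed-form expression for the projector and differentiate it directly; on the concrete manifold treated later in this paper such a formula is available from the parameterized tangent space and the orthogonal projector derived there, making the smoothness verifiable by a direct (if tedious) computation.
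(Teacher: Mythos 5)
The paper offers no proof of this lemma: it is imported verbatim from \cite[Sect.~8.1.3]{absil2009optimization} as a known fact, so there is no in-paper argument to compare against, and your verification is the standard one underlying that reference. Your treatment is correct: properties (i)--(iii) of Definition~\ref{def:vector_transport} are indeed definitional consequences of the projector and of $R_x(0_x)=x$, and you correctly isolate the one substantive point, smoothness of the base-point dependence $y\mapsto \mathbf{P}_y$, which follows on an embedded submanifold from a smooth local tangent frame $E(y)$ via $\mathbf{P}_y=E(y)\bigl(E(y)^{\top}E(y)\bigr)^{-1}E(y)^{\top}$. The only caveat worth recording is that the statement tacitly requires $\mathscr{M}$ to be a Riemannian submanifold of a Euclidean space (otherwise ``orthogonal projection onto the tangent space'' is undefined); you assume this explicitly, and it matches the paper's actual setting $\st{n,p,l}\subset\T{n}{p}{l}$, where the explicit projector \eqref{eq:proj} makes the smoothness directly checkable as you suggest.
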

\begin{definition}
	A vector transport $\mathcal{T}$ is called isometric if it satisfies
	$
	\left\langle\mathcal{T}_{\eta}(\xi), \mathcal{T}_{\eta}(\xi)\right\rangle_{R_{x}(\eta)}=\langle\xi, \xi\rangle_{x}
	$
	for all $\eta, \xi \in T_{x} \mathscr{M}$, where $R$ is the retraction associated with $\mathcal{T}$.
\end{definition}

\section{Computation over Tensor Stiefel Manifold}  \label{sec:tensor_stiefel_manifold}
In this Section, we first prove that $\st{n,p,l}$ with $n\geq p$ is a manifold and establish the parmeterized form of its tangent space in Sect. \ref{subsec:manifold setting}. Next, in Sect. \ref{subsection:grad Hess} we further show that $\st{n,p,l}$ is a Riemannian manifold and obtain the Riemannian gradient and Riemannian Hessian by the orthogonal projector operator. In Sect. \ref{sec:retraction}, the geodesic and some retractions based on various tensor decompositions are derived. Finally, we construct various vector transports based on the projector operator and by differentiated the retration in Sec. \ref{subsec:vec trans}.
\subsection{Manifold setting}\label{subsec:manifold setting}
The following theorem shows that the set $\st{n,p,l}$ with $n\geq p$  is indeed a manifold.
\begin{theorem}[Manifold]\label{Th:tensor_stiefel_manifold}
	For $n\geq p$, let $\st{n,p,l} =\bigdakuohao{\MC{X}\in \mathbb{R}^{n\times p \times l }| \MC{X}^{\top}\ast\MC{X} = \mathcal{I}}$. Then $\st{n,p,l} $ is an embedded  submanifold of ${\mathbb{R}^{n\times p \times l }}$ of dimension
	\begin{equation*}
		\mathrm{dim}(\st{n,p,l} )
		%		 = \operatorname{dim}(\mathbb{R}^{n\times p \times l })-\operatorname{dim}(\mathbb{S}^{p\times p\times l}) 
		= p\bigxiaokuohao{nl-\frac{pl}{2}-\frac{1}{2^{|\sin(\frac{l\pi}{2})|}}}.
	\end{equation*}
\end{theorem}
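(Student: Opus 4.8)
The plan is to realize $\st{n,p,l}$ as the regular level set of a smooth map and then invoke the regular value (submersion) theorem, exactly mirroring the classical proof for the matrix Stiefel manifold. Define
\[
F : \T{n}{p}{l} \to \TS{p}{p}{l}, \qquad F(\mathcal{X}) := \mathcal{X}^{\top}\tprod \mathcal{X} - \mathcal{I}.
\]
The codomain here is the \emph{linear} space of symmetric tensors: since $(\mathcal{A}\tprod\mathcal{B})^{\top} = \mathcal{B}^{\top}\tprod\mathcal{A}^{\top}$, the product $\mathcal{X}^{\top}\tprod\mathcal{X}$ is always symmetric, so $F$ indeed takes values in $\TS{p}{p}{l}$. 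As $\tprod$ is bilinear, $F$ is polynomial and hence smooth, and $\st{n,p,l} = F^{-1}(\mathcal{O})$.

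First I would compute the differential. By bilinearity of $\tprod$,
\[
\mathrm{D}F(\mathcal{X})[\mathcal{V}] = \mathcal{V}^{\top}\tprod\mathcal{X} + \mathcal{X}^{\top}\tprod\mathcal{V}, \qquad \mathcal{V}\in\T{n}{p}{l}.
\]
Next I would check that $\mathrm{D}F(\mathcal{X})$ is surjective onto $\TS{p}{p}{l}$ at every $\mathcal{X}\in\st{n,p,l}$. Given an arbitrary $\mathcal{S}\in\TS{p}{p}{l}$, set $\mathcal{V} := \tfrac12\,\mathcal{X}\tprod\mathcal{S}$. Using $\mathcal{S}^{\top}=\mathcal{S}$ and $\mathcal{X}^{\top}\tprod\mathcal{X}=\mathcal{I}$, one gets $\mathcal{V}^{\top}\tprod\mathcal{X} = \tfrac12\,\mathcal{S}^{\top}\tprod\mathcal{X}^{\top}\tprod\mathcal{X} = \tfrac12\mathcal{S}$ and $\mathcal{X}^{\top}\tprod\mathcal{V} = \tfrac12\,\mathcal{X}^{\top}\tprod\mathcal{X}\tprod\mathcal{S} = \tfrac12\mathcal{S}$, so $\mathrm{D}F(\mathcal{X})[\mathcal{V}] = \mathcal{S}$. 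Hence $\mathcal{O}$ is a regular value, and the regular value theorem yields that $\st{n,p,l}$ is an embedded submanifold of $\T{n}{p}{l}$ of codimension $\dim\TS{p}{p}{l}$, i.e.\ $\dim\st{n,p,l} = npl - \dim\TS{p}{p}{l}$.

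It remains to count $\dim\TS{p}{p}{l}$ from the frontal-slice form of the transpose. Writing $\mathcal{A}=\mathcal{A}^{\top}$ slice-by-slice (Definition \ref{def:tranpose}) forces $A^{(1)}=(A^{(1)})^{\top}$ and $A^{(i)}=(A^{(l+2-i)})^{\top}$ for $i\in[l]\setminus\{1\}$. Thus $A^{(1)}$ is a free symmetric $p\times p$ matrix, contributing $p(p+1)/2$, while the remaining slices split into conjugate-index pairs $\{i,\,l+2-i\}$, each pair being determined by one free matrix $A^{(i)}\in\V{p\times p}$ and contributing $p^2$. When $l$ is odd there are $(l-1)/2$ such pairs and no self-paired slice, giving $\dim\TS{p}{p}{l} = p(p+1)/2 + (l-1)p^2/2 = (p^2 l + p)/2$; when $l$ is even the index $i=l/2+1$ is its own partner and forces a second symmetric slice, giving $(l-2)/2$ free pairs plus two symmetric slices, i.e.\ $\dim\TS{p}{p}{l} = p^2 l/2 + p$. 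Subtracting from $npl$ gives $p(nl - pl/2 - 1/2)$ for odd $l$ and $p(nl - pl/2 - 1)$ for even $l$; since $|\sin(l\pi/2)|$ equals $1$ for odd $l$ and $0$ for even $l$, both cases are captured by the single expression $p\bigxiaokuohao{nl - pl/2 - 2^{-|\sin(l\pi/2)|}}$, as claimed.

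The differential computation and submersion argument are routine; the only part requiring care is the slice-wise dimension count of $\TS{p}{p}{l}$, in particular correctly isolating the self-conjugate middle slice that appears precisely when $l$ is even and accounts for the odd/even dichotomy encoded in $2^{-|\sin(l\pi/2)|}$. An alternative route would pass to the Fourier domain, where $\mathcal{X}^{\top}\tprod\mathcal{X}=\mathcal{I}$ becomes $(\hat{X}^{(i)})^{H}\hat{X}^{(i)}=I_p$ together with the conjugate-symmetry constraints $\hat X^{(i)}=\operatorname{conj}(\hat X^{(l+2-i)})$; this exhibits $\st{n,p,l}$ as a real slice of a product of complex Stiefel manifolds and recovers the same count, but the ambient real regular-value approach above is cleaner for establishing the embedded structure directly.
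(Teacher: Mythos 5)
Your proposal is correct and follows essentially the same route as the paper: both realize $\st{n,p,l}$ as the level set $h^{-1}(\mathcal{O})$ of $\mathcal{X}\mapsto\mathcal{X}^{\top}\tprod\mathcal{X}-\mathcal{I}$ mapping into $\TS{p}{p}{l}$, verify surjectivity of the differential with the identical witness $\mathcal{V}=\tfrac{1}{2}\mathcal{X}\tprod\mathcal{S}$, and invoke the regular level set theorem to get codimension $\dim\TS{p}{p}{l}$. The only difference is cosmetic: you derive $\dim\TS{p}{p}{l}$ by a (correct) slice-wise count of the constraints $A^{(1)}=(A^{(1)})^{\top}$ and $A^{(i)}=(A^{(l+2-i)})^{\top}$, including the self-paired slice $i=l/2+1$ for even $l$, whereas the paper simply cites this dimension formula from Zheng et al.
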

\begin{proof}
	It follows from \cite[Prop. 2]{zheng2021t} that
	\begin{equation}\label{eq:dim of sym}
		\operatorname{dim}(\TS{p}{p}{l}) = \begin{cases}
			p(pk+1),& l = 2k \\ 
			p(p(k+\frac{1}{2})+\frac{1}{2}),&  l = 2k+1
		\end{cases} = p\bigxiaokuohao{p\frac{l}{2}+\frac{1}{2^{|\sin(\frac{l\pi}{2})|}}}.
	\end{equation}
	Consider the following function:
	\begin{equation*}
		h:\mathbb{R}^{n\times p \times l}\rightarrow \TS{p}{p}{l}:\MC{X}\mapsto h(\MC{X})= \MC{X}^{\top}\ast\MC{X} - \mathcal{I}.
	\end{equation*}
	%	We claim that $h$ is a local defining function for $\st{n,p,l} $. Indeed, 
	Obviously, $h$ is smooth and the zero level set $h^{-1}(\mathcal{O}) = \st{n,p,l} $. Note that $\TS{p}{p}{l}$ is a vector space and so a linear manifold.  Since $h$ is a smooth map of manifolds, \cite[Thm.  9.9]{tuIntroductionManifolds2011} shows that if the level set $h^{-1}(\mathcal{O})$ is a regular level set of $h$, then  $\st{n,p,l} $ is an embedded submanifold 
	% 	(or called regular submanifold) 
	of $\mathbb{R}^{n\times p \times l }$ of dimension equal to $\operatorname{dim}(\mathbb{R}^{n\times p \times l })-\operatorname{dim}(\TS{p}{p}{l})$. Recall that   $h^{-1}(\mathcal{O})$ is a regular level set of $h$ if only if the differential of $h$
	is surjective (cf. \cite[Sect. 9.2]{tuIntroductionManifolds2011}). 
	%	Thus it remains to check that the differential of $h$ has rank $d$ for all $\MC{X}\in \st{n,p,l} $. 
	To this end, for all $\MC{X}\in \st{n,p,l} $, consider $\mathrm{D}h(\MC{X}):\mathbb{R}^{n\times p \times l}\rightarrow \TS{p}{p}{l}:$
	\begin{eqnarray*}
		\mathrm{D}h(\MC{X})[\mathcal{V}] &=& \lim\limits_{t\rightarrow0} \frac{h(\MC{X}+t\mathcal{V})-h(\MC{X})}{t}\\
		&=& \lim\limits_{t\rightarrow0} \frac{(\MC{X}+t\mathcal{V})^{\top}\ast(\MC{X}+t\mathcal{V})-\MC{X}^{\top}\ast\MC{X}}{t}\\
		&=& \MC{X}^{\top}\ast\mathcal{V} + \mathcal{V}^{\top}\ast\MC{X}.
	\end{eqnarray*}
	%	To show $Dh(\MC{X})$ has rank $d$, we must be show its range is linear subspace of dimension $d$. Since the codomain $\sym{\mathbb{R}^{p\times p \times l }}$ has dimension $d$, we must show that the range of $Dh(\MC{X})$ is all of $\sym{\mathbb{R}^{p\times p \times l }}$, that is, $Dh(\MC{X})$ is surjective. 
	For $\mathcal{V} = \frac{1}{2}\MC{X}\ast\mathcal{A}$ with $\mathcal{A} \in \TS{p}{p}{l}$ arbitrary, there holds
	\begin{equation*}
		\mathrm{D}h(\MC{X})[\mathcal{V}] = \frac{1}{2} \MC{X}^{\top}\ast\MC{X}\ast\mathcal{A} + \frac{1}{2}\mathcal{A}^{\top}\ast\MC{X}^{\top}\ast\MC{X} = \mathcal{A}.
	\end{equation*}
	In other words, for any tensor $\mathcal{A} \in \TS{p}{p}{l}$, there exists a tensor $\mathcal{V}\in \mathbb{R}^{n\times p \times l }$ such that $\mathrm{D}h(\MC{X})[\mathcal{V}] = \mathcal{A}$. This confirms that the range of $\mathrm{D}h(\MC{X})$ is  $\TS{p}{p}{l}$. Thus, $h^{-1}(\mathcal{O}) = \st{n,p,l} $ is the  a nonempty regular  level set, making $\st{n,p,l} $ an embedded submanifold of ${\mathbb{R}^{n\times p \times l }}$ of dimension
	\begin{equation*}
		\operatorname{dim}(\st{n,p,l} ) = \operatorname{dim}(\mathbb{R}^{n\times p \times l })-\operatorname{dim}(\TS{p}{p}{l}) = p\bigxiaokuohao{nl-\frac{pl}{2}-\frac{1}{2^{|\sin(\frac{l\pi}{2})|}}}.
	\end{equation*}
\end{proof}
To apply optimization algorithms based on line search, we must consider a direction on a manifold, that is the tangent vector. 
%The set of all tangent vectors at $\MC{X}$ is called the tangent space $T_{\MC{X}}\st{n,p,l} $ and 
%the union of all tangent spaces is called the tangent bundle of the manifold $T\st{n,p,l} $. 
The tangent space of $\st{n,p,l}$  can be
parametrized as follows.
\begin{theorem}[Tangent space]\label{th:dim}
	The tangent space $T_{\MC{X}}\st{n,p,l} $ is a subspace of $\mathbb{R}^{n\times p \times l }$:
	\begin{equation*}	T_{\MC{X}}\st{n,p,l}  
		%		= \operatorname{ker}( Dh(\MC{X})) 
		= \bigdakuohao{ \MC{X}\ast \mathcal{W}+\MC{X}_{\perp }\ast \mathcal{B}\in\mathbb{R}^{n\times p \times l }\bigg|\mathcal{W}\in \TSkew{p}{p}{l},~\mathcal{B}\in \mathbb{R}^{(n-p)\times p \times l}},
	\end{equation*}
	where ${\MC{X}_{\perp }} = L^{-1}\bigxiaokuohao{\hat{\MC{X}}_{\perp }},~{\hat{\MC{X}}} = L\bigxiaokuohao{{\MC{X}}}$ and  
	%$\hat{\mathcal {Q}}_{\perp }(:,:,i) = \hat{X}^{(i)}_{\perp }$ are orthogonal complementary matrix of $\hat{\mathcal {Q}}(:,:,i) =\hat{X}^{(i)},i \in [l]$.  
	$ \hat{X}^{(i)}_{\perp }\in \mathbb{C}^{n\times (n-p)}$ is any matrix such that $\operatorname{span}(\hat{X}^{(i)}_{\perp})=\{\hat{X}^{(i)}_{\perp}\alpha\big|\alpha\in \mathbb{C}^{n-p}\}$ is the or­thogonal complement of $\operatorname{span}(\hat{X}^{(i)})=\{\hat{X}^{(i)}\beta\big|\beta\in \mathbb{C}^{p}\}$.
\end{theorem}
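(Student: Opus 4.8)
The plan is to identify $T_{\MC{X}}\st{n,p,l}$ with the kernel of the differential $\mathrm{D}h(\MC{X})$ already computed inside the proof of Theorem~\ref{Th:tensor_stiefel_manifold}, and then to show that the claimed parameterized set coincides with this kernel by combining a one-line inclusion with a dimension count. Since $\st{n,p,l} = h^{-1}(\mathcal{O})$ was shown there to be a regular level set of $h(\MC{X}) = \MC{X}^{\top}\tprod\MC{X}-\mathcal{I}$, the standard characterization of the tangent space of a regular level set gives $T_{\MC{X}}\st{n,p,l} = \ker \mathrm{D}h(\MC{X})$, where $\mathrm{D}h(\MC{X})[\mathcal{V}] = \MC{X}^{\top}\tprod\mathcal{V} + \mathcal{V}^{\top}\tprod\MC{X}$. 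Thus $\mathcal{V}$ lies in the tangent space precisely when $\MC{X}^{\top}\tprod\mathcal{V}$ is skew-symmetric.

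Next I would verify the inclusion ``$\supseteq$''. The Fourier-domain definition of $\MC{X}_{\perp}$ yields the two orthogonality relations $\MC{X}^{\top}\tprod\MC{X} = \mathcal{I}$ and $\MC{X}^{\top}\tprod\MC{X}_{\perp} = \mathcal{O}$: by Remark~\ref{remark:DFT equality} these are equivalent to $(\hat{X}^{(i)})^{H}\hat{X}^{(i)} = I_p$ and $(\hat{X}^{(i)})^{H}\hat{X}^{(i)}_{\perp} = 0$ for every $i\in[l]$, the latter holding because $\operatorname{span}(\hat{X}^{(i)}_{\perp})$ is the orthogonal complement of $\operatorname{span}(\hat{X}^{(i)})$. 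Consequently, for $\mathcal{V} = \MC{X}\tprod\mathcal{W}+\MC{X}_{\perp}\tprod\mathcal{B}$ with $\mathcal{W}\in\TSkew{p}{p}{l}$ and $\mathcal{B}\in\mathbb{R}^{(n-p)\times p\times l}$, associativity of the t-product gives $\MC{X}^{\top}\tprod\mathcal{V} = \mathcal{W}$, which is skew-symmetric, so $\mathcal{V}\in\ker\mathrm{D}h(\MC{X})$. This places the parameterized set inside the tangent space.

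To upgrade this inclusion to an equality I would match dimensions. The linear map $(\mathcal{W},\mathcal{B})\mapsto\MC{X}\tprod\mathcal{W}+\MC{X}_{\perp}\tprod\mathcal{B}$ is injective: applying $\MC{X}^{\top}\tprod(\cdot)$ to a vanishing image recovers $\mathcal{W}=\mathcal{O}$, after which $\MC{X}_{\perp}\tprod\mathcal{B}=\mathcal{O}$ forces $\mathcal{B}=\mathcal{O}$ because each $\hat{X}^{(i)}_{\perp}$ has full column rank $n-p$. Hence the parameterized set is a subspace of dimension $\dim(\TSkew{p}{p}{l})+(n-p)pl$. Using Lemma~\ref{lemma:orthogonal complement} and \eqref{eq:dim of sym} to evaluate $\dim(\TSkew{p}{p}{l}) = p^2l-\dim(\TS{p}{p}{l}) = \frac{p^2l}{2}-\frac{p}{2^{|\sin(\frac{l\pi}{2})|}}$, I expect this total to simplify to $p\bigxiaokuohao{nl-\frac{pl}{2}-\frac{1}{2^{|\sin(\frac{l\pi}{2})|}}}$, exactly $\dim(\st{n,p,l})$ from Theorem~\ref{Th:tensor_stiefel_manifold}. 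A subspace of the tangent space having the same dimension must equal it, completing the proof.

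The step I expect to require the most care is not any of these computations but the well-definedness of $\MC{X}_{\perp}$ as a \emph{real} tensor. One must check that the slice-wise orthogonal complements $\hat{X}^{(i)}_{\perp}$ can be chosen to respect the conjugate symmetry $\hat{X}^{(i)}_{\perp} = \operatorname{conj}(\hat{X}^{(l+2-i)}_{\perp})$ that characterizes the image of $L$ on real tensors; this is arranged by selecting a real basis on the self-conjugate slice(s) (index $1$, and $\tfrac{l}{2}+1$ when $l$ is even) and conjugate-matched bases on the remaining index pairs, which is consistent because $\hat{X}^{(l+2-i)} = \operatorname{conj}(\hat{X}^{(i)})$. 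Once $\MC{X}_{\perp}$ is secured, the orthogonality and full-rank facts transfer slicewise from the Fourier domain via Proposition~\ref{prop:bcirc_properties}, and the remaining manipulations are routine.
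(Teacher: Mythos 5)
Your proposal is correct, but it reaches the equality by a genuinely different mechanism than the paper. The paper, after identifying $T_{\MC{X}}\st{n,p,l}=\operatorname{ker}(\mathrm{D}h(\MC{X}))$ exactly as you do, proves the hard inclusion directly: it expands an arbitrary kernel element slice-wise as $\hat V^{(i)}=\hat X^{(i)}\hat W^{(i)}+\hat X^{(i)}_{\perp}\hat B^{(i)}$ (possible because $[\hat X^{(i)},\hat X^{(i)}_{\perp}]$ is invertible, with \emph{unique} coefficients), substitutes into the Fourier-domain kernel condition \eqref{eq:4.2.1}, and finds $\hat W^{(i)}+(\hat W^{(i)})^{H}=O_p$ for each $i$, i.e.\ $\mathcal W\in\TSkew{p}{p}{l}$, so every tangent vector has the claimed form. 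You instead verify only the easy inclusion ``$\supseteq$'' and close the gap by a dimension count: injectivity of $(\mathcal W,\mathcal B)\mapsto \MC{X}\tprod\mathcal W+\MC{X}_{\perp}\tprod\mathcal B$ together with $\dim\bigxiaokuohao{\TSkew{p}{p}{l}}+(n-p)pl=p\bigxiaokuohao{nl-\frac{pl}{2}-\frac{1}{2^{|\sin(\frac{l\pi}{2})|}}}=\dim\bigxiaokuohao{\st{n,p,l}}$, and I have checked that this arithmetic is consistent with \eqref{eq:dim of sym}, Lemma \ref{lemma:orthogonal complement}, and Theorem \ref{Th:tensor_stiefel_manifold}. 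The trade-offs: your route shortens the hard direction but leans on the manifold-dimension formula of Theorem \ref{Th:tensor_stiefel_manifold}, whereas the paper's direct decomposition is self-contained in that direction and additionally delivers uniqueness of the coefficients $(\mathcal W,\mathcal B)$ for each tangent vector, a fact the paper silently reuses later (e.g.\ when concluding $\mathcal Q(t)^{\top}\tprod\dot{\mathcal Q}(t)\in\TSkew{p}{p}{l}$ in the t-QR vector-transport proof). Finally, your closing observation---that the slice-wise complements must be chosen with the conjugate symmetry $\hat X^{(i)}_{\perp}=\operatorname{conj}(\hat X^{(l+2-i)}_{\perp})$, with real choices on the self-conjugate slices, so that $\MC{X}_{\perp}=L^{-1}(\hat{\MC{X}}_{\perp})$ is a real tensor---is a genuine point of care that the paper's proof glosses over; it is in fact essential to your argument, since without a real $\MC{X}_{\perp}$ the parameterized set is not obviously the image of a real-linear map and your dimension count would not apply.
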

\begin{proof}
	The rank of $h$ at $\MC{X}$
	is defined as the dimension of the range of  the differential  $\mathrm{D}h(\MC{X})$ (cf. \cite[Sect. 8.9]{tuIntroductionManifolds2011}), which is equal to  $\text{dim}(\TS{p}{p}{l})$ given in \eqref{eq:dim of sym}.
	Since $\st{n,p,l} $ is defined as a level set of a constant-rank
	function  $	h:\mathbb{R}^{n\times p \times l}\rightarrow \TS{p}{p}{l}:\MC{X}\mapsto h(\MC{X})= \MC{X}^{\top}\ast\MC{X} - \mathcal{I}$, it follows from \cite[Sect. 3.5.7]{absil2009optimization} that 
	\begin{equation*}
		T_{\MC{X}}\st{n,p,l}  = \operatorname{ker}( Dh(\MC{X})) = \bigdakuohao{\mathcal{V}\in\mathbb{R}^{n\times p \times l }|\MC{X}^{\top}\ast\mathcal{V} + \mathcal{V}^{\top}\ast\MC{X} = \mathcal{O}}.
	\end{equation*} 
	Using  Remark \ref{remark:DFT equality}, we get
	\begin{equation}\label{eq:4.2.1}
		\MC{X}^{\top}\ast\mathcal{V} + \mathcal{V}^{\top}\ast\MC{X} = \mathcal {O} \Leftrightarrow (\hat{X}^{(i)})^H\hat{V}^{(i)} + (\hat{V}^{(i)})^H\hat{X}^{(i)} = O_{p}, i\in [l].
	\end{equation}
	Similarly, $	\MC{X}^{\top}\ast\MC{X}
	% = \operatorname{fold}(\operatorname{bcirc}(\MC{X})^{\top}\cdot \operatorname{unfold}(\MC{X})) 
	= \mathcal{I}$ leads to $	(\hat{X}^{(i)})^H\hat{X}^{(i)} = I_p,i\in[l]$ which means $\hat{X}^{(i)}$ is full rank $p$. Let $\hat{X}^{(i)}_{\perp}$ be any $n\times (n-p)$ complex matrix such that $\operatorname{span}(\hat{X}^{(i)}_{\perp})=\{\hat{X}^{(i)}_{\perp}\alpha\big|\alpha\in \mathbb{C}^{n-p}\}$ is the or­thogonal complement of $\operatorname{span}(\hat{X}^{(i)})=\{\hat{X}^{(i)}\beta\big|\beta\in \mathbb{C}^{p}\}$. By the definition, $\innerprod{\hat{X}^{(i)}\beta}{\hat{X}^{(i)}_{\perp}\alpha} = 0$ for any $\alpha\in \mathbb{C}^{n-p}$ and $\beta\in \mathbb{C}^{p}$,  leading to 
	\begin{equation}\label{eq:orthogonal}
		(\hat{X}^{(i)})^H\hat{X}^{(i)}_{\perp } = O_{p\times (n-p)}.
	\end{equation}
	Since $[\hat{X}^{(i)}, \hat{X}^{(i)}_{\perp }]\in \mathbb{C}^{n\times n}$ is  invertible, any matrix $\hat{V}^{(i)}$ can be
	written as
	\begin{equation}\label{eq:4.2.3}
		\hat{V}^{(i)} =  [\hat{X}^{(i)}, \hat{X}^{(i)}_{\perp }]\begin{bmatrix}
			\hat W^{(i)}\\ 
			\hat B^{(i)}
		\end{bmatrix} = \hat{X}^{(i)}\hat W^{(i)}+\hat{X}^{(i)}_{\perp}\hat B^{(i)},
	\end{equation}
	for a unique choice of $\hat W^{(i)} \in \mathbb{C}^{p\times p}$ and $\hat B^{(i)}\in \mathbb{C}^{(n-p)\times p}$. 
	Combing equation \eqref{eq:4.2.1} and \eqref{eq:4.2.3} yields
	\begin{equation} 
		O_p = (\hat{X}^{(i)})^H(\hat{X}^{(i)}\hat W^{(i)}+\hat{X}^{(i)}_{\perp }\hat B^{(i)})+	(\hat{X}^{(i)}\hat W^{(i)}+\hat{X}^{(i)}_{\perp }\hat B^{(i)})^H\hat{X}^{(i)} = 	\hat W^{(i)}+	(\hat W^{(i)})^H, i\in [l],
	\end{equation}
	which together with Remark \ref{remark:DFT equality} gives \begin{equation}\label{eq:proof:tangent_space:1}
		\mathcal{W}+\mathcal{W}^{\top} =  \mathcal{O}.\end{equation}
	
	Remark \ref{remark:DFT equality} shows that  \eqref{eq:orthogonal} is equivalent to  $$ 
	%	({\MC{X}})^{\top}\ast{\MC{X}}= \mathcal {I},~({\MC{X}}_{\perp })^{\top}\ast{\MC{X}}_{\perp } = \mathcal I_{(n-p)(n-p)l},~
	{\MC{X}}^{\top}\ast {\MC{X}}_{\perp } = \mathcal{O},
	$$
	where ${\MC{X}_{\perp }} = L^{-1}\bigxiaokuohao{\hat{\MC{X}}_{\perp }},~{\hat{\MC{X}}} = L\bigxiaokuohao{{\MC{X}}}$ and  $ \hat{X}^{(i)}_{\perp }\in \mathbb{C}^{n\times (n-p)}$ are any matrix such that $\operatorname{span}(\hat{X}^{(i)}_{\perp})=\{\hat{X}^{(i)}_{\perp}\alpha\big|\alpha\in \mathbb{C}^{n-p}\}$ is the or­thogonal complement of $\operatorname{span}(\hat{X}^{(i)})=\{\hat{X}^{(i)}\beta\big|\beta\in \mathbb{C}^{p}\}$.
	Thus it holds that $\mathcal{V} = \MC{X}\ast \mathcal{W}+\MC{X}_{\perp }\ast \mathcal{B}\in\mathbb{R}^{n\times p \times l }$ where $\mathcal{W}\in \TSkew{p}{p}{l}$ by \eqref{eq:proof:tangent_space:1}, and so
	\begin{equation*}	T_{\MC{X}}\st{n,p,l}  
		= \operatorname{ker}( Dh(\MC{X})) 
		= \bigdakuohao{ \MC{X}\ast \mathcal{W}+\MC{X}_{\perp }\ast \mathcal{B}\in\mathbb{R}^{n\times p \times l }\bigg|\mathcal{W}\in \TSkew{p}{p}{l},~\mathcal{B}\in \mathbb{R}^{(n-p)\times p \times l}}.
	\end{equation*}
\end{proof}
\subsection{Riemannian metric, gradient and Hessian on $\st{n,p,l}$}\label{subsection:grad Hess}
For the smooth manifold $\st{n,p,l} $, the Riemannian metric $g_\MC{X}$ is defined as
\begin{equation}\label{eq:inner product}
	g_\MC{X}(\mathcal{V},\mathcal{U}) := 	\langle\mathcal{V},\mathcal{U}	\rangle_{\MC{X}} = \sum_{i = 1}^{n}\sum_{j = 1}^{p}\sum_{k = 1}^{l}v_{ijk}\cdot u_{ijk}~~\operatorname{with}~\MC{X}\in \st{n,p,l} ~\operatorname{and}~\mathcal{V},\mathcal{U}\in T_{\MC{X}}\st{n,p,l} ,
\end{equation}
which is indeed the Euclidean metric from the embedded space induced by the Frobenius norm.
Equipped with this metric,
% smoothly varied inner product $g_{\MC{X}}(\cdot, \cdot):=\langle\cdot, \cdot\rangle_{\MC{X}}$ on the tangent space, then  
$(\st{n,p,l} ,g_\MC{X})$ becomes a Riemannian manifold. In the sequel, we write $\st{n,p,l} $
as a Riemannian manifold for simplicity. The norm induced by the Riemannian metric $g_\MC{X}$ is defined as $||\MC{V}||_{\MC{X}} := 	\langle\mathcal{V},\mathcal{V}	\rangle_{\MC{X}}$.
%According to \cite[Sect. 3.6.1]{absil2009optimization}, since $\st{n,p,l} $ is an embedded smooth manifold of ${\mathbb{R}^{n\times p \times l }}$, then
%the Riemannian gradient can be seen as the orthogonal projection onto the tangent space of the gradient of ${\mathbb{R}^{n\times p \times l }}$.
\begin{theorem}[Orthogonal projector operator]
	The orthogonal projector operator from Euclidean space $\T{n}{p}{l}$ to  $T_{\MC{X}}\st{n,p,l}$ is 
	\begin{equation}\label{eq:proj}
		\mathbf{P}_{\MC{X}}(\mathcal{U}) = \mathcal{U} - \MC{X}\ast\frac{1}{2}\bigxiaokuohao{\MC{X}^{\top}\ast\mathcal{U}+\mathcal{U}^{\top}\ast\MC{X}} = \mathcal{U} -\MC{X}\ast\operatorname{sym}(\MC{X}^{\top}\ast\mathcal{U})	= (\mathcal{I} - \MC{X}\ast\MC{X}^{\top})\ast\mathcal{U}+\MC{X}\ast\operatorname{skew}(\MC{X}^{\top}\ast\mathcal{U}),
	\end{equation}
	where $\operatorname{sym}(\mathcal{A}) = \frac{\mathcal{A}+\mathcal{A}^{\top}}{2}$ and $\operatorname{skew}(\mathcal{A}) = \frac{\mathcal{A}-\mathcal{A}^{\top}}{2}$.
\end{theorem}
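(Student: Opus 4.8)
The plan is to use the standard characterization of an orthogonal projector onto a subspace: the operator $\mathbf{P}_{\MC{X}}$ is the orthogonal projection onto $T_{\MC{X}}\st{n,p,l}$ precisely when, for every $\mathcal{U}\in\T{n}{p}{l}$, the image $\mathbf{P}_{\MC{X}}(\mathcal{U})$ lies in $T_{\MC{X}}\st{n,p,l}$ and the residual $\mathcal{U}-\mathbf{P}_{\MC{X}}(\mathcal{U})$ is orthogonal to $T_{\MC{X}}\st{n,p,l}$ with respect to the metric \eqref{eq:inner product}. Throughout I would work with the kernel description of the tangent space established in the proof of Theorem \ref{th:dim}, namely $T_{\MC{X}}\st{n,p,l}=\bigdakuohao{\mathcal{V}\in\T{n}{p}{l}\mid \MC{X}^{\top}\ast\mathcal{V}+\mathcal{V}^{\top}\ast\MC{X}=\mathcal{O}}$, together with the t-product transpose rule $(\mathcal{A}\ast\mathcal{B})^{\top}=\mathcal{B}^{\top}\ast\mathcal{A}^{\top}$ and the defining identity $\MC{X}^{\top}\ast\MC{X}=\mathcal{I}$.

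First I would verify that $\mathbf{P}_{\MC{X}}(\mathcal{U})=\mathcal{U}-\MC{X}\ast\operatorname{sym}(\MC{X}^{\top}\ast\mathcal{U})$ is tangent. Writing $\mathcal{S}:=\operatorname{sym}(\MC{X}^{\top}\ast\mathcal{U})\in\TS{p}{p}{l}$ and left-multiplying by $\MC{X}^{\top}$, the identity $\MC{X}^{\top}\ast\MC{X}=\mathcal{I}$ collapses the correction term to $\mathcal{S}$, yielding
\[
\MC{X}^{\top}\ast\mathbf{P}_{\MC{X}}(\mathcal{U})=\MC{X}^{\top}\ast\mathcal{U}-\mathcal{S}=\operatorname{skew}(\MC{X}^{\top}\ast\mathcal{U}).
\]
Since the right-hand side is skew-symmetric and its transpose equals $(\mathbf{P}_{\MC{X}}(\mathcal{U}))^{\top}\ast\MC{X}$, the tangency equation $\MC{X}^{\top}\ast\mathbf{P}_{\MC{X}}(\mathcal{U})+(\mathbf{P}_{\MC{X}}(\mathcal{U}))^{\top}\ast\MC{X}=\mathcal{O}$ holds, so $\mathbf{P}_{\MC{X}}(\mathcal{U})\in T_{\MC{X}}\st{n,p,l}$.

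Next I would show the residual $\mathcal{U}-\mathbf{P}_{\MC{X}}(\mathcal{U})=\MC{X}\ast\mathcal{S}$ is orthogonal to every tangent vector $\mathcal{V}$. The key observation is that for tangent $\mathcal{V}$ the tensor $\MC{X}^{\top}\ast\mathcal{V}$ is skew-symmetric, because its transpose is $\mathcal{V}^{\top}\ast\MC{X}$ and the kernel equation forces their sum to vanish. Invoking the nonsymmetric inner-product switching rule of item 2 of Proposition \ref{prop:trace_switch} gives $\innerprod{\MC{X}\ast\mathcal{S}}{\mathcal{V}}=\innerprod{\mathcal{S}}{\MC{X}^{\top}\ast\mathcal{V}}$, which pairs a symmetric tensor $\mathcal{S}$ against a skew-symmetric one; by Lemma \ref{lemma:orthogonal complement} this inner product is zero. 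Hence the residual lies in the normal space, and combined with the previous paragraph this identifies $\mathbf{P}_{\MC{X}}$ as the orthogonal projector.

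Finally, the equality of the three displayed expressions in the statement is a routine algebraic identity: expanding $(\mathcal{I}-\MC{X}\ast\MC{X}^{\top})\ast\mathcal{U}+\MC{X}\ast\operatorname{skew}(\MC{X}^{\top}\ast\mathcal{U})$ and regrouping via the split $\MC{X}^{\top}\ast\mathcal{U}=\operatorname{sym}(\MC{X}^{\top}\ast\mathcal{U})+\operatorname{skew}(\MC{X}^{\top}\ast\mathcal{U})$ recovers $\mathcal{U}-\MC{X}\ast\operatorname{sym}(\MC{X}^{\top}\ast\mathcal{U})$. I do not anticipate a genuine obstacle; the only points demanding care are the correct use of the t-product transpose rule and of the nonsymmetric trace/inner-product switching identities of Proposition \ref{prop:trace_switch}, which play the role that the matrix-trace manipulations do in the classical Stiefel case, so that symmetric and skew-symmetric components decouple exactly as in the matrix setting.
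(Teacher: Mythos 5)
Your proof is correct, but it proceeds along a genuinely different route from the paper's. The paper first computes the normal space explicitly: it takes the parametrized form of $T_{\MC{X}}\st{n,p,l}$ from Theorem \ref{th:dim}, expands a candidate normal vector in the Fourier domain as $\hat N^{(i)} = \hat X^{(i)}\hat A^{(i)} + \hat X^{(i)}_{\perp}\hat C^{(i)}$, and uses Lemma \ref{lemma:orthogonal complement} to conclude $N_{\MC{X}}\st{n,p,l} = \bigdakuohao{\MC{X}\tprod\mathcal A \mid \mathcal A\in\TS{p}{p}{l}}$; it then writes $\mathbf{P}_{\MC{X}}(\mathcal U) = \mathcal U - \MC{X}\tprod\mathcal A$ and \emph{solves} for the symmetric multiplier $\mathcal A$ by imposing that the projected tensor satisfy the tangency equation, obtaining $\mathcal A = \operatorname{sym}(\MC{X}^{\top}\tprod\mathcal U)$ constructively. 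You instead \emph{verify} the given formula against the two defining properties of the orthogonal projector: tangency of the image via $\MC{X}^{\top}\tprod\mathbf{P}_{\MC{X}}(\mathcal U) = \operatorname{skew}(\MC{X}^{\top}\tprod\mathcal U)$, and orthogonality of the residual via the switching identity $\innerprod{\MC{X}\tprod\mathcal S}{\mathcal V} = \innerprod{\mathcal S}{\MC{X}^{\top}\tprod\mathcal V}$ from Proposition \ref{prop:trace_switch} together with the fact that $\MC{X}^{\top}\tprod\mathcal V\in\TSkew{p}{p}{l}$ for tangent $\mathcal V$ and Lemma \ref{lemma:orthogonal complement}. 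Your route is more elementary and self-contained: it works entirely with the kernel description of the tangent space, never invokes $\MC{X}_{\perp}$ or the Fourier domain, and cleanly isolates where the symmetric/skew decoupling enters; the uniqueness of the orthogonal decomposition in a finite-dimensional inner product space then pins down $\mathbf{P}_{\MC{X}}$. What the paper's longer computation buys is the explicit identification of the normal space itself, which is reused later (for instance, in the proof of the Riemannian Hessian formula, where $\MC{X}\tprod\mathcal S\in N_{\MC{X}}\st{n,p,l}$ is annihilated by $\mathbf{P}_{\MC{X}}$); if you adopted your verification argument in the paper, that characterization of $N_{\MC{X}}\st{n,p,l}$ would still need to be recorded separately (it in fact falls out of your residual computation, since you show every $\MC{X}\tprod\mathcal S$ with $\mathcal S$ symmetric is orthogonal to the tangent space, and a dimension count matches). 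The only implicit ingredient you should cite explicitly is the t-product transpose rule $(\mathcal A\tprod\mathcal B)^{\top} = \mathcal B^{\top}\tprod\mathcal A^{\top}$, which the paper uses without comment but which follows from Proposition \ref{prop:DFT tran} and Remark \ref{remark:DFT equality}.
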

\begin{proof}
	From Lemma \ref{Lemma:grad Hess proj}, there holds $\innerprod{\MC{U}-\mathbf{P}_{\MC{X}}(\mathcal{U})}{\MC{V}} = 0$ for all $\MC{V}\in T_{\MC{X}}\st{n,p,l}$ and $\MC{U} \in \T{n}{p}{l}$.
	Hence we consider the  normal space of $T_{\MC{X}}\st{n,p,l} $ as follows:
	\begin{eqnarray*}
		N_{\MC{X}}\st{n,p,l}  
		%		&=& (T_{\MC{X}}\st{n,p,l} )^{\top}\\
		&=& \bigdakuohao{\mathcal{N}\in \mathbb{R}^{n\times p \times l}|	\langle\mathcal{N},\mathcal{V}	\rangle = \mathcal{O}~ \operatorname{for~all}~ \mathcal{V}\in T_{\MC{X}}\st{n,p,l} }\\
		&=& \bigdakuohao{\mathcal{N}\in \mathbb{R}^{n\times p \times l}\bigg|	\langle\mathcal{N},\MC{X}\ast \mathcal{W}+\MC{X}_{\perp }\ast \mathcal{B}	\rangle= \mathcal{O},
			%			\mathcal{W}(:,:,i)=
			\mathcal{W}\in \TSkew{p}{p}{l},\mathcal{B}\in \mathbb{R}^{(n-p)\times p\times l}},
	\end{eqnarray*}
	where we use the parameter expression of $	T_{\MC{X}}\st{n,p,l}$ in Theorem \ref{th:dim}. Since $\hat{X}^{(i)}_{\perp }\in \mathbb{C}^{n\times (n-p)}$ is any matrix such that $\operatorname{span}(\hat{X}^{(i)}_{\perp})=\{\hat{X}^{(i)}_{\perp}\alpha\big|\alpha\in \mathbb{C}^{n-p}\}$ is the or­thogonal complement of $\operatorname{span}(\hat{X}^{(i)})=\{\hat{X}^{(i)}\beta\big|\beta\in \mathbb{C}^{p}\}$,
	we can expand the DFT of normal vectors as ${\hat N}^{(i)} = \hat{X}^{(i)}\hat A^{(i)}+\hat{X}^{(i)}_{\perp}\hat C^{(i)}$  with $\hat A^{(i)} \in \mathbb{C}^{p\times p}$ and $\hat C^{(i)}\in \mathbb{C}^{(n-p)\times p}$, which together with Remark \ref{remark:DFT equality} leads to $\mathcal{N} = \MC{X}\ast \mathcal{A}+\MC{X}_{\perp }\ast \mathcal{C}$. Then it holds that
	\begin{eqnarray*}
		N_{\MC{X}}\st{n,p,l}  
		&=& \bigdakuohao{\mathcal{N}\in \mathbb{R}^{n\times p \times l}\bigg|	\langle\MC{X}\ast \mathcal{A}+\MC{X}_{\perp }\ast \mathcal{C},\MC{X}\ast \mathcal{W}+\MC{X}_{\perp }\ast \mathcal{B}	\rangle= 0,~\operatorname{for~ all}~
			%			\mathcal{W}(:,:,i)=
			\mathcal{W}\in \TSkew{p}{p}{l},\mathcal{B}\in \mathbb{R}^{(n-p)\times p\times l}}\\
		&=& \bigdakuohao{\mathcal{N}\in \mathbb{R}^{n\times p \times l}\bigg|	\langle \MC{X}^{\top}\ast \MC{X}\ast \mathcal{A}, \mathcal{W}	\rangle+	\langle\MC{X}_{\perp }^{\top}\ast \MC{X}_{\perp }\ast\mathcal{C}, \mathcal{B}	\rangle= 0,~\operatorname{for~all}~	\mathcal{W}\in \TSkew{p}{p}{l},\mathcal{B}\in \mathbb{R}^{(n-p)\times p\times l}}\\
		&=& \bigdakuohao{\mathcal{N}\in \mathbb{R}^{n\times p \times l}\bigg|	\langle\mathcal{A}, \mathcal{W}	\rangle=0 ~\operatorname{and}~	\langle\mathcal{C}, \mathcal{B}	\rangle=0,~\operatorname{for~ all}~	\mathcal{W}\in \TSkew{p}{p}{l},\mathcal{B}\in \mathbb{R}^{(n-p)\times p\times l}}\\
		%			&=& \bigdakuohao{\mathcal{U}\in \mathbb{R}^{n\times p \times l}\bigg|	\langle\mathcal{\hat A}, \mathcal{\hat W}	\rangle=0,~\operatorname{for~ all}~\hat W^{(i)} \in \operatorname{Skew}(\mathbb{C}^{p\times p}),\mathcal{C}=\mathcal{O}_{(n-p)\times p\times l}}\\
		&=& \bigdakuohao{\MC{X}\ast\mathcal{A}\in \mathbb{R}^{n\times p \times l}\bigg|
			%			\mathcal{A}(:,:,i)= 
			\mathcal{A}\in \TS{p}{p}{l}
			%			&=& \bigdakuohao{\MC{X}\ast\mathcal{A}\in \mathbb{R}^{n\times p \times l}\bigg|
				%			 A^{(i)}\in \operatorname{Sym}({\mathbb{C}^{p\times p}}),i \in [l]
			},
		\end{eqnarray*}
		where the last equation comes from Lemma \ref{lemma:orthogonal complement}. 
		%	 and the last equation comes from the item 2 of Proposition \ref{prop:bcirc_properties}
		%	 .
		Thus, the orthogonal projector obeys:
		\begin{equation*}
			\mathbf{P}_{\MC{X}}(\mathcal{U}) = \mathcal{U} - \mathcal{N} = \mathcal{U} - \MC{X}\ast\mathcal{A}
		\end{equation*}
		for some symmetric tensor $
		\mathcal{A}\in \TS{p}{p}{l}$. Since the projected tensor must lie in $	T_{\MC{X}}\st{n,p,l}  $, we have
		\begin{equation*}
			\MC{X}^{\top}\ast\bigxiaokuohao{\mathcal{U} - \MC{X}\ast\mathcal{A}}+	\bigxiaokuohao{\mathcal{U} - \MC{X}\ast\mathcal{A}}^{\top}\ast\MC{X} = \mathcal{O},
		\end{equation*}
		that is, $ \MC{X}^{\top}\ast\mathcal{U}+\mathcal{U}^{\top}\ast\MC{X}  = \mathcal{A}+\mathcal{A}^{\top} = 2\mathcal{A}$. 
		Thus,  $\mathcal{A} = \frac{1}{2}\bigxiaokuohao{\MC{X}^{\top}\ast\mathcal{U}+\mathcal{U}^{\top}\ast\MC{X}}$ and 
		\begin{equation*}
			\mathbf{P}_{\MC{X}}(\mathcal{U}) = \mathcal{U} - \MC{X}\ast\operatorname{sym}(\MC{X}^{\top}\ast\mathcal{U})
			%		= \mathcal{U} -\MC{X}\ast\operatorname{sym}(\MC{X}^{\top}\ast\mathcal{U}) 
			= (\mathcal{I} - \MC{X}\ast\MC{X}^{\top})\ast\mathcal{U}+\MC{X}\ast\operatorname{skew}(\MC{X}^{\top}\ast\mathcal{U}).
		\end{equation*}
		%	where $\operatorname{sym}(\mathcal{A}) = \frac{\mathcal{A}+\mathcal{A}^{\top}}{2}$ and $\operatorname{skew}(\mathcal{A}) = \frac{\mathcal{A}-\mathcal{A}^{\top}}{2}$.
	\end{proof}
	Lemma \ref{Lemma:grad Hess proj} tells us that the orthogonal projector yields a convenient formula for the Riemannian gradient and Riemannian Hessian.
	\begin{proposition}[Gradient]
		The gradient of smooth functions  $f$ defined on $\st{n,p,l} $ is
		\begin{equation*}
			\operatorname{grad}f(\MC{X}) = \mathbf{P}_{\MC{X}}(	\operatorname{grad}\bar{f}(\MC{X})) = \operatorname{grad}\bar{f}(\MC{X})-\MC{X}\ast\operatorname{sym}(\MC{X}^{\top}\ast\operatorname{grad}\bar{f}(\MC{X})),
		\end{equation*}
		where $\bar{f}$ defined on $\T{n}{p}{l}$ coincides with $f$ on $\st{n,p,l}: f = \bar{f}\big|_{\st{n,p,l}}$.
	\end{proposition}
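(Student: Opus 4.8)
The plan is to recognize this proposition as an immediate specialization of Lemma \ref{Lemma:grad Hess proj} to the concrete projector derived in \eqref{eq:proj}. First I would observe that the metric \eqref{eq:inner product} placed on $\st{n,p,l}$ is exactly the Euclidean (Frobenius) inner product inherited from the ambient space $\T{n}{p}{l}$, which is precisely the setting required by Lemma \ref{Lemma:grad Hess proj}. Hence the hypothesis that $f$ admits a smooth extension $\bar{f}$ to the ambient space with $f = \bar{f}|_{\st{n,p,l}}$ permits me to invoke that lemma directly, yielding $\operatorname{grad}f(\MC{X}) = \mathbf{P}_{\MC{X}}(\operatorname{grad}\bar{f}(\MC{X}))$.

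Second, I would simply substitute $\mathcal{U} = \operatorname{grad}\bar{f}(\MC{X})$ into the explicit projector formula \eqref{eq:proj}, namely $\mathbf{P}_{\MC{X}}(\mathcal{U}) = \mathcal{U} - \MC{X}\ast\operatorname{sym}(\MC{X}^{\top}\ast\mathcal{U})$, which produces the claimed closed form at once. For robustness, and to keep the argument self-contained, I would also sketch the direct verification that $\mathbf{P}_{\MC{X}}(\operatorname{grad}\bar{f}(\MC{X}))$ really is the Riemannian gradient: by the defining property $\langle \operatorname{grad}f(\MC{X}), \xi\rangle_{\MC{X}} = \mathrm{D}f(\MC{X})[\xi]$ for all $\xi\in T_{\MC{X}}\st{n,p,l}$, and since $\bar{f}$ extends $f$ on the manifold, one has $\mathrm{D}f(\MC{X})[\xi] = \mathrm{D}\bar{f}(\MC{X})[\xi] = \langle\operatorname{grad}\bar{f}(\MC{X}), \xi\rangle$ for every tangent $\xi$. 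Writing $\operatorname{grad}\bar{f}(\MC{X})$ as its tangent plus normal part and using that the normal part pairs to zero against every $\xi$, the unique tangent vector realizing this inner product is precisely the orthogonal projection $\mathbf{P}_{\MC{X}}(\operatorname{grad}\bar{f}(\MC{X}))$.

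The argument involves essentially no obstacle; the one point that deserves care is confirming that the Riemannian metric on $\st{n,p,l}$ coincides with the ambient Frobenius metric, so that Lemma \ref{Lemma:grad Hess proj} applies verbatim and no extra scaling factor of the kind appearing in Proposition \ref{prop:trace_inner_t_prod} (where $\trace{\mathcal{A}^{\top}\ast\mathcal{B}} = l\langle\mathcal{A},\mathcal{B}\rangle$) intrudes into the gradient formula. Once this consistency is noted, the proof reduces to a single substitution.
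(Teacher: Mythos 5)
Your proposal is correct and follows the same route as the paper, which obtains this proposition by directly combining Lemma \ref{Lemma:grad Hess proj} with the explicit projector formula \eqref{eq:proj}. Your added checks --- that the metric \eqref{eq:inner product} is exactly the ambient Frobenius inner product (so no factor of $l$ from Proposition \ref{prop:trace_inner_t_prod} enters) and the direct tangent--normal verification of the gradient's defining property --- are sound and consistent with the paper's setup.
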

	The definitions and computation of the Euclidean gradient $\operatorname{grad}f(\MC{X})$ above and the Euclidean directional derivative $\mathrm{D}f(\MC{X})[\MC{H}]$ that will appear in the following are given in Appendix \ref{apx:18}. 
	\begin{theorem}[Hessian]
		The Riemannian Hessian of a real-valued function $f$ at a point $\MC{X}$ on  $\st{n,p,l} $ is
		\begin{equation*}
			\operatorname{Hess}f(\MC{X})[\mathcal{V}] = \mathbf{P}_{\MC{X}}\bigxiaokuohao{\operatorname{Hess}\bar{f}(\MC{X})[\mathcal{V}]}-\mathcal{V}\ast \operatorname{sym}(\MC{X}^{\top}\ast \operatorname{grad}\bar{f}(\MC{X})).
		\end{equation*}
	\end{theorem}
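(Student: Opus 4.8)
The plan is to invoke Lemma~\ref{Lemma:grad Hess proj}, which reduces the Riemannian Hessian to $\operatorname{Hess}f(\MC{X})[\mathcal{V}] = \mathbf{P}_{\MC{X}}(\mathrm{D}\bar G(\MC{X})[\mathcal{V}])$ for $\mathcal{V}\in T_{\MC{X}}\st{n,p,l}$, where $\bar G$ is any smooth extension of the Riemannian gradient field $\operatorname{grad} f$ to a neighborhood of $\st{n,p,l}$ in the ambient space $\T{n}{p}{l}$. The natural choice is to reuse the gradient formula from the Gradient proposition verbatim, now allowing the base point to vary: define $\bar G(\MC{Y}) := \operatorname{grad}\bar f(\MC{Y}) - \MC{Y}\ast\operatorname{sym}\bigxiaokuohao{\MC{Y}^{\top}\ast\operatorname{grad}\bar f(\MC{Y})}$. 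Since this agrees with $\operatorname{grad} f$ on $\st{n,p,l}$ and is smooth (being built from the smooth map $\operatorname{grad}\bar f$, the t-product, and the linear map $\operatorname{sym}$), it is a legitimate extension, so Lemma~\ref{Lemma:grad Hess proj} applies.

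Next I would compute $\mathrm{D}\bar G(\MC{X})[\mathcal{V}]$ by differentiating along a curve through $\MC{X}$ in direction $\mathcal{V}$, using linearity of $\mathrm{D}$ and of $\operatorname{sym}$ together with the Leibniz rule for the t-product, $\mathrm{D}(\mathcal{A}\ast\mathcal{B})[\mathcal{V}] = (\mathrm{D}\mathcal{A}[\mathcal{V}])\ast\mathcal{B} + \mathcal{A}\ast(\mathrm{D}\mathcal{B}[\mathcal{V}])$, and $\mathrm{D}(\MC{Y}^{\top})[\mathcal{V}]=\mathcal{V}^{\top}$. Writing $G:=\operatorname{grad}\bar f$ and noting $\mathrm{D}G(\MC{X})[\mathcal{V}] = \operatorname{Hess}\bar f(\MC{X})[\mathcal{V}]$, this produces three groups of terms:
\[
\mathrm{D}\bar G(\MC{X})[\mathcal{V}] = \operatorname{Hess}\bar f(\MC{X})[\mathcal{V}] - \mathcal{V}\ast\operatorname{sym}\bigxiaokuohao{\MC{X}^{\top}\ast G(\MC{X})} - \MC{X}\ast\operatorname{sym}\bigxiaokuohao{\mathcal{V}^{\top}\ast G(\MC{X}) + \MC{X}^{\top}\ast\operatorname{Hess}\bar f(\MC{X})[\mathcal{V}]}.
\]

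Finally I would apply the linear operator $\mathbf{P}_{\MC{X}}$ term by term. The decisive simplification is that the normal space of $\st{n,p,l}$ at $\MC{X}$ was identified, in the proof of the Orthogonal projector theorem, as exactly $\bigdakuohao{\MC{X}\ast\mathcal{A}\mid\mathcal{A}\in\TS{p}{p}{l}}$; since $\operatorname{sym}(\cdot)\in\TS{p}{p}{l}$, the entire last group $\MC{X}\ast\operatorname{sym}(\cdots)$ lies in $N_{\MC{X}}\st{n,p,l}$ and is annihilated by $\mathbf{P}_{\MC{X}}$. The first term contributes $\mathbf{P}_{\MC{X}}\bigxiaokuohao{\operatorname{Hess}\bar f(\MC{X})[\mathcal{V}]}$, and the middle term contributes the explicit correction $-\mathcal{V}\ast\operatorname{sym}\bigxiaokuohao{\MC{X}^{\top}\ast\operatorname{grad}\bar f(\MC{X})}$, yielding the claimed formula.

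The main obstacle I anticipate is the bookkeeping in this projection step: one must verify that the normal contributions created by differentiating the factor $\MC{Y}\ast\operatorname{sym}(\cdots)$ are precisely the ones killed by $\mathbf{P}_{\MC{X}}$, while confirming that the surviving pieces assemble into a genuine tangent vector, all while tracking the noncommutativity of the t-product. The cleanest way to keep the symmetric/skew decomposition transparent is to pass to the Fourier domain via Remark~\ref{remark:DFT equality} and argue frontal-slice by frontal-slice, using that $\mathcal{V}\in T_{\MC{X}}\st{n,p,l}$ is characterized (Theorem~\ref{th:dim}) by each $(\hat X^{(i)})^{H}\hat V^{(i)}$ being skew-Hermitian; each slice then reduces to the classical matrix Stiefel Hessian computation, and reassembling via $L^{-1}$ recovers the stated expression.
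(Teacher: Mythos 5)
Your proposal is correct and follows essentially the same route as the paper's proof: the same smooth extension $\bar G(\MC{Y})=\operatorname{grad}\bar f(\MC{Y})-\MC{Y}\tprod\operatorname{sym}\bigxiaokuohao{\MC{Y}^{\top}\tprod\operatorname{grad}\bar f(\MC{Y})}$, the same Leibniz-rule differentiation producing the identical three-term expansion of $\mathrm{D}\bar G(\MC{X})[\mathcal{V}]$, and the same final step in which the normal contribution $\MC{X}\tprod\operatorname{sym}(\cdots)$ is annihilated by $\mathbf{P}_{\MC{X}}$ using the characterization $N_{\MC{X}}\st{n,p,l}=\{\MC{X}\tprod\mathcal{A}\mid\mathcal{A}\in\TS{p}{p}{l}\}$. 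The Fourier-domain, slice-by-slice verification you sketch as a safeguard is not needed in (and not used by) the paper, which carries out the projection directly at the tensor level.
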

	
	\begin{proof}
		Let $\bar{G}(\MC{X}) =\operatorname{grad}\bar{f}(\MC{X})-\MC{X}\ast\operatorname{sym}(\MC{X}^{\top}\ast\operatorname{grad}\bar{f}(\MC{X}))$
		denote a smooth extension of  $\operatorname{grad}f(\MC{X})$ to a neighborhood of $\st{n,p,l} $ in $\T{n}{p}{l}$. 
		\begin{eqnarray*}
			D\bar{G}(\MC{X})[\mathcal{V}] &=& \lim_{t\rightarrow 0}\frac{1}{t}\bigzhongkuohao{\bar{G}(\MC{X}+t\mathcal{V})-\bar{G}(\MC{X})}\\
			&=&\operatorname{Hess}\bar{f}(\MC{X})[\mathcal{V}] - \lim_{t\rightarrow 0}\frac{1}{t}\bigzhongkuohao{
				(\MC{X}+t\MC{V})\ast\operatorname{sym}\bigxiaokuohao{(\MC{X}+t\MC{V})^{\top}\ast\operatorname{grad}\bar{f}(\MC{X}+t\MC{V})}
				-\MC{X}\ast\operatorname{sym}(\MC{X}^{\top}\ast\operatorname{grad}\bar{f}(\MC{X}))}\\
			&=&\operatorname{Hess}\bar{f}(\MC{X})[\mathcal{V}] - \mathcal{V}\ast \operatorname{sym}\bigxiaokuohao{\MC{X}^{\top}\ast \operatorname{grad}\bar{f}(\MC{X})}
			\\
			&\;&
			-\MC{X}\ast\operatorname{sym}\bigxiaokuohao{ \lim_{t\rightarrow 0}\frac{1}{t}\bigzhongkuohao{(\MC{X}+t\MC{V})^{\top}\ast\operatorname{grad}\bar{f}(\MC{X}+t\MC{V})-\MC{X}^{\top}\ast\operatorname{grad}\bar{f}(\MC{X})}}\\
			&=&\operatorname{Hess}\bar{f}(\MC{X})[\mathcal{V}] - \mathcal{V}\ast \operatorname{sym}\bigxiaokuohao{\MC{X}^{\top}\ast \operatorname{grad}\bar{f}(\MC{X})}
			- \MC{X}\tprod{\operatorname{sym}\bigxiaokuohao{\mathcal{V}^{\top}\ast \operatorname{grad}\bar{f}(\MC{X})+\MC{X}^{\top}\operatorname{Hess}\bar{f}(\MC{X})[\mathcal{V}]}}.
		\end{eqnarray*}
		$\st{n,p,l} $ is a Riemannian submanifold of ${\mathbb{R}^{n\times p \times l }}$, and its   Riemannian Hessian is 
		\begin{eqnarray*}
			\operatorname{Hess}f(\MC{X})[\mathcal{V}] 
			% 		&=& \nabla_{\mathcal{V}}\operatorname{grad}f(\MC{X})\\
			&=&\mathbf{P}_{\MC{X}}\bigxiaokuohao{D\bar{G}(\MC{X})[\mathcal{V}]}\\
			&=& \mathbf{P}_{\MC{X}}\bigxiaokuohao{\operatorname{Hess}\bar{f}(\MC{X})[\mathcal{V}]-\mathcal{V}\ast \operatorname{sym}(\MC{X}^{\top}\ast \operatorname{grad}\bar{f}(\MC{X}))-\MC{X}\ast \mathcal{S}}\\
			&=&\mathbf{P}_{\MC{X}}\bigxiaokuohao{\operatorname{Hess}\bar{f}(\MC{X})[\mathcal{V}]}-\mathcal{V}\ast \operatorname{sym}(\MC{X}^{\top}\ast \operatorname{grad}\bar{f}(\MC{X})),
		\end{eqnarray*}
		where $\mathcal{S} = \operatorname{sym}\bigxiaokuohao{\mathcal{V}^{\top}\ast \operatorname{grad}\bar{f}(\MC{X})+\MC{X}^{\top}\operatorname{Hess}\bar{f}(\MC{X})[\mathcal{V}]}$ and $\MC{X}\tprod\MC{S}\in N_{\MC{X}}\st{n,p,l}$ vanishes through $\mathbf{P}_{\MC{X}}$.
	\end{proof}
	%For $\mathcal X \in\mathbb R^{n\times 1\times l}$, the definitions and computation of the Euclidean gradient $\operatorname{grad}f(\MC{X})$ and the Euclidean directional derivative $Df(\MC{X})[\MC{H}]$  in the sense of t-product  were given in \cite{zheng2021t}; For $\mathcal X \in\mathbb R^{n\times p\times l}$, the results of the corresponding promotions
	%are left to Appendix \ref{apx:18}.
	
	\subsection{Retraction on  $\st{n,p,l}$} \label{sec:retraction}
	According to Lemma \ref{lemm:retraciton}, we can  construct different retractions based on various tensor decompositions. Based on t-QR in Subsection \ref{subsec:t-QR}, we get the following retraction.
	\begin{theorem}[t-QR based retraction]\label{th:t-qr based retraction}
		The
		retraction on $\st{n,p,l}$ based on the t-QR decomposition is
		\begin{equation}\label{eq:t-qr based retraction}
			R_{\MC{X}}(\mathcal{V}) = qf\bigxiaokuohao{\MC{X}+\mathcal{V}},
		\end{equation}
		where $\MC{X}\in \st{n,p,l}, \MC{V}\in T_{\MC{X}}\st{n,p,l}$, and $qf(\mathcal A)$ denotes the $\mathcal Q$ factor of the t-QR decomposition of $\mathcal A \in L^{-1}\bigxiaokuohao{\mathbb{C}_*^{n\times p \times l}}$ as
		$\mathcal A = \mathcal Q\ast \mathcal R$, where  $\mathcal Q\in\st{n,p,l} $ and $\mathcal R \in L^{-1}\bigxiaokuohao{\TCUP{p}{p}{l}}$. 
	\end{theorem}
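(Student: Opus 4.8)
The plan is to apply Lemma \ref{lemm:retraciton} with the embedded manifold $\mathscr{M} = \st{n,p,l}$, the abstract manifold $\mathscr{N} = L^{-1}\bigxiaokuohao{\TCUP{p}{p}{l}}$, the ambient vector space $\mathscr{E} = \T{n}{p}{l}$, and the open subset $\mathscr{E}_* = L^{-1}\bigxiaokuohao{\mathbb{C}_*^{n\times p\times l}}$ of f-full-rank tensors. The candidate diffeomorphism is the t-product map $\phi(\mathcal{Q},\mathcal{R}) = \mathcal{Q}\tprod\mathcal{R}$, and the neutral element of $\mathscr{N}$ is the $p\times p\times l$ identity tensor $\mathcal{I}$. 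Once all hypotheses of the lemma are verified, its conclusion $R_{\MC{X}}(\mathcal{V}) = \pi_1\bigxiaokuohao{\phi^{-1}(\MC{X}+\mathcal{V})}$ specializes to extracting the $\mathcal{Q}$-factor of $\MC{X}+\mathcal{V}$, which is exactly $qf\bigxiaokuohao{\MC{X}+\mathcal{V}}$.

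First I would check the dimension condition $\dim\mathscr{M}+\dim\mathscr{N}=\dim\mathscr{E}$. Here $\dim\mathscr{E}=npl$, the value of $\dim\mathscr{M}$ is given by Theorem \ref{Th:tensor_stiefel_manifold}, and $\dim\mathscr{N}$ by Lemma \ref{lemma:upp_dim}. Splitting into the even and odd cases of $l$ and using that $\frac{1}{2^{|\sin(l\pi/2)|}}=1$ when $l$ is even and $=\frac{1}{2}$ when $l$ is odd, the two $\frac{p^2l}{2}$ contributions and the remaining $\pm p$ (resp. $\pm\frac{p}{2}$) terms cancel, and the sum collapses to $npl$ in both cases; this is a short arithmetic verification.

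Next I would establish that $\phi$ is a diffeomorphism with the required neutral element. Existence and uniqueness of the factorization $\mathcal{A}=\mathcal{Q}\tprod\mathcal{R}$ with $\mathcal{Q}\in\st{n,p,l}$ and $\mathcal{R}\in L^{-1}\bigxiaokuohao{\TCUP{p}{p}{l}}$ for every $\mathcal{A}\in\mathscr{E}_*$ is precisely Theorem \ref{th:t_qr}, so $\phi$ is a bijection onto $\mathscr{E}_*$, and smoothness of $\phi$ is immediate from bilinearity of the t-product. For the neutral element, note that $\hat{\mathcal{I}}$ has every frontal slice equal to $I_p$, which is upper triangular with positive diagonal, so $\mathcal{I}\in\mathscr{N}$, and $\phi(\mathcal{Q},\mathcal{I})=\mathcal{Q}\tprod\mathcal{I}=\mathcal{Q}$ for all $\mathcal{Q}\in\mathscr{M}$. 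The main obstacle is verifying that $\phi^{-1}$ is smooth, i.e. that the t-QR factors depend smoothly on $\mathcal{A}$; by Proposition \ref{prop:bcirc_properties} this transports to the Fourier domain, where it reduces to the classical fact that the thin complex QR factors of a full-column-rank matrix, normalized so that $\mathcal{R}$ has positive diagonal, are smooth functions of the entries, and smoothness is preserved under the linear maps $L$ and $L^{-1}$.

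Finally, to guarantee $R_{\MC{X}}$ is well-defined on the whole tangent space, I would verify $\MC{X}+\mathcal{V}\in\mathscr{E}_*$ for every $\mathcal{V}\in T_{\MC{X}}\st{n,p,l}$. The tangent-space condition gives $\MC{X}^{\top}\tprod\mathcal{V}+\mathcal{V}^{\top}\tprod\MC{X}=\mathcal{O}$, so
\begin{equation*}
	(\MC{X}+\mathcal{V})^{\top}\tprod(\MC{X}+\mathcal{V}) = \mathcal{I}+\mathcal{V}^{\top}\tprod\mathcal{V},
\end{equation*}
which lies in $\TSPP{p}{p}{l}$ by Lemma \ref{lem:positive definite}. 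By Remark \ref{rmk:t_psd} each Fourier slice $(\hat{X}^{(i)}+\hat{V}^{(i)})^H(\hat{X}^{(i)}+\hat{V}^{(i)})$ is Hermitian positive definite, forcing $\hat{X}^{(i)}+\hat{V}^{(i)}$ to have full column rank, i.e. $\MC{X}+\mathcal{V}\in\mathscr{E}_*$. With all hypotheses of Lemma \ref{lemm:retraciton} confirmed, the stated formula \eqref{eq:t-qr based retraction} defines a retraction on $\st{n,p,l}$.
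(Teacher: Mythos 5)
Your proof is correct and follows essentially the same route as the paper's: both apply Lemma \ref{lemm:retraciton} with $\mathscr{N}=L^{-1}\bigxiaokuohao{\TCUP{p}{p}{l}}$, $\mathscr{E}_*=L^{-1}\bigxiaokuohao{\mathbb{C}_*^{n\times p\times l}}$ and $\phi(\mathcal{Q},\mathcal{R})=\mathcal{Q}\tprod\mathcal{R}$, check the dimension count via Theorem \ref{Th:tensor_stiefel_manifold} and Lemma \ref{lemma:upp_dim}, and verify well-definedness by showing $(\MC{X}+\MC{V})^{\top}\tprod(\MC{X}+\MC{V})=\mathcal{I}+\MC{V}^{\top}\tprod\MC{V}$ forces $\MC{X}+\MC{V}$ to be f-full rank. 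The only cosmetic differences are that you justify smoothness of $\phi^{-1}$ by smooth dependence of the slice-wise complex QR factors in the Fourier domain rather than the paper's Gram--Schmidt argument, and you verify the neutral element explicitly where the paper merely asserts it.
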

	\begin{proof}
		According to Theorem \ref{th:t_qr}, if $\mathcal A \in L^{-1}\bigxiaokuohao{\mathbb{C}_*^{n\times p \times l}}$, then t-QR decomposition of $\MC{A}$ is unique.
		Hence the inverse of t-QR decomposition of  $\mathcal A \in L^{-1}\bigxiaokuohao{\mathbb{C}_*^{n\times p \times l}}$ is a one-to-one mapping
		\begin{equation*}
			\phi:\st{n,p,l} \times L^{-1}\bigxiaokuohao{\TCUP{p}{p}{l}}
			%		\TUP{p}{p}{l} 
			\rightarrow  L^{-1}(\mathbb{C}_*^{n\times p \times l}):(\mathcal Q, \mathcal R)\mapsto \mathcal Q \ast \mathcal R = \mathcal A.
		\end{equation*}
		where 
		% $L^{-1}(\mathbb{C}_*^{n\times p \times l}) = \bigdakuohao{L^{-1}(\hat{\MC{A}})|\hat{\MC{A}}\in \TC{n}{p}{l}_*}$ and 
		$L^{-1}\bigxiaokuohao{\TCUP{p}{p}{l}}$ represents the set of $\MC{R}$ factor of t-QR decomposition of $\MC{A}\in \T{n}{p}{l}$.
		Since $L$ is a continuous function and $\mathbb{C}_*^{n\times p \times l}$ is an open set in $\mathbb{C}^{n\times p \times l}$, it follows that the preimage $L^{-1}(\mathbb{C}_*^{n\times p \times l})$ is open in $\T{n}{p}{l}$.
		Combing Theorem \ref{Th:tensor_stiefel_manifold} and Lemma \ref{lemma:upp_dim} gives rise to
		\begin{equation*}
			\operatorname{dim}(\st{n,p,l})+	\operatorname{dim}\bigxiaokuohao{L^{-1}\bigxiaokuohao{\TCUP{p}{p}{l}}}=\operatorname{dim}(\mathbb{R}^{n\times p \times l })-\operatorname{dim}( \TS{p}{p}{l})+\operatorname{dim}\bigxiaokuohao{L^{-1}\bigxiaokuohao{\TCUP{p}{p}{l}}}=\operatorname{dim}(\mathbb{R}^{n\times p \times l }).
		\end{equation*} The identity tensor is the neutral element. 
		% 	It follows from the existence and uniqueness properties of the t-QR decomposition for a tensor $\mathcal A \in L^{-1}\bigxiaokuohao{\mathbb{C}_*^{n\times p \times l}}$ that $\phi$ is bijective. 
		The mapping $\phi$ is smooth since it is the restriction of a
		smooth map (tensor product) to a submanifold. For $\phi^{-1}$, notice that
		its first tensor component $\mathcal Q$ is obtained by the Gram-Schmidt process and the inverse Fourier transform according to \cite[Alg. 1]{kilmer2013third}, which
		are $C^{\infty}$. Since the second component $\mathcal R$ is
		obtained as $\mathcal{Q}^{-1}\ast \mathcal{A}$, it follows that $\phi^{-1}$ is $C^{\infty}$. Thus $\phi$ is a diffeomorphism. From Lemma \ref{lemm:retraciton}, we have
		\begin{equation*}
			R_{\MC{X}}(\mathcal{V})
			= \pi_1\bigxiaokuohao{\phi^{-1}(\MC{X}+\mathcal{V})} 
			= qf(\MC{X}+\mathcal{V}),
		\end{equation*}
		where $qf(\mathcal A) = \pi_1 \circ \phi^{-1} $ denotes the mapping that sends a tensor to the $\mathcal{ Q}$ factor of its t-QR decomposition. This is
		well defined, i.e., the t-QR decomposition $\MC{X}+\MC{V}$ is unique. To see this,
		it follows from Remark \ref{remark:DFT equality} that 
		$$\MC{A}^{\top}\tprod\MC{A}=(\MC{X}+\mathcal{V})^{\top}\tprod(\MC{X}+\mathcal{V}) = \mathcal{I} + \mathcal{V}^{\top}\ast\mathcal{V}\Leftrightarrow (\hat{A}^{(i)})^H\hat{A}^{(i)}
		% 	= (\hat{X}^{(i)}+\hat{V}^{(i)})^{H}(\hat{X}^{(i)}+\hat{V}^{(i)}) 
		= {I}_p + (\hat{V}^{(i)})^H\hat{V}^{(i)}, i\in [l],$$
		confirming that ${\MC{A}}={\MC{X}}+{\MC{V}}\in L^{-1}\bigxiaokuohao{\TC{n}{p}{l}_*}$.
		% 	The mapping $qf$ can be computed using the
		% 	Gram-Schmidt orthonormalization on the Fourier domanin of $\mathcal{A}$ and then returning to the original domain.
	\end{proof}
	In the same vein, we get the following retraction based on t-PD in Subsection \ref{t-PD}.
	\begin{theorem}[t-PD based retraction]\label{t-pd based retraction}
		The
		retraction on $\st{n,p,l}$ based on   t-PD is
		\begin{equation}\label{eq:t-pd based retraction}
			R_{\MC{X}}(\mathcal{V}) = (\MC{X}+\mathcal{V})\ast(\mathcal{I}+\mathcal{V}^{\top}\ast\mathcal{V})^{-\frac{1}{2}},
		\end{equation}
		where $\MC{X}\in \st{n,p,l}$ and $\MC{V}\in T_{\MC{X}}\st{n,p,l}$.
	\end{theorem}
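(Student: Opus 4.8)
The plan is to invoke the general retraction construction of Lemma \ref{lemm:retraciton}, mirroring the t-QR argument of Theorem \ref{th:t-qr based retraction} but with the t-polar factorization playing the role of t-QR. Concretely, I would take the ambient space $\mathscr{E} = \T{n}{p}{l}$, the embedded manifold $\mathscr{M} = \st{n,p,l}$, and the abstract manifold $\mathscr{N} = \TSPP{p}{p}{l}$, which is an open subset of the vector space $\TS{p}{p}{l}$ and hence a manifold with $\dim\TSPP{p}{p}{l} = \dim\TS{p}{p}{l}$. The identity tensor $\mathcal I \in \TSPP{p}{p}{l}$ serves as the neutral element, since $\mathcal P \tprod \mathcal I = \mathcal P$ for every $\mathcal P \in \st{n,p,l}$.

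First I would set up the candidate diffeomorphism
$$\phi: \st{n,p,l}\times\TSPP{p}{p}{l}\to L^{-1}\bigxiaokuohao{\mathbb{C}_*^{n\times p \times l}}: (\mathcal P, \mathcal H)\mapsto \mathcal P\tprod\mathcal H.$$
By Proposition \ref{prop:sym equal}, every $\mathcal A\in L^{-1}(\mathbb{C}_*^{n\times p \times l})$ satisfies $\mathcal A^{\top}\tprod\mathcal A\in\TSPP{p}{p}{l}$, so Theorem \ref{th:t_pd} guarantees a unique partially orthogonal factor $\mathcal P$ and a unique positive-definite factor $\mathcal H$; this makes $\phi$ a bijection. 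The dimension bookkeeping then goes through exactly as in the t-QR case: by Theorem \ref{Th:tensor_stiefel_manifold} and \eqref{eq:dim of sym}, $\dim\st{n,p,l} + \dim\TSPP{p}{p}{l} = \dim\T{n}{p}{l}$. As in the t-QR proof, I would also note that $\mathbb{C}_*^{n\times p \times l}$ is open in $\mathbb{C}^{n\times p \times l}$ and $L$ is a linear isomorphism, so $L^{-1}(\mathbb{C}_*^{n\times p \times l})$ is open in $\mathscr{E}$.

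Next I would verify that $\phi$ is a diffeomorphism. The forward map is smooth, being the restriction of the (bilinear) t-product. For $\phi^{-1}$, Proposition \ref{prop:t_pd_for_retraction} supplies the explicit formulas $\mathcal H = (\mathcal A^{\top}\tprod\mathcal A)^{1/2}$ and $\mathcal P = \mathcal A\tprod\mathcal H^{-1}$; passing to the Fourier domain, each frontal slice $\hat H^{(i)} = ((\hat A^{(i)})^H\hat A^{(i)})^{1/2}$ is the Hermitian positive-definite square root, a smooth (indeed analytic) function on the open cone of Hermitian positive-definite matrices, while the DFT/IDFT are linear. Hence $\mathcal H$, $\mathcal H^{-1}$, and therefore $\mathcal P = \mathcal A\tprod\mathcal H^{-1}$, depend smoothly on $\mathcal A$, so $\phi^{-1}$ is $C^\infty$. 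I expect this smoothness of the tubal square root — confirming both that it lands in $\TSPP{p}{p}{l}$ and that it varies smoothly — to be the main technical point, although it reduces to the classical smoothness of the matrix square root on positive-definite matrices once one argues slicewise in the Fourier domain.

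Finally I would apply Lemma \ref{lemm:retraciton} to conclude $R_{\MC{X}}(\mathcal V) = \pi_1(\phi^{-1}(\MC{X}+\mathcal V))$ and identify this with the stated closed form. Since $\mathcal V\in T_{\MC{X}}\st{n,p,l}$ forces $\MC{X}^{\top}\tprod\mathcal V + \mathcal V^{\top}\tprod\MC{X} = \mathcal O$, I would compute
$$(\MC{X}+\mathcal V)^{\top}\tprod(\MC{X}+\mathcal V) = \MC{X}^{\top}\tprod\MC{X} + \MC{X}^{\top}\tprod\mathcal V + \mathcal V^{\top}\tprod\MC{X} + \mathcal V^{\top}\tprod\mathcal V = \mathcal I + \mathcal V^{\top}\tprod\mathcal V.$$
By Lemma \ref{lem:positive definite} this lies in $\TSPP{p}{p}{l}$, so $\MC{X}+\mathcal V\in L^{-1}(\mathbb{C}_*^{n\times p \times l})$ and its t-PD is well defined and unique. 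Proposition \ref{prop:t_pd_for_retraction} then gives $\pi_1(\phi^{-1}(\MC{X}+\mathcal V)) = (\MC{X}+\mathcal V)\tprod(\mathcal I + \mathcal V^{\top}\tprod\mathcal V)^{-1/2}$, which is precisely \eqref{eq:t-pd based retraction}, completing the proof.
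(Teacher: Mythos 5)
Your proposal is correct and follows essentially the same route as the paper's proof: the same map $\phi:\st{n,p,l}\times\TSPP{p}{p}{l}\to L^{-1}\bigxiaokuohao{\mathbb{C}_*^{n\times p\times l}}$ justified by Proposition \ref{prop:sym equal} and Theorem \ref{th:t_pd}, the same openness and dimension bookkeeping feeding into Lemma \ref{lemm:retraciton}, and the same closing computation $(\MC{X}+\mathcal{V})^{\top}\tprod(\MC{X}+\mathcal{V})=\mathcal{I}+\mathcal{V}^{\top}\tprod\mathcal{V}$ with Lemma \ref{lem:positive definite} and Proposition \ref{prop:t_pd_for_retraction}. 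The only difference is that you spell out the smoothness of $\phi^{-1}$ via the Fourier-slicewise Hermitian positive-definite square root, a point the paper asserts more tersely from the explicit inverse formula.
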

	\begin{proof}
		Proposition \ref{prop:sym equal} shows that if $\mathcal A \in L^{-1}\bigxiaokuohao{\mathbb{C}_*^{n\times p \times l}}$, then $\mathcal A^{\top}\tprod\mathcal A \in\TSPP{p}{p}{l}$.
		Hence the t-PD of  $\mathcal A \in L^{-1}\bigxiaokuohao{\mathbb{C}_*^{n\times p \times l}}$ is unique due to Theorem \ref{th:t_pd}. The inverse of t-PD is a mapping
		\begin{equation*}
			\phi:\st{n,p,l} \times \TSPP{p}{p}{l} \rightarrow  L^{-1}(\mathbb{C}_*^{n\times p \times l}):(\MC{X}, \mathcal H)\mapsto \MC{X}\ast \mathcal H = \mathcal A.
		\end{equation*}
		% 	where $L^{-1}(\mathbb{C}_*^{n\times p \times l}) = \bigdakuohao{L^{-1}(\hat{\MC{A}})|\hat{\MC{A}}\in \TC{n}{p}{l}_*}$.
		Since $L$ is a continuous function and $\mathbb{C}_*^{n\times p \times l}$ is a open set in $\mathbb{C}^{n\times p \times l}$, it follows that the preimage $L^{-1}(\mathbb{C}_*^{n\times p \times l})$ is open in $\T{n}{p}{l}$.
		Theorem \ref{Th:tensor_stiefel_manifold} shows that
		\begin{equation*}
			\operatorname{dim}(\st{n,p,l})+	\operatorname{dim}( \TSPP{p}{p}{l})=\operatorname{dim}(\mathbb{R}^{n\times p \times l })-\operatorname{dim}( \TSPP{p}{p}{l})+\operatorname{dim}( \TSPP{p}{p}{l})=\operatorname{dim}(\mathbb{R}^{n\times p \times l }).
		\end{equation*} The identity tensor is the neutral element. 
		By Proposition \ref{prop:t_pd_for_retraction}, if $\mathcal A^{\top}\tprod\mathcal A \in \TSPP{p}{p}{l}$, then  $\phi^{-1}(\mathcal A) = (\mathcal A\ast(\mathcal A^{\top}\ast \mathcal A)^{-\frac{1}{2}},( \mathcal A^{\top}\ast \mathcal A)^{\frac{1}{2}})$ which shows that $\phi$ is a diffeomorphism, and thus
		\begin{eqnarray*}
			R_{\MC{X}}(\mathcal{V})
			&=& \pi_1\bigxiaokuohao{\phi^{-1}(\MC{X}+\mathcal{V})} \\
			&=& (\MC{X}+\mathcal{V})\ast((\MC{X}+\mathcal{V})^{\top}\ast (\MC{X}+\mathcal{V}))^{-\frac{1}{2}} \\
			&=& (\MC{X}+\mathcal{V})\ast\bigxiaokuohao{\mathcal{I}+\mathcal{V}^{\top}\ast\mathcal{V}+\MC{X}^{\top}\ast \mathcal{V} +\mathcal{V}^{\top}\ast \MC{X}}^{-\frac{1}{2}}\\
			&=& (\MC{X}+\mathcal{V})\ast(\mathcal{I}+\mathcal{V}^{\top}\ast\mathcal{V})^{-\frac{1}{2}}.
		\end{eqnarray*}
		This is well defined. To see this, 
		it follows from \ref{remark:DFT equality} that 
		$$
		\MC{A}^{\top}\tprod\MC{A}=
		(\MC{X}+\mathcal{V})^{\top}\tprod(\MC{X}+\mathcal{V}) 
		= \mathcal{I} + \mathcal{V}^{\top}\ast\mathcal{V}
		% 	\in \TSPP{p}{p}{l}.
		\Leftrightarrow
		(\hat{A}^{(i)})^H\hat{A}^{(i)}
		% 	= (\hat{X}^{(i)}+\hat{V}^{(i)})^{H}(\hat{X}^{(i)}+\hat{V}^{(i)}) 
		= {I}_p + (\hat{V}^{(i)})^H\hat{V}^{(i)}, i\in [l].
		$$
		confirming that $\mathcal A ={\MC{X}}+{\MC{V}} \in L^{-1}\bigxiaokuohao{\mathbb{C}_*^{n\times p \times l}}$.
	\end{proof}
	We can also construct retractions directly from the Definition \ref{def:retraction}.
	\begin{theorem}[t-Cayley based retraction]\label{cayley based retraction}
		The retraction on $\st{n,p,l}$ based on  t-Cayley transform is
		\begin{equation}\label{eq:cayley based retraction}
			R_{\MC{X}}(\mathcal{V}) = \bigxiaokuohao{\mathcal{I} - \frac{1}{2}\mathcal{W}_{\mathcal{V}}}^{-1}\ast\bigxiaokuohao{\mathcal{I} + \frac{1}{2}\mathcal{W}_{\mathcal{V}}}\ast\MC{X},
		\end{equation}
		where $\MC{X}\in \st{n,p,l}, \MC{V}\in T_{\MC{X}}\st{n,p,l}$, $\mathcal{W}_{\mathcal{V}} = \MC{P}\ast \mathcal{V}\ast \MC{X}^{\top} - \MC{X}\ast \mathcal{V}^{\top}\ast\MC{P}\in \TSkew{n}{n}{l}$ and $\MC{P} = \mathcal{I} - \frac{1}{2}\MC{X}\ast\MC{X}^{\top}$.
	\end{theorem}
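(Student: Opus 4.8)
The plan is to verify that the map $R_{\MC{X}}$ in \eqref{eq:cayley based retraction} meets the two defining conditions of a retraction in Definition \ref{def:retraction}, after first checking that it is well defined (the relevant tensor inverse exists) and that its image lies in $\st{n,p,l}$. I would organize the argument around four claims: (a) $\mathcal{W}_{\mathcal{V}}$ is skew-symmetric; (b) $\mathcal{I}-\frac{1}{2}\mathcal{W}_{\mathcal{V}}$ is invertible and the t-Cayley factor $\mathcal{Q}:=\bigxiaokuohao{\mathcal{I}-\frac{1}{2}\mathcal{W}_{\mathcal{V}}}^{-1}\ast\bigxiaokuohao{\mathcal{I}+\frac{1}{2}\mathcal{W}_{\mathcal{V}}}$ is orthogonal, so that $R_{\MC{X}}(\mathcal{V})=\mathcal{Q}\ast\MC{X}\in\st{n,p,l}$; (c) $R_{\MC{X}}$ is smooth; and (d) the centering and local-rigidity conditions hold.

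For (a), since $\MC{P}=\mathcal{I}-\frac{1}{2}\MC{X}\ast\MC{X}^{\top}$ is symmetric, transposing $\mathcal{W}_{\mathcal{V}}$ and using $(\mathcal{A}\ast\mathcal{B})^{\top}=\mathcal{B}^{\top}\ast\mathcal{A}^{\top}$ gives $\mathcal{W}_{\mathcal{V}}^{\top}=-\mathcal{W}_{\mathcal{V}}$ at once. For (b) I would pass to the Fourier domain: by Remark \ref{remark:DFT equality}, skew-symmetry of $\mathcal{W}_{\mathcal{V}}$ is equivalent to each frontal slice $\hat{W}^{(i)}_{\mathcal{V}}$ being skew-Hermitian. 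A skew-Hermitian matrix has purely imaginary eigenvalues, so every eigenvalue of $I_n-\frac{1}{2}\hat{W}^{(i)}_{\mathcal{V}}$ has real part $1$ and is nonzero; by Proposition \ref{prop:bcirc_properties} this makes $\mathcal{I}-\frac{1}{2}\mathcal{W}_{\mathcal{V}}$ invertible, so $\mathcal{Q}$ is well defined. The classical matrix Cayley transform of a skew-Hermitian matrix is unitary, hence each $\hat{Q}^{(i)}=(I_n-\frac{1}{2}\hat{W}^{(i)}_{\mathcal{V}})^{-1}(I_n+\frac{1}{2}\hat{W}^{(i)}_{\mathcal{V}})$ is unitary, which by the Fourier-domain characterization of orthogonal tensors means $\mathcal{Q}$ is orthogonal. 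Then $(\mathcal{Q}\ast\MC{X})^{\top}\ast(\mathcal{Q}\ast\MC{X})=\MC{X}^{\top}\ast\mathcal{Q}^{\top}\ast\mathcal{Q}\ast\MC{X}=\MC{X}^{\top}\ast\MC{X}=\mathcal{I}$, so $R_{\MC{X}}(\mathcal{V})\in\st{n,p,l}$. Claim (c) then follows because $R_{\MC{X}}$ is a composition of t-products, transposes, and a tensor inverse that exists on the whole tangent space by (b); slice-wise in the Fourier domain the inverse is a rational, hence smooth, function of the entries.

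For (d), the centering property is immediate: $\mathcal{V}=\mathcal{O}$ forces $\mathcal{W}_{\mathcal{O}}=\mathcal{O}$, so $R_{\MC{X}}(\mathcal{O})=\mathcal{I}^{-1}\ast\mathcal{I}\ast\MC{X}=\MC{X}$. For local rigidity I would exploit $\mathcal{W}_{t\mathcal{V}}=t\mathcal{W}_{\mathcal{V}}$ and differentiate $R_{\MC{X}}(t\mathcal{V})$ at $t=0$; with the tensor analogue of $\frac{\mathrm{d}}{\mathrm{d}t}\mathcal{A}(t)^{-1}=-\mathcal{A}(t)^{-1}\ast\mathcal{A}'(t)\ast\mathcal{A}(t)^{-1}$ (valid slice-wise in the Fourier domain), this yields $\mathrm{D}R_{\MC{X}}(0_{\MC{X}})[\mathcal{V}]=\mathcal{W}_{\mathcal{V}}\ast\MC{X}$. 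Expanding $\mathcal{W}_{\mathcal{V}}\ast\MC{X}$ and repeatedly applying $\MC{X}^{\top}\ast\MC{X}=\mathcal{I}$ (together with $\MC{X}\ast\MC{X}^{\top}\ast\MC{X}=\MC{X}$) reduces it to $\mathcal{V}-\frac{1}{2}\MC{X}\ast\bigxiaokuohao{\MC{X}^{\top}\ast\mathcal{V}+\mathcal{V}^{\top}\ast\MC{X}}$; since $\mathcal{V}\in T_{\MC{X}}\st{n,p,l}$ satisfies $\MC{X}^{\top}\ast\mathcal{V}+\mathcal{V}^{\top}\ast\MC{X}=\mathcal{O}$ (the kernel description of the tangent space from the proof of Theorem \ref{th:dim}), the correction term vanishes and $\mathrm{D}R_{\MC{X}}(0_{\MC{X}})[\mathcal{V}]=\mathcal{V}$, as required.

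The main obstacle will be the local-rigidity computation in (d): it requires the correct tensor inverse-derivative formula and careful bookkeeping of the t-products in $\mathcal{W}_{\mathcal{V}}\ast\MC{X}$, where the final cancellation hinges precisely on the tangent condition. The orthogonality step (b) is the conceptual crux, but it reduces cleanly to the classical skew-Hermitian/Cayley fact once the problem is diagonalized slice-wise in the Fourier domain; the remaining verifications are routine.
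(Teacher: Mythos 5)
Your proposal is correct and follows essentially the same route as the paper's proof: establish invertibility of $\mathcal{I}-\frac{1}{2}\mathcal{W}_{\mathcal{V}}$ and orthogonality of the Cayley factor so that $R_{\MC{X}}(\mathcal{V})\in\st{n,p,l}$, then verify $R_{\MC{X}}(\MC{O})=\MC{X}$ and obtain $\mathrm{D}R_{\MC{X}}(0_{\MC{X}})[\mathcal{V}]=\mathcal{W}_{\mathcal{V}}\ast\MC{X}=\mathcal{V}-\frac{1}{2}\MC{X}\ast\bigxiaokuohao{\MC{X}^{\top}\ast\mathcal{V}+\mathcal{V}^{\top}\ast\MC{X}}=\mathcal{V}$ by differentiating the curve $t\mapsto R_{\MC{X}}(t\mathcal{V})$ and invoking the tangent-space condition, exactly as in the paper. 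The only cosmetic difference is that you justify invertibility and orthogonality slice-wise in the Fourier domain via the skew-Hermitian/Cayley fact, whereas the paper argues at the tensor level through Lemma \ref{lem:positive definite} (which is itself proved via Fourier slices) together with the commutation identity, so the two arguments coincide in substance.
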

	\begin{proof}
		According to Lemma \ref{lem:positive definite}, for any $\mathcal{W}_{\mathcal{V}}\in \TSkew{n}{n}{l}$, it holds that
		$$\bigxiaokuohao{\mathcal{I} - \frac{1}{2}\mathcal{W}_{\mathcal{V}}}^{\top}\tprod\bigxiaokuohao{\mathcal{I} - \frac{1}{2}\mathcal{W}_{\mathcal{V}}} 
		%	= \bigxiaokuohao{\mathcal{I} + \frac{1}{2}\mathcal{W}_{\mathcal{V}}}\tprod\bigxiaokuohao{\mathcal{I} - \frac{1}{2}\mathcal{W}_{\mathcal{V}}}
		=\mathcal{I} + \frac{1}{4}\mathcal{W}_{\mathcal{V}}^{\top}\tprod\mathcal{W}_{\mathcal{V}}\in \TSPP{n}{n}{l},$$ namely, $\mathcal{I} - \frac{1}{2}\mathcal{W}_{\mathcal{V}}$ is invertible.
		Since $\mathcal{W}_{\mathcal{V}}$ is  skew-symmetric and $(\MC{I}-\MC{A})*(\MC{I}+\MC{B}) = (\MC{I}+\MC{B})*(\MC{I}-\MC{A})$ for all $\MC{A},\MC{B}\in \T{n}{p}{l}$, we obtain $$\bigxiaokuohao{\mathcal{I} - \frac{1}{2}\mathcal{W}_{\mathcal{V}}}^{-1}\ast\bigxiaokuohao{\mathcal{I} + \frac{1}{2}\mathcal{W}_{\mathcal{V}}}\in \st{n,n,l},$$ and thus $R_{\MC{X}}(\mathcal{V})\in \st{n,p,l} $.  Consider the following curve on $\st{n,p,l} $: $$\mathcal{F}(t) = R_{\MC{X}}(t\mathcal{V})= \bigxiaokuohao{\mathcal{I} - \frac{t}{2}\mathcal{W}_{\mathcal{V}}}^{-1}\ast\bigxiaokuohao{\mathcal{I} + \frac{t}{2}\mathcal{W}_{\mathcal{V}}}\ast\MC{X},$$  with $\mathcal{F}(0) = \MC{X}$. Differentiating both sides of the following equation 
		$$\bigxiaokuohao{\mathcal{I} - \frac{t}{2}\mathcal{W}_{\mathcal{V}}}\ast\mathcal{F}(t) = \bigxiaokuohao{\mathcal{I} + \frac{t}{2}\mathcal{W}_{\mathcal{V}}}\ast\MC{X}$$ with respect to  $t$  gives  $$\dot{{\mathcal{F}}}(0) =  \mathcal{W}_{\mathcal{V}}\ast \MC{X}  =\mathcal{V}-\frac{1}{2}\MC{X}\tprod\bigxiaokuohao{\MC{X}^{\top}\tprod\MC{V}+\MC{V}^{\top}\tprod\MC{X}}=\mathcal{V}$$ for all $\MC{V}\in T_{\MC{X}}\st{n,p,l} $. Hence $R_{\MC{X}}(\MC{O}_{\MC{X}}) = \MC{X}$ and $$D R_{\MC{X}}(\MC{O}_{\MC{X}})[\mathcal{V}] = \frac{\mathrm{d}}{\mathrm{d}t}R_{\MC{X}}(\MC{O}_{\MC{X}}+t\mathcal{V})\big|_{t=0} = \mathcal{V},$$ which shows that $R_{\MC{X}}(\mathcal{V})$ is a retraction.
	\end{proof}
	Based on t-exponential introduced in Subsection \ref{subsect:t-exp}, we get the following retraction.
	\begin{theorem}[t-exponential based retraction]\label{t-exp based retraction}
		The retraction on $\st{n,p,l}$ based on     t-exponential is
		\[R_{\MC{X}}(\mathcal{V}) = (\MC{X} \,\,\,\, \mathcal{Q}) *\ex{
			\begin{pmatrix}
				\begin{smallmatrix}
					\MC{X}^{\top}*\mathcal{V} & -\mathcal{R}^{\top}\\
					\mathcal{R} & \mathcal{O}
				\end{smallmatrix}
		\end{pmatrix}}*
		\begin{pmatrix}
			\begin{smallmatrix}
				\mathcal{I}\\
				\mathcal{O}
			\end{smallmatrix}
		\end{pmatrix},
		\]
		where \(\mathcal{Q}*\mathcal{R}=\MC{C}\) is the t-QR decomposition of 
		\(\mathcal{C} = (\mathcal{I}-\MC{X}*\MC{X}^{\top})*\mathcal{V}\) if $\hat{\MC{C}}\in \TC{n}{p}{l}_*$
		% when $\hat{\MC{C}}\in \TC{n}{p}{l}_*$
		; alternatively, if  $\hat{\MC{C}}\notin \TC{n}{p}{l}_*$, then
		%$\hat{\MC{C}}$
		%% \(\mathcal{C} = (\mathcal{I}_{nnl}-\MC{X}*\MC{X}^{\top})*\mathcal{V}\)
		%is not of f-full rank $p$, 
		$\mathcal{Q} \in \R^{n \times (n-p) \times l}, \mathcal{R} \in \R^{(n-p) \times p \times l},$
		in such a way that \(\mathcal{Q}\) is partially orthogonal and \(\MC{X}^{\top}*\mathcal{Q}=\MC{O}\).
		%Note \(\mathcal{R}\) is not required to be f-upper triangular.
	\end{theorem}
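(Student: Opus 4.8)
The plan is to verify the two defining properties in Definition~\ref{def:retraction}, after first checking that $R_{\MC{X}}$ is well defined and lands in $\st{n,p,l}$. Abbreviate $\MC{A}:=\MC{X}^{\top}\tprod\MC{V}$, $\MC{C}:=(\mathcal I-\MC{X}\tprod\MC{X}^{\top})\tprod\MC{V}$, $\mathcal E:=\bigl(\begin{smallmatrix}\mathcal I\\ \mathcal O\end{smallmatrix}\bigr)$, and let $\mathcal M_{\MC{V}}:=\bigl(\begin{smallmatrix}\MC{A}&-\mathcal R^{\top}\\ \mathcal R&\mathcal O\end{smallmatrix}\bigr)$ be the block tensor inside the exponential. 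In the full-rank branch $\hat{\MC{C}}\in\TC{n}{p}{l}_*$ the pair $(\mathcal Q,\mathcal R)$ is the t-QR of $\MC{C}$ (Theorem~\ref{th:t_qr}); in the branch $\hat{\MC{C}}\notin\TC{n}{p}{l}_*$ I take $\mathcal Q$ any partially orthogonal completion with $\MC{X}^{\top}\tprod\mathcal Q=\mathcal O$ and set $\mathcal R:=\mathcal Q^{\top}\tprod\MC{C}$; since the columns of $\MC{C}$ lie in the range of $\mathcal Q$, the identity $\mathcal Q\tprod\mathcal R=\MC{C}$ holds in both branches, along with $\mathcal Q^{\top}\tprod\mathcal Q=\mathcal I$ and $\MC{X}^{\top}\tprod\mathcal Q=\mathcal O$.

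Next I would record the two algebraic facts that force orthogonality. From $\MC{X}^{\top}\tprod\MC{C}=(\MC{X}^{\top}-\MC{X}^{\top}\tprod\MC{X}\tprod\MC{X}^{\top})\tprod\MC{V}=\mathcal O$ and $\mathcal Q\tprod\mathcal R=\MC{C}$ one gets $\MC{X}^{\top}\tprod\mathcal Q=\mathcal O$, so $[\MC{X}\ \mathcal Q]^{\top}\tprod[\MC{X}\ \mathcal Q]=\mathcal I$. Because $\MC{V}\in T_{\MC{X}}\st{n,p,l}$, Theorem~\ref{th:dim} gives $\MC{A}+\MC{A}^{\top}=\MC{X}^{\top}\tprod\MC{V}+\MC{V}^{\top}\tprod\MC{X}=\mathcal O$, i.e. $\MC{A}$ is skew-symmetric, hence $\mathcal M_{\MC{V}}^{\top}=-\mathcal M_{\MC{V}}$. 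By Propositions~\ref{prop:exp tranpose} and~\ref{prop:exp addition}, $(\ex{\mathcal M_{\MC{V}}})^{\top}\tprod\ex{\mathcal M_{\MC{V}}}=\ex{-\mathcal M_{\MC{V}}}\tprod\ex{\mathcal M_{\MC{V}}}=\ex{\mathcal O}=\mathcal I$, so $\ex{\mathcal M_{\MC{V}}}$ is orthogonal. Writing $\mathcal Y:=\ex{\mathcal M_{\MC{V}}}\tprod\mathcal E$, one has $R_{\MC{X}}(\MC{V})^{\top}\tprod R_{\MC{X}}(\MC{V})=\mathcal Y^{\top}\tprod[\MC{X}\ \mathcal Q]^{\top}\tprod[\MC{X}\ \mathcal Q]\tprod\mathcal Y=\mathcal Y^{\top}\tprod\mathcal Y=\mathcal I$, so $R_{\MC{X}}(\MC{V})\in\st{n,p,l}$. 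Property~(i) is then immediate: at $\MC{V}=\mathcal O$ we have $\MC{C}=\mathcal O$, $\mathcal R=\mathcal O$, $\MC{A}=\mathcal O$, whence $\mathcal M_{\mathcal O}=\mathcal O$, $\ex{\mathcal M_{\mathcal O}}=\mathcal I$, and $R_{\MC{X}}(\mathcal O)=[\MC{X}\ \mathcal Q]\tprod\mathcal E=\MC{X}$, independently of the (arbitrary) $\mathcal Q$.

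For property~(ii) I would differentiate along the ray $t\mapsto t\MC{V}$. Replacing $\MC{V}$ by $t\MC{V}$ scales $\MC{C}$ to $t\MC{C}$, so for $t>0$ one may keep $\mathcal Q$ fixed and take $\mathcal R(t)=t\mathcal R$, giving $\mathcal M_{t\MC{V}}=t\,\mathcal M_{\MC{V}}$ and the real-analytic curve $\gamma(t):=R_{\MC{X}}(t\MC{V})=[\MC{X}\ \mathcal Q]\tprod\ex{t\mathcal M_{\MC{V}}}\tprod\mathcal E$. Since $\mathcal M_{\MC{V}}\tprod\mathcal E=\bigl(\begin{smallmatrix}\MC{A}\\ \mathcal R\end{smallmatrix}\bigr)$, Proposition~\ref{prop:exp der} yields
\[
\dot\gamma(0)=[\MC{X}\ \mathcal Q]\tprod\mathcal M_{\MC{V}}\tprod\mathcal E=\MC{X}\tprod\MC{X}^{\top}\tprod\MC{V}+\mathcal Q\tprod\mathcal R=\MC{X}\tprod\MC{X}^{\top}\tprod\MC{V}+\MC{C}=\MC{V},
\]
the last equality using $\mathcal Q\tprod\mathcal R=\MC{C}=(\mathcal I-\MC{X}\tprod\MC{X}^{\top})\tprod\MC{V}$. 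This establishes $\mathrm DR_{\MC{X}}(\mathcal O_{\MC{X}})[\MC{V}]=\MC{V}$.

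The main obstacle is the well-definedness and smoothness of the curve through $t=0$, since the t-QR factor is unique only up to a Fourier-domain sign/phase per slice, and in the branch $\hat{\MC{C}}\notin\TC{n}{p}{l}_*$ the completion $\mathcal Q$ is genuinely non-unique; in particular, for $t<0$ the positive-diagonal convention forces $\mathcal Q(t)=\mathcal Q\tprod\mathcal D_t$ and $\mathcal R(t)=\mathcal D_t^{-1}\tprod(t\mathcal R)$ for a sign tensor $\mathcal D_t$. I would resolve this by a conjugation argument: with $\mathcal G:=\operatorname{Diag}(\mathcal I,\mathcal D_t)$ (orthogonal) one checks $\mathcal M_{t\MC{V}}=\mathcal G^{\top}\tprod(t\,\mathcal M_{\MC{V}})\tprod\mathcal G$ and $[\MC{X}\ \mathcal Q(t)]=[\MC{X}\ \mathcal Q]\tprod\mathcal G$, so that by Proposition~\ref{prop:exp decomp} the factors $\mathcal G$ cancel, $\mathcal G\tprod\mathcal E=\mathcal E$, and $R_{\MC{X}}(t\MC{V})=\gamma(t)$ for \emph{all} $t$ near $0$. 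Thus $\gamma$ is an unambiguous smooth curve and the derivative above is genuinely two-sided; moreover every quantity entering $\dot\gamma(0)$ depends only on $\MC{X}\tprod\MC{X}^{\top}$ and on the product $\mathcal Q\tprod\mathcal R=\MC{C}$, both smooth in $\MC{V}$, so the non-uniqueness of the individual factors is harmless.
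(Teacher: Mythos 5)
Your proposal is correct and, for most of the argument, runs parallel to the paper's proof: the same verification that $\MC{X}^{\top}\tprod\mathcal{Q}=\MC{O}$ in both branches (via invertibility of $\mathcal{R}$ in the f-full-rank case, via the explicit $\MC{X}_{\perp}$-construction otherwise, where the paper likewise takes $\mathcal{Q}=\MC{X}_{\perp}$, $\mathcal{R}=\mathcal{B}$), the same use of skew-symmetry of the block tensor together with Propositions \ref{prop:exp tranpose} and \ref{prop:exp addition} to get $R_{\MC{X}}(\mathcal{V})\in\st{n,p,l}$, the same evaluation $R_{\MC{X}}(\MC{O})=\MC{X}$, and the same computation of $\dot\gamma(0)=\MC{X}\tprod\MC{X}^{\top}\tprod\mathcal{V}+\mathcal{Q}\tprod\mathcal{R}=\mathcal{V}$ via Proposition \ref{prop:exp der}. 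Where you genuinely diverge is the treatment of well-definedness and smoothness. The paper's key device is to apply Proposition \ref{prop:exp decomp} to pull $(\MC{X}\ \,\mathcal{Q})$ inside the exponential and collapse the formula to the factor-free closed form $R_{\MC{X}}(\mathcal{V})=\ex{\mathcal{V}\tprod\MC{X}^{\top}+\MC{X}\tprod\mathcal{V}^{\top}\tprod(\MC{X}\tprod\MC{X}^{\top}-\mathcal{I})}\tprod\MC{X}$, which simultaneously shows that the value is independent of the choice of $(\mathcal{Q},\mathcal{R})$ (and of the branch) and that $R$ is a smooth map on the tangent bundle, since it involves only the t-product and the t-exponential of $\MC{X}$ and $\mathcal{V}$ (Proposition \ref{prop:exp smooth}). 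Your conjugation argument with $\mathcal{G}=\operatorname{Diag}(\mathcal{I},\mathcal{D}_t)$ is a valid alternative for the invariance question --- it correctly handles the sign/phase ambiguity of t-QR for $t<0$ and, since any two admissible $\mathcal{Q}$'s span the same orthogonal complement of $\operatorname{span}(\hat{X}^{(i)})$ slice-wise, it in fact covers all choices within a branch --- and it buys you a genuinely two-sided derivative along rays, a point the paper leaves implicit. However, note one shortfall: Definition \ref{def:retraction} requires $R$ to be smooth on $T\st{n,p,l}$, not merely along rays $t\mapsto t\mathcal{V}$, and your closing remark only establishes smoothness of the quantities entering $\dot\gamma(0)$. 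Your own observation that everything depends only on $\MC{X}\tprod\MC{X}^{\top}$ and on the product $\mathcal{Q}\tprod\mathcal{R}=\MC{C}$ is exactly the seed of the paper's closed form; one further application of Proposition \ref{prop:exp decomp}, as in the paper, converts your ray-wise argument into the manifestly smooth $\mathcal{Q}$-free expression and closes this remaining gap.
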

	\begin{proof}
		When $\hat{\MC{C}}\in \TC{n}{p}{l}_*$,  \(\hat{{R}}^{(i)}\) are invertible and therefore \(\mathcal{R}\) is  invertible and 
		\[\MC{X}^{\top}*\mathcal{Q}=\MC{X}^{\top}*\mathcal{C}*\mathcal{R}^{-1}=\MC{O},\] since
		$\MC{X}^{\top}*\mathcal{C}=\MC{X}^{\top}*(\mathcal{V}-\MC{X}*\MC{X}^{\top}*\mathcal{V})
		=\MC{X}^{\top}*\mathcal{V}-\MC{X}^{\top}*\MC{X}*\MC{X}^{\top}*\mathcal{V}=\MC{O}.$
		
		Now consider the case that when 
		$\hat{\MC{C}}$
		is not of f-full rank $p$. Such a choice is always available as we show below. Recall that every \(\mathcal{V} \in T_{\MC{X}}\mathrm{St}(n,p,l)\) can be written in the form
		\(\mathcal{V}=\MC{X}*\mathcal{A}+\MC{X}_{\perp}*\mathcal{B}\), where
		\(\MC{X}_{\perp} \in \R^{n \times (n-p) \times l}\) is partially orthogonal and \(\MC{X}^{\top}*\MC{X}_{\perp}=\MC{O}\).
		According to Remark \ref{remark:DFT equality}, for each \(i\in[l]\), the complex matrix \(\hat{X}_{\perp}^{(i)}\) is partially unitary and 
		\((\hat{X}^{(i)})^H\hat{X}_{\perp}^{(i)}=O_{p\times (n-p)}\).
		Note that
		\begin{equation*}
			\mathcal{C}	=\mathcal{V}-\MC{X}*\MC{X}^{\top}*\mathcal{V}
			=\mathcal{V}-\MC{X}*\MC{X}^{\top}*(\MC{X}*\mathcal{A}+\MC{X}_{\perp}*\mathcal{B})
			=\mathcal{V}-\MC{X}*\mathcal{A}
			=\MC{X}_{\perp}*\mathcal{B}.
		\end{equation*}
		Therefore, using Remark \ref{remark:DFT equality} again, for \(i\in[l]\), we have \(\hat{{C}}^{(i)}=\hat{X}_{\perp}^{(i)}\hat{{B}}^{(i)}\) and consequently
		$\mathrm{span}\,\hat{{C}}^{(i)} \subset \mathrm{span}\,\hat{X}_{\perp}^{(i)},$
		with \(\mathrm{dim}\bigxiaokuohao{\mathrm{span}\,\hat{X}_{\perp}^{(i)}}=n-p\).
		Pick an orthonormal basis of \(\mathrm{span}\,\hat{X}_{\perp}^{(i)}\) and 
		form the partially unitary matrix \(\hat{Q}^{(i)} \in \mathbb{C}^{n \times (n-p)}\). Then we have 
		\((\hat{X}^{(i)})^H\hat{Q}^{(i)} = O\) since the column vectors of \(\hat{X}_i\) are orthogonal to any vector in
		\(\mathrm{span}\,\hat{X}_{\perp}^{(i)}\).
		Since \(\mathrm{span}\,\hat{{C}}_i \subset \mathrm{span}\,\hat{X}_{\perp}^{(i)}\), we have
		\(\hat{{C}}_i=\hat{Q}^{(i)}\hat{{R}}^{(i)}\) for some \(\hat{{R}}^{(i)} \in \mathbb{C}^{(n-p)\times p}\).
		Applying inverse DFT to the third-order tensors \(
		\Fold{\hat{Q}^{(i)}}
		%\mathrm{fold}(\begin{pmatrix}
			%	\hat{\mathcal{U}}^{(1)}\\
			%	\vdots\\
			%	\hat{\mathcal{U}}^{(l)}
			%\end{pmatrix})
			\)
			and  \(
			\Fold{\hat{{R}}^{(i)}}
			%\mathrm{fold}(\begin{pmatrix}
				%	\hat{\mathcal{R}}^{(1)}\\
				%	\vdots\\
				%	\hat{\mathcal{R}}^{(l)}
				%\end{pmatrix})
				\),
				we obtain a partially orthogonal tensor \(\mathcal{Q} \in \R^{n \times (n-p) \times l}\) with
				\(\MC{X}^{\top}*\mathcal{Q}=\MC{O}\) and a tensor \(\mathcal{R}\in \R^{(n-p)\times p \times l}\) with \(\mathcal{Q}*\mathcal{R}=\mathcal{C}\).
				Indeed we can simply take \(\mathcal{Q}=\MC{X}_{\perp}\) and \(\mathcal{R}=\mathcal{B}\).  
				
				In conclusion, there always holds that \(\MC{X}^{\top}*\mathcal{Q}=\MC{O}\) regardless of whether $\hat{\MC{C}}$ is f-full rank or not.
				Now we prove that the exponential retraction on \(\mathrm{St}(n,p,l)\) as defined above is indeed a retraction.
				
				First of all, we prove that \(R_{\MC{X}}(\mathcal{V}) \in \mathrm{St}(n,p,l)\).
				Denotes $\mathcal{A}=\begin{pmatrix}
					\begin{smallmatrix}
						\MC{X}^{\top}*\mathcal{V} & -\mathcal{R}^{\top}\\
						\mathcal{R}& \MC{O}
					\end{smallmatrix}
				\end{pmatrix}$. Since $\MC{A}$ is a skew-symmetric tensor, it follows from Proposition \ref{prop:exp addition} that
				$\ex{\mathcal{A}^{\top}}*\ex{\mathcal{A}}=\ex{-\mathcal{A}}*\ex{\mathcal{A}}=\ex{-\mathcal{A}+\mathcal{A}}=\mathcal{I}.$ 
				Thus we have
				\begin{eqnarray*}
					R_{\MC{X}}(\mathcal{V})^{\top}*R_{\MC{X}}(\mathcal{V})
					&=&\begin{pmatrix}
						\mathcal{I}&\MC{O}
					\end{pmatrix}*
					\ex{
						\MC{A}
						%			\begin{pmatrix}
							%			\MC{X}^T*\mathcal{V} & -\mathcal{R}^T\\
							%			\mathcal{R}& O
							%		\end{pmatrix}
					}^{\top}*\begin{pmatrix}
						\begin{smallmatrix}
							\MC{X}^{\top}\\
							\mathcal{Q}^{\top}
						\end{smallmatrix}
					\end{pmatrix}
					*
					(\MC{X}\,\,\,\,\,\mathcal{Q}) *\ex {
						\MC{A}
						%			\begin{pmatrix}
							%			\MC{X}^T*\mathcal{V} & -\mathcal{R}^T\\
							%			\mathcal{R}& O
							%		\end{pmatrix}
					}*
					\begin{pmatrix}
						\begin{smallmatrix}
							\mathcal{I}\\
							\MC{O}
						\end{smallmatrix}
					\end{pmatrix}\\
					&=&\begin{pmatrix}
						\mathcal{I}&\MC{O}
					\end{pmatrix}*
					\ex{
						\MC{A}
						%			\begin{pmatrix}
							%			\MC{X}^T*\mathcal{V} & -\mathcal{R}^T\\
							%			\mathcal{R}& O
							%		\end{pmatrix}
						^{\top}}
					*
					\begin{pmatrix}
						\begin{smallmatrix}
							\MC{X}^{\top}*\MC{X} & 	\MC{X}^{\top}*\mathcal{Q}\\
							\mathcal{Q}^{\top}*\MC{X}& \mathcal{Q}^{\top}*\mathcal{Q}
						\end{smallmatrix}
					\end{pmatrix}
					*
					\ex{
						\MC{A}
						%			\begin{pmatrix}
							%			\MC{X}^T*\mathcal{V} & -\mathcal{R}^T\\
							%			\mathcal{R}& O
							%		\end{pmatrix}
					}*
					\begin{pmatrix}
						\begin{smallmatrix}
							\mathcal{I}\\
							\MC{O}
						\end{smallmatrix}
					\end{pmatrix}\\
					&=&\begin{pmatrix}
						\mathcal{I}&\MC{O}
					\end{pmatrix}*
					\ex{
						\MC{A}
						%			\begin{pmatrix}
							%			\MC{X}^T*\mathcal{V} & -\mathcal{R}^T\\
							%			\mathcal{R}& O
							%		\end{pmatrix}
						^{\top}}
					*
					\ex{
						\MC{A}
						%			\begin{pmatrix}
							%			\MC{X}^T*\mathcal{V} & -\mathcal{R}^T\\
							%			\mathcal{R}& O
							%		\end{pmatrix}
					}*
					\begin{pmatrix}
						\begin{smallmatrix}
							\mathcal{I}\\
							\MC{O}
						\end{smallmatrix}
					\end{pmatrix}
					=
					%		\begin{pmatrix}
						%			\mathcal{I}&0
						%		\end{pmatrix}*\begin{pmatrix}
						%			\mathcal{I}\\
						%			O
						%		\end{pmatrix}
					\mathcal{I},
					%	\\
					%		=&\mathcal{I}
				\end{eqnarray*}
				%	where we have used the fact that for the skew-symmetric tensor 
				%	$\mathcal{A}=\begin{pmatrix}
					%		\MC{X}^T*\mathcal{V} & -\mathcal{R}^T\\
					%		\mathcal{R}& O
					%	\end{pmatrix}$ is skew-symmetric and 
				%	we have
				%	$\ex{\mathcal{A}^T}*\ex{\mathcal{A}}=\ex{-\mathcal{A}}*\ex{\mathcal{A}}=\ex{-\mathcal{A}+\mathcal{A}}=\mathcal{I}_{2p,2p,l}.$
				where the second equality comes from Proposition \ref{prop:exp tranpose}.
				We now derive an equivalent formula for the exponential retraction as follows:
				\begin{eqnarray*}
					R_{\MC{X}}(\mathcal{V}) 
					&=& (\MC{X}\,\,\mathcal{Q}) *\ex {
						\MC{A}
						%			\begin{pmatrix}
							%			\MC{X}^T*\mathcal{V} & -\mathcal{R}^T\\
							%			\mathcal{R}& O
							%		\end{pmatrix}
					}*
					\begin{pmatrix}
						\begin{smallmatrix}
							\mathcal{I}\\
							\MC{O}
						\end{smallmatrix}
					\end{pmatrix}\\
					&=&(\MC{X}\,\,\mathcal{Q}) *\ex {
						\MC{A}
						%			\begin{pmatrix}
							%			\MC{X}^T*\mathcal{V} & -\mathcal{R}^T\\
							%			\mathcal{R}& O
							%		\end{pmatrix}
					}*
					\begin{pmatrix}
						\begin{smallmatrix}
							\MC{X}^{\top}\\
							\mathcal{Q}^{\top}
						\end{smallmatrix}
					\end{pmatrix}*
					(\MC{X}\,\,\mathcal{Q})*
					\begin{pmatrix}
						\begin{smallmatrix}
							\mathcal{I}\\
							\MC{O}
						\end{smallmatrix}
					\end{pmatrix}\\
					&=&\ex{(\MC{X}\,\,\,\mathcal{Q})*
						\MC{A}
						%		\begin{pmatrix}
							%			\MC{X}^T*\mathcal{V} & -\mathcal{R}^T\\
							%			\mathcal{R}& O
							%		\end{pmatrix}
						*
						\begin{pmatrix}
							\begin{smallmatrix}
								\MC{X}^{\top}\\
								\mathcal{Q}^{\top}
							\end{smallmatrix}
						\end{pmatrix}
					}*
					(\MC{X}\,\,\,\mathcal{Q})*
					\begin{pmatrix}
						\begin{smallmatrix}
							\mathcal{I}\\
							\MC{O}
						\end{smallmatrix}
					\end{pmatrix}\\
					&=&\ex{(\MC{X}*\MC{X}^{\top}*\mathcal{V}+\mathcal{Q}*\mathcal{R},\,\,\,-\MC{X}*\mathcal{R}^{\top})*
						\begin{pmatrix}
							\begin{smallmatrix}
								\MC{X}^{\top}\\
								\mathcal{Q}^{\top}
							\end{smallmatrix}
					\end{pmatrix}}*\MC{X}\\
					&=&\ex{(\MC{X}*\MC{X}^{\top}*\mathcal{V}+(\mathcal{I}-\MC{X}*\MC{X}^{\top})*\mathcal{V},\,\,\,-\MC{X}*\mathcal{R}^{\top})
						*\begin{pmatrix}
							\begin{smallmatrix}
								\MC{X}^{\top}\\
								\mathcal{Q}^{\top}
							\end{smallmatrix}
					\end{pmatrix}}*\MC{X}\\
					&=&\ex{(\mathcal{V},\,\,\,-\MC{X}*\mathcal{R}^{\top})*
						\begin{pmatrix}
							\begin{smallmatrix}
								\MC{X}^{\top}\\
								\mathcal{Q}^{\top}
							\end{smallmatrix}
					\end{pmatrix}}*\MC{X}\\
					&=& \ex{\mathcal{V}*\MC{X}^{\top}-\MC{X}*(\mathcal{Q}*\mathcal{R})^{\top}}*\MC{X}\\
					&=& \ex {\mathcal{V}*\MC{X}^{\top}+\MC{X}*\mathcal{V}^{\top}*(\MC{X}*\MC{X}^{\top}-\mathcal{I})}*\MC{X},
				\end{eqnarray*} 
				where the third equality uses Proposition \ref{prop:exp decomp}.
				This equivalent expression for \(R:T\mathrm{St}(n,p,l) \to \mathrm{St}(n,p,l)\) proves its smoothness since it involves only the exponential and the t-product of \(\MC{X}\)
				and \(\mathcal{V}\),
				which from Proposition \ref{prop:exp smooth} are both smooth operations.
				
				We then verify that \(R_{\MC{X}}(\MC{O})=\MC{X}\). Note that in this case (\(\mathcal{V}=\MC{O}\)), we have
				\(\MC{X}^{\top}*\mathcal{V}=\MC{O}\) and
				\(\mathcal{Q}*\mathcal{R}=
				(\mathcal{I}-\MC{X}*\MC{X}^{\top})*\mathcal{V}=\MC{O}\). Therefore \(\mathcal{R} = \mathcal{Q}^{\top}*\mathcal{Q}*\mathcal{R}=
				\MC{O}\).
				Then it holds that
				\[
				\begin{aligned}
					R_\MC{X}(\MC{O})=& (\MC{X}\,\,\,\,\mathcal{Q})*\ex {\begin{pmatrix}
							\begin{smallmatrix}
								\MC{O} & \MC{O} \\
								\MC{O} & \MC{O}
							\end{smallmatrix}
					\end{pmatrix}}*\begin{pmatrix}
						\begin{smallmatrix}
							\mathcal{I}\\
							\MC{O}
						\end{smallmatrix}
					\end{pmatrix}
					=(\MC{X}\,\,\,\,\mathcal{Q})*\begin{pmatrix}
						\begin{smallmatrix}
							\mathcal{I}\\
							\MC{O}
						\end{smallmatrix}
					\end{pmatrix}
					=\MC{X}.
				\end{aligned}
				\]
				Finally we show that the derivative \(\mathrm{D}R_{\MC{X}}(\MC{O}): T_{\MC{X}}\mathrm{St}(n,p,l) \to T_{\MC{X}}\mathrm{St}(n,p,l)\) is the identity mapping.
				For any \(\mathcal{V} \in T_{\MC{X}}\mathrm{St}(n,p,l)\), we have
				\begin{eqnarray*}
					\mathrm{D}R_{\MC{X}}(\MC{O})[\mathcal{V}]
					&=&\frac{\mathrm{d}}{\mathrm{d}t}(R_{\MC{X}}(t\mathcal{V}))\bigg|_{t=0}\\
					&=&\bigdakuohao{(\MC{X}\,\,\,\,\mathcal{Q}) *\ex {t\begin{pmatrix}
								\begin{smallmatrix}
									\MC{X}^{\top}*\mathcal{V} & -\mathcal{R}^{\top}\\
									\mathcal{R} & \MC{O}
								\end{smallmatrix}
						\end{pmatrix}}*
						\begin{pmatrix}
							\begin{smallmatrix}
								\MC{X}^{\top}*\mathcal{V} & -\mathcal{R}^{\top}\\
								\mathcal{R} & \MC{O}
							\end{smallmatrix}
						\end{pmatrix}*
						\begin{pmatrix}
							\begin{smallmatrix}
								\mathcal{I}\\
								\MC{O}
							\end{smallmatrix}
					\end{pmatrix}}\bigg|_{t=0}\\
					&=& \MC{X}*\MC{X}^{\top}*\mathcal{V}+\mathcal{Q}*\mathcal{R}\\
					&=& \MC{X}*\MC{X}^{\top}*\mathcal{V}+(\mathcal{I}-\MC{X}*\MC{X}^{\top})*\mathcal{V}= \mathcal{V},
					%		\\
					%		=& \mathcal{V}.
				\end{eqnarray*}
				where the second equality is due to Proposition \ref{prop:exp der}.
			\end{proof}
			\begin{remark}
				It follows from this formula that when \(n=p\), we have the simpler formula for the exponential retraction on the group
				\(\mathrm{St}(n,n,l)\) of orthogonal tensors: $	R_{\MC{X}}(\mathcal{V})=\ex{\mathcal{V}*\MC{X}^{\top}}*\MC{X}
				=\MC{X}*\ex{\MC{X}^{\top}*\mathcal{V}}.$
				%\[
				%\begin{aligned}
				%	R_{\MC{X}}(\mathcal{V})=&\ex{\mathcal{V}*\MC{X}^{\top}}*\MC{X}\\
				%	=&\ex{\MC{X}*\MC{X}^{\top}*\mathcal{V}*\MC{X}^{\top}}*\MC{X}\\
				%	=&\MC{X}*\ex{
					%	\MC{X}^{\top}*\mathcal{V}
					%}*\MC{X}^{\top}*\MC{X}\\
				%	=&\MC{X}*\ex{\MC{X}^{\top}*\mathcal{V}}.
				%\end{aligned}  
				%\]
			\end{remark}
			The following proposition indicates that the retraction based on t-exponential is actually a geodesic on $\st{n,p,l}$.
			\begin{proposition}[Geodesic]
				The geodesic on $\st{n,p,l}$ emanating from $\MC{X}$ in direction $\MC{V}$ is given by the curve $\MC{C}(t) = R_{\MC{X}}(t\mathcal{V})=	(\MC{X}\,\,\,\,\,\mathcal{Q}) *\ex {
					t\MC{A}
				}*
				\begin{pmatrix}
					\begin{smallmatrix}
						\mathcal{I}\\
						\MC{O}
					\end{smallmatrix}
				\end{pmatrix}$, where  $\mathcal{A}=
				\begin{pmatrix}
					\begin{smallmatrix}
						\MC{X}^{\top}*\mathcal{V} & -\mathcal{R}^{\top}\\
						\mathcal{R}& \MC{O}
					\end{smallmatrix}
				\end{pmatrix}$ is a skew-symmetric tensor.
			\end{proposition}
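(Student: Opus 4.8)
The plan is to verify that $\MC{C}(t)$ satisfies the geodesic equation $\nabla_{\dot{\MC C}(t)}\dot{\MC C}(t)=0$ for the Levi--Civita connection induced by the metric \eqref{eq:inner product}. Since $\st{n,p,l}$ is a Riemannian submanifold of the Euclidean space $\T{n}{p}{l}$, the induced covariant derivative of the velocity field along $\MC C$ is the orthogonal projection of the ordinary (entrywise) second derivative onto the tangent space; hence it suffices to show $\mathbf{P}_{\MC C(t)}\bigxiaokuohao{\ddot{\MC C}(t)}=\MC O$, i.e. that $\ddot{\MC C}(t)$, computed in the ambient space, lies in the normal space $N_{\MC C(t)}\st{n,p,l}=\bigdakuohao{\MC C(t)\ast\MC S\mid \MC S\in\TS{p}{p}{l}}$ identified in the proof of the orthogonal projector operator \eqref{eq:proj}.

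First I would record the facts already established in the proof of Theorem \ref{t-exp based retraction}: that $\MC Y:=(\MC X\,\,\,\MC Q)$ is a square orthogonal tensor, $\MC Y^{\top}\ast\MC Y=\mathcal I$ (using $\MC X^{\top}\ast\MC Q=\MC O$ and partial orthogonality of $\MC X,\MC Q$), and that $\MC C(0)=\MC X$, $\dot{\MC C}(0)=\MC V$. Writing $\MC C(t)=\MC Y\ast\ex{t\MC A}\ast\mathcal E$, where $\mathcal E\in\T{n}{p}{l}$ is the selector tensor whose leading $p\times p\times l$ sub-tensor is $\mathcal I$ and whose remaining entries vanish, Proposition \ref{prop:exp der} applied twice gives $\dot{\MC C}(t)=\MC Y\ast\ex{t\MC A}\ast\MC A\ast\mathcal E$ and $\ddot{\MC C}(t)=\MC Y\ast\ex{t\MC A}\ast\MC A^{2}\ast\mathcal E$. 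Because $\MC A$ is skew-symmetric, $\ex{t\MC A}$ is orthogonal (Propositions \ref{prop:exp tranpose} and \ref{prop:exp addition}, exactly as in Theorem \ref{t-exp based retraction}), so $\MC C(t)^{\top}\ast\ddot{\MC C}(t)$ collapses to $\mathcal E^{\top}\ast\MC A^{2}\ast\mathcal E$, the leading $p\times p\times l$ block of $\MC A^{2}$; since $(\MC A^{2})^{\top}=(-\MC A)^{2}=\MC A^{2}$, this block is symmetric, which supplies one half of the normality condition essentially for free.

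The main obstacle is the remaining half: showing that $\ddot{\MC C}(t)$ carries no tangential part, equivalently that $\ddot{\MC C}(t)=\MC C(t)\ast\bigxiaokuohao{\MC C(t)^{\top}\ast\ddot{\MC C}(t)}$. This does not follow from the block algebra alone, because $\MC A^{2}\ast\mathcal E$ contributes a lower block of the form $\MC R\ast(\MC X^{\top}\ast\MC V)$ that is not recaptured by the range of $\mathcal E$; controlling it is precisely where the construction forcing $\MC X^{\top}\ast\MC Q=\MC O$ and the specific skew-symmetric $\MC A$ must do their work. I expect the cleanest way to close this gap is to lift the curve to the orthogonal tensor group $\st{n,n,l}$: the family $\MC Y\ast\ex{t\MC A}$ is a one-parameter curve of orthogonal tensors which, viewed slice-by-slice in the Fourier domain via Proposition \ref{prop:bcirc_properties} (where each $\hat Y^{(i)}\exp(t\hat A^{(i)})$ is a one-parameter subgroup and hence a geodesic of the corresponding unitary group), is a geodesic of $\st{n,n,l}$; projecting through $\MC Y\mapsto\MC Y\ast\mathcal E$ and checking that $\dot{\MC C}(0)=\MC V$ is the horizontal lift of the prescribed direction then yields that $\MC C$ is a geodesic. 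Carrying out this reduction slice-by-slice and identifying the Frobenius metric \eqref{eq:inner product} with the quotient metric that renders this projection a Riemannian submersion is the subtle point the argument must pin down, and it is where I would concentrate the technical effort.
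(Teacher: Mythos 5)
Your opening mirrors the paper's proof: both arguments invoke the fact that on a Riemannian submanifold the intrinsic acceleration is $\ddot{\MC{C}}(t)=\mathbf{P}_{\MC{C}(t)}\bigxiaokuohao{\frac{\mathrm{d}^2}{\mathrm{d}t^2}\MC{C}(t)}$, compute $\frac{\mathrm{d}^2}{\mathrm{d}t^2}\MC{C}(t)=(\MC{X}\,\,\,\mathcal{Q})\ast\ex{t\MC{A}}\ast\MC{A}^2\ast\mathcal{E}$ with $\mathcal{E}=\begin{pmatrix}\begin{smallmatrix}\mathcal{I}\\ \MC{O}\end{smallmatrix}\end{pmatrix}$, and observe that $\MC{C}(t)^{\top}\ast\frac{\mathrm{d}^2}{\mathrm{d}t^2}\MC{C}(t)=\mathcal{E}^{\top}\ast\MC{A}^2\ast\mathcal{E}=\MC{W}^2-\MC{R}^{\top}\ast\MC{R}$ is symmetric, where $\MC{W}:=\MC{X}^{\top}\ast\MC{V}$. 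At this point the paper simply asserts that ``it is easy to check $\ddot{\MC{C}}(t)=\MC{O}$,'' whereas you correctly noticed that the block algebra does \emph{not} deliver the vanishing of the tangential part: since $\MC{A}^2\ast\mathcal{E}=\begin{pmatrix}\begin{smallmatrix}\MC{W}^2-\MC{R}^{\top}\ast\MC{R}\\ \MC{R}\ast\MC{W}\end{smallmatrix}\end{pmatrix}$ while $\mathcal{E}\ast\operatorname{sym}\bigxiaokuohao{\mathcal{E}^{\top}\ast\MC{A}^2\ast\mathcal{E}}=\begin{pmatrix}\begin{smallmatrix}\MC{W}^2-\MC{R}^{\top}\ast\MC{R}\\ \MC{O}\end{smallmatrix}\end{pmatrix}$, one gets
\begin{equation*}
\mathbf{P}_{\MC{C}(t)}\bigxiaokuohao{\tfrac{\mathrm{d}^2}{\mathrm{d}t^2}\MC{C}(t)}
=(\MC{X}\,\,\,\mathcal{Q})\ast\ex{t\MC{A}}\ast
\begin{pmatrix}\begin{smallmatrix}\MC{O}\\ \MC{R}\ast\MC{W}\end{smallmatrix}\end{pmatrix},
\end{equation*}
which is exactly the residual lower block $\MC{R}\ast(\MC{X}^{\top}\ast\MC{V})$ you flagged, and it vanishes for all $t$ only when $\MC{R}\ast\MC{W}=\MC{O}$ (for instance $n=p$, where $\MC{R}$ is void, or $\MC{X}^{\top}\ast\MC{V}=\MC{O}$). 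So the obstruction you isolated is genuine, and it is present in the paper's own proof, which passes over precisely this step without justification.

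However, the repair you sketch cannot close the gap for the metric the proposition actually carries. Lifting to the group of orthogonal tensors and projecting via $\MC{Y}\mapsto\MC{Y}\ast\mathcal{E}$ is a Riemannian submersion only for the \emph{quotient} metric inherited from the bi-invariant metric on $\st{n,n,l}$; slice-by-slice in the Fourier domain this is the classical Stiefel picture, where the quotient metric is the canonical metric $\trace{\xi^{\top}\ast(\mathcal{I}-\frac{1}{2}\MC{X}\ast\MC{X}^{\top})\ast\xi}$-type structure, not the Frobenius metric \eqref{eq:inner product}, and for $p<n$ the two metrics have different geodesics (in the matrix case the curve $(X\,\,Q)\exp(tA)\begin{pmatrix}\begin{smallmatrix}I\\ 0\end{smallmatrix}\end{pmatrix}$ is the canonical-metric geodesic of Edelman--Arias--Smith, while the Euclidean-metric geodesic has a different closed form with a trailing factor of the form $\ex{-t\MC{W}}$). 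The ``subtle point'' you defer — identifying \eqref{eq:inner product} with the quotient metric — is therefore not a technicality to be pinned down but a false identification when $p<n$, so your plan proves geodesy for a different metric rather than the stated one; under the paper's Frobenius metric the displayed curve is a geodesic only in the degenerate cases $\MC{R}\ast\MC{W}=\MC{O}$ noted above.
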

			\begin{proof}
				The proof of Theorem \ref{t-exp based retraction} shows that $\MC{C}(0) = \MC{X}$ and $\dot{\MC{C}}(0) = \MC{V}$.
				Since $\st{n,p,l}$ is a Riemannian submanifold of $\T{n}{p}{l}$, \cite[Sect. 5.8]{boumal2022intromanifolds} shows that the acceleration of $\MC{C}(t)$ is
				\begin{equation*}
					\ddot{\MC{C}}(t) =  \mathbf{P}_{\MC{C}(t)}\bigxiaokuohao{\frac{\mathrm{d}^2}{\mathrm{d}t^2}\MC{C}(t)},
				\end{equation*}
				where
				$ \mathbf{P}_{\MC{X}}(\mathcal{U}) = \mathcal{U} - \MC{X}\ast\operatorname{sym}\bigxiaokuohao{\MC{X}^{\top}\ast\mathcal{U}}
				$
				and $\operatorname{sym}(\mathcal{A}) = \frac{\mathcal{A}+\mathcal{A}^{\top}}{2}$.
				Hence 
				$$
				\ddot{\MC{C}}(t) = \frac{\mathrm{d}^2}{\mathrm{d}t^2}\MC{C}(t)-\MC{C}(t)\ast \operatorname{sym}\bigxiaokuohao{\MC{C}^{\top}(t)\ast\frac{\mathrm{d}^2}{\mathrm{d}t^2}\MC{C}(t)},
				$$
				where $
				\frac{\mathrm{d}^2}{\mathrm{d}t^2}\MC{C}(t)=
				(\MC{X}\,\,\,\,\,\mathcal{Q}) *\ex {
					t\MC{A}
				}*\MC{A}^2 \ast
				\begin{pmatrix}
					\begin{smallmatrix}
						\mathcal{I}\\
						\MC{O}
					\end{smallmatrix}
				\end{pmatrix}.
				$
				Since $\MC{A}$ is a skew-symmetric tensor, it is easy to check $\ddot{\MC{C}}(t) = \MC{O}$, which means $\MC{C}(t)$ is the geodesic on $\st{n,p,l}$.
			\end{proof}
			\subsection{Vector transport on  $\st{n,p,l}$}\label{subsec:vec trans}
			By Lemma \ref{lemma:vector transport} and the orthogonal projector operator \eqref{eq:proj}, we obtain a series of vector transports as follows.
			\begin{theorem}[Orthogonal projector based vector transport]
				The vector transport on $\st{n,p,l}$ is
				\begin{eqnarray}\label{eq:Orthogonal projector based vector transport}
					\mathcal{T}_{\MC{U}}{\MC{V}} 
					&=&\mathbf{P}_{R_{\MC{X}}(\MC{U})}\MC{V}\nonumber\\
					&=& (\mathcal{I}-\mathcal{Y}\ast\mathcal{Y}^{\top})\ast\MC{V}+\mathcal{Y}\ast\operatorname{skew}(\mathcal{Y}^{\top}\ast \MC{V})\nonumber\\
					&=& \MC{V}-\mathcal{Y}\ast\operatorname{sym}(\mathcal{Y}^{\top}\ast \MC{V})\in T_{\mathcal{Y}}\st{n,p,l},
				\end{eqnarray}
				where $\mathcal{Y} = R_{\MC{X}}(\MC{U})$ is any retraction on $\st{n,p,l}$.
			\end{theorem}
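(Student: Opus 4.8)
The plan is to deduce the statement directly from Lemma~\ref{lemma:vector transport} together with the explicit projector formula~\eqref{eq:proj}, so that essentially no fresh computation is required.

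First I would record the only point-specific fact that is needed: because $R_{\MC{X}}$ is a retraction on $\st{n,p,l}$, its image $\mathcal{Y} = R_{\MC{X}}(\MC{U})$ lies in $\st{n,p,l}$ by Definition~\ref{def:retraction}, and hence satisfies $\mathcal{Y}^{\top}\ast\mathcal{Y} = \mathcal{I}$. This membership is precisely what allows the projector formula, which was derived for an arbitrary point of the manifold, to be applied at $\mathcal{Y}$.

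Next, Lemma~\ref{lemma:vector transport} asserts that the assignment $\MC{V}\mapsto \mathbf{P}_{R_{\MC{X}}(\MC{U})}\MC{V}$ is a vector transport associated with the retraction $R$. Taking this as the definition of $\mathcal{T}_{\MC{U}}{\MC{V}}$ gives the first equality, and by construction of the orthogonal projector the output lies in $T_{\mathcal{Y}}\st{n,p,l}$, which settles the membership claim at the end of the display.

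Finally, since $\mathcal{Y}\in\st{n,p,l}$, I would substitute $\mathcal{Y}$ for $\MC{X}$ directly in~\eqref{eq:proj} to obtain the two remaining expressions
$$\mathbf{P}_{\mathcal{Y}}(\MC{V}) = (\mathcal{I}-\mathcal{Y}\ast\mathcal{Y}^{\top})\ast\MC{V}+\mathcal{Y}\ast\operatorname{skew}(\mathcal{Y}^{\top}\ast \MC{V}) = \MC{V}-\mathcal{Y}\ast\operatorname{sym}(\mathcal{Y}^{\top}\ast \MC{V}).$$
No step here presents a genuine obstacle: the entire content has been front-loaded into Lemma~\ref{lemma:vector transport} and equation~\eqref{eq:proj}, so the proof reduces to the single observation that $\mathcal{Y}$ inherits the orthogonality constraint, after which the formula is merely a relabeling of~\eqref{eq:proj}.
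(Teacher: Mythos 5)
Your proposal is correct and follows exactly the route the paper intends: the paper states this theorem as an immediate consequence of Lemma~\ref{lemma:vector transport} combined with the projector formula~\eqref{eq:proj} evaluated at $\mathcal{Y}=R_{\MC{X}}(\MC{U})$, offering no further argument. Your only addition, explicitly noting that $\mathcal{Y}\in\st{n,p,l}$ so that~\eqref{eq:proj} applies at $\mathcal{Y}$, is a sound (and slightly more careful) rendering of the same derivation.
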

			% \begin{lemma}\label{lemma:upper tri mannifold} $\TU{p}{p}{l}$ is  a linear manifold and $T_{\MC{X}}\TU{p}{p}{l} = \TU{p}{p}{l}, \MC{X}\in \TU{p}{p}{l}$.
				%  $\TUP{p}{p}{l}$ is an open submanifold of linear manifold $\TU{p}{p}{l}$, and $T_{\MC{Y}}\TUP{p}{p}{l} = \TU{p}{p}{l}, \MC{Y}\in \TUP{p}{p}{l}$.
				% \end{lemma}
			% \begin{proof}
				% 	Note that $\TU{p}{p}{l}$ is a vector space and so a linear manifold, hence  $T_{\MC{X}}\TU{p}{p}{l} = \TU{p}{p}{l}, \MC{X}\in \TU{p}{p}{l}$. The set of all $p\times p \times l$ real tensors, whose each frontal slice is invertible,  is open in $\T{p}{p}{l}$; hence the intersection of this set with $\TU{p}{p}{l}$, which equals to $\TUP{p}{p}{l}$, is also open in $\TU{p}{p}{l}$.  According to \cite[Section 3.5.2]{absil2009optimization}, since $\TUP{p}{p}{l}$ is an
				% 	open subspace of  manifold $\TU{p}{p}{l}$, thus it is a manifold and its tangent space at any point $\MC{U}$ is just  $T_{\MC{X}}$\TUP{p}{p}{l} = \TU{p}{p}{l}.
				% \end{proof}
			To derive the vector transport by  differentiated retraction  based on t-QR decomposition, the following two lemmas are necessary.
			\begin{lemma}\label{lemma:complex upp manifold}
				$\TCUP{p}{p}{l}$ is an open submanifold of linear manifold $\TCU{p}{p}{l}$ with real diagonal elements, and its tangent space at any point ${\MC{Y}}\in \TCUP{p}{p}{l}$ is just $\TCU{p}{p}{l}$ with real diagonal elements.
			\end{lemma}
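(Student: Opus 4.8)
The plan is to exhibit $\TCUP{p}{p}{l}$ as an open subset of a suitable real vector space and then to invoke the elementary fact that an open subset of a manifold is an open submanifold whose tangent space at every point agrees with that of the ambient space. Concretely, I would set $\mathscr{E}$ to be the collection of all f-upper triangular complex tensors of size $p\times p\times l$ whose diagonal entries are real, i.e. exactly ``$\TCU{p}{p}{l}$ with real diagonal elements''. First I would check that $\mathscr{E}$ is a linear manifold: the defining constraints (upper triangularity of each frontal slice, and reality of each diagonal entry) are all real-linear, so $\mathscr{E}$ is a real vector space, and by Definition \ref{def:manifold} every vector space is a linear manifold.

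Next I would prove openness. Consider the map $d:\mathscr{E}\to\mathbb{R}^{pl}$ that reads off the $pl$ real diagonal entries of the $l$ frontal slices; being linear, $d$ is continuous. Since $\TCUP{p}{p}{l}=d^{-1}\bigxiaokuohao{(0,\infty)^{pl}}$ is the preimage of an open set under a continuous map, it is open in $\mathscr{E}$. An open subset of a manifold is itself an embedded submanifold of the same dimension (cf. Definition \ref{def:embeded submanifold}), so $\TCUP{p}{p}{l}$ is an open submanifold of $\mathscr{E}$. For the tangent space, I would use that the inclusion of an open submanifold is a local diffeomorphism, whence $T_{\MC{Y}}\TCUP{p}{p}{l}=T_{\MC{Y}}\mathscr{E}$ for every $\MC{Y}\in\TCUP{p}{p}{l}$; and the tangent space of a vector space at any point is canonically identified with the vector space itself, giving $T_{\MC{Y}}\TCUP{p}{p}{l}=\mathscr{E}$, namely $\TCU{p}{p}{l}$ with real diagonal elements.

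The argument is essentially soft, so the only genuinely delicate point — and the one I would flag — is the choice of ambient linear space. Positivity of the diagonal is \emph{not} an open condition inside the full complex upper-triangular space, where the diagonal entries would range over $\mathbb{C}$; it becomes open only after one restricts to the real-diagonal slice $\mathscr{E}$, where the diagonal lives in $\mathbb{R}$ and $(0,\infty)$ is open. Thus the reality constraint on the diagonal is not cosmetic but is precisely what makes $\TCUP{p}{p}{l}$ an open submanifold with full tangent space $\mathscr{E}$, and I would make sure this is stated explicitly before asserting openness.
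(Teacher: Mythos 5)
Your proof is correct and follows essentially the same route as the paper: identify $\TCU{p}{p}{l}$ with real diagonal elements as a vector space (hence a linear manifold), observe openness of $\TCUP{p}{p}{l}$ from the openness of $\mathbb{R}^{+}$ in $\mathbb{R}$, and conclude via the standard fact (cf.\ \cite[Sect.~3.5.2]{absil2009optimization}) that an open submanifold inherits the ambient tangent space. Your explicit preimage argument for openness and your remark that positivity is only an open condition after restricting to the real-diagonal slice are just more careful spellings of steps the paper leaves implicit.
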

			\begin{proof}
				Note that $\TCU{p}{p}{l}$ with real diagonal elements is a vector space and so a linear manifold. Since $\mathbb{R}^{+}$ is open in $\mathbb{R}$, it follows that $\TCUP{p}{p}{l}$  is open in $\TCU{p}{p}{l}$ with real diagonal elements.
				% The set of all $p\times p \times l$ real tensors, whose each frontal slice is invertible,  is open in $\T{p}{p}{l}$; hence the intersection of this set with $\TU{p}{p}{l}$, which equals to $\TUP{p}{p}{l}$, is also open in $\TU{p}{p}{l}$.  
				% Since $\TCUP{p}{p}{l}$ is an
				%	open subspace of  manifold $\TCU{p}{p}{l}$ with real diagonal elements,
				Then it follows from 
				\cite[Sect. 3.5.2]{absil2009optimization} that $\TCUP{p}{p}{l}$ is a manifold and its tangent space at any point ${\MC{Y}}\in \TCUP{p}{p}{l}$ is just $\TCU{p}{p}{l}$ with real diagonal elements.
			\end{proof}
			\begin{lemma}\label{prop:curve}
				Let $\MC{C}(t) = \mathcal{A}(t)\ast\mathcal{B}(t)\in \T{m}{n}{l}
				$.
				% 	be a curve on an abstract manifold. 
				Then the tangent vector to the curve $\MC{C}(t)$ is
				$$
				\dot{\MC{C}}(t)= \dot{\MC{A}}(t)\ast{\MC{B}}(t) + {\MC{A}}(t)\ast\dot{\MC{B}}(t).
				$$
			\end{lemma}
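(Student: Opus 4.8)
The plan is to recognize that the claimed identity is nothing more than the Leibniz product rule for a bilinear operation, and that the t-product is such an operation. The only real content is to make the bilinearity explicit and to justify passing the limit through the fixed linear maps $\fold{\cdot}$ and $\unfold{\cdot}$. I would present this as the primary route, and record the Fourier-domain argument as a quick alternative that leans on Proposition \ref{prop:bcirc_properties}.

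\emph{Bilinearity route.} From Definition \ref{def:t-product}, $\MC{A}\ast\MC{B} = \fold{\bcirc{\MC{A}}\cdot\unfold{\MC{B}}}$ is assembled from the linear maps $\bcirc{\cdot}$, $\unfold{\cdot}$, $\fold{\cdot}$ and matrix multiplication, so $\ast$ is bilinear and, being a fixed finite-dimensional linear-algebraic operation, continuous in each argument. I would then form the difference quotient for $\MC{C}(t)=\MC{A}(t)\ast\MC{B}(t)$, add and subtract the cross term $\MC{A}(t+h)\ast\MC{B}(t)$, and use bilinearity to split
\[
\frac{\MC{C}(t+h)-\MC{C}(t)}{h} = \MC{A}(t+h)\ast\frac{\MC{B}(t+h)-\MC{B}(t)}{h} + \frac{\MC{A}(t+h)-\MC{A}(t)}{h}\ast\MC{B}(t).
\]
Letting $h\to 0$, continuity of $\ast$ together with differentiability of $\MC{A}$ and $\MC{B}$ gives $\MC{A}(t+h)\to\MC{A}(t)$ and the two difference quotients converge to $\dot{\MC{B}}(t)$ and $\dot{\MC{A}}(t)$, yielding $\dot{\MC{C}}(t)=\MC{A}(t)\ast\dot{\MC{B}}(t)+\dot{\MC{A}}(t)\ast\MC{B}(t)$, which is the claim.

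\emph{Fourier route (alternative).} By item 2 of Proposition \ref{prop:bcirc_properties}, $\hat{C}^{(i)}(t)=\hat{A}^{(i)}(t)\hat{B}^{(i)}(t)$ for each $i\in[l]$. Since the DFT $L$ is a fixed linear isomorphism independent of $t$, differentiation commutes with it, so $\frac{\mathrm{d}}{\mathrm{d}t}\hat{C}^{(i)}(t)=\dot{\hat{A}}^{(i)}(t)\hat{B}^{(i)}(t)+\hat{A}^{(i)}(t)\dot{\hat{B}}^{(i)}(t)$ by the ordinary matrix product rule applied slicewise. By Proposition \ref{prop:bcirc_properties} this is exactly the $i$-th Fourier slice of $\dot{\MC{A}}\ast\MC{B}+\MC{A}\ast\dot{\MC{B}}$, and invertibility of $L$ finishes the proof.

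I do not anticipate a genuine obstacle here: the statement is the product rule for a bilinear map, so the work is purely justificatory. The only points requiring a word of care are that $\ast$ is continuous and bilinear (immediate from its construction via $\bcirc{\cdot}$ and matrix multiplication) and that differentiation passes through the fixed linear transforms; both are automatic because everything lives in a finite-dimensional space, where entrywise convergence suffices and limits may be freely exchanged with the linear maps involved.
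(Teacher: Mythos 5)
Your proof is correct, but it takes a genuinely different route from the paper's. The paper never invokes bilinearity abstractly: it unfolds the t-product as $\operatorname{unfold}(\MC{C}(t)) = \bcirc{\MC{A}(t)}\cdot\operatorname{unfold}(\MC{B}(t))$, writes out each frontal slice explicitly as $C^{(k)}(t) = \sum_{i=1}^{l}A^{(h_i)}(t)B^{(i)}(t)$ with the circulant index $h_i = l+k+1-i$ for $i>k$ and $h_i = k+1-i$ for $i\leq k$, applies the matrix product rule to each slice, and reassembles the result through $\bcirc{\cdot}$ and $\operatorname{fold}$. Your primary argument --- the add-and-subtract cross-term decomposition for a continuous bilinear map --- reaches the same conclusion with no index bookkeeping at all, and it is arguably the better proof: it isolates the only analytic inputs (bilinearity of $\tprod$, which is immediate from its construction via the linear maps $\bcirc{\cdot}$, $\unfold{\cdot}$, $\fold{\cdot}$ and matrix multiplication, plus joint continuity in finite dimensions), and it transfers verbatim to the more general tensor-tensor products mentioned in the paper's concluding remarks, where the spatial-domain circulant structure is replaced by another transform. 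Your Fourier-domain alternative is closest in spirit to the paper's slicewise computation, trading the circulant index pattern for the cleaner slicewise identity of item 2 of Proposition \ref{prop:bcirc_properties}, at the mild cost of working over $\mathbb{C}$ and noting that the fixed linear map $L$ commutes with $\frac{\mathrm{d}}{\mathrm{d}t}$. What the paper's explicit expansion buys in exchange is a formula for $\operatorname{unfold}(\dot{\MC{C}}(t))$ in terms of $\bcirc{\dot{\MC{A}}(t)}$ and $\bcirc{\MC{A}(t)}$ that is stated entirely in real arithmetic and mirrors the computational representation used throughout the rest of the paper. One cosmetic point: like the paper, you should state explicitly that $\MC{A}(t)$ and $\MC{B}(t)$ are assumed differentiable, since the lemma as written leaves this implicit.
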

			\begin{proof}
				By Definition \ref{def:DFT}, there holds  
				\begin{equation*}
					\operatorname{unfold}(\mathcal{C}(t)) =	\operatorname{unfold}(\mathcal{A}(t)\ast\mathcal{B}(t))\\ = \operatorname{bcirc}(\mathcal{A}(t))\cdot \operatorname{unfold}(\mathcal{B}(t)).
				\end{equation*}
				Hence it follows that
				\begin{equation*}
					C^{(k)}(t) = \sum\nolimits_{i =1}^{l}A^{(h_i)}(t)B^{(i)}(t),~k\in[l],
				\end{equation*}
				where $$h_i = \begin{cases}
					l+k+1-i,& i> k \\ 
					k+1-i,&  i\leq k
				\end{cases}.$$ 
				Using the corresponding property of the matrix case \cite{absil2009optimization}, we obtain 
				\begin{equation*}
					\dot{C}^{(k)}(t) = \sum\nolimits_{i =1}^{l}\bigxiaokuohao{
						\dot{A}^{(h_i)}(t)B^{(i)}(t)
						+A^{(h_i)}(t)\dot{B}^{(i)}(t)
					},
				\end{equation*}
				which means $
				\operatorname{unfold}(\dot{\mathcal{C}}(t)) =	 \operatorname{bcirc}(\dot{\mathcal{A}}(t))\cdot \operatorname{unfold}(\mathcal{B})(t)+\operatorname{bcirc}(\mathcal{A}(t))\cdot \operatorname{unfold}(\dot{\mathcal{B}}(t)).
				$ Thus it follows that
				\[
				\dot{\MC{C}}(t)= \dot{\MC{A}}(t)\ast{\MC{B}}(t) + {\MC{A}}(t)\ast\dot{\MC{B}}(t).
				\]
			\end{proof}
			We are now in a position to derive computational formulae from  \eqref{eq:vec transport} for the vector transport as the differentiated retraction $	R_{\MC{X}}(\mathcal{U}) = qf\bigxiaokuohao{\MC{X}+\mathcal{U}}$.
			\begin{theorem}[t-QR based vector transport]	The vector transport on $\st{n,p,l}$ by differentiated retraction $	R_{\MC{X}}(\mathcal{U}) = qf\bigxiaokuohao{\MC{X}+\mathcal{U}}$  is
				\begin{equation*}	
					\mathcal{T}_{\mathcal{U}}{\mathcal{V}} =
					\MC{Q}\ast L^{-1}\bigxiaokuohao{\Fold{ \mathbf{P}_{\operatorname{skew}}\bigxiaokuohao{\hat{B}^{(i)}}}} +
					\bigxiaokuohao{\mathcal{I}-\MC{Q}\ast \MC{Q}^{\top}}\ast 	\MC{C},
				\end{equation*}
				where $\MC{Q}=R_{\MC{X}}(\mathcal{U})$, $\MC{C} = \mathcal V\ast (\MC{Q}^{\top}\ast (\MC{X}+\mathcal{U}))^{-1}$, $\MC{B} =\MC{Q}^{\top} \ast \MC{C}$ and $\mathbf{P}_{\operatorname{skew}}(\hat{B}^{(i)})$	denotes   the skew-symmetric term of the decomposition of the complex matrix
				$\hat{B}^{(i)} = \bigxiaokuohao{L(\MC{B})}^{(i)}$ into the sum of a skew-symmetric term and an upper triangular term with real diagonal elements. Specifically,
				\begin{equation}
					%(\mathbf{P}_{\operatorname{skew}}(\MC{B}))_{::i} := %\operatorname{ifft}(\mathbf{P}_{\operatorname{skew}}(%\hat{B}^{(i)}),[~],3), ~~ \text{and} ~~
					(\mathbf{P}_{\operatorname{skew}}(\hat{B}^{(i)}))_{mn} = \begin{cases}
						-\operatorname{conj}(\hat{B}^{(i)}_{mn}),& m<n \\ 
						\operatorname{Im}(\hat{B}^{(i)}_{mn}),&  m=n\\
						\hat{B}^{(i)}_{mn},& m>n
					\end{cases}
				\end{equation}
				where $\operatorname{Im}(c)$ represents the imaginary part of the complex number $c$.
			\end{theorem}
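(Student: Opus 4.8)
The plan is to compute $\mathcal{T}_{\mathcal{U}}\mathcal{V}$ directly from its definition as a differentiated retraction, namely \eqref{eq:vec transport}: $\mathcal{T}_{\mathcal{U}}\mathcal{V} = \frac{\mathrm{d}}{\mathrm{d}t}R_{\MC{X}}(\mathcal{U}+t\mathcal{V})\big|_{t=0} = \dot{\MC{Q}}(0)$, where $\MC{Q}(t)\ast\MC{R}(t) = \MC{X}+\mathcal{U}+t\mathcal{V} =: \MC{A}(t)$ is the t-QR decomposition. I would first record that this curve is well defined and smooth near $t=0$: since $\MC{A}(0)^{\top}\ast\MC{A}(0) = \mathcal{I}+\mathcal{U}^{\top}\ast\mathcal{U}\in\TSPP{p}{p}{l}$ is f-full rank (as in the proof of Theorem \ref{th:t-qr based retraction}), the condition $\MC{A}(t)\in L^{-1}\bigxiaokuohao{\TC{n}{p}{l}_*}$ is open and persists for small $t$, so the factors $\MC{Q}(t),\MC{R}(t)$ are unique and depend smoothly on $t$ by Theorem \ref{th:t_qr}. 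Here $\MC{Q}(0)=\MC{Q}$, $\MC{R}(0)=\MC{R}$, and $\MC{Q}^{\top}\ast\MC{A}(0)=\MC{R}$, so that $\MC{C}=\mathcal{V}\ast\MC{R}^{-1}$ and $\MC{B}=\MC{Q}^{\top}\ast\mathcal{V}\ast\MC{R}^{-1}$.

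Next I would differentiate the three defining relations at $t=0$ using the product rule of Lemma \ref{prop:curve}. Differentiating $\MC{A}(t)=\MC{Q}(t)\ast\MC{R}(t)$ gives $\mathcal{V}=\dot{\MC{Q}}\ast\MC{R}+\MC{Q}\ast\dot{\MC{R}}$; right-multiplying by $\MC{R}^{-1}$ yields $\MC{C}=\dot{\MC{Q}}+\MC{Q}\ast\dot{\MC{R}}\ast\MC{R}^{-1}$. Differentiating the orthogonality $\MC{Q}(t)^{\top}\ast\MC{Q}(t)=\mathcal{I}$ shows $\MC{Q}^{\top}\ast\dot{\MC{Q}}\in\TSkew{p}{p}{l}$. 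Finally, since $\MC{R}(t)\in L^{-1}\bigxiaokuohao{\TCUP{p}{p}{l}}$, Lemma \ref{lemma:complex upp manifold} identifies $\dot{\MC{R}}$ as $L^{-1}$ of an f-upper-triangular tensor with real diagonal; as $\MC{R}^{-1}$ is of the same upper-triangular type with positive real diagonal, the product $\dot{\MC{R}}\ast\MC{R}^{-1}$ is again f-upper-triangular with real diagonal. Left-multiplying $\MC{C}=\dot{\MC{Q}}+\MC{Q}\ast\dot{\MC{R}}\ast\MC{R}^{-1}$ by $\MC{Q}^{\top}$ produces the key identity $\MC{B}=\MC{Q}^{\top}\ast\dot{\MC{Q}}+\dot{\MC{R}}\ast\MC{R}^{-1}$, exhibiting $\MC{B}$ as the sum of a skew-symmetric tensor and an f-upper-triangular tensor with real diagonal.

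I would then pass to the Fourier domain, where by Proposition \ref{prop:DFT tran} and Remark \ref{remark:DFT equality} the tensor transpose becomes the conjugate transpose, so each slice $\hat{B}^{(i)}$ splits as a skew-Hermitian matrix (the slice of $\MC{Q}^{\top}\ast\dot{\MC{Q}}$) plus an upper-triangular matrix with real diagonal. This splitting of a complex matrix is unique, so solving it entrywise recovers exactly the stated closed form $\mathbf{P}_{\operatorname{skew}}(\hat{B}^{(i)})$ for the skew-Hermitian part; transforming back gives $\MC{Q}^{\top}\ast\dot{\MC{Q}} = L^{-1}\bigxiaokuohao{\Fold{\mathbf{P}_{\operatorname{skew}}(\hat{B}^{(i)})}}$. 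To assemble $\dot{\MC{Q}}$, I would split it as $\MC{Q}\ast(\MC{Q}^{\top}\ast\dot{\MC{Q}})+(\mathcal{I}-\MC{Q}\ast\MC{Q}^{\top})\ast\dot{\MC{Q}}$; the first summand is the first term of the claimed formula, while applying $(\mathcal{I}-\MC{Q}\ast\MC{Q}^{\top})$ to $\MC{C}=\dot{\MC{Q}}+\MC{Q}\ast\dot{\MC{R}}\ast\MC{R}^{-1}$ annihilates the second piece because $(\mathcal{I}-\MC{Q}\ast\MC{Q}^{\top})\ast\MC{Q}=\MC{O}$, giving $(\mathcal{I}-\MC{Q}\ast\MC{Q}^{\top})\ast\dot{\MC{Q}}=(\mathcal{I}-\MC{Q}\ast\MC{Q}^{\top})\ast\MC{C}$, the second term. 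Adding the two pieces yields the result, and $\mathcal{T}_{\mathcal{U}}\mathcal{V}\in T_{\MC{Q}}\st{n,p,l}$ holds since $\dot{\MC{Q}}(0)$ is tangent to the curve $\MC{Q}(t)$ on the manifold.

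The main obstacle I anticipate is the Fourier-domain bookkeeping: correctly translating tensor skew-symmetry into slicewise skew-Hermiticity via Proposition \ref{prop:DFT tran}, verifying uniqueness of the skew-Hermitian-plus-upper-triangular splitting of each slice, and reading off the explicit entries of $\mathbf{P}_{\operatorname{skew}}$ with the right conjugation and diagonal convention. The algebra with $\ast$ is routine once Lemma \ref{prop:curve} and the identities $\MC{Q}^{\top}\ast\MC{Q}=\mathcal{I}$ and $\MC{Q}^{\top}\ast\MC{A}(0)=\MC{R}$ are in hand.
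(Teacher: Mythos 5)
Your proposal is correct and follows essentially the same route as the paper's own proof: differentiate the t-QR curve $\MC{W}(t)=\MC{X}+\mathcal{U}+t\mathcal{V}=\MC{Q}(t)\ast\MC{R}(t)$ via the product rule of Lemma \ref{prop:curve}, split $\dot{\MC{Q}}(0)$ through $\MC{Q}\ast\MC{Q}^{\top}$ and its complement, and recover the first term by extracting, slice-wise in the Fourier domain, the skew-Hermitian part of $\hat{B}^{(i)}=(\hat{Q}^{(i)})^{H}\dot{\hat{Q}}^{(i)}+\dot{\hat{R}}^{(i)}(\hat{R}^{(i)})^{-1}$ using Lemma \ref{lemma:complex upp manifold} and the uniqueness of the skew-Hermitian-plus-upper-triangular splitting. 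The only cosmetic deviations are that you obtain skew-symmetry of $\MC{Q}^{\top}\ast\dot{\MC{Q}}$ by directly differentiating $\MC{Q}(t)^{\top}\ast\MC{Q}(t)=\mathcal{I}$, where the paper instead invokes the tangent-space parametrization of Theorem \ref{th:dim}, and that you justify well-definedness by an openness argument for small $t$ while the paper verifies $\MC{W}(t)^{\top}\ast\MC{W}(t)=\mathcal{I}+(\mathcal{U}+t\mathcal{V})^{\top}\ast(\mathcal{U}+t\mathcal{V})$ for all $t$; both variants are valid and equally adequate.
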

			\begin{proof}
				From \eqref{eq:vec transport} and Theorem \ref{th:t-qr based retraction},
				for $\mathcal{U},\mathcal{V}\in T_{\MC{X}}\st{n,p,l},$ we have
				\begin{equation*}
					\mathcal{T}_{\mathcal{U}}{\mathcal{V}}
					= D R_{\MC{X}}(\mathcal{U})[\mathcal{V}]
					= D qf(\MC{X}+\mathcal{U})[\mathcal{V}]
					= \frac{\mathrm{d}}{\mathrm{d}t}qf(\MC{X}+\mathcal{U}+t\mathcal{V})\big|_{t=0}.
				\end{equation*}
				This is well defined,  i.e., the t-QR decomposition of  $\MC{W}(t):=\MC{G} + t\MC{V}$ is unique, where $\MC{G}:=\MC{X}+\MC{U}$. To see this, it follows from Remark \ref{remark:DFT equality} that
				$$\MC{W}(t)^{\top}\ast\MC{W}(t) = \mathcal{I} + \bigxiaokuohao{\mathcal{U}+t\mathcal{V}}^{\top}\ast\bigxiaokuohao{\mathcal{U}+t\mathcal{V}}\Leftrightarrow
				(\hat{W}(t)^{(i)})^H\hat{W}(t)^{(i)}
				= {I} + \bigxiaokuohao{\hat{U}^{(i)}+t\hat{V}^{(i)}}^{H}\bigxiaokuohao{\hat{U}^{(i)}+t\hat{V}^{(i)}}, i\in [l],$$
				showing that $\hat{\MC{W}}(t)\in \TC{n}{p}{l}_*$, which together with Theorem \ref{th:t-qr based retraction} gives the desired result.
				% 	Thus we need a formula for $D qf(\mathcal{Q})[\mathcal{V}]$ with $\hat{\mathcal{Q}}\in\mathbb{C}_*^{n\times p \times l}$ and $\hat{\mathcal{Q}}+t\hat{\mathcal{V}}\in\mathbb{C}_*^{n\times p \times l}$.
				Hence $\mathcal{W}(t) 
				=\mathcal{G}+t\mathcal{V}
				$ is a curve on $L^{-1}\bigdakuohao{\mathbb{C}_*^{n\times p \times l}}$ with $\mathcal{W}(0) = \mathcal{G}$ and $\dot{\mathcal{W}}(0) = \mathcal{V}$.
				Let $\mathcal W(t) = \mathcal Q(t) \ast\mathcal R(t)$ denote the t-QR decomposition of $\mathcal W(t)$, where $\mathcal Q(t)\in \st{n,p,l}$ and $\mathcal R(t)\in L^{-1}\bigxiaokuohao{\TCUP{p}{p}{l}}$. 	Hence $\mathcal{Q}(0)=qf(\mathcal{G}),~\mathcal{R}(0) = qf(\mathcal{G})^{\top}\ast\mathcal{G}
				%	,\mathcal{V}=\MC{X}'(0)\ast qf(\mathcal{Q})^{\top}\ast\mathcal{Q}+qf(\mathcal{Q})\ast \mathcal{R}'(0)
				$. Our task now is to compute $\mathcal{T}_{\mathcal{U}}{\mathcal{V}} = \frac{\mathrm{d}}{\mathrm{d}t}\MC{Q}(t)\big|_{t=0}=\dot{\mathcal Q}(0).$
				Since $\mathcal{I}=\mathcal{Q}(t)\ast \mathcal{Q}(t)^{\top} + \bigxiaokuohao{\mathcal{I}-\mathcal{Q}(t)\ast \mathcal{Q}(t)^{\top}}$, we have the decomposition
				\begin{equation}\label{eq:2}
					\dot{\mathcal Q}(t)=\mathcal{Q}(t)\ast \mathcal{Q}(t)^{\top}\ast \dot{\mathcal Q}(t)  + \bigxiaokuohao{\mathcal{I}-\mathcal{Q}(t)\ast \mathcal{Q}(t)^{\top}}\ast \dot{\mathcal Q}(t).
				\end{equation}
				It follows from Lemma \ref{prop:curve} that
				\begin{equation}\label{eq:1}
					\dot{\mathcal W}(t) = 	\dot{\mathcal Q(t)} \ast	\mathcal R(t) + 	\mathcal Q(t) \ast 	\dot{\mathcal R}(t). 
				\end{equation}
				Since $\hat{\MC{R}}(t)\in \TCUP{p}{p}{l}$ which means $\hat{{R}}^{(i)}(t)$ are invertible, it follows from Remark \ref{remark:DFT equality} and Definition \ref{def:inverse} that ${\MC{R}}(t)$ are invertible.
				Multiplying \eqref{eq:1} by $\mathcal{I}-\mathcal{Q}(t)\ast \mathcal{Q}(t)^{\top}$ on the left and by $(\mathcal{R}(t))^{-1}$ on the right yields 
				\begin{equation}\label{eq:3}
					\bigxiaokuohao{\mathcal{I}-\mathcal{Q}(t)\ast \mathcal{Q}(t)^{\top}}\ast \dot{\mathcal Q}(t) = \bigxiaokuohao{\mathcal{I}-\MC{Q}(t)\ast \mathcal{Q}(t)^{\top}}\ast 	\dot{\mathcal W}(t)\ast (\mathcal{R}(t))^{-1}.
				\end{equation} 
				which is the second term of \eqref{eq:2}. It remains to derive the computational formulae for the first term of \eqref{eq:2}.
				Since $\dot{\mathcal Q}(t)$ is a tangent vector at the point $\mathcal{Q}(t)$, it follows that \eqref{eq:2} satisfies the form:
				\begin{equation*}	T_{\mathcal{Q}(t)}\st{n,p,l}  
					%		= \operatorname{ker}( Dh(\MC{X})) 
					= \bigdakuohao{ \mathcal{Q}(t)\ast \mathcal{W}(t)+\mathcal{Q}_{\perp }(t)\ast \mathcal{B}(t)\in\mathbb{R}^{n\times p \times l }\bigg|\mathcal{W}(t)\in\TSkew{p}{p}{l},~\mathcal{B}(t)\in \mathbb{R}^{(n-p)\times p \times l}},
				\end{equation*}
				where $ \hat{Q}^{(i)}_{\perp }(t)\in \mathbb{C}^{n\times (n-p)}$ is any matrix such that $\operatorname{span}(\hat{Q}^{(i)}_{\perp}(t))=\{\hat{Q}^{(i)}_{\perp}(t)\alpha\big|\alpha\in \mathbb{C}^{n-p}\}$ is the orthogonal complement of $\operatorname{span}(\hat{Q}^{(i)}(t))=\{\hat{Q}^{(i)}(t)\beta\big|\beta\in \mathbb{C}^{p}\}$.
				It is easy to check that $\innerprod{\hat{Q}^{(i)}(t)\beta}{\bigxiaokuohao{I-\hat{Q}^{(i)}(t)(\hat{Q}^{(i)}(t))^H}\alpha} = 0$ for any $\alpha\in \mathbb{C}^{n-p}$ and $\beta\in \mathbb{C}^{p}$,
				which means matrix $\bigxiaokuohao{I-\hat{Q}^{(i)}(t)(\hat{Q}^{(i)}(t))^H}$ is a choice of matrix $\hat{Q}^{(i)}_{\perp }(t)$,
				hence the
				term $\mathcal{Q}(t)^{\top} \ast \dot{\mathcal{Q}}(t)\in\TSkew{p}{p}{l}$. Next it is sufficient for us to obtain the formula for $\mathcal{Q}(t)^{\top} \ast \dot{\mathcal{Q}}(t).$

				Multiplying \eqref{eq:1} on the left
				by $\mathcal{Q}(t)^{\top}$ and on the right by $(\mathcal{R}(t))^{-1}$ leads to
				\begin{equation}\label{eq:33}
					\MC{B}(t):=\mathcal{Q}(t)^{\top} \ast \dot{\mathcal W}(t)\ast  (\mathcal{R}(t))^{-1} 
					= \mathcal{Q}(t)^{\top} \ast \dot{\mathcal{Q}}(t) + \dot{\mathcal R}(t) \ast (\mathcal{R}(t))^{-1}.
				\end{equation}	
				Since $\hat{\MC{R}}(t)\in \TCUP{p}{p}{l}$, Lemma \ref{lemma:complex upp manifold} shows that $\dot{\hat{\MC{R}}}(t)\in \TCU{p}{p}{l}$ with real diagonal elements.
				% , which combines Lemma \ref{lemma:upper tri mannifold} leads to the term $\dot{\mathcal R}(t)\in \TU{p}{p}{l}$. 
				% Note that $\TU{p}{p}{l}$ is a vector space and so a linear manifold, hence 
				% % $\dot{\mathcal R}(t)$ reduces to its classical definition
				% % $\MC{R}'(t) = \lim\limits_{\tau\rightarrow 0} \frac{1}{\tau}\bigxiaokuohao{\MC{R}(t+\tau)-\MC{R}(t)}$ 
				% % and 
				% $\dot{\mathcal R}(t)\in T_{\MC{X}}\TU{p}{p}{l} =\TU{p}{p}{l}, ~\text{for}~\MC{X}\in\TU{p}{p}{l}$. 
				% It follows from Remark \ref{remark:DFT equality} and Remark \ref{remark:Linear relationship} that $ (\hat{\mathcal{R}}^{(i)}(t))^{-1}$ are upper triangular complex matrix with real diagonal elements.
				% belongs to the set of f-upper triangular tensors.
				Note that $\hat{{R}}^{(i)}(t)\in \mathbb{C}^{p\times p}_{upp+}$ are invertible, hence $\bigxiaokuohao{\hat{R}^{(i)}(t)}^{-1}\in \mathbb{C}^{p\times p}_{upp+}$.
				Thus $\dot{\hat R}^{(i)}(t) \bigxiaokuohao{\hat{R}^{(i)}(t)}^{-1}$ is upper triangular complex matrix with real diagonal elements. 
				Using Remark \ref{remark:DFT equality} again, we get
				\begin{equation}\label{eq:13}
					\hat{B}^{(i)}=\bigxiaokuohao{\hat{Q}^{(i)}(0)}^{H}\dot{\hat W}^{(i)}(0)  \bigxiaokuohao{\hat{R}^{(i)}(0)}^{-1} 
					= \bigxiaokuohao{\hat{Q}^{(i)}(0)}^{H} \dot{\hat{Q}}^{(i)}(0) + \dot{\hat R}^{(i)}(0) \bigxiaokuohao{\hat{R}^{(i)}(0)}^{-1}.
				\end{equation}
				
				%	Let $\mathbf{P}_{\operatorname{skew}}(\hat{B}^{(i)})$
				%	denote the  skew-symmetric term of the decomposition of a complex matrix
				%	$\hat{B}^{(i)}$ into the sum of a skew-symmetric term and an upper triangular term with real diagonal elements. Specifically,
				%	\begin{equation}
					%		%(\mathbf{P}_{\operatorname{skew}}(\MC{B}))_{::i} := %\operatorname{ifft}(\mathbf{P}_{\operatorname{skew}}(%\hat{B}^{(i)}),[~],3), ~~ \text{and} ~~
					%		(\mathbf{P}_{\operatorname{skew}}(\hat{B}^{(i)}))_{mn} = \begin{cases}
						%			-\operatorname{conj}(\hat{B}^{(i)}_{mn}),& m<n \\ 
						%			\operatorname{Im}(\hat{B}^{(i)}_{mn}),&  m=n\\
						%			\hat{B}^{(i)}_{mn},& m>n
						%		\end{cases}
					%	\end{equation}
				%where $\operatorname{Im}(c)$ represents the imaginary part of the complex number $c$.
				Recalling that $\bigxiaokuohao{\hat{Q}^{(i)}(t)}^{H} \dot{\hat{Q}}^{(i)}(t)$ is skew-symmetric and 
				applying the operator $\mathbf{P}_{\operatorname{skew}}$ which denotes the skew-symmetric term of the decomposition of the complex matrix into the sum of a skew-symmetric term and an upper triangular term with real diagonal elements, we obtain
				\begin{equation}\label{eq:4.31}
					\bigxiaokuohao{L\bigxiaokuohao
						{\mathcal{Q}(0)^{\top} \ast \dot{\mathcal{Q}}(0)}}^{(i)}  =\bigxiaokuohao{\hat{Q}^{(i)}(t)}^{H} \dot{\hat{Q}}^{(i)}(t) 
					=  \mathbf{P}_{\operatorname{skew}}\bigxiaokuohao{\hat{B}^{(i)}}
				\end{equation} 
				And  Remark \ref{remark:DFT equality} tell us that \eqref{eq:4.31} can be equivalently rewritten as
				\begin{equation}\label{eq:first term}
					\mathcal{Q}(0)^{\top} \ast \dot{\mathcal{Q}}(0) = L^{-1}\bigxiaokuohao{\Fold{ \mathbf{P}_{\operatorname{skew}}\bigxiaokuohao{\hat{B}^{(i)}}}},
				\end{equation} 
				where $\MC{B} = \mathcal{Q}(0)^{\top} \ast \dot{\mathcal W}(0)\ast  (\mathcal{R}(0))^{-1}.$
				Replacing \eqref{eq:first term} and \eqref{eq:3}
				in \eqref{eq:2} gives
				\begin{eqnarray*}\label{eq:4}
					\dot{\mathcal Q}(0)  
					% 		&=&
					% 		\mathcal{Q}(0)\ast \mathcal{Q}(0)^{\top}\ast \dot{\mathcal Q}(0)  + 
					% 		\bigxiaokuohao{\mathcal{I}-\mathcal{Q}(0)\ast \mathcal{Q}(0)^{\top}}\ast \dot{\mathcal Q}(0) \\
					&=& 
					\mathcal{Q}(0)\ast L^{-1}\bigxiaokuohao{\Fold{ \mathbf{P}_{\operatorname{skew}}\bigxiaokuohao{\hat{B}^{(i)}}}} +
					\bigxiaokuohao{\mathcal{I}-\mathcal{Q}(0)\ast \mathcal{Q}(0)^{\top}}\ast 	\dot{\mathcal W}(0)\ast (\mathcal{R}(0))^{-1}\\
					&=& 
					qf(\mathcal{G})\ast L^{-1}\bigxiaokuohao{\Fold{ \mathbf{P}_{\operatorname{skew}}\bigxiaokuohao{\hat{B}^{(i)}}}} +
					\bigxiaokuohao{\mathcal{I}-qf(\mathcal{G})\ast qf(\mathcal{G})^{\top}}\ast 	\mathcal V\ast (qf(\mathcal{G})^{\top}\ast \mathcal{G})^{-1},
				\end{eqnarray*}
				% 	Therefor there holds that
				% 	\begin{eqnarray*}\label{eq:44}
					% 		\mathrm{D} qf(\mathcal{G})[\mathcal{V}]
					% 		&=&
					% % 		\frac{\mathrm{d}}{\mathrm{d}t}qf(\mathcal{G}+t\mathcal{V})\big|_{t=0} 
					% % 		= \frac{\mathrm{d}}{\mathrm{d}t}qf(\mathcal{W}(t))\big|_{t=0}     
					% % 		=
					% 		\dot{\mathcal{Q}}(0)     \\
					% 		&=& 
					% 		qf(\mathcal{G})\ast L^{-1}\bigxiaokuohao{\Fold{ \mathbf{P}_{\operatorname{skew}}\bigxiaokuohao{\hat{B}^{(i)}}}} +
					% 		 \bigxiaokuohao{\mathcal{I}-qf(\mathcal{G})\ast qf(\mathcal{G})^{\top}}\ast 	\mathcal V\ast (qf(\mathcal{G})^{\top}\ast \mathcal{G})^{-1},
					% 	\end{eqnarray*}
				where $\MC{B} = qf(\mathcal{G})^{\top} \ast \mathcal V\ast  (qf(\mathcal{G})^{\top}\ast \mathcal{G})^{-1}.$
				% Consider $\MC{Q} = \MC{X}+\MC{U}$, then $\MC{W}(t)^{\top}\tprod\MC{W}(t) = \MC{I} + (\MC{U}+t\MC{V})^{\top}\tprod(\MC{U}+t\MC{V})\in \TSPP{p}{p}{l}$, hence $\MC{W}(t) \in \T{n}{p}{l}_*$.
				Finally, we have, for $\mathcal{U},\mathcal{V}\in T_{\MC{X}}\st{n,p,l} ,$
				\begin{equation}
					\mathcal{T}_{\mathcal{U}}{\mathcal{V}} 
					% 		&=& \mathrm{D} R_{\MC{X}}(\mathcal{U})[\mathcal{V}]
					% 		= \mathrm{D} qf(\MC{X}+\mathcal{U})[\mathcal{V}]\\
					= \dot{\mathcal Q}(0) =
					\MC{Q}\ast L^{-1}\bigxiaokuohao{\Fold{ \mathbf{P}_{\operatorname{skew}}\bigxiaokuohao{\hat{B}^{(i)}}}} +
					\bigxiaokuohao{\mathcal{I}-\MC{Q}\ast \MC{Q}^{\top}}\ast 	\mathcal C,
				\end{equation}
				where $\MC{Q}=R_{\MC{X}}(\mathcal{U})$, $\MC{C} = \mathcal V\ast  (\MC{Q}^{\top}\ast (\MC{X}+\mathcal{U}))^{-1}$ and $\MC{B} =\MC{Q}^{\top} \ast \MC{C}$.
			\end{proof}
			To derive the vector transport by differentiated retraction $	R_{\MC{X}}(\mathcal{U}) $ based on t-PD decomposition, we then need the circulant matrices of $\mathcal A$ in another order as follows.
			\begin{definition}
				Let $\mathcal{A}\in \mathbb{R}^{n\times p \times l}$; then the circulant matrices in another order is defined as
				$$\small
				\widetilde{\operatorname{bcirc}}(\mathcal{A}) := \begin{bmatrix}
					\begin{smallmatrix}
						A^{(1)} &  A^{(2)} & \cdots & A^{(l)}\\ 
						A^{(l)} & A^{(1)}  & \cdots & A^{(l-1)} \\ 
						\vdots  & \ddots  & \ddots & \vdots \\ 
						A^{(2)}& \cdots & A^{(l)} & A^{(1)}
					\end{smallmatrix}
				\end{bmatrix}\in\mathbb{R}^{nl\times pl}.
				$$
			\end{definition}
			\begin{theorem}[t-Sylvester equation]\label{th:sylvester}
				Let $\mathcal{A}\in \mathbb{R}^{n\times n \times l},\mathcal{X}\in \mathbb{R}^{n\times p\times l},\mathcal{B}\in \mathbb{R}^{p\times p\times l}$. The analytical solution of the t-Sylvester equation
				\begin{equation}\label{eq:sylvester}
					\mathcal{A}\ast \mathcal{X} + \mathcal{X}\ast \mathcal{B} = \mathcal{C},
				\end{equation}
				is $\operatorname{vec}(\mathcal{X})  = \bigxiaokuohao{ \widetilde{\operatorname{bcirc}}(\mathcal{B})^{\top}\otimes I_n+[I_p]_{l\times l}\odot\operatorname{bcirc}(\mathcal{A}) }^{\dagger }\cdot \operatorname{vec}(\mathcal{C}),$ where 
				$[I_p]_{l\times l}\in\mathbb{R}^{pl\times pl}$ is a  block matrix whose $(i,j)$ submatrice is $I_p,~i\in [l], j\in [l]$,
				%	$[I_k]_{l\times l}  = \begin{bmatrix}
					%		I_k & \cdots  & I_k\\ 
					%		\vdots  & \ddots  & \vdots \\ 
					%		I_k& \cdots  & I_k
					%	\end{bmatrix}_{l\times l},$
				% $M^{\dagger }$ denotes the Moore-Penrose generalized inverse of matrix $M$,
				and $\operatorname{vec}(\MC{C}) = \MC{C}(:)$ denotes the vectorrized of $\MC{C}$ in the meaning of lexicographical ordering.
			\end{theorem}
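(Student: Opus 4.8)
The plan is to vectorize \eqref{eq:sylvester} frontal slice by frontal slice and then recognize the resulting coefficient matrix as the displayed one. First I would record that, under the MATLAB lexicographic convention $\operatorname{vec}(\mathcal{Y}) = \mathcal{Y}(:)$, the vector $\operatorname{vec}(\mathcal{Y})$ is exactly the stacking of the ordinary column-major vectorizations $\operatorname{vec}(Y^{(1)}),\ldots,\operatorname{vec}(Y^{(l)})$ of the frontal slices. Hence it suffices to express $\operatorname{vec}\bigl((\mathcal{A}\ast\mathcal{X})^{(k)}\bigr)$ and $\operatorname{vec}\bigl((\mathcal{X}\ast\mathcal{B})^{(k)}\bigr)$ in terms of the $\operatorname{vec}(X^{(m)})$, $m\in[l]$, and assemble the $l\times l$ block arrays. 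Lemma \ref{prop:curve} already supplies the needed frontal-slice formula for a t-product: $(\mathcal{M}\ast\mathcal{N})^{(k)} = \sum_{i=1}^{l} M^{(h_i)}N^{(i)}$ with $h_i = ((k-i)\bmod l)+1$, which is precisely the $(k,i)$ block index of $\operatorname{bcirc}(\mathcal{M})$.

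For the first term I would apply the identity $\operatorname{vec}(PQ) = (I\otimes P)\operatorname{vec}(Q)$ to $(\mathcal{A}\ast\mathcal{X})^{(k)} = \sum_{i} A^{(h_i)}X^{(i)}$, giving $\operatorname{vec}\bigl((\mathcal{A}\ast\mathcal{X})^{(k)}\bigr) = \sum_{i}(I_p\otimes A^{(h_i)})\operatorname{vec}(X^{(i)})$. Stacking over $k$ produces a block matrix whose $(k,i)$ block is $I_p\otimes A^{(h_i)} = I_p\otimes [\operatorname{bcirc}(\mathcal{A})]_{ki}$; by the definition of the Khatri--Rao product for partitioned matrices this block array is exactly $[I_p]_{l\times l}\odot\operatorname{bcirc}(\mathcal{A})$. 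Thus $\operatorname{vec}(\mathcal{A}\ast\mathcal{X}) = \bigl([I_p]_{l\times l}\odot\operatorname{bcirc}(\mathcal{A})\bigr)\operatorname{vec}(\mathcal{X})$.

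For the second term I would instead use $\operatorname{vec}(PQ) = (Q^{\top}\otimes I)\operatorname{vec}(P)$ on $(\mathcal{X}\ast\mathcal{B})^{(k)} = \sum_{i} X^{(h_i)}B^{(i)}$, obtaining $\operatorname{vec}\bigl((\mathcal{X}\ast\mathcal{B})^{(k)}\bigr) = \sum_{i}\bigl((B^{(i)})^{\top}\otimes I_n\bigr)\operatorname{vec}(X^{(h_i)})$. Reindexing by $m=h_i$ (so $i=((k-m)\bmod l)+1$) shows that the $(k,m)$ block of the associated operator is $\bigl(B^{(((k-m)\bmod l)+1)}\bigr)^{\top}\otimes I_n$. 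The main obstacle, and the only step requiring genuine care, is to identify this block array with $\widetilde{\operatorname{bcirc}}(\mathcal{B})^{\top}\otimes I_n$. Concretely, one must verify the compatibility statement: if a $pl\times pl$ matrix $M$ is partitioned into $l\times l$ blocks $\beta_{km}$ of size $p\times p$, then partitioning $M\otimes I_n$ into $l\times l$ blocks of size $pn\times pn$ yields exactly the blocks $\beta_{km}\otimes I_n$. This holds because the within-slice lexicographic ordering places the $n$ replicated copies generated by $\otimes I_n$ contiguously in the correct positions, so the two partitions interleave consistently. Taking $M=\widetilde{\operatorname{bcirc}}(\mathcal{B})^{\top}$, whose $(k,m)$ block equals $\bigl(B^{(((k-m)\bmod l)+1)}\bigr)^{\top}$ by the reversed circulant ordering in the definition of $\widetilde{\operatorname{bcirc}}$, then matches the block array above, giving $\operatorname{vec}(\mathcal{X}\ast\mathcal{B}) = \bigl(\widetilde{\operatorname{bcirc}}(\mathcal{B})^{\top}\otimes I_n\bigr)\operatorname{vec}(\mathcal{X})$.

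Combining the two identities, \eqref{eq:sylvester} is equivalent to the linear system $K\operatorname{vec}(\mathcal{X}) = \operatorname{vec}(\mathcal{C})$ with $K := \widetilde{\operatorname{bcirc}}(\mathcal{B})^{\top}\otimes I_n + [I_p]_{l\times l}\odot\operatorname{bcirc}(\mathcal{A})$. I would close by solving this system: when $K$ is nonsingular the unique solution is $K^{-1}\operatorname{vec}(\mathcal{C}) = K^{\dagger}\operatorname{vec}(\mathcal{C})$, and in the general case $\operatorname{vec}(\mathcal{X}) = K^{\dagger}\operatorname{vec}(\mathcal{C})$ is the minimum-norm least-squares solution, which satisfies \eqref{eq:sylvester} exactly whenever the equation is consistent. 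This yields the claimed formula.
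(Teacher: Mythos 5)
Your proof is correct and follows essentially the same route as the paper's: both reduce \eqref{eq:sylvester} to the two operator identities $\operatorname{vec}(\mathcal{A}\ast\mathcal{X}) = \bigl([I_p]_{l\times l}\odot\operatorname{bcirc}(\mathcal{A})\bigr)\operatorname{vec}(\mathcal{X})$ and $\operatorname{vec}(\mathcal{X}\ast\mathcal{B}) = \bigl(\widetilde{\operatorname{bcirc}}(\mathcal{B})^{\top}\otimes I_n\bigr)\operatorname{vec}(\mathcal{X})$ (the content of Lemma \ref{prop:vec}) and then solve the resulting linear system. The only differences are bookkeeping --- you assemble the $l\times l$ block arrays slice-by-slice from the frontal-slice formula of Lemma \ref{prop:curve} (with the correct block-Kronecker compatibility check for $\widetilde{\operatorname{bcirc}}(\mathcal{B})^{\top}\otimes I_n$), whereas the paper vectorizes the global unfolding identity $[C^{(1)},\cdots,C^{(l)}] = [A^{(1)},\cdots,A^{(l)}]\,\widetilde{\operatorname{bcirc}}(\mathcal{B})$ via Lemmas \ref{lem:vec} and \ref{prop:1} --- and your explicit handling of the pseudoinverse (unique solution when nonsingular, minimum-norm least-squares solution otherwise, exact when consistent) is in fact more careful than the paper's.
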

			Necessary lemmas for proving Theorem \ref{th:sylvester} are provided in Appendix \ref{apx:19}.
			The proof of Theorem \ref{th:sylvester} is also left to Appendix \ref{apx:19}. 
			\begin{lemma}\label{lemma:t-PSD manifold}
				$\TSPP{n}{p}{l}$ is an open submanifold of linear manifold $\TS{n}{p}{l}$ and its tangent space  at any point $\MC{Y}\in\TSPP{n}{p}{l}$ is just $ \TS{n}{p}{l} $.
			\end{lemma}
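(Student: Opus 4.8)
The plan is to follow exactly the template of Lemma \ref{lemma:complex upp manifold}, exploiting that $\TS{n}{p}{l}$ is a vector space and that t-positive definiteness is an \emph{open} condition once we pass to the Fourier domain. First I would note that the set of symmetric tensors $\TS{n}{p}{l}$ is closed under tensor addition and real scalar multiplication, hence is a real vector space and therefore a linear manifold in the sense of Definition \ref{def:manifold}. It thus suffices to prove that $\TSPP{n}{p}{l}$ is an open subset of $\TS{n}{p}{l}$: by \cite[Sect. 3.5.2]{absil2009optimization} an open subset of a manifold is an open submanifold whose tangent space at each point coincides with the ambient one, and for a linear manifold that ambient tangent space is the whole vector space $\TS{n}{p}{l}$. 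This immediately yields both assertions of the lemma.

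The central step is the openness claim, and for this I would transfer the problem to the Fourier domain via Remark \ref{rmk:t_psd}: a tensor $\MC{A}\in\TS{n}{p}{l}$ lies in $\TSPP{n}{p}{l}$ if and only if every frontal slice $\hat A^{(i)}$ of $\hat{\MC{A}} = L(\MC{A})$ is Hermitian positive definite. The set of Hermitian positive definite matrices is open inside the set of Hermitian matrices --- this follows either from the continuity of eigenvalues (strict positivity of all eigenvalues is preserved under sufficiently small perturbations) or from Sylvester's criterion, since the leading principal minors are continuous and the strict inequalities defining positivity are open. Consequently the collection $\bigdakuohao{\Fold{\hat A^{(i)}} \mid \hat A^{(i)}~\text{Hermitian positive definite for each } i\in[l]}$ is open within the image $L\bigxiaokuohao{\TS{n}{p}{l}}$.

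Finally I would push this openness back through the transform. Since the DFT $L$ of \eqref{eq:DFT} is a linear bijection with continuous inverse $L^{-1}$ given in \eqref{eq:IDFT}, it is a homeomorphism, so the $L^{-1}$-image of the open Fourier-domain set is open in $\TS{n}{p}{l}$; this image is precisely $\TSPP{n}{p}{l}$, completing the argument. The main obstacle is not analytical but bookkeeping: I must confirm that the Fourier-domain characterization really stays inside $L\bigxiaokuohao{\TS{n}{p}{l}}$, i.e.\ that the Hermitian condition on each slice $\hat A^{(i)}$ (which by Proposition \ref{prop:DFT tran} is equivalent to $\MC{A}=\MC{A}^{\top}$) is compatible with the conjugate-symmetry relations $\hat A^{(i)} = \operatorname{conj}\bigxiaokuohao{\hat A^{(l+2-i)}}$ forced on any real tensor. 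Once this identification is in place, openness is inherited slicewise and the topological conclusion is immediate.
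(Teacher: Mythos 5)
Your proposal is correct and follows the same skeleton as the paper's proof: identify $\TS{n}{p}{l}$ as a vector space (hence a linear manifold), show that $\TSPP{n}{p}{l}$ is open in it, and invoke \cite[Sect. 3.5.2]{absil2009optimization} to obtain simultaneously the open-submanifold structure and the identification of the tangent space with the ambient vector space. The one place you genuinely diverge is the openness step: the paper disposes of it with a single citation to \cite[Rmk. 10]{zheng2021t}, whereas you prove it from scratch by passing to the Fourier domain via Remark \ref{rmk:t_psd}, using slicewise the openness of the Hermitian positive definite cone inside the Hermitian matrices, and pulling the open set back along the linear homeomorphism $L$. Your argument is sound: each slice-extraction map is continuous, strict positivity of all eigenvalues (or of all leading principal minors) is an open condition, $L$ restricted to $\TS{n}{p}{l}$ is a homeomorphism onto its image, and --- as you correctly flag in your final bookkeeping step --- the Hermitian-slice characterization of symmetry coming from Proposition \ref{prop:DFT tran} is compatible with the conjugate-symmetry relations $\hat A^{(i)} = \operatorname{conj}\bigxiaokuohao{\hat A^{(l+2-i)}}$ forced on real tensors, so openness in the subspace topology of $L\bigxiaokuohao{\TS{n}{p}{l}}$ transfers as claimed. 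What your version buys is self-containedness: the reader need not consult \cite{zheng2021t}, and the Fourier-domain mechanism you use is exactly the one already driving the other openness and uniqueness arguments in the paper (e.g., in the t-QR and t-PD retraction proofs), so no new machinery is introduced. What the paper's version buys is brevity, at the cost of an external dependency.
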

			\begin{proof}
				Note that $\TS{p}{p}{l}$ is a vector space thus a linear manifold. 	\cite[Rmk. 10]{zheng2021t} shows that $\TSPP{n}{p}{l}$ is an open subset of $\TS{n}{p}{l}$ thus an open submanifold of $\TS{n}{p}{l}$.  
				%Since $\TSPP{n}{p}{l}$ is an open submanifold of $\TS{n}{p}{l}$, 
				Then it follows from
				\cite[Sect. 3.5.2]{absil2009optimization} that  $T_{\MC{Y}}\TSPP{n}{p}{l} = \TS{n}{p}{l} $.
			\end{proof}
			With the help of Theorem \ref{th:sylvester} and Lemma \ref{lemma:t-PSD manifold}, we can now derive the following t-PD based vector transport.
			\begin{theorem}[t-PD based vector transport]The vector transport on $\st{n,p,l}$ by differentiated retraction based on t-PD is
				\begin{equation*}
					\mathcal{T}_{\mathcal{U}}{\mathcal{V}} = \mathcal{Y}\ast \mathcal{S} + (\mathcal{I}-\mathcal{Y}\ast \mathcal{Y}^{\top})\ast \mathcal{V}\ast (\mathcal{Y}^{\top}\ast (\MC{X}+\mathcal{U}))^{-1},
				\end{equation*}
				where $\mathcal{Y}  = 	R_{\MC{X}}(\mathcal{U}) = (\MC{X}+\mathcal{U})\ast\MC{P}^{-1}$, $\operatorname{vec}(\mathcal S)=\bigxiaokuohao{ \widetilde{\operatorname{bcirc}}(\mathcal P)^{\top}\otimes I_p+[I_p]_{l\times l}\odot\operatorname{bcirc}(\mathcal P) }^{\dagger }\cdot \operatorname{vec}(\mathcal{Y}^{\top}\ast \mathcal V - \mathcal V^{\top}\ast 
				\mathcal Y)$ and $\mathcal P =  (\mathcal{I}+\mathcal{U}^{\top}\ast \mathcal{U})^{\frac{1}{2}}$.
			\end{theorem}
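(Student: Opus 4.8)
The plan is to differentiate the t-PD retraction $R_{\MC{X}}(\mathcal{U}) = (\MC{X}+\mathcal{U})\ast(\mathcal{I}+\mathcal{U}^{\top}\ast\mathcal{U})^{-\frac{1}{2}}$ along the curve $t\mapsto R_{\MC{X}}(\mathcal{U}+t\mathcal{V})$ and read off $\dot{\mathcal{Y}}(0)$ via \eqref{eq:vec transport}. First I would set $\mathcal{G}(t):=\MC{X}+\mathcal{U}+t\mathcal{V}$ and write its t-PD as $\mathcal{G}(t)=\mathcal{Y}(t)\ast\MC{P}(t)$ with $\mathcal{Y}(t)\in\st{n,p,l}$ and $\MC{P}(t)\in\TSPP{p}{p}{l}$. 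At $t=0$ one has $\mathcal{G}(0)^{\top}\ast\mathcal{G}(0)=\mathcal{I}+\mathcal{U}^{\top}\ast\mathcal{U}\in\TSPP{p}{p}{l}$ by Lemma \ref{lem:positive definite}, so $\mathcal{G}(0)\in L^{-1}\bigxiaokuohao{\mathbb{C}_*^{n\times p \times l}}$; since this set is open, $\mathcal{G}(t)$ stays in it for small $t$, and Proposition \ref{prop:sym equal}, Theorem \ref{th:t_pd}, together with the explicit formulas of Proposition \ref{prop:t_pd_for_retraction} (the tensor square root being a smooth function of a t-SPD argument), guarantee that the factors are unique and depend smoothly on $t$, with $\mathcal{Y}(0)=\mathcal{Y}$, $\MC{P}(0)=\MC{P}=(\mathcal{I}+\mathcal{U}^{\top}\ast\mathcal{U})^{\frac{1}{2}}$, and $\mathcal{Y}^{\top}\ast(\MC{X}+\mathcal{U})=\MC{P}$.

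Next I would differentiate $\mathcal{G}(t)=\mathcal{Y}(t)\ast\MC{P}(t)$ at $t=0$ using Lemma \ref{prop:curve} to obtain $\mathcal{V}=\dot{\mathcal{Y}}\ast\MC{P}+\mathcal{Y}\ast\dot{\MC{P}}$. Differentiating the orthogonality constraint $\mathcal{Y}(t)^{\top}\ast\mathcal{Y}(t)=\mathcal{I}$ shows that $\mathcal{S}:=\mathcal{Y}^{\top}\ast\dot{\mathcal{Y}}\in\TSkew{p}{p}{l}$, while Lemma \ref{lemma:t-PSD manifold} gives $\dot{\MC{P}}\in\TS{p}{p}{l}$. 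I then split $\dot{\mathcal{Y}}=\mathcal{Y}\ast\mathcal{S}+(\mathcal{I}-\mathcal{Y}\ast\mathcal{Y}^{\top})\ast\dot{\mathcal{Y}}$. For the normal part, multiplying $\mathcal{V}=\dot{\mathcal{Y}}\ast\MC{P}+\mathcal{Y}\ast\dot{\MC{P}}$ on the left by $\mathcal{I}-\mathcal{Y}\ast\mathcal{Y}^{\top}$ annihilates $\mathcal{Y}\ast\dot{\MC{P}}$ and yields $(\mathcal{I}-\mathcal{Y}\ast\mathcal{Y}^{\top})\ast\dot{\mathcal{Y}}=(\mathcal{I}-\mathcal{Y}\ast\mathcal{Y}^{\top})\ast\mathcal{V}\ast\MC{P}^{-1}$, and the claimed $\MC{P}^{-1}=(\mathcal{Y}^{\top}\ast(\MC{X}+\mathcal{U}))^{-1}$ follows from $\mathcal{Y}^{\top}\ast(\MC{X}+\mathcal{U})=\MC{P}$.

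The heart of the argument is the tangential coefficient $\mathcal{S}$. Multiplying $\mathcal{V}=\dot{\mathcal{Y}}\ast\MC{P}+\mathcal{Y}\ast\dot{\MC{P}}$ on the left by $\mathcal{Y}^{\top}$ gives $\mathcal{Y}^{\top}\ast\mathcal{V}=\mathcal{S}\ast\MC{P}+\dot{\MC{P}}$; transposing and using $\MC{P}^{\top}=\MC{P}$, $\mathcal{S}^{\top}=-\mathcal{S}$, $\dot{\MC{P}}^{\top}=\dot{\MC{P}}$ gives $\mathcal{V}^{\top}\ast\mathcal{Y}=-\MC{P}\ast\mathcal{S}+\dot{\MC{P}}$. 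Subtracting eliminates $\dot{\MC{P}}$ and produces the t-Sylvester equation $\MC{P}\ast\mathcal{S}+\mathcal{S}\ast\MC{P}=\mathcal{Y}^{\top}\ast\mathcal{V}-\mathcal{V}^{\top}\ast\mathcal{Y}$, which I would solve with Theorem \ref{th:sylvester} (taking $\mathcal{A}=\mathcal{B}=\MC{P}$) to recover exactly the stated $\operatorname{vec}(\mathcal{S})$ formula. Assembling the two parts gives $\mathcal{T}_{\mathcal{U}}{\mathcal{V}}=\dot{\mathcal{Y}}=\mathcal{Y}\ast\mathcal{S}+(\mathcal{I}-\mathcal{Y}\ast\mathcal{Y}^{\top})\ast\mathcal{V}\ast(\mathcal{Y}^{\top}\ast(\MC{X}+\mathcal{U}))^{-1}$.

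I expect the main obstacle to be the smoothness and well-posedness bookkeeping for the t-PD curve, namely ensuring that $\mathcal{G}(t)$ remains f-full rank near $t=0$ so that the square-root factors $\mathcal{Y}(t),\MC{P}(t)$ are differentiable, together with verifying that the pseudo-inverse solution of the t-Sylvester equation really returns the \emph{skew-symmetric} $\mathcal{S}$ we need. Consistency holds because the right-hand side $\mathcal{Y}^{\top}\ast\mathcal{V}-\mathcal{V}^{\top}\ast\mathcal{Y}$ is skew-symmetric and, passing to the Fourier domain where each $\hat{P}^{(i)}$ is Hermitian positive definite, the operator $\mathcal{S}\mapsto\MC{P}\ast\mathcal{S}+\mathcal{S}\ast\MC{P}$ acts slice-wise as $\hat{S}^{(i)}\mapsto\hat{P}^{(i)}\hat{S}^{(i)}+\hat{S}^{(i)}\hat{P}^{(i)}$ with strictly positive eigenvalues $\lambda_j+\lambda_k$, hence is invertible and preserves skew-symmetry, so the Sylvester solution is unique and automatically skew-symmetric.
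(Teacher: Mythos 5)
Your proposal is correct and follows essentially the same route as the paper: differentiate the curve $t\mapsto R_{\MC{X}}(\mathcal{U}+t\mathcal{V})$ through the t-PD factorization, split $\dot{\mathcal{Y}}(0)$ into tangential and normal parts, kill $\dot{\MC{P}}$ by the symmetric/skew subtraction (using Lemma \ref{prop:curve} and Lemma \ref{lemma:t-PSD manifold}), and solve the resulting t-Sylvester equation $\MC{P}\ast\mathcal{S}+\mathcal{S}\ast\MC{P}=\mathcal{Y}^{\top}\ast\mathcal{V}-\mathcal{V}^{\top}\ast\mathcal{Y}$ via Theorem \ref{th:sylvester}. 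Your closing Fourier-domain check that the Lyapunov-type operator $\mathcal{S}\mapsto\MC{P}\ast\mathcal{S}+\mathcal{S}\ast\MC{P}$ is invertible and preserves skew-symmetry is a worthwhile addition the paper leaves implicit behind its pseudo-inverse notation.
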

			\begin{proof}
				From \eqref{eq:vec transport} and Theorem \ref{t-pd based retraction},
				for $\mathcal{U},\mathcal{V}\in T_{\MC{X}}\st{n,p,l},$ we have 
				\begin{equation*}
					\mathcal{T}_{\mathcal{U}}\mathcal{V} = D R_{\MC{X}}(\mathcal{U})[\mathcal{V}]
					= \frac{\mathrm{d}}{\mathrm{d}t} R_{\MC{X}}(\mathcal{U}+t\mathcal{V})\big|_{t=0}.
					% 		\\
					% 		&=& \frac{d}{dt} \mathcal{Y}(t)\big|_{t=0}.
				\end{equation*}
				This is well defined,  i.e., the t-PD decomposition of  $\MC{W}(t):=\MC{X}+\MC{U} + t\MC{V}$ is unique. To see this, it follows from Remark \ref{remark:DFT equality} that
				$$\MC{W}(t)^{\top}\ast\MC{W}(t) = \mathcal{I} + \bigxiaokuohao{\mathcal{U}+t\mathcal{V}}^{\top}\ast\bigxiaokuohao{\mathcal{U}+t\mathcal{V}}\Leftrightarrow
				(\hat{W}(t)^{(i)})^H\hat{W}(t)^{(i)}
				= {I}_{p} + \bigxiaokuohao{\hat{U}^{(i)}+t\hat{V}^{(i)}}^{H}\bigxiaokuohao{\hat{U}^{(i)}+t\hat{V}^{(i)}}, i\in [l],$$
				showing that $\hat{\MC{W}}(t)\in \TC{n}{p}{l}_*$, which together with Theorem \ref{th:t-qr based retraction} gives the desired result.
				Hence  $\mathcal W(t)$
				% 	It is easy to check that $\MC{W}(t)^{\top}\tprod\MC{W}(t) = \MC{I} + (\MC{U}+t\MC{V})^{\top}\tprod(\MC{U}+t\MC{V})\in \TSPP{p}{p}{l}$ for all $\MC{U},\MC{V}\in T_{\MC{X}}\st{n,p,l}$, which means that $\MC{W}(t) \in \T{n}{p}{l}_*$. 
				is a curve on $L^{-1}\bigxiaokuohao{\mathbb{C}_*^{n\times p \times l}}$ with $\mathcal{W}(0) = \MC{X}+\mathcal{U}$ and $\dot{\mathcal{W}}(0) = \mathcal{V}$.  
				Let $\mathcal W(t) = \mathcal Y(t) \ast\mathcal P(t)$ denote the t-PD of $\mathcal W(t)$.  Theorem \ref{th:t_pd} shows that $\mathcal Y(t)\in \st{n,p,l}$ and $\mathcal P(t)\in \TSPP{p}{p}{l}$. Hence $\mathcal{Y}(0)=R_{\MC{X}}(\mathcal{U}),\mathcal{P}(0) = R_{\MC{X}}(\mathcal{U})^{\top}\ast (\MC{X} + \mathcal U)
				%,\mathcal{V}=\dot{\mathcal{Y}}(0)\ast R_{\MC{X}}(\mathcal{U})^{\top}\ast (\MC{X} + \mathcal U)+R_{\MC{X}}(\mathcal{U})\ast \dot{\mathcal{P}}(0)
				$. Our task now is to compute $\mathcal{T}_{\mathcal{U}}{\mathcal{V}} = \frac{\mathrm{d}}{\mathrm{d}t}\MC{Y}(t)\big|_{t=0}=\dot{\mathcal Y}(0).$ 
				Since $\mathcal{I}=\mathcal{Y}(t)\ast \mathcal{Y}(t)^{\top} + \bigxiaokuohao{\mathcal{I}-\mathcal{Y}(t)\ast \mathcal{Y}(t)^{\top}}$, we have the decomposition
				\begin{equation}\label{eq:22}
					\dot{\mathcal{Y}}(t)=	\mathcal{Y}(t)\ast \mathcal{Y}(t)^{\top}\ast \dot{\mathcal{Y}}(t) + \bigxiaokuohao{\mathcal{I}-\mathcal{Y}(t)\ast \mathcal{Y}(t)^{\top}}\ast \dot{\mathcal{Y}}(t).
				\end{equation}
				It follows from Lemma \ref{prop:curve} that
				\begin{equation}\label{eq:21}
					\dot{\mathcal W}(t) = 	\dot{\mathcal Y}(t) \ast	\mathcal P(t) + 	\mathcal Y(t) \ast 	\dot{\mathcal P}(t). 
				\end{equation}
				Multiplying \eqref{eq:21} by $\mathcal{I}-\mathcal{Y}(t)\ast \mathcal{Y}(t)^{\top}$ on the left and $\mathcal P(t)^{-1}$ on the right yields
				\begin{equation}\label{eq:sec term}
					\bigxiaokuohao{\mathcal{I}-\mathcal{Y}(t)\ast \mathcal{Y}(t)^{\top}}\ast \dot{\mathcal Y}(t) = \bigxiaokuohao{\mathcal{I}-\mathcal{Y}(t)\ast \mathcal{Y}(t)^{\top}}\ast 	\dot{\mathcal W}(t)\ast (\mathcal{P}(t))^{-1},
				\end{equation}
				which is the second term of \eqref{eq:22}. It remains to derive the computational formulae for the first term of \eqref{eq:22}.
				Since $\dot{\mathcal Y}(t)$ is a tangent vector at the point $\mathcal{Y}(t)$, \eqref{eq:22} satisfies the form:
				\begin{equation*}	T_{\mathcal{Y}(t)}\st{n,p,l}  
					%		= \operatorname{ker}( Dh(\MC{X})) 
					= \bigdakuohao{ \mathcal{Y}(t)\ast \mathcal{W}(t)+\mathcal{Y}_{\perp }(t)\ast \mathcal{B}(t)\in\mathbb{R}^{n\times p \times l }\bigg|\mathcal{W}(t)\in\TSkew{p}{p}{l},~\mathcal{B}(t)\in \mathbb{R}^{(n-p)\times p \times l}},
				\end{equation*}
				where $ \hat{Y}^{(i)}_{\perp }(t)\in \mathbb{C}^{n\times (n-p)}$ is any matrix such that $\operatorname{span}(\hat{Y}^{(i)}_{\perp}(t))=\{\hat{Y}^{(i)}_{\perp}(t)\alpha\big|\alpha\in \mathbb{C}^{n-p}\}$ is the orthogonal complement of $\operatorname{span}(\hat{Y}^{(i)}(t))=\{\hat{Y}^{(i)}(t)\beta\big|\beta\in \mathbb{C}^{p}\}$.
				It is easy to check that $\innerprod{\hat{Y}^{(i)}(t)\beta}{\bigxiaokuohao{I-\hat{Y}^{(i)}(t)(\hat{Y}^{(i)}(t))^H}\alpha} = 0$ for any $\alpha\in \mathbb{C}^{n-p}$ and $\beta\in \mathbb{C}^{p}$,
				which means matrix $\bigxiaokuohao{I-\hat{Y}^{(i)}(t)(\hat{Y}^{(i)}(t))^H}$ is a choice of matrix $\hat{Y}^{(i)}_{\perp }(t)$,
				hence the
				term $\mathcal{S}(t):=\mathcal{Y}(t)^{\top} \ast \dot{\mathcal{Y}}(t)\in\TSkew{p}{p}{l}$.
				% 	Note the expression of the tangent space of the Stiefel manifold. 
				% 	$\mathcal{Y}^{\top}(t)\ast \dot{\mathcal{Y}}(t)$ is a skew symmetric
				% 	tensor and is denoted by $\mathcal{S}(t)$. 
				Next it is sufficient to obtain the formula for $\mathcal{S}(t)$.
				Multiplying \eqref{eq:21} by $\mathcal{Y}(t)^{\top}$ on the left gives
				\begin{equation}
					\mathcal{Y}(t)^{\top}\ast \dot{\mathcal W}(t) = 	\mathcal{Y}(t)^{\top}\ast \dot{\mathcal Y}(t) \ast	\mathcal P(t) + 	\dot{\mathcal P}(t) = \mathcal{S}(t) \ast	\mathcal P(t) + 	\dot{\mathcal P}(t). 
				\end{equation}
				From Lemma \ref{lemma:t-PSD manifold}, $\dot{\mathcal P}(t)\in T_{\MC{X}}\TSPP{n}{p}{l} = \TS{n}{p}{l}$ for any $\MC{X}\in \TSPP{n}{p}{l}$, hence
				\begin{equation}
					\mathcal{Y}(t)^{\top}\ast \dot{\mathcal W}(t) - \dot{\mathcal{W}}(t)^{\top}\ast 
					\mathcal Y(t)= \mathcal{S}(t) \ast	\mathcal P(t) - \mathcal{P}(t)^{\top} \ast	\mathcal S(t)^{\top} =  \mathcal{S}(t) \ast	\mathcal P(t) + \mathcal{P}(t) \ast	\mathcal S(t).
				\end{equation}
				Therefore, according to Theorem \ref{th:sylvester}, a analytical solution for $\mathcal S(t)$
				exists and is given by
				\begin{equation}\label{eq:vecs}
					\operatorname{vec}(\mathcal S(t))=\bigxiaokuohao{ \widetilde{\operatorname{bcirc}}(\mathcal P(t))^{\top}\otimes I_p+[I_p]_{l\times l}\odot\operatorname{bcirc}(\mathcal P(t)) }^{\dagger }\cdot \operatorname{vec}(\mathcal{Y}(t)^{\top}\ast \dot{\mathcal W}(t) - \dot{\mathcal{W}}(t)^{\top}\ast 
					\mathcal Y(t)).
				\end{equation}
				By substitute \eqref{eq:vecs} and \eqref{eq:sec term} into \eqref{eq:22}, we obtain
				\begin{equation*}
					\dot{\mathcal{Y}}(t)=\mathcal{Y}(t)\ast \mathcal S(t) + \bigxiaokuohao{\mathcal{I}-\mathcal{Y}(t)\ast \mathcal{Y}(t)^{\top}}\ast 	\dot{\mathcal W}(t)\ast (\mathcal{P}(t))^{-1}.
				\end{equation*}
				Thus, there holds
				\begin{equation*}
					\mathcal{T}_{\mathcal{U}}\mathcal{V} 
					% 		&=& \mathrm{D} R_{\MC{X}}(\mathcal{U})[\mathcal{V}]\\
					% 		&=& \frac{\mathrm{d}}{\mathrm{d}t} R_{\MC{X}}(\mathcal{U}+t\mathcal{V})\big|_{t=0}\\
					=
					\dot{\mathcal{Y}}(0)
					= \mathcal{Y}\ast \mathcal{S} + (\mathcal{I}-\mathcal{Y}\ast \mathcal{Y}^{\top})\ast \mathcal{V}\ast (\mathcal{Y}^{\top}\ast (\MC{X}+\mathcal{U}))^{-1},
				\end{equation*}
				where $\mathcal{Y}  = R_{\MC{X}}(\mathcal{U}),$  $\operatorname{vec}(\mathcal S)=\bigxiaokuohao{ \widetilde{\operatorname{bcirc}}(\mathcal P)^{\top}\otimes I_p+[I_p]_{l\times l}\odot\operatorname{bcirc}(\mathcal P) }^{\dagger }\cdot \operatorname{vec}(\mathcal{Y}^{\top}\ast \mathcal V - \mathcal V^{\top}\ast 
				\mathcal Y)$ and $\mathcal P = R_{\MC{X}}(\mathcal{U})^{\top}\ast (\MC{X} + \mathcal U) = (\mathcal{I}+\mathcal{U}^{\top}\ast \mathcal{U})^{\frac{1}{2}}$.
			\end{proof}
			Next we derive the vector transport as the differentiated retraction based on t-Cayley transfrom.
			\begin{theorem}[t-Cayley based vector transport]
				The vector transport on $\st{n,p,l}$ by differentiated retraction based on t-Cayley transfrom is
				\begin{equation*}
					\mathcal{T}_{\mathcal{U}}{\mathcal{V}} = \bigxiaokuohao{\mathcal{I} - \frac{1}{2}\mathcal{W}_{\mathcal{U}}}^{-1}\ast \mathcal{W}_{\mathcal{V}}\ast \bigxiaokuohao{\mathcal{I} - \frac{1}{2}\mathcal{W}_{\mathcal{U}}}^{-1}\ast \MC{X}.
				\end{equation*}
				where $\MC{X}\in \st{n,p,l}, \MC{U},\MC{V}\in T_{\MC{X}}\st{n,p,l}$, $\mathcal{W}_{\mathcal{U}} = \MC{P}\ast \mathcal{U}\ast \MC{X}^{\top} - \MC{X}\ast \mathcal{U}^{\top}\ast\MC{P}\in \TSkew{n}{n}{l}$,  $\mathcal{W}_{\mathcal{V}} = \MC{P}\ast \mathcal{V}\ast \MC{X}^{\top} - \MC{X}\ast \mathcal{V}^{\top}\ast\MC{P}\in \TSkew{n}{n}{l}$, and $\MC{P} = \mathcal{I} - \frac{1}{2}\MC{X}\ast\MC{X}^{\top}$.
			\end{theorem}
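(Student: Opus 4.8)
The plan is to compute the differentiated retraction directly from the t-Cayley formula \eqref{eq:cayley based retraction} by evaluating $\mathcal{T}_{\mathcal{U}}{\mathcal{V}} = \frac{\mathrm{d}}{\mathrm{d}t}R_{\MC{X}}(\mathcal{U}+t\mathcal{V})\big|_{t=0}$, as prescribed by \eqref{eq:vec transport}. The first observation I would record is that the assignment $\mathcal{V}\mapsto\mathcal{W}_{\mathcal{V}} = \MC{P}\ast\mathcal{V}\ast\MC{X}^{\top}-\MC{X}\ast\mathcal{V}^{\top}\ast\MC{P}$ is linear in its argument, so that $\mathcal{W}_{\mathcal{U}+t\mathcal{V}}=\mathcal{W}_{\mathcal{U}}+t\mathcal{W}_{\mathcal{V}}$. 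Writing $\MC{A}(t):=\mathcal{I}-\frac{1}{2}\mathcal{W}_{\mathcal{U}+t\mathcal{V}}$ and $\MC{B}(t):=\mathcal{I}+\frac{1}{2}\mathcal{W}_{\mathcal{U}+t\mathcal{V}}$, the retracted curve becomes $R_{\MC{X}}(\mathcal{U}+t\mathcal{V})=\MC{A}(t)^{-1}\ast\MC{B}(t)\ast\MC{X}$, with $\dot{\MC{A}}(t)=-\frac{1}{2}\mathcal{W}_{\mathcal{V}}$ and $\dot{\MC{B}}(t)=\frac{1}{2}\mathcal{W}_{\mathcal{V}}$ independent of $t$. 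Invertibility of $\MC{A}(t)$ along the curve is already guaranteed by the argument in Theorem \ref{cayley based retraction} via Lemma \ref{lem:positive definite}, so the curve is well defined and smooth.

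Next I would differentiate this product. Applying the t-product Leibniz rule of Lemma \ref{prop:curve} to the identity $\MC{A}(t)\ast\MC{A}(t)^{-1}=\mathcal{I}$ yields the inverse-derivative formula $\frac{\mathrm{d}}{\mathrm{d}t}\MC{A}(t)^{-1}=-\MC{A}(t)^{-1}\ast\dot{\MC{A}}(t)\ast\MC{A}(t)^{-1}$, where I must keep the noncommutative ordering intact. Combining this with Lemma \ref{prop:curve} applied to $\MC{A}(t)^{-1}\ast\MC{B}(t)$ and setting $\MC{A}:=\MC{A}(0)=\mathcal{I}-\frac{1}{2}\mathcal{W}_{\mathcal{U}}$, $\MC{B}:=\MC{B}(0)=\mathcal{I}+\frac{1}{2}\mathcal{W}_{\mathcal{U}}$ gives
\[
\mathcal{T}_{\mathcal{U}}{\mathcal{V}} = \bigxiaokuohao{-\MC{A}^{-1}\ast\dot{\MC{A}}(0)\ast\MC{A}^{-1}\ast\MC{B}+\MC{A}^{-1}\ast\dot{\MC{B}}(0)}\ast\MC{X} = \tfrac{1}{2}\MC{A}^{-1}\ast\mathcal{W}_{\mathcal{V}}\ast\bigxiaokuohao{\MC{A}^{-1}\ast\MC{B}+\mathcal{I}}\ast\MC{X}.
\]

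The crucial simplification, which I expect to be the one nonroutine point, is that $\MC{A}+\MC{B}=2\mathcal{I}$, so that $\MC{A}^{-1}\ast\MC{B}+\mathcal{I}=\MC{A}^{-1}\ast(\MC{B}+\MC{A})=2\MC{A}^{-1}$; substituting this collapses the expression to $\mathcal{T}_{\mathcal{U}}{\mathcal{V}}=\MC{A}^{-1}\ast\mathcal{W}_{\mathcal{V}}\ast\MC{A}^{-1}\ast\MC{X}$, which is precisely the claimed formula. I would stress that this step only uses left-factoring by $\MC{A}^{-1}$ and not commutativity, so no delicate justification about commuting tensors is needed. The genuine content of the proof is therefore the tensor inverse-derivative identity, which rests entirely on the associativity of $\ast$ and on Lemma \ref{prop:curve}; everything else is bookkeeping of the linear dependence of $\mathcal{W}_{\mathcal{V}}$ and of the algebraic identity $\MC{A}+\MC{B}=2\mathcal{I}$. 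Since the transported tensor is $\MC{A}^{-1}\ast\mathcal{W}_{\mathcal{V}}\ast\MC{A}^{-1}\ast\MC{X}=\dot R_{\MC{X}}(\mathcal{U})[\mathcal{V}]$, it automatically lies in $T_{R_{\MC{X}}(\mathcal{U})}\st{n,p,l}$ by Definition \ref{def:vector_transport}, so no separate verification of the range condition is required.
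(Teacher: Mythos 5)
Your proposal is correct and follows essentially the same route as the paper: both differentiate the t-Cayley curve $t\mapsto R_{\MC{X}}(\mathcal{U}+t\mathcal{V})$ at $t=0$ (using the linearity $\mathcal{W}_{\mathcal{U}+t\mathcal{V}}=\mathcal{W}_{\mathcal{U}}+t\mathcal{W}_{\mathcal{V}}$) and then collapse the result via $\bigxiaokuohao{\mathcal{I}-\frac{1}{2}\mathcal{W}_{\mathcal{U}}}+\bigxiaokuohao{\mathcal{I}+\frac{1}{2}\mathcal{W}_{\mathcal{U}}}=2\mathcal{I}$, exactly the paper's final simplification. The only cosmetic difference is that the paper differentiates the implicit relation $\bigxiaokuohao{\mathcal{I}-\frac{1}{2}\mathcal{W}_{\mathcal{U}}-\frac{t}{2}\mathcal{W}_{\mathcal{V}}}\ast R_{\MC{X}}(\mathcal{U}+t\mathcal{V})=\bigxiaokuohao{\mathcal{I}+\frac{1}{2}\mathcal{W}_{\mathcal{U}}+\frac{t}{2}\mathcal{W}_{\mathcal{V}}}\ast\MC{X}$ and solves for $\frac{\mathrm{d}}{\mathrm{d}t}R_{\MC{X}}(\mathcal{U}+t\mathcal{V})\big|_{t=0}$, whereas you differentiate the explicit formula with the t-product inverse-derivative identity; both reductions rest on the same Leibniz rule and yield identical algebra.
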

			\begin{proof}
				Consider the following retraction based on
				t-Cayley transform:
				\begin{equation*}
					R_{\MC{X}}(\mathcal{U}+t\mathcal{V}) = \bigxiaokuohao{\mathcal{I} - \frac{1}{2}\mathcal{W}_{\mathcal{U}}-\frac{t}{2}\mathcal{W}_{\mathcal{V}}}^{-1}\ast\bigxiaokuohao{\mathcal{I} + \frac{1}{2}\mathcal{W}_{\mathcal{U}}+\frac{t}{2}\mathcal{W}_{\mathcal{V}}}\ast\MC{X}.
				\end{equation*}
				Differentiating both sides of
				$$
				\bigxiaokuohao{\mathcal{I} - \frac{1}{2}\mathcal{W}_{\mathcal{U}}-\frac{t}{2}\mathcal{W}_{\mathcal{V}}}\ast R_{\MC{X}}(\mathcal{U}+t\mathcal{V}) = \bigxiaokuohao{\mathcal{I} + \frac{1}{2}\mathcal{W}_{\mathcal{U}}+\frac{t}{2}\mathcal{W}_{\mathcal{V}}}\ast\MC{X}
				$$
				with respect to $t$, we have
				$$
				-\frac{1}{2}\mathcal{W}_{\mathcal{V}}\ast R_{\MC{X}}(\mathcal{U}+t\mathcal{V})
				+\bigxiaokuohao{\mathcal{I} - \frac{1}{2}\mathcal{W}_{\mathcal{U}}-\frac{t}{2}\mathcal{W}_{\mathcal{V}}}
				\ast \frac{\mathrm{d}}{\mathrm{d} t}R_{\MC{X}}(\mathcal{U}+t\mathcal{V}) 
				= \frac{1}{2}\mathcal{W}_{\mathcal{V}}\ast\MC{X}.
				$$
				According to \eqref{eq:vec transport}, the vector transport by differentiated retraction is 
				\begin{eqnarray*}
					\mathcal{T}_{\mathcal{U}}{\mathcal{V}} &=& \frac{\mathrm{d}}{\mathrm{d} t}R_{\MC{X}}(\mathcal{U}+t\mathcal{V})\big|_{t=0}\\
					&=& \frac{1}{2}\bigxiaokuohao{\mathcal{I} - \frac{1}{2}\mathcal{W}_{\mathcal{U}}}^{-1}\ast \mathcal{W}_{\mathcal{V}}\ast \bigxiaokuohao{\MC{X}+R_{\MC{X}}(\mathcal{U})}\\
					&=& \frac{1}{2}\bigxiaokuohao{\mathcal{I} - \frac{1}{2}\mathcal{W}_{\mathcal{U}}}^{-1}\ast \mathcal{W}_{\mathcal{V}}\ast \bigxiaokuohao{\MC{X}+\bigxiaokuohao{\mathcal{I} - \frac{1}{2}\mathcal{W}_{\mathcal{U}}}^{-1}\ast\bigxiaokuohao{\mathcal{I} + \frac{1}{2}\mathcal{W}_{\mathcal{U}}}\ast\MC{X}}\\
					&=& \bigxiaokuohao{\mathcal{I} - \frac{1}{2}\mathcal{W}_{\mathcal{U}}}^{-1}\ast \mathcal{W}_{\mathcal{V}}\ast \bigxiaokuohao{\mathcal{I} - \frac{1}{2}\mathcal{W}_{\mathcal{U}}}^{-1}\ast \MC{X},
				\end{eqnarray*}
				where the last equality comes from $\MC{W}_{\MC{U}}\ast \MC{X} = \MC{U}$.
			\end{proof}
			Finally, we introduce a  isometric vector transport.
			\begin{theorem}[isometric vector transport]
				The following formulae is an isometric vector transport on $\st{n,p,l}$ 
				\begin{equation}\label{eq:cayley vec transport}
					\mathcal{T}_{\mathcal{U}}{\mathcal{V}} = \bigxiaokuohao{\mathcal{I} - \frac{1}{2}\mathcal{W}_{\mathcal{U}}}^{-1}\ast\bigxiaokuohao{\mathcal{I} + \frac{1}{2}\mathcal{W}_{\mathcal{U}}}\ast\mathcal{V},
				\end{equation}
				where $\MC{X}\in \st{n,p,l}, \MC{U},\MC{V}\in T_{\MC{X}}\st{n,p,l}$, $\mathcal{W}_{\mathcal{U}} = \MC{P}\ast \mathcal{U}\ast \MC{X}^{\top} - \MC{X}\ast \mathcal{U}^{\top}\ast\MC{P}\in \TSkew{n}{n}{l}$ and $\MC{P} = \mathcal{I} - \frac{1}{2}\MC{X}\ast\MC{X}^{\top}$. 
			\end{theorem}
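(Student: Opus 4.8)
The plan is to recognize that \eqref{eq:cayley vec transport} is simply left t-multiplication by the orthogonal Cayley factor that already appeared in the t-Cayley retraction, and then to reduce all four required checks (the three axioms of Definition \ref{def:vector_transport} together with the isometry property) to the single algebraic fact that this factor is an orthogonal tensor. Concretely, I would abbreviate $\mathcal{Y}_{\mathcal{U}} := \bigxiaokuohao{\mathcal{I} - \frac{1}{2}\mathcal{W}_{\mathcal{U}}}^{-1}\ast\bigxiaokuohao{\mathcal{I} + \frac{1}{2}\mathcal{W}_{\mathcal{U}}}$, so that $\mathcal{T}_{\mathcal{U}}\mathcal{V} = \mathcal{Y}_{\mathcal{U}}\ast\mathcal{V}$, and the associated retraction of Theorem \ref{cayley based retraction} reads $\mathcal{Y} := R_{\MC{X}}(\mathcal{U}) = \mathcal{Y}_{\mathcal{U}}\ast\MC{X}$. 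Exactly as in the proof of Theorem \ref{cayley based retraction}, since $\mathcal{W}_{\mathcal{U}}\in\TSkew{n}{n}{l}$, Lemma \ref{lem:positive definite} guarantees that $\mathcal{I} - \frac{1}{2}\mathcal{W}_{\mathcal{U}}$ is invertible and that $\mathcal{Y}_{\mathcal{U}}\in\st{n,n,l}$ is orthogonal, i.e. $\mathcal{Y}_{\mathcal{U}}^{\top}\ast\mathcal{Y}_{\mathcal{U}} = \mathcal{Y}_{\mathcal{U}}\ast\mathcal{Y}_{\mathcal{U}}^{\top} = \mathcal{I}$.

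For the vector-transport axioms, properties (ii) and (iii) of Definition \ref{def:vector_transport} are immediate: at $\mathcal{U} = \mathcal{O}$ one has $\mathcal{W}_{\mathcal{O}} = \mathcal{O}$, hence $\mathcal{Y}_{\mathcal{O}} = \mathcal{I}$ and $\mathcal{T}_{\mathcal{O}}\mathcal{V} = \mathcal{V}$, while linearity in $\mathcal{V}$ follows from bilinearity of the t-product. Smoothness of $(\mathcal{U},\mathcal{V})\mapsto\mathcal{T}_{\mathcal{U}}\mathcal{V}$ holds because it is built from the t-product and the inversion of the everywhere-invertible tensor $\mathcal{I} - \frac{1}{2}\mathcal{W}_{\mathcal{U}}$. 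The one check with content is property (i), namely $\mathcal{T}_{\mathcal{U}}\mathcal{V}\in T_{\mathcal{Y}}\st{n,p,l}$. Using $\mathcal{Y}^{\top} = \MC{X}^{\top}\ast\mathcal{Y}_{\mathcal{U}}^{\top}$ together with orthogonality of $\mathcal{Y}_{\mathcal{U}}$, I would compute
\[
\mathcal{Y}^{\top}\ast\bigxiaokuohao{\mathcal{Y}_{\mathcal{U}}\ast\mathcal{V}} + \bigxiaokuohao{\mathcal{Y}_{\mathcal{U}}\ast\mathcal{V}}^{\top}\ast\mathcal{Y}
= \MC{X}^{\top}\ast\mathcal{V} + \mathcal{V}^{\top}\ast\MC{X} = \mathcal{O},
\]
where the final equality is exactly the tangency condition $\MC{X}^{\top}\ast\mathcal{V} + \mathcal{V}^{\top}\ast\MC{X} = \mathcal{O}$ characterizing $T_{\MC{X}}\st{n,p,l}$ in Theorem \ref{th:dim}. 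By the same kernel characterization applied at $\mathcal{Y}$, this places $\mathcal{T}_{\mathcal{U}}\mathcal{V}$ in $T_{\mathcal{Y}}\st{n,p,l}$, confirming that the map is a vector transport associated with $R_{\MC{X}}(\mathcal{U}) = \mathcal{Y}_{\mathcal{U}}\ast\MC{X}$.

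For isometry, I would use that the Riemannian metric \eqref{eq:inner product} is the Frobenius inner product at every point, so that $\innerprod{\cdot}{\cdot}_{R_{\MC{X}}(\mathcal{U})} = \innerprod{\cdot}{\cdot}$ and it suffices to show $\innerprod{\mathcal{Y}_{\mathcal{U}}\ast\mathcal{V}}{\mathcal{Y}_{\mathcal{U}}\ast\mathcal{V}} = \innerprod{\mathcal{V}}{\mathcal{V}}$. Applying item 2 of Proposition \ref{prop:trace_switch} and then the orthogonality of $\mathcal{Y}_{\mathcal{U}}$ gives $\innerprod{\mathcal{Y}_{\mathcal{U}}\ast\mathcal{V}}{\mathcal{Y}_{\mathcal{U}}\ast\mathcal{V}} = \innerprod{\mathcal{V}}{\mathcal{Y}_{\mathcal{U}}^{\top}\ast\mathcal{Y}_{\mathcal{U}}\ast\mathcal{V}} = \innerprod{\mathcal{V}}{\mathcal{V}}$, which is the isometry property. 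I expect no genuine obstacle here; the only points requiring care are establishing the orthogonality of $\mathcal{Y}_{\mathcal{U}}$ (inherited from the t-Cayley retraction analysis) and the correct interpretation of property (i) relative to the specific retraction $\mathcal{Y} = \mathcal{Y}_{\mathcal{U}}\ast\MC{X}$. Once these are in place, every axiom collapses to a one-line application of $\mathcal{Y}_{\mathcal{U}}^{\top}\ast\mathcal{Y}_{\mathcal{U}} = \mathcal{I}$.
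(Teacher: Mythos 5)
Your proposal is correct and follows essentially the same route as the paper's own proof: both rest on the skew-symmetry of $\mathcal{W}_{\mathcal{U}}$ and the commutation identity, which make the Cayley factor $\bigxiaokuohao{\mathcal{I} - \frac{1}{2}\mathcal{W}_{\mathcal{U}}}^{-1}\ast\bigxiaokuohao{\mathcal{I} + \frac{1}{2}\mathcal{W}_{\mathcal{U}}}$ orthogonal, and from this derive tangency at $R_{\MC{X}}(\mathcal{U})$ via the kernel characterization of Theorem \ref{th:dim}, the remaining transport axioms, and the isometry. The only difference is presentational: you name the orthogonal factor $\mathcal{Y}_{\mathcal{U}}$ explicitly and route every check through $\mathcal{Y}_{\mathcal{U}}^{\top}\ast\mathcal{Y}_{\mathcal{U}}=\mathcal{I}$, whereas the paper performs the same cancellation inline without isolating it.
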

			\begin{proof}Consider the following retraction based on t-Cayley transform:
				\begin{equation*}
					R_{\MC{X}}(\mathcal{U}) = \bigxiaokuohao{\mathcal{I} - \frac{1}{2}\mathcal{W}_{\mathcal{U}}}^{-1}\ast\bigxiaokuohao{\mathcal{I} + \frac{1}{2}\mathcal{W}_{\mathcal{U}}}\ast\MC{X}.
				\end{equation*}
				Since  $\mathcal{W}_{\mathcal{U}}\in \TSkew{n}{n}{l}$ and $(\MC{I}-\MC{A})*(\MC{I}+\MC{B}) = (\MC{I}+\MC{B})*(\MC{I}-\MC{A})$ for all $\MC{A},\MC{B}\in \T{n}{p}{l}$, we have 
				\begin{equation*}
					\mathcal{T}_{\mathcal{U}}{\mathcal{V}}^{\top}\ast R_{\MC{X}}(\mathcal{U}) + R_{\MC{X}}(\mathcal{U})^{\top}\ast \mathcal{T}_{\mathcal{U}}{\mathcal{V}} = \MC{V}^{\top}\ast \MC{X}+\MC{X}^{\top}\ast \MC{V} = \MC{O},
				\end{equation*}
				which combines Theorem \ref{th:dim} lead to $\mathcal{T}_{\mathcal{U}}{\mathcal{V}}\in T_{R_{\MC{X}}(\mathcal{U})}\st{n,p,l}$.
				% and therefore $R_{\MC{X}}(\mathcal{U})$ is the foot of $\mathcal{T}_{\mathcal{U}}{\mathcal{V}}$. 
				% Thus we have proved that the t-Cayley transform $R_{\MC{X}}(\mathcal{U})$ is a retraction associated with $\mathcal{T}$. 
				It is easy to check that $\mathcal{T}_{\MC{O}_\MC{X}}{\mathcal{V}} = \mathcal{V}$ 
				and  
				% $\MC{T}_{\mathcal{U}}(a\mathcal{V}_1+b\mathcal{V}_2) 
				% = a\mathcal{T}_{\mathcal{U}}\mathcal{V}_1+b\mathcal{T}_{\mathcal{U}}\mathcal{V}_2
				% $ 
				for $\mathcal{V}_1,\mathcal{V}_2 \in T_{\MC{X}}\st{n,p,l}$.
				The smoothness follows immediately from \eqref{eq:cayley vec transport}. According to Definition \ref{def:vector_transport}, $\MC{T}$ is indeed a vector transport on $\st{n,p,l}$. It follows from the skew-symmetry of $\mathcal{W}_{\mathcal{U}}$ that $
				\left\langle\mathcal{T}_{\MC{U}}(\MC{V}), \mathcal{T}_{\MC{U}}(\MC{V})\right\rangle_{R_{\MC{X}}(\MC{U})}=\langle\MC{V}, \MC{V}\rangle_{\MC{X}}
				$
				for all $\MC{U}, \MC{V} \in T_{\MC{X}} \st{n,p,l}$.
			\end{proof}

\section{Examples of \eqref{prob:prototype}}\label{sec:appl}
We present some (potential) examples of \eqref{prob:prototype} and related problems in this section. 
%The prototype problem
%In this section, we present some applications case of \eqref{prob:prototype}.

%$\TH$
\paragraph{Best approximation.} 
Given $\mathcal A\in\T{n}{p}{l}$, its best $k$-term approximation was given in \cite[Thm. 4.3]{kilmer2011factorization}. Such a problem can also be formulated as ($k\leq \min\{n,p\}$):
%\begin{align*}
$	\min_{\mathcal U\in\st{n,k,l}, \mathcal S\in\mathbb R^{k\times k\times l},\mathcal V \in \st{p,k,l}}~ \bigfnorm{\mathcal A - \mathcal U\tprod\mathcal S\tprod\mathcal V^{\top}}^2.$
%\end{align*}
By using Proposition \ref{prop:for_realization} to eliminate the variable $\mathcal S$, such a problem is equivalent to 
\begin{align}\label{prob:t-svd_truncated}
	\max_{\mathcal U\in \st{n,k,l},\mathcal V \in \st{p,k,l} }~\bigfnorm{\mathcal U^{\top}\tprod \mathcal A\tprod\mathcal V  }^2.  \end{align}
By denoting $\mathcal W:= \left[\begin{smallmatrix}
	\mathcal{O}& \mathcal U \\
	\mathcal V & \mathcal{O} 
\end{smallmatrix}\right]\in\mathbb R^{(n+p)\times 2k\times l}$ with $\mathcal O$ the zero tensor of proper size and reformulating the objective function accordingly, such a problem is of the form \eqref{prob:prototype}. 

%\paragraph{Best approximation to symmetric tensors}
Given $\mathcal A\in\T{n}{n}{l}$ which is symmetric, its eigenvalue decomposition was given in \cite{zheng2021t}. Correspondingly, one can define its best $k$-term symmetric approximation as $(k\leq n)$:
%\begin{align*}
$	\min_{\mathcal U\in\st{n,k,l},\mathcal S\in\mathbb R^{n\times n\times l} }~\bigfnorm{\mathcal A-\mathcal U\tprod\mathcal S\tprod\mathcal U^{\top}}^2. $
%\end{align*}
Similarly, the variable $\mathcal S$ can be eliminated and the problem is equivalent to 
\begin{align}\label{prob:t-evd_truncated}
	& \max_{\mathcal U\in\st{n,k,l}} \bigfnorm{\mathcal U^{\top}\tprod\mathcal A\tprod\mathcal U}^2,
\end{align}
which is of the form \eqref{prob:prototype}. Similar to their matrix counterparts, \eqref{prob:t-svd_truncated} and \eqref{prob:t-evd_truncated} are also equivalent to
\begin{align}\label{prob:t-evd_or_svd_truncated_trace}
	\min_{\mathcal U\in \st{n,k,l},\mathcal V\in \st{p,k,l} }-\trace{\mathcal U^{\top}\tprod\mathcal A\tprod\mathcal V }~{\rm and}~\min_{\mathcal U\in \st{n,k,l}}-\trace{\mathcal U^{\top}\tprod\mathcal A\tprod\mathcal U}.
\end{align}

\paragraph{Best approximation  with missing entries.}
In real-world applications, we are sometimes faced the situation that part of the observation data is missing; this   troubles the approximation problem.  Similar to tensor completion, one can formulate the problem as  
\begin{align*}
	&\min_{\mathcal U\in\st{n,k,l}, \mathcal S\in\mathbb R^{k\times k\times l},\mathcal V \in\st{p,k,l} } \bigfnorm{\Omega \circledast  \bigxiaokuohao{\mathcal A - \mathcal U\tprod\mathcal S\tprod\mathcal V^{\top}}}^2,
\end{align*}
where $\circledast $ denotes the Hadamard operator and $\Omega\in\T{n}{p}{l}$ is a 0-1 tensor whose entries take 1 if the associated entries of $\mathcal A$ are available and 0 otherwise. The variable $\mathcal S$ cannot be eliminated and such a problem is a variant of \eqref{prob:prototype} that can be possibly solved in an alternating fashion. 
%\paragraph{Best approximation to symmetric tensors with missing entries}

In the symmetric tensors setting, similar troubles might occur. Such a problem is thus formulated as 
\begin{align}\label{prob:missing_entries}
	&\min_{\mathcal U\in\st{n,k,l},\mathcal S\in\mathbb R^{k\times k\times l} }~\bigfnorm{\Omega\circledast \bigxiaokuohao{\mathcal A-\mathcal U\tprod\mathcal S\tprod\mathcal U^{\top}}}^2. 
\end{align}

\paragraph{Joint f-diagonalization.}
The  connection of simultaneous f-diagonalization to commutative tensors was discovered theoretically in  \cite{miao2021t,liu2020study}. For more than two tensors, the joint f-diagonalization is difficult in theory. In the matrix case, however, this can be resolved numerically by formulating the problem as optimization models over orthogonal or non-orthogonal constraints; see, e.g, \cite{bunse1993numerical}. Similarly, for joint f-diagonalization of more than two tensors of size $n\times n\times l$, one can consider the following optimization models:
\begin{align}\label{prob:joint_diag}
	\min_{\mathcal U\in\st{n,k,l}} \sum^N_{i=1}\nolimits{\rm off}\bigxiaokuohao{\mathcal U^{\top}\tprod\mathcal A_i\tprod\mathcal U},%~{\rm s.t.}~\mathcal U^{\top}\tprod\mathcal U = \mathcal I,	
\end{align}
where $\mathcal A_i\in\T{n}{n}{l}$, $i=1,\ldots,N$, and ${\rm off}(\mathcal X) = \sum_{i_3=1}^l\sum_{1\leq i_1\neq i_2\leq k}^k \bigxiaokuohao{x_{i_1i_2i_3}}^2 $ or $=\sum^l_{i_3=1}\sum^k_{1\leq i_1\neq i_2\leq k} \bigjueduizhi{x_{i_1i_2i_3}}$,  which is analogous to its matrix counterpart.
%, or
%\begin{align*}
%	&\min_{\mathcal U\in\mathbb R^{n\times k\times l}, \mathcal S_i\in\mathbb R^{k\times k\times l} } \sum^N_{i=1}\bigfnorm{\mathcal A_i - \mathcal U\tprod\mathcal S_i\tprod\mathcal U^{\top}}^2\\
%	&~~~~~~~~~~{\rm s.t.}~~~~~~~~~~~~~\mathcal U^{\top}\tprod\mathcal U =\mathcal I, \mathcal S_i~{\rm f-diagonal}. 
%\end{align*}

\paragraph{Joint t-SVD.}
We first consider the matrix cases. Assume that $N$ matrices $A_1,\ldots,A_N\in\mathbb R^{n\times p}$ are given, which are regarded as $N$ samples. The joint SVD is to find common orthogonal matrices $U\in\mathbb R^{n\times k},V\in\mathbb R^{p\times k}$, $k\leq \min\{n,p\}$, such that $U^{\top}A_iV$ are as diagonal as possible. The joint SVD is useful in image representation and dimension reduction; see, e.g. \cite{pesquet2001joint,shashua2001linear}.  Now assume that the samples are third-order tensors $\mathcal A_1,\ldots,\mathcal A_N\in\T{n}{p}{l}$. To perform dimension reduction on these samples, it is quite natural to extend such an idea to obtain the following joint t-SVD models:
\begin{align*}
	\min_{\mathcal U\in\st{n,k,l},\mathcal V\in\st{p,k,l}} \sum^N_{i=1}\nolimits{\rm off}\bigxiaokuohao{\mathcal U^{\top}\tprod\mathcal A_i\tprod\mathcal V}.
\end{align*}
%or
%\begin{align*}
%	&\min_{\mathcal U\in\mathbb R^{n\times k\times l}, \mathcal S_i\in\mathbb R^{k\times k\times l} ,\mathcal V\in\mathbb R^{n\times k\times l}} \sum^N_{i=1}\bigfnorm{\mathcal A - \mathcal U\tprod\mathcal S_i\tprod\mathcal V^{\top}}^2\\
%	&~~~~~~~~~~{\rm s.t.}~~~~~~~~~~~~~~~~~~~~~~~~\mathcal U^{\top}\tprod\mathcal U =\mathcal I,\mathcal V^{\top}\tprod\mathcal V=\mathcal I, \mathcal S_i~{\rm f-diagonal}. 
%\end{align*}

\paragraph{Sparse tensor PCA.} 
Sparse tensor PCA was introduced in \cite{allen2012sparse} and then its solution methods were  further studied in \cite{wang2020sparse,mao2022several}. The purpose is to find sparse principal  components for higher-order data. The model of \cite{allen2012sparse} is based on the canonical polyadic format and is approximately solved by a deflation approach, where the orthogonality cannot be assured. However, by using t-product, it is more natural to directly extend the sparse matrix PCA to the third-order tensor setting, leading to the following model:
\begin{align}\label{prob:stpca}
	\min_{\mathcal U\in\st{n,k,l}} -\trace{ {\mathcal U^{\top}\tprod\mathcal A\tprod\mathcal A^{\top}\tprod\mathcal U}} + \rho\bignorm{\mathcal U}_1,
\end{align}
where $\mathcal A\in\mathbb R^{n\times p\times l}$ is the data tensor consisting of samples,   $\bignorm{\mathcal U}_1=\sum_{i_1,i_2,i_3=1}^{n,k,l}\bigjueduizhi{u_{i_1i_2i_3}}$, and $\rho>0$. Based on the formulas derived in this work, it is expected that the recently developed Riemannian algorithms, such as \cite{chen2020proximal,huang2021riemannian}, can be applied to the above problem without many modifications.

Besides the above basic examples, some applications have been or can be formulated as optimization over the tensor Stiefel manifold in the literature; see, e.g., \cite{lin2020tensor,xu2021t,xu2022iterative,hoover2018multilinear,ozdemir2022high}. For instance, \cite{lin2020tensor} proposed a tensor subspace representation method for hyperspectral image denoising, where the model is exactly an optimization over the tensor Stiefel manifold. 
To save space, we do not introduce them in detail here; interested readers can be referred to them.

We make the following remark to end this section. 
\begin{remark} Considering the relation \eqref{eq:IDFT}, one may wonder whether \eqref{prob:prototype} can be solved in the Fourier domain. That is to say, since $\MC{X} = L^{-1}L(\MC{X}) = L^{-1}(\hat{\MC{X}})$, \eqref{prob:prototype} is equivalent to minimizing $f(L^{-1}(\hat{\MC{X}}))$ subject to ${ (\hat{X}^{(i)})^H\hat{X}^{(i)} = I_p, i = 1,\cdots,\lceil  \frac{l+1}{2}  \rceil }, 
	\hat{X}^{(i)} = \conj{\hat{X}^{(l+2-i)}},i=1+ \lceil  \frac{l+1}{2}  \rceil,\ldots,l$, which is minimizing a real-valued function over the product of $k$ real matrix Stiefel manifolds and $\frac{l-k}{2}$ complex matrix Stiefel manifolds (when $l$ is even, $k = 2$; when $l$ is odd, $k = 1$). Comparing with \eqref{prob:prototype}, solving this problem may have some drawbacks. Firstly, it is in the complex field which is more complicated to analyze  than \eqref{prob:prototype}; secondly, the introduction of $L^{-1}$ in the problem might destroy certain structure of the problem; thirdly, if $f$ is nonsmooth, such as $f$ involves the $\ell_1$ norm    as that in the sparse tensor PCA model \eqref{prob:stpca}, then as far as we know, no Riemannian algorithms can handle problems involving the term $\|{L^{-1}(\hat{\MC{X}})}\|_1$. 
\end{remark}
%\paragraph{$\ell_1$-PCA}

\color{black}

\section{Preliminary Numerical Examples}\label{sec:experiments} We conducted preliminary numerical experiments to verify the derived formulas. To this end,
we applied a Riemannian CG algorithm summarized in Algorithm \ref{alg:Riemannian CG} with various retractions  to the four problems  introduced in Section \ref{sec:appl}, namely, \eqref{prob:t-evd_or_svd_truncated_trace}, \eqref{prob:missing_entries}, \eqref{prob:joint_diag}, and \eqref{prob:stpca}.
%Four test problems are best approximation of $\MC{A}\in \TSP{n}{n}{l}$, best approximation of $\MC{A}\in \T{n}{n}{l}$ with missing entries, joint f-diagonalization of $\MC{A}_i \in \T{n}{n}{l}, i = 1,\cdots, I$, and sparse tensor PCA of $\MC{A}\in \T{n}{p}{l}$, respectively.
The considered retractions were the t-QR based one \eqref{eq:t-qr based retraction}, the t-PD based one \eqref{eq:t-pd based retraction}, and the t-Cayley transform based one \eqref{eq:cayley based retraction}, while the vector transports were constituted by the orthogonal projector \eqref{eq:Orthogonal projector based vector transport} associated with the various retractions mentioned above. 
%These methods are different from each other only in ways of constructing retractions and vector transports. 
All the experiments were
conducted on an Intel i7 CPU desktop computer with 16 GB of RAM. The supporting software is Matlab R2022a. Tensorlab \cite{tensorlab2013} and Tensor-Tensor Product Toolbox \cite{lu2018tproduct} were employed for tensor operations.

Algorithm \ref{alg:Riemannian CG} was modified from \cite{zhu2017riemannian} to the t-product setting. In the algorithm, we set $\beta_{k+1} = \text{min}\{\beta^{\mathrm{FR}}_{k+1},\beta^{\mathrm{D}}_{k+1}\}$ where
\begin{equation}\label{eq:step_Dai}
	\beta^{\mathrm{D}}_{k+1} = \frac{||\operatorname{grad}f(\MC{X}_{k+1})||^2_{\MC{X}_{k+1}}}{\text{max}\{\innerprod{\operatorname{grad}f(\MC{X}_{k+1})}{\MC{T}_{\alpha_k\MC{Z}_k}(\MC{Z}_k)}_{\MC{X}_{k+1}}-\innerprod{\operatorname{grad}f(\MC{X}_k)}{\MC{Z}_k}_{\MC{X}_k},-\innerprod{\operatorname{grad}f(\MC{X}_k)}{\MC{Z}_k}_{\MC{X}_k}\}}
\end{equation}
is a generalization of Dai’s nonmonotone parameter \cite{zhu2017riemannian} and 
\begin{equation}\label{eq:step_FR}
	\beta^{\mathrm{FR}}_{k+1} = {||\operatorname{grad}f(\MC{X}_{k+1})||^2_{\MC{X}_{k+1}}}/{||\operatorname{grad}f(\MC{X}_{k})||^2_{\MC{X}_k}}
\end{equation}
is the Fletcher–Reeves parameter. The steplength $\alpha_{k+1} = \text{max}\{\text{min}\{\alpha^{\mathrm{BB}}_{k+1},\alpha_{\text{max}}\},\alpha_{\text{min}}\}$ where 
\begin{equation}\label{eq:step_BB}
	\alpha^{\mathrm{BB}}_{k+1} = {\innerprod{\MC{S}_{k}}{\MC{S}_{k}}_{\MC{X}_k}}/{\big|\innerprod{\MC{S}_{k}}{\MC{V}_{k}}_{\MC{X}_k}\big|},
\end{equation}
with $\MC{S}_{k} = -\alpha_k\MC{T}_{\alpha_k\MC{Z}_k}(\operatorname{grad}f(\MC{X}_k)), ~\MC{V}_{k} = \operatorname{grad}f(\MC{X}_{k+1})+\alpha_k^{-1}\MC{S}_{k}$, which is a Riemannian generalization of the Barzilai-Borwein steplength \cite{iannazzo2018riemannian}. The iterative algorithms were stopped if $||\MC{X}_{k+1}-\MC{X}_k||_F/\sqrt{n}<10^{-6}$ or $|f(\MC{X}_{k+1})-f(\MC{X}_k)|/(1+|f(\MC{X}_k)|)<10^{-12}$ or $k > 1000$. In the algorithm, the initial steplength $\alpha_0=10^{-3}$, $(\alpha_{\min},\alpha_{\max})=(10^{-20},1)$; $\lambda = 0.2$, and $\delta = 10^{-4}$. We remark that in the problem \eqref{prob:stpca}, for simplicity, we just used the subgradient of the objective function in our computation; in \eqref{prob:missing_entries}, $\mathcal U$ and $\mathcal S$ are computed in an alternating fashion, where $\mathcal U$ is computed by one step of Algorithm \ref{alg:Riemannian CG}.

\iffalse
\begin{table}[htbp]
	\setlength{\abovecaptionskip}{0.5 cm}
	\setlength{\belowcaptionskip}{0.2cm}
	\renewcommand{\arraystretch}{1.5}
	\setlength{\tabcolsep}{20pt}
	\centering
	\caption{Parameters for Algorithm \ref{alg:Riemannian CG}}
	\begin{mytabular} {ccc}     
		\toprule
		Parameter &
		Value & Description \\
		\toprule 
		$\epsilon_x$ & $10^{-6}$ & Tolerance for the difference of variables\\
		$\epsilon_f$ & $10^{-12}$ & Tolerance for the difference of function values\\
		$\lambda$ & $0.2$ & Steplength shrinkage factor\\
		$\delta$ & $10^{-4}$ & Nonmonotone line search Armijo-Wolfe constant\\
		$\alpha$ & $10^{-3}$ & Initial steplength\\
		$\alpha_{\text{min}}$ & $10^{-20}$ & Lower threshold for steplength\\
		$\alpha_{\text{max}}$ & $1$ & Upper threshold for steplength\\
		$k_{\text{max}}$ & 1000 & Maximal number of iterations\\
		\bottomrule
	\end{mytabular}
	\label{tab:1}
\end{table}
\fi

In the experiments, we set $(n,p,l)= (50, 10, 8)$. The data tensors in \eqref{prob:t-evd_or_svd_truncated_trace}  is set to be $\MC{A} = \MC{V}^{\top}\tprod \MC{V}$, where $\MC{V} = \texttt{randn}(n,n,l)$. In  \eqref{prob:missing_entries},  $\MC{A} = \MC{X}\tprod \MC{W}\tprod \MC{X}^{\top}$ where $\MC{X}\in \st{n,p,l}$ and the  f-diagonal tensor $\MC{W}\in \T{p}{p}{l}$ were both randomly generated; the entries were randomly missing with missing ratio being  $30\%$. In \eqref{prob:joint_diag},  the tensors were constructed as   $\MC{A}_i =  \MC{X}\tprod\MC{C}_i\tprod\MC{X}^{\top}+r\frac{\MC{E}_i}{||\MC{E}_i||_F}, i\in[N]$, where the number of samples $N = 3$, $\MC{X}\in \st{n,p,l}$, the f-diagonal tensors $\MC{C}_i\in \T{p}{p}{l}$, and the noise term $\MC{E}_i\in \T{n}{n}{l}$ were
all randomly generated with the noise level $r = 0.1$. In \eqref{prob:stpca}, $\MC{A} = \texttt{randn}(n,p,l)$  and the  parameter $\rho = 0.1$. 
% Values of the input parameters for Algorithm \ref{alg:Riemannian CG} are summarized in Table \ref{tab:1}. 
All the initial points for the algorithm $\MC{X}_0$ were randomly generated feasible points. For each case, we randomly generated 50 instances, and the averaged results are presented.  Figure \ref{fig:obj} shows the curves of the objective values of the four test problems versus iterations, whose colors are respectively green (t-QR based retraction), blue (t-PD based retraction), cyan (t-Cayley transform based retraction).
\begin{figure}[h] %data from 8-Jun-2022-18-46-20.mat
	\centering
	\includegraphics[width=.9\textwidth]{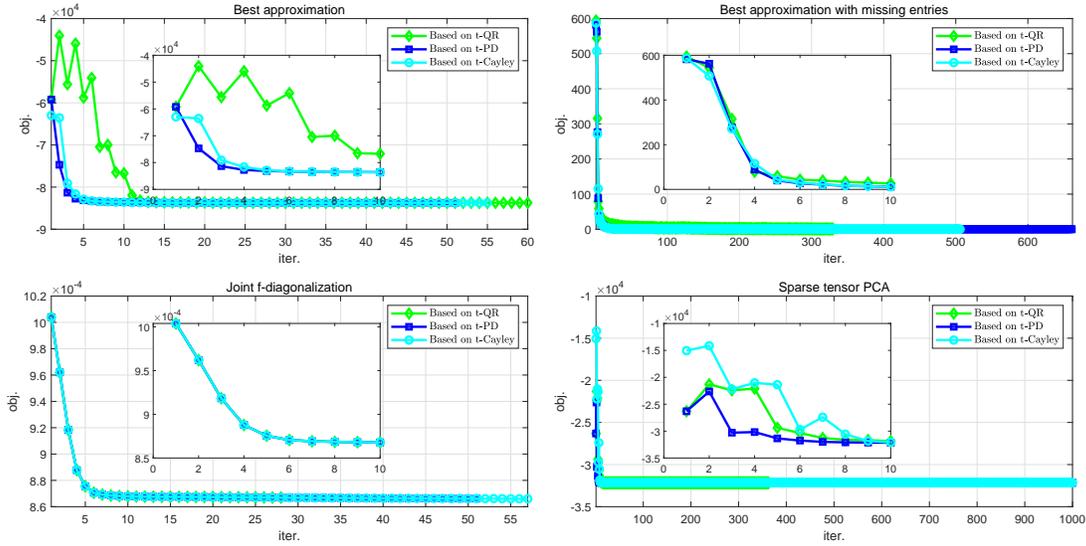}
	\caption{Objective values on four test problem versus iterations with different retractions
	} 
	\label{fig:obj} 
\end{figure}

The performance and  the feasibility of  Riemannian CG   on the   problems with various retractions are reported in Tables \ref{tab:2} and   \ref{tab:3}, respectively. ``obj.'' stands for the objective value,  ``feasi.'' specifies the feasibility $||\MC{X}^{\top}\tprod \MC{X} - \MC{I}||_F$, ``iter.'' means the iterations, ``time.'' represents the CPU time where the unit is second, and ``re.'' means the relative error: specifically, ``re.'' $=||\MC{X}\tprod \MC{W}\tprod \MC{X}^{\top}-\MC{U}_{\text{out}}\tprod \MC{S}_{\text{out}}\tprod \MC{U}_{\text{out}}^{\top}||_F/||\MC{W}||_F$ in \eqref{prob:missing_entries} and  ``re.''$=\bigxiaokuohao{\sum_{i=1}^N||\MC{X}\tprod\MC{C}_i\tprod\MC{X}^{\top}-\MC{U}_{\text{out}}\tprod\MC{U}_{\text{out}}^{\top}\tprod\MC{A}_i\tprod \MC{U}_{\text{out}}\tprod\MC{U}_{\text{out}}^{\top}||_F/||\MC{C}_i||_F}/N$ in  \eqref{prob:joint_diag}, where
$\MC{U}_{\text{out}}$ is generated by the Algorithm \ref{alg:Riemannian CG} and $\MC{S}_{\text{out}}$ is generated by the nonmonotone gradient method with Barzilai-borwein step size on $\T{p}{p}{l}$ in an alternating fashion. Empirically, we can observe that the algorithm converges in all the examples, indicating that the derived formulas (using orthogonal projector based vector transport) are correct. 
\begin{table}[htbp]%data from 8-Jun-2022-18-46-20.mat
	\setlength{\abovecaptionskip}{0.5 cm}
	\setlength{\belowcaptionskip}{0.2cm}
	\renewcommand{\arraystretch}{2.5}
	\setlength{\tabcolsep}{5pt}
	\centering
	\caption{The performance of Riemannian CG method on four test problem with various retractions}
	\begin{mytabular} {ccccccccccccc}      
		\toprule
		\multicolumn{1}{c}{}&\multicolumn{3}{c}{Best approximation} & \multicolumn{3}{c}{ with missing entries} & \multicolumn{3}{c}{Joint f-diagonalization} & \multicolumn{3}{c}{Sparse tensor PCA} \\
		\cmidrule(r){2-4} \cmidrule(r){5-7} \cmidrule(r){8-10} \cmidrule(r){11-13}
		Retration &
		obj. & iter.   & time. & re. & iter.   & time. & re. & iter.   & time. & obj. & iter.   & time.\\
		\toprule    
		t-QR  & -8.37E+04 & 6.00E+01 & 5.39E-01 & 1.59E-01 & 3.30E+02 & 1.72E+01 & 2.24E-03 & 2.80E+01 & 2.31E+00 & -3.22E+04 & 3.63E+02 & 2.70E+00 \\
		t-PD  & -8.37E+04 & 5.10E+01 & 4.98E-01 & 1.62E-02 & 6.62E+02 & 3.38E+01 & 2.31E-03 & 5.10E+01 & 4.18E+00 & -3.22E+04 & 1.00E+03 & 8.24E+00 \\
		t-Cayley & -8.37E+04 & 5.50E+01 & 1.04E+00 & 6.83E-03 & 5.06E+02 & 3.08E+01 & 2.32E-03 & 5.70E+01 & 5.19E+00 & -3.22E+04 & 1.00E+03 & 1.78E+01 \\
		\bottomrule
	\end{mytabular}
	\label{tab:2}
\end{table}
\begin{table}[htbp]%data from 8-Jun-2022-18-46-20.mat
	\setlength{\abovecaptionskip}{0.5 cm}
	\setlength{\belowcaptionskip}{0.2cm}
	\renewcommand{\arraystretch}{2.5}
	\setlength{\tabcolsep}{5pt}
	\centering
	\caption{The feasibility of Riemannian CG method on four test problem with various retractions}
	\begin{mytabular} {cccccccccccc}      
		\toprule
		\multicolumn{3}{c}{Best approximation} & \multicolumn{3}{c}{ with missing entries} & \multicolumn{3}{c}{Joint f-diagonalization} & \multicolumn{3}{c}{Sparse tensor PCA} \\
		\cmidrule(r){1-3} \cmidrule(r){4-6} \cmidrule(r){7-9} \cmidrule(r){10-12}
		t-QR & t-PD   & t-Cayley & t-QR & t-PD   & t-Cayley & t-QR & t-PD   & t-Cayley & t-QR & t-PD   & t-Cayley\\
		\toprule 
		1.47E-15 & 7.47E-15 & 2.26E-14 & 1.29E-15 & 5.06E-15 & 1.67E-13 & 1.06E-15 & 5.47E-15 & 2.09E-14 & 1.40E-15 & 3.95E-15 & 3.95E-13 \\
		\bottomrule
	\end{mytabular}
	\label{tab:3}
\end{table}

% \begin{small}
	\begin{algorithm}[!h]
		\caption{A Riemannian nonmonotone conjugate gradient  method on $\st{n,p,l}$}\label{alg:Riemannian CG}
		% 	\hspace*{0.02in} 
		% 	{\bf Require:} Tensor Stiefel $\st{n,p,l} $; vector transport $\mathcal{T}$ on $\st{n,p,l} $ with associated retraction $R$; real-valued \hspace*{0.75in} function $f$ on $\st{n,p,l} $.\\
		%  	\hspace*{0.02in} {\bf Goal: } 
		%  		$
		%  		\min_{\MC{X}\in\st{n,p,l}} f(\MC{X})= -{\rm tr}\bigxiaokuohao{\MC{X}^{\top}\tprod\mathcal A\tprod\MC{X}} = \sum_{i = 1}^l\operatorname{sum}( \operatorname{svds}(\hat{A}^{(i)},p)),
		%  		$
		%  where $\MC{A}\in \TSP{n}{n}{l}$\\
		\hspace*{0.02in} {\bf Input:} $\MC{X}_0\in \st{n,p,l} $, 
		% 	$\MC{Z}_0 = - \operatorname{grad}f(\MC{X}_0)$,
		$\alpha_0=10^{-3}$,  $(\alpha_{\text{min}},\alpha_{\text{max}}) = (10^{-20},1)$.\\
		\hspace*{0.02in} {\bf Output:}  $\{\MC{X}_k\}, \{f(\MC{X}_k)\}$ and $\{\operatorname{grad}f(\MC{X}_k)\}$.
		\label{Alg:R-CG}
		\begin{algorithmic}[1]%一行一个标行号
			\STATE  Set $k = 0,~\MC{Z}_0 = - \operatorname{grad}f(\MC{X}_0)$.
			\WHILE{$||\MC{X}_{k+1}-\MC{X}_k||_F/\sqrt{n}>10^{-6}$ or $|f(\MC{X}_{k+1})-f(\MC{X}_k)|/(1+|f(\MC{X}_k)|)>10^{-12}$ or $k < 1000$}
			\IF{$f(R_{\MC{X}_k}(\alpha_k\MC{Z}_k))\leq\operatorname{max}\{f(\MC{X}_k),f(\MC{X}_{k-1})\}+10^{-4}\alpha_k\innerprod{\operatorname{grad}f(\MC{X}_k)}{\MC{Z}_k}_{\MC{X}_k}$ }
			\STATE Set
			$\MC{X}_{k+1} = 			R_{\MC{X}_k}(\alpha_k\MC{Z}_k).$
			\ELSE
			\STATE Set $\alpha_k \leftarrow  0.2\alpha_k$ and go to line 3.
			\ENDIF
			\STATE Compute $\MC{Z}_{k+1} = -\operatorname{grad}f(\MC{X}_{k+1})+\beta_{k+1}\MC{T}_{\alpha_k\MC{Z}_k}(\MC{Z}_k)$, where $\beta_{k+1} = \text{min}\{\beta^{\mathrm{FR}}_{k+1},\beta^{\mathrm{D}}_{k+1}\}$.
			\STATE Compute $\alpha_{k+1} = \text{max}\{\text{min}\{\alpha^{\mathrm{BB}}_{k+1},\alpha_{\text{max}}\},\alpha_{\text{min}}\}$.
			\STATE Set $k \leftarrow  k+1$.
			\ENDWHILE
		\end{algorithmic}
	\end{algorithm}

\section{Concluding Remarks} \label{sec:conclusion}
Optimization over the matrix Stiefel manifold draws much attention in recent years. With the properties and decompositions built upon the t-product of third-order tensors, we study computation over the tensor Stiefel manifold $\st{n,p,l}=\{\MC{X}\in\T{n}{p}{l} \mid \MC{X}^{\top}\tprod \MC{X} = \mathcal I  \}$ in this work. 
Firstly, it is shown that $\st{n,p,l}$  endowed with the Frobenius norm is known to admit a Riemannian manifold structure; then,   explicit expressions  over $\st{n,p,l}$, such as the tangent space,   Riemannian gradient,  Riemannian Hessian, several retractions, and  vector transports,  are derived, which may serve as building blocks for designing and analyzing Riemannian algorithms over the tensor Stiefel manifold. As byproducts, we define the skew tensors, t-polar decomposition, and obtain the analytical solution to the tensor Sylvester equation in the t-product sense.

We also remark that although this work is focused on the   tensor Stiefel manifold in the sense of t-product, it is straightforward to derive similar results in the sense of the more general tensor-tensor product \cite{kilmer2021tensor}.

In the future, it would be necessary to further study properties and computation concerning the tensor Stiefel manifold, and it would be interesting to find more instances of the form \eqref{prob:prototype}. In particular, we prefer to systematically study the sparse tensor PCA model \eqref{prob:stpca} and algorithms in our future work.

{\footnotesize \section*{Acknowledgment} This work was supported by   National Natural Science Foundation of China Grant 12171105,     Fok Ying Tong Education Foundation Grant 171094, and the special foundation for Guangxi Ba Gui Scholars. All authors  equally contributed to this research.
	}
\color{black}
\bibliography{orth_tensor,t_prod,tensor,tensorCompletion,manifold,sparse_pca,ref,texp_ref}
\bibliographystyle{plain}

\appendix
\section{Appendix}
\subsection{Proof of Proposition \ref{prop:DFT tran}}\label{apx:1}
\begin{proof}
		Taking the conjugate transpose of both sides of the equation in item 1 of Proposition \ref{prop:bcirc_properties}, then multiplying both sides by $(F_l\otimes I_n)$, we get
		\begin{equation*}
			(F_l\otimes I_p)\operatorname{bcirc}(\mathcal{A}^{\top})  
			%	=\begin{bmatrix}
				%		(\hat{A}^{(1)})^H &  & \\ 
				%		& \ddots  & \\ 
				%		&  & (\hat{A}^{(l)})^H
				%	\end{bmatrix}
			=\Diag{(\hat{A}^{(i)})^H}
			(F_l\otimes I_n).
		\end{equation*}
		Taking the first column of block matrix on both sides of the above equation yields
		\begin{equation*}
			(F_l\otimes I_p)\operatorname{unfold}(\mathcal{A}^{\top}) = \frac{1}{\sqrt{l}}\Diag{(\hat{A}^{(i)})^H}
			\Vector{I_n},
		\end{equation*}
		which combing with Definition \ref{def:DFT} and  \eqref{remark:equality} gives $L(\MC{A}^{\top}) = \Fold{(\hat{A}^{(i)})^{H}}.$
				\end{proof}		
\subsection{Proof of Lemma \ref{lem:positive definite} }\label{apx:2}
\begin{proof}
	According to \cite[Thm. 5]{zheng2021t}, $\mathcal{I} + \mathcal{V}^{\top}\ast\mathcal{V}\in \TSPP{p}{p}{l}$ if only if $I_p+(\hat{V}^{(i)})^H\hat{V}^{(i)}, i \in [l]$ are Hermitian  positive definite.
\end{proof}
\subsection{The proof of Lemma \ref{lemma:orthogonal complement}}\label{apx:3}
\begin{proof}
	For any tensor $\MC{A}\in (\TSkew{p}{p}{l})^{\perp}$, there holds
	\begin{eqnarray*}
		\innerprod{\MC{A}-\MC{A}^{\top}}{\MC{A}-\MC{A}^{\top}} &=& \innerprod{\MC{A}-\MC{A}^{\top}}{\MC{A}} - \innerprod{\MC{A}-\MC{A}^{\top}}{\MC{A}^{\top}}\\
		&=& \innerprod{\MC{A}-\MC{A}^{\top}}{\MC{A}} - \innerprod{\MC{A}^{\top}-\MC{A}}{\MC{A}}=0,
	\end{eqnarray*}
	where the last equation follows from $\MC{A}-\MC{A}^{\top}\in\TSkew{p}{p}{l}$.
	Thus $\MC{A}\in\TS{p}{p}{l}$.
\end{proof}
\subsection{Proof of Proposition \ref{prop:trace_inner_t_prod}}\label{apx:4}
% \begin{proposition}\cite{zheng2021t}
% 	\label{prop: inner_tensor_bcirc}
% 	For any $\mathcal A,\mathcal B\in \T{n}{p}{l}$, 
% 	$    \innerprod{\bcirc{\mathcal A}}{\bcirc{\mathcal B}} = l \innerprod{\mathcal A}{\mathcal B}$. 
% \end{proposition}
\begin{proof}
	We denote $\mathcal D:= \mathcal A^{\top} \in \T{p}{n}{l}$ and $\mathcal C := \mathcal D\tprod\mathcal B \in \T{p}{p}{l}$. Then according to Definition \ref{def:trace_tprod} and by the definition of $\hat C$ in item 1 of Proposition \ref{prop:bcirc_properties}, we get
	$$\trace{\mathcal C} = \sum^l_{i=1}\nolimits\trace{\hat C_i} = \trace{\hat C}.$$
	Using item 1 of Proposition \ref{prop:bcirc_properties} again, we have
	\begin{align*}
		\trace{\hat C} &= \trace{ \hat D \hat B    } \\
		&= \trace{  \fourieri{p} \bcirc{\mathcal D} \fourieriH{n} \fourieri{n}  \bcirc{\mathcal B} \fourieriH{p}   }\\
		&= \trace{\bcirc{\mathcal A^{\top} }\bcirc{\mathcal B}} = \trace{\bcirc{\mathcal A}^{\top}\bcirc{\mathcal B}} \\
		&= \innerprod{\bcirc{\mathcal A}}{\bcirc{\mathcal B}} \\
		&= l\innerprod{\mathcal A}{\mathcal B},
	\end{align*} 
	where the third line uses \cite[Lem. 3]{miao2021t} and the last equation comes from \cite[Rmk. 9]{zheng2021t}. 
\end{proof}

\subsection{Proof of Theorem \ref{th:t_qr}}\label{apx:5}
% \color{red}
\begin{proof}
	t-QR was proposed in \cite[Sect. 5]{kilmer2013third}. Similar to t-SVD,
	to compute t-QR, we would only need to compute individual matrix QR's for
	about half the frontal slices of $\hat{\MC{A}}$ and the remaining part is obtained by the conjugate symmetry of the Fourier transform. Specifically, for $i= 1, \cdots, \lceil \frac{l+1}{2}\rceil,$ let $	\hat{A}^{(i)} = \hat{Q}^{(i)}\cdot \hat{R}^{(i)}$ be the QR decomposition of $\hat{A}^{(i)}\in \C{n}{p}$ \footnote{For QR factorization of complex matrices, we can choose that $R$ factor is upper triangular with real nonzero diagonal elements.} where $\hat{Q}^{(i)}\in \C{n}{p}, (\hat{Q}^{(i)})^H\cdot\hat{U}^{(i)} = I_p$, $\hat{R}^{(i)}\in \C{p}{p}_{upp}$ and $\operatorname{diag}(\hat{R}^{(i)})\in \M{p}{p}$, namely, the diagonal entries of $\hat R^{(i)}$ are real.  For $i=1+ \lceil  \frac{l+1}{2}  \rceil,\ldots,l$, 
	$
	\hat A^{(i)} = \conj{\hat A^{(l+2-i)}}, \hat Q^{(i)} = \conj{\hat U^{(l+2-i)}}, \hat R^{(i)} = \conj{\hat R^{(l+2-i)}}.
	$ It follows from Remark \ref{remark:Linear relationship} and Remark \ref{remark:DFT equality} that $\mathcal Q\in\T{n}{p}{l} \in \st{n,p,l}$ and $\mathcal R\in\TU{p}{p}{l}$. Here $\mathcal R$ to be real is because of Remark \ref{remark:Linear relationship} and direct computation.  
	Using Remark \ref{remark:Linear relationship} again, further we  have $\hat{A}^{(1)}\in \M{n}{p}, \hat{Q}^{(1)}\in \M{n}{p}, \hat{R}^{(1)}\in \M{p}{p}$.
	
We then show the uniqueness of the decomposition.	As we know, for QR decomposition of a matrix $\hat{A}^{(i)}\in \C{n}{p}$ with $n\geq p$, if $\hat{A}^{(i)}$ is of full rank $p$, then the QR decomposition $\hat{A}^{(i)} = \hat{Q}^{(i)}\hat{R}^{(i)}$ is unique if we require that the diagonal entries of $\hat{R}^{(i)}$ are all positive, i.e.,
$\hat{\MC{R}}\in\TCUP{p}{p}{l}$. 
	Since the Fourier transform is bijective, the uniqueness of the matrix QR decomposition leads to the uniqueness of the  t-QR decomposition.
\end{proof} \color{black}
\subsection{Proof of Lemma \ref{lemma:upp_dim}}\label{apx:6}
\begin{proof}
	 Theorem \ref{th:t_qr} shows that $L^{-1}\bigxiaokuohao{\TCUP{p}{p}{l}}$	is isomorphic to 
	$$\TCUP{p}{p}{l} = \bigdakuohao{\hat{\MC{R}}\bigg|\hat{R}^{(1)}\in \M{p}{p}_{upp+}, \hat{R}^{(i)}\in \C{p}{p}_{upp+},\operatorname{diag}(\hat{R}^{(i)})\in \M{p}{p}, 	i = [l]\setminus \{1\},  \hat{R}^{(i)} = \operatorname{conj}(\hat{R}^{(l+2-i)}), 
		i= 2, \cdots, \lceil \frac{l+1}{2}\rceil
	}.$$

	If $l$ is even, then it holds that
	$$\TCUP{p}{p}{l}= \bigdakuohao{\hat{\MC{R}}\bigg|\hat{R}^{(1)},\hat{R}^{(\frac{l}{2}+1)}\in \M{p}{p}_{upp+},  \hat{R}^{(i)}\in \C{p}{p}_{upp+}, \operatorname{diag}(\hat{R}^{(i)})\in \M{p}{p}, 	i = [l]\setminus \{1\}, \hat{R}^{(i)} = \operatorname{conj}(\hat{R}^{(l+2-i)}),
		i = 2,\cdots,\frac{l}{2}
	}.
	$$ 
	There are two real upper triangular $p\times p$ matrices,   whose dimensions are  $\frac{(1+p)p}{2}$;  there are $\frac{l-2}{2}$ pairs of complex upper triangular $p\times p$ matrices with positive diagonal elements, whose dimensions are  $\frac{(p-1)p}{2}\times 2 + p$.
	Hence the dimension of $\TCUP{p}{p}{l}$ is 
	$2\times \frac{(1+p)p}{2}+\frac{l-2}{2}\times\big(\frac{(p-1)p}{2}\times 2 + p\big)=\frac{p^2l}{2}+p$.
	
	If $l$ is odd, then it holds that
	$$\TCUP{p}{p}{l} = \bigdakuohao{\hat{\MC{R}}\bigg|\hat{R}^{(1)}\in \M{p}{p}_{upp+}, \hat{R}^{(i)}\in \C{p}{p}_{upp+}, \operatorname{diag}(\hat{R}^{(i)})\in \M{p}{p}, 	i = [l]\setminus \{1\}, \hat{R}^{(i)} = \operatorname{conj}(\hat{R}^{(l+2-i)}), 
		i = 2,\cdots,\frac{l+1}{2}
	}.$$ 
	There is one real upper triangular $p\times p$ matrix, whose dimensions is $\frac{(1+p)p}{2}$; there are $\frac{l-1}{2}$ pairs of complex upper triangular $p\times p$ matrices with positive diagonal elements, whose dimensions are  $\frac{(p-1)p}{2}\times 2 + p$.
	Hence the dimension of $\TCUP{p}{p}{l}$ is 
	$ \frac{(1+p)p}{2}+\frac{l-1}{2}\times\big(\frac{(p-1)p}{2}\times 2 + p\big)=\frac{p^2l+p}{2}$.
\end{proof}
\subsection{Proof of Theorem \ref{th:t_pd}}\label{apx:7}
\begin{proof}
	Let the compact t-SVD of $\mathcal A= \mathcal U\tprod\mathcal S\tprod\mathcal V^{\top}$. Let $\mathcal P :=\mathcal U\tprod\mathcal V^{\top}$ and $\mathcal H := \mathcal V\tprod\mathcal S\tprod\mathcal V^{\top}$. %Then $\mathcal P^{\top}\tprod\mathcal P = \mathcal I$.
	Then it is clear that \eqref{eq:t_pd} is satisfied. To see that $\mathcal H\in \TSP{p}{p}{l}$, first we show that $\mathcal S\in\TSP{p}{p}{l}$. This is obvious, as each $\hat S^{(i)}$ is diagonal with nonnegative entries, and so $\mathcal S\in\TSP{p}{p}{l}$, according to Remark \ref{rmk:t_psd}. By \cite[Thm. 7]{zheng2021t}, there is a unique $\mathcal T$ such that $\mathcal T\tprod\mathcal T^{\top}= \mathcal S$. Then $\mathcal H $ can be written as $\mathcal H = \mathcal V\tprod\mathcal T \tprod\bigxiaokuohao{\mathcal V\tprod\mathcal T}^{\top}$, which together with \cite[Thm. 8]{zheng2021t} shows that $\mathcal H \in\TSP{p}{p}{l}$. 
	
	To show the uniqueness of $\mathcal H$, note that $  \mathcal A^{\top}\tprod\mathcal A=\mathcal H\tprod\mathcal H$, which by \cite[Thm. 8]{zheng2021t} is clearly symmetric positive semidefinite. Revoking again \cite[Thm. 7]{zheng2021t} gives the uniqueness of $\mathcal H$.  
	
	If $\mathcal A^{\top}\tprod\mathcal A\in \TSPP{p}{p}{l}$, \cite[Thm. 8]{zheng2021t} shows that $\mathcal H$ is nonsingular (invertable, Def. \ref{def:inverse}), and so $\mathcal P = \mathcal A\tprod\mathcal H^{-1}$, which is unique. 
\end{proof}
\begin{remark}
	The proof of Theorem \ref{th:t_pd} gives the way to obtain t-PD from the compact t-SVD. This is analogous to the matrix case.  
\end{remark}
\subsection{Proof of Proposition \ref{prop:t_pd_for_retraction} }\label{apx:8}
\begin{proof}
	This can be easily derived from the proof of Theorem \ref{th:t_pd}. Here the root of a symmetric positive definite tensor was defined in \cite[Thm. 7]{zheng2021t}.
\end{proof}
\subsection{Proof of Proposition \ref{prop:sym equal}}\label{apx:ex9}
\begin{proof}
If $\hat{\mathcal{A}}\in \mathbb{C}_*^{n\times p \times l}$, then $(\hat{A}^{(i)})^H\hat{A}^{(i)}, i \in [l]$ are Hermitian  positive definite. Note that \cite[Thm. 5]{zheng2021t} shows that $(\hat{A}^{(i)})^H\hat{A}^{(i)}, i \in [l]$  are Hermitian  positive definite if only if $ \mathcal{A}^{\top}\ast\mathcal{A}\in \TSPP{p}{p}{l}$.
\end{proof}
\subsection{Proof of Theorem \ref{th:max_sol_related_pd} }\label{apx:9}
\begin{proof}
	Let $\mathcal D:=\mathcal U^{\top}\in\T{p}{n}{l}$. Then for any $\mathcal P\in\st{n,p,l}$,
	\begin{align*}
		l	\innerprod{\mathcal A}{\mathcal P} &= \trace{\mathcal A^{\top}\tprod\mathcal P}\\
		&= \trace{\mathcal V\tprod\mathcal S\tprod\mathcal U^{\top}\tprod\mathcal P}\\
		&= \trace{\mathcal S\tprod\mathcal D\tprod\mathcal P\tprod\mathcal V}\\
		&= \trace{\hat S \hat D\hat P\hat V  } \\
		&= \sum^l_{i=1} \trace{ \hat  S^{(i)} \hat  D^{(i)} \hat  P^{(i)} \hat  V^{(i)}} = \sum^l_{i=1}\trace{\hat  S^{(i)} \hat  W^{(i)}} ,
	\end{align*}
	where we let $\hat  W^{(i)}:= \hat  D^{(i)}\hat  P^{(i)}\hat  V^{(i)} \in \mathbb C^{p\times p}$.
	Note that $\hat  D^{(i)} (\hat  D^{(i)})^H = I$, $(\hat  P^{(i)})^H \hat  P^{(i)} = I$, $(\hat  V^{(i)})^H\hat  V^{(i)} = I$. Thus $    | (\hat  W^{(i)})_{jj}  | \leq 1  $, $i \in [l]$, $j \in [p]$. Therefore,
	\begin{align*}
		\sum^l_{i=1}\trace{\hat  S^{(i)} \hat W^{(i)}}  = \sum^l_{i=1}\sum^p_{j=1} (\hat S^{(i)})_{jj} (\hat W^{(i)})_{jj}  \leq \sum^l_{i=1}\sum^p_{j=1}(\hat S^{(i)})_{jj} | \hat W^{(i)}_{jj}| \leq \sum^l_{i=1}\trace{\hat S^{(i)}} = \trace{\hat S},
	\end{align*}
	where  $\hat S^{(i)}\geq 0$. 
	On the other hand, take $\mathcal P:=\mathcal U\tprod\mathcal V^{\top}$. It is easy to see that
	\[
	l\innerprod{\mathcal A}{\mathcal P} = \trace{\hat S},
	\]
	namely, the upper bound is tight, which is achieved when $\mathcal P = \mathcal U\tprod\mathcal V^{\top}$. This gives the desired result. 
\end{proof}
\subsection{Proof of  the well-defined property of \eqref{eq:exp well defined}}\label{apx:10}
% \begin{remark}\label{remark:equality}
% To be convenient,   we will use the notation $    \Delta $  as the frontal-slice-wise product (cf. \cite[Definition 2.1]{kernfeld2015tensor}) between two tensors in the Fourier domain, i.e., if
% 	$\hat C^{(i)} = \hat A^{(i)}\hat B^{(i)}, i \in [l],$ then it holds that
% 	$L(\MC{A})\Delta L(\MC{B}) = \Fold{\hat{A}^{(i)}\hat{B}^{(i)}}$; in other words, 
% 	\begin{equation}\label{eq:tri product}
% 		L(\MC{C}) 
% %		= L(\mathcal A\tprod\mathcal B)
% 		= L(\MC{A})\Delta L(\MC{B})
% 		 \Leftrightarrow \hat C^{(i)} = \hat  A^{(i)}\hat B^{(i)}, i \in [l] 
% 		\Leftrightarrow
% % 		\hat C = \hat A\hat B
% % 		 \Leftrightarrow 
% 		 \MC{C} = \mathcal A\tprod\mathcal B.
% 	\end{equation} 
% \end{remark}
\begin{proof}
To be convenient,   we will use the notation $    \Delta $  as the frontal-slice-wise product (cf. \cite[Def. 2.1]{kernfeld2015tensor}) between two tensors in the Fourier domain, i.e., if
	$\hat C^{(i)} = \hat A^{(i)}\hat B^{(i)}, i \in [l],$ then it holds that
	$L(\MC{A})\Delta L(\MC{B}) = \Fold{\hat{A}^{(i)}\hat{B}^{(i)}}$; in other words, 
	\begin{equation}\label{remark:equality}
% 	\label{eq:tri product}
		L(\MC{C}) 
%		= L(\mathcal A\tprod\mathcal B)
		= L(\MC{A})\Delta L(\MC{B})
		 \Leftrightarrow \hat C^{(i)} = \hat  A^{(i)}\hat B^{(i)}, i \in [l] 
		\Leftrightarrow
% 		\hat C = \hat A\hat B
% 		 \Leftrightarrow 
		 \MC{C} = \mathcal A\tprod\mathcal B.
	\end{equation} 
Using this notation, we have
	%\[\mathcal{A}^k = L^{-1}(L(\mathcal{A})\Delta \cdots \Delta L(\mathcal{A})) = L^{-1}(\mathrm{fold}(
	%\begin{pmatrix}
	%	(\hat{{A}}^{(1)})^k \\
	%	\vdots \\
	%	(\hat{{A}}^{(l)})^k
	%\end{pmatrix})),\]
	\[\mathcal{A}^k = L^{-1}\bigxiaokuohao{
		L(\mathcal{A})\Delta \cdots \Delta L(\mathcal{A})}
	= L^{-1}\bigxiaokuohao{
		%\mathrm{fold}(
		\Fold{(\hat{{A}}^{(i)})^k}
		%\begin{pmatrix}
		%	(\hat{{A}}^{(1)})^k \\
		%	\vdots \\
		%	(\hat{{A}}^{(l)})^k
		%\end{pmatrix}
	}.\]
% \color{red}	where \(\hat{\mathcal{A}} = L(A)\) is the third-order tensor obtained by applying (non-normalized) DFT 
% 	to the mode-\(3\) vectors of \(\mathcal{A}\), and \(\hat{{A}}^{(1)}, \cdots, \hat{{A}}^{(l)}\) are its frontal slices. \color{black}
	Thus for any \(N\),
	$$\sum_{k=0}^{N} \frac{1}{k!} \mathcal{A}^k=\sum_{k=0}^N  \frac{1}{k!} L^{-1}\bigxiaokuohao{
		%\mathrm{fold}(
		\Fold{(\hat{{A}}^{(i)})^k}
		%\begin{pmatrix}
		%	(\hat{{A}}^{(1)})^k \\
		%	\vdots \\
		%	(\hat{{A}}^{(l)})^k
		%\end{pmatrix}
	}=L^{-1}\bigxiaokuohao{
		%\mathrm{fold}(
		\Fold{\sum_{k=0}^N \frac{1}{k!} (\hat{{A}}^{(i)})^k}
		%\begin{pmatrix}
		%\sum_{k=0}^N \frac{1}{k!} (\hat{{A}}^{(1)})^k \\
		%\vdots \\
		%\sum_{k=0}^N \frac{1}{k!} (\hat{{A}}^{(l)})^k
		%\end{pmatrix}
	}.$$
	%\[
	%\begin{aligned}
	%	&\sum_{k=0}^{N} \frac{1}{k!} \mathcal{A}^k \\
	%	=&\sum_{k=0}^N  \frac{1}{k!} L^{-1}(\mathrm{fold}(
	%	\begin{pmatrix}
		%		(\hat{{A}}^{(1)})^k \\
		%		\vdots \\
		%		(\hat{{A}}^{(l)})^k
		%	\end{pmatrix}))\\
	%	=&L^{-1}(
	%	\mathrm{fold}
	%	\begin{pmatrix}
		%		\sum_{k=0}^N \frac{1}{k!} (\hat{{A}}^{(1)})^k \\
		%		\vdots \\
		%		\sum_{k=0}^N \frac{1}{k!} (\hat{{A}}^{(l)})^k
		%	\end{pmatrix}
	%	).
	%\end{aligned}  
	%\]
	Let \(N \to \infty\), it holds that
	\begin{equation}
		\ex{\mathcal{A}} = L^{-1}\bigxiaokuohao{
			\Fold{\ex{\hat{{A}}^{(i)}}}} 
		= L^{-1}\bigxiaokuohao{
			\Fold{\ex{(L(\MC{A}))^{(i)}}}} 
		%\mathrm{fold}(
		%\begin{pmatrix}
		%	e^{\hat{{A}}^{(1)}}\\
		%	\vdots \\
		%	e^{\hat{{A}}^{(l)}}
		%\end{pmatrix})
		,\end{equation}
	since the series defining the matrix exponential is convergent \cite[Prop. 2.1]{Hall2015}.
\end{proof}
\subsection{Proof of equivalence of \eqref{eq:t-exp} and \eqref{eq:t-exp2}}\label{apx:11}
\begin{proof}
	Using \eqref{eq:t-exp2} and item 1 of Proposition \ref{prop:bcirc_properties}, we have 
\begin{eqnarray*}
		 \ex{\mathcal{A}}
		&=& \mathrm{fold}\bigxiaokuohao{
			\ex{ \mathrm{bcirc}(\mathcal{A})}\mathrm{unfold}(\mathcal{I})
		}\\
		&=& \mathrm{fold}\bigxiaokuohao{
			\ex{(F^H_l \otimes I_n)
				\Diag{\hat{{A}}^{(i)}}
				%	\begin{p$  $matrix}
					%		\hat{{A}}^{(1)} & & &\\
					%		&\hat{{A}}^{(2)} & &\\
					%		& & \ddots & \\
					%		& & & \hat{{A}}^{(l)}
					%	\end{pmatrix}
				(F_{l} \otimes I_n)}\mathrm{unfold}(\mathcal{I})
		}\\
		&=& \mathrm{fold}\bigxiaokuohao{
			(F^H_l \otimes I_n) \ex{
				\Diag{	\hat{{A}}^{(i)}}
				%	\begin{pmatrix}
					%		\hat{{A}}^{(1)} & & &\\
					%		&\hat{{A}}^{(2)} & &\\
					%		& & \ddots & \\
					%		& & & \hat{{A}}^{(l)}
					%	\end{pmatrix}
			}(F_{l} \otimes I_n)\mathrm{unfold}(\mathcal{I})
		}\\
		&=& \mathrm{fold}\bigxiaokuohao{
			(F^H_l \otimes I_n) \ex{
				\Diag{\hat{{A}}^{(i)}}}
			%	 [\begin{pmatrix}
				%		\hat{{A}}^{(1)} & & &\\
				%		&\hat{{A}}^{(2)} & &\\
				%		& & \ddots & \\
				%		& & & \hat{{A}}^{(l)}
				%	\end{pmatrix}]
			\frac{1}{\sqrt{l}}
			\Vector{I_n}
			%	\begin{pmatrix}
				%		I_n \\
				%		\vdots \\
				%		I_n
				%	\end{pmatrix})
		}
		\\
		&=& \mathrm{fold}\bigxiaokuohao{
			\frac{1}{\sqrt{l}}
			(F^H_l \otimes I_n)
			\Diag{\ex{\hat{{A}}^{(i)}}}
			%	\begin{pmatrix}
				%		\ex [\hat{{A}}^{(1)}] & & &\\
				%		&\ex [\hat{{A}}^{(2)}] & &\\
				%		& & \ddots & \\
				%		& & & \ex [\hat{{A}}^{(l)}]
				%	\end{pmatrix}
			\Vector{I_n}
			%	\begin{pmatrix}
				%		I_n \\
				%		\vdots \\
				%		I_n
				%	\end{pmatrix}
		}\\  
		&=& \mathrm{fold}\bigxiaokuohao{
			\frac{1}{\sqrt{l}}
			(F^H_l \otimes I_n)
			\Vector{\ex{\hat{{A}}^{(i)}}}
			%	\begin{pmatrix}
				%		\ex [\hat{{A}}^{(1)}] \\
				%		\vdots \\
				%		\ex [\hat{{A}}^{(l)}]
				%	\end{pmatrix})
		}
		\\
		&=& 
		%		\Fold{\ex{{{A}}^{(i)}}}\\
		L^{-1}\bigxiaokuohao{
			\Fold{\ex{\hat{{A}}^{(i)}}}
			%	\begin{pmatrix}
				%		\ex [\hat{{A}}^{(1)}] \\
				%		\vdots \\
				%		\ex [\hat{{A}}^{(l)}]
				%	\end{pmatrix}
		},
\end{eqnarray*}
	where 
	%the second equality follows from item 2 of Proposition \ref{prop:bcirc_properties} and 
	the third equality is due to the following property of the matrix exponential (\cite[Prop. 2.3,\,\,6]{Hall2015}): If $X^{\top}X=I$, then
	$\ex{XAX^{\top}} = X\ex{A}X^{\top},$
	and the fifth equality comes from the following formula which follows immediately from definition:
	$ \ex{
		\Diag{D_{i}}
		%\begin{pmatrix}
		%	D_{1} & & \\
		%	& \ddots & \\
		%	& & D_l
		%\end{pmatrix}
	} = 
	\Diag{\ex{D_i}}
	%\begin{pmatrix}
	%	\ex [D_1] & & \\
	%	& \ddots & \\
	%	& & \ex[D_l]
	%\end{pmatrix}.
	$,
	and the last equality comes from \eqref{eq:IDFT} and \eqref{eq:t-exp}.
	and the fact that $\ex{A^{(i)}} = (\ex{\MC{A}})^{(i)}$ gives the penultimate equation.
\end{proof}
\subsection{Proof of Proposition \ref{prop:exp smooth} }\label{apx:12}
\begin{proof}Since  the  t-exponential mapping
	\[
	\begin{aligned}
		& \ex{\mathcal{A}}
		=&L^{-1}\bigxiaokuohao{
			\Fold{\ex{L(\mathcal{A})^{(i)}}}
			%		\mathrm{fold}(
			%		\begin{pmatrix}
				%			\ex [L(\mathcal{A})^{(1)}]\\
				%			\vdots\\
				%			\ex [L(\mathcal{A})^{(l)}]
				%		\end{pmatrix})
		}
	\end{aligned}  
	\]
	is the composite of the matrix exponential mapping and linear mappings and the matrix exponential is smooth (\cite[Prop. 2.16]{Hall2015}), we conclude that
	the t-exponential mapping is smooth.
\end{proof}
\subsection{The proof of Proposition \ref{prop:exp der} }\label{apx:13}
\begin{proof}
	Using the corresponding property of the matrix exponential \cite[Prop. 2.4]{Hall2015}, we obtain
\begin{eqnarray*}
			\frac{\mathrm{d}}{\mathrm{d}t}\ex{t\mathcal{A}}
			&=&\frac{\mathrm{d}}{\mathrm{d}t} L^{-1}\bigxiaokuohao{
				\Fold{\ex{t\hat{{A}}^{(i)}}}
				%		\mathrm{fold}(\begin{pmatrix}
					%			e^{t\hat{{A}}^{(1)}}\\
					%			\vdots \\
					%			e^{t\hat{{A}}^{(l)}}
					%		\end{pmatrix})
			}\\
			&=&L^{-1}\bigxiaokuohao{
				\Fold{\frac{\mathrm{d}}{\mathrm{d}t}\ex{t\hat{{A}}^{(i)}}}
				%		\mathrm{fold}(\begin{pmatrix}
					%			\frac{d}{dt}e^{t\hat{{A}}^{(1)}}\\
					%			\vdots \\
					%			\frac{d}{dt}e^{t\hat{{A}}^{(l)}}
					%		\end{pmatrix})
			}\\
			&=& L^{-1}\bigxiaokuohao{
				\Fold{\ex{t\hat{{A}}^{(1)}}\hat{{A}}^{(i)}}
				%		\mathrm{fold}(\begin{pmatrix}
					%			e^{t\hat{{A}}^{(1)}}\hat{{A}}^{(1)} \\
					%			\vdots\\
					%			e^{t\hat{{A}}^{(1)}}\hat{{A}}^{(l)}
					%		\end{pmatrix})
			}\\
			&=&L^{-1}\bigxiaokuohao{L(\ex{t\mathcal{A}})\Delta L(\mathcal{A})}
			= \ex{t\mathcal{A}}*\mathcal{A}.
\end{eqnarray*}
		where the first equality comes from \eqref{eq:t-exp}, while  \eqref{remark:equality} gives the last two equality.
		Similarly we can show that \(\frac{\mathrm{d}}{\mathrm{d}t} \ex{t\mathcal{A}}=\mathcal{A}*\ex{t\mathcal{A}}\).
	\end{proof}
\subsection{Proof of Proposition \ref{prop:exp decomp} }\label{apx:14}
\begin{proof}
	Applying the corresponding property in the matrix case \cite[Prop. 2.3,\,\,6]{Hall2015} and \eqref{remark:equality}, it follows that
\begin{eqnarray*}
		\ex{\MC{X}*\mathcal{A}*\MC{X}^{\top}}
		&=&L^{-1}\bigxiaokuohao{
			\Fold{\ex{\bigxiaokuohao{L(\MC{X}*\mathcal{A}*\MC{X}^{\top})}^{(i)}}}
			%		\mathrm{fold}(
			%		\begin{pmatrix}
				%			\ex[L(\MC{X}*\mathcal{A}*\MC{X}^T)^{(1)}]\\
				%			\vdots\\
				%			\ex[L(\MC{X}*\mathcal{A}*\MC{X}^T)^{(l)}]
				%		\end{pmatrix})
		}\\
		&=&L^{-1}\bigxiaokuohao{
			\Fold{	\ex{(L(\MC{X})\Delta L(\mathcal{A}) \Delta L(\MC{X}^{\top}))^{(i)}}}
			%		\mathrm{fold}(
			%		\begin{pmatrix}
				%			\ex[(L(\MC{X})\Delta L(\mathcal{A}) \Delta L(\MC{X}^{\top}))^{(1)}]\\
				%			\vdots\\
				%			\ex[(L(\MC{X})\Delta L(\mathcal{A}) \Delta L(\MC{X}^{\top}))^{(l)}]
				%		\end{pmatrix})
		}\\
		&=&L^{-1}\bigxiaokuohao{
			\Fold{\ex{\hat{\MC{X}}^{(i)}\hat{{A}}^{(i)}\hat{\MC{X}}^{(i)H}}}
			%		\mathrm{fold}(\begin{pmatrix}
				%			\ex[\hat{\MC{X}}^{(1)}\hat{{A}}^{(1)}\hat{\MC{X}}^{(1)H}]\\
				%			\vdots\\
				%			\ex[\hat{\MC{X}}^{(l)}\hat{{A}}^{(l)}\hat{\MC{X}}^{(l)H}]
				%		\end{pmatrix})
		}\\
		&=&L^{-1}\bigxiaokuohao{
			\Fold{\hat{\MC{X}}^{(i)}\ex{\hat{{A}}^{(i)}}\hat{\MC{X}}^{(i)H}}
			%		\mathrm{fold}(\begin{pmatrix}
				%			\hat{\MC{X}}^{(1)}\ex[\hat{{A}}^{(1)}]\hat{\MC{X}}^{(1)H}\\
				%			\vdots\\
				%			\hat{\MC{X}}^{(l)}\ex[\hat{{A}}^{(l)}]\hat{\MC{X}}^{(l)H}
				%		\end{pmatrix})
		}\\
		&=&L^{-1}\bigxiaokuohao{
			L(\MC{X})\Delta L(\ex{\mathcal{A}})\Delta L(\MC{X}^H)}
		=\MC{X}*\ex{\mathcal{A}}*\MC{X}^{\top},
\end{eqnarray*}
	where the first equality comes from \eqref{eq:t-exp}.
	%	 The stament there is given for an invertible matrix. 
	%	The argument in the proof clearly also works for any partially orthogonal (or unitary) matrix.
\end{proof}
\subsection{Proof of Proposition \ref{prop:exp Diag tensor}}\label{apx:15}
\begin{proof}
	Denotes $\MC{A} = \operatorname{Diag}\left(\mathcal{D}_j: j \in[p]\right)$ and $\MC{B}=	\operatorname{Diag}\left(\ex{\mathcal{D}_j}: j \in[p]\right) $.  Applying \eqref{eq:t-exp}, we get
\begin{eqnarray*}
		\ex{\MC{A}}
		%		&\ex{
			%			\operatorname{Diag}\left(\mathcal{D}_j: j \in[p]\right) 
			%%		\begin{pmatrix}
				%%			\mathcal{D}_1 & & &\\
				%%			& \mathcal{D}_2 & &\\
				%%			& & \ddots & \\
				%%			& & & \mathcal{D}_m
				%%		\end{pmatrix}
			%	}\\
		&=&L^{-1}\bigxiaokuohao{
			\Fold{\ex{
					\hat{A}^{(i)}
				}
			}
		}\\
		&=&L^{-1}\bigxiaokuohao{
			\Fold{\ex{
					\operatorname{Diag}\left(\hat{{D}_j}^{(i)}: j \in[p]\right) 
					%			\begin{pmatrix}
						%				\hat{\mathcal{D}_1}^{(i)}&&\\
						%				&\ddots&\\
						%				&&\hat{\mathcal{D}_m}^{(i)}
						%			\end{pmatrix}
			}}
			%		\mathrm{fold}(
			%		\begin{pmatrix}
				%			\ex[\begin{pmatrix}
					%				\hat{\mathcal{D}_1}^{(1)}&&\\
					%				&\ddots&\\
					%				&&\hat{\mathcal{D}_m}^{(1)}
					%			\end{pmatrix}]\\
				%			\vdots\\
				%			\ex[\begin{pmatrix}
					%				\hat{\mathcal{D}_1}^{(l)}&&\\
					%				&\ddots&\\
					%				&&\hat{\mathcal{D}_m}^{(l)}
					%			\end{pmatrix}]
				%		\end{pmatrix})
		}\\
		&=&L^{-1}\bigxiaokuohao{
			\Fold{
				\operatorname{Diag}\left(\ex{\hat{{D}_j}^{(i)}}: j \in[p]\right) 
				%			\begin{pmatrix}
					%					\ex[\hat{\mathcal{D}_1}^{(i)}]&&\\
					%					&\ddots&\\
					%					&&\ex[\hat{\mathcal{D}_m}^{(i)}]
					%			\end{pmatrix}
			}
			%		\mathrm{fold}(
			%		\begin{pmatrix}
				%			\begin{pmatrix}
					%				\ex[\hat{\mathcal{D}_1}^{(1)}]&&\\
					%				&\ddots&\\
					%				&&\ex[\hat{\mathcal{D}_m}^{(1)}]
					%			\end{pmatrix}\\
				%			\vdots\\
				%			\begin{pmatrix}
					%				\ex[\hat{\mathcal{D}_1}^{(l)}]&&\\
					%				&\ddots&\\
					%				&&\ex[\hat{\mathcal{D}_m}^{(l)}]
					%			\end{pmatrix}
				%		\end{pmatrix})
		}\\
		&=&L^{-1}\bigxiaokuohao{\Fold{\hat{B}^{(i)}}
		}
		=\MC{B}
		%		\operatorname{Diag}\left(\ex{\mathcal{D}_j}: j \in[p]\right)
		%		\begin{pmatrix}
			%			\ex[\mathcal{D}_1] & & &\\
			%			& \ex[\mathcal{D}_2] & &\\
			%			& & \ddots & \\
			%			& & & \ex[\mathcal{D}_m]
			%		\end{pmatrix}
		,
\end{eqnarray*}
	where 
	%	the frist and last equality comes from \eqref{eq:t-exp}, and 
	the third equality is due to the  property of the matrix exponential \cite{van1978computing}:
	$ \ex{
		\Diag{C_{i}}
		%\begin{pmatrix}
		%	D_{1} & & \\
		%	& \ddots & \\
		%	& & D_l
		%\end{pmatrix}
	} = 
	\Diag{\ex{C_i}}
	%\begin{pmatrix}
	%	\ex [D_1] & & \\
	%	& \ddots & \\
	%	& & \ex[D_l]
	%\end{pmatrix}.
	$.
	%	In the derivation we used the corresponding property in the matrix case. We do not have a convenient reference for this, but the property
	%	follows immediately from the definition of matrix exponential.
\end{proof}
\subsection{Proof of Proposition \ref{prop:exp tranpose} }\label{apx:16}
\begin{proof}
\begin{eqnarray*}
		(\ex{\mathcal{A}})^{\top}
		%		=&L^{-1}\bigxiaokuohao{
			%			\Fold{	\bigxiaokuohao{L(\ex{\mathcal{A}})}^{(i)H}}
			%%		\mathrm{fold}(\begin{pmatrix}
				%%			L(\ex[\mathcal{A}])^{(1)H}\\
				%%			\vdots\\
				%%			L(\ex[\mathcal{A}])^{(l)H}
				%%		\end{pmatrix})
			%	}\\
		&=&L^{-1}\bigxiaokuohao{
			\Fold{(\ex{\hat{{A}}^{(i)}})^H}
			%		\mathrm{fold}(\begin{pmatrix}
				%			(\ex[\hat{{A}}^{(1)}])^H\\
				%			\vdots\\
				%			(\ex[\hat{{A}}^{(l)}])^H
				%		\end{pmatrix})
		}\\
		&=&L^{-1}\bigxiaokuohao{
			\Fold{\ex{\hat{{A}}^{(i)H}}}
			%		\mathrm{fold}(\begin{pmatrix}
				%			\ex[\hat{{A}}^{(1)H}]\\
				%			\vdots\\
				%			\ex[\hat{{A}}^{(l)H}]
				%		\end{pmatrix})
		}
		=\ex{\mathcal{A}^{\top}},
\end{eqnarray*}
	where  Proposition \ref{prop:DFT tran} gives the first equality, while the second equality comes from  the corresponding property in the matrix case \cite[Prop. 2.3,\,\,2]{Hall2015}.
\end{proof}
\subsection{Proof of Proposition \ref{prop:exp addition}}\label{apx:17}
\begin{proof}
	Using \eqref{remark:equality}, we have
\begin{eqnarray*}
		\ex{\mathcal{A}}*\ex{\mathcal{B}}
		&=&L^{-1}(L(\ex{\mathcal{A}})\Delta L(\ex{\mathcal{B}}))\\
		&=&L^{-1}\bigxiaokuohao{
			\Fold{\ex{\hat{{A}}^{(i)}}\ex{\hat{{B}}^{(i)}}}
			%		\mathrm{fold}(
			%		\begin{pmatrix}
				%			\ex[\hat{{A}}^{(1)}]\ex[\hat{{B}}^{(1)}]\\
				%			\vdots\\
				%			\ex[\hat{{A}}^{(l)}]\ex[\hat{{B}}^{(l)}]
				%		\end{pmatrix})
		}\\
		&=&L^{-1}\bigxiaokuohao{
			\Fold{\ex{\hat{{A}}^{(i)}+\hat{{B}}^{(i)}}}
			%		\mathrm{fold}(\begin{pmatrix}
				%			\ex[\hat{{A}}^{(1)}+\hat{{B}}^{(1)}]\\
				%			\vdots\\
				%			\ex[\hat{{A}}^{(l)}+\hat{{B}}^{(l)}]
				%		\end{pmatrix})
		}
		=\ex{\mathcal{A}+\mathcal{B}},
\end{eqnarray*}
	where the second equality comes from the property in the matrix exponential \cite[Prop. 2.3,\,\,5]{Hall2015}.
\end{proof}

\subsection{Proof of Theorem \ref{th:sylvester}}\label{apx:19}
\begin{lemma}\label{prop:1}
	Let $\mathcal{A}\in \mathbb{R}^{m\times n \times l},\mathcal{B}\in \mathbb{R}^{n\times k\times l}
	%	,\mathcal{C}\in \mathbb{R}^{m\times k\times l}
	$. Then
	\begin{equation*}
		(I_{kl}\otimes [A^{(1)},\cdots,A^{(l)}])\operatorname{vec}(\widetilde{\operatorname{bcirc}}(\mathcal{B}))\\
		= ([I_k]_{l\times l}\odot \operatorname{bcirc}(\mathcal{A}) )\operatorname{vec}(\mathcal{B}).
	\end{equation*}
%	where $
%	%	\widetilde{\operatorname{bcirc}}(\mathcal{B}) = \begin{bmatrix}
%		%		B^{(1)} &  B^{(2)} & \cdots & B^{(l)}\\ 
%		%		B^{(l)} & B^{(1)}  & \cdots & B^{(l-1)} \\ 
%		%		\vdots & \vdots & \ddots    &\vdots \\ 
%		%		B^{(2)}& \cdots & B^{(l)} & B^{(1)}
%		%	\end{bmatrix}, \operatorname{bcirc}(\mathcal{B}) 
%	% =\begin{bmatrix}
%		%	B^{(1)} & B^{(l)}  & \cdots  & B^{(2)}\\ 
%		%	B^{(2)} & B^{(1)}  & \cdots & B^{(3)}\\ 
%		%	\vdots  & \ddots  & \ddots & \vdots \\ 
%		%	B^{(l)}  & \cdots & B^{(2)} & B^{(1)}
%		%\end{bmatrix}
%		%,
%		[I_k]_{l\times l}  = \begin{bmatrix}
%			I_k & \cdots  & I_k\\ 
%			\vdots  & \ddots  & \vdots \\ 
%			I_k& \cdots  & I_k
%		\end{bmatrix}_{l\times l}
%		$ and $\odot$ is the the Khatri-Rao product for partitioned matrices.
		
	\end{lemma}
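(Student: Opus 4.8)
The plan is to show that both sides of the claimed identity equal the column-stacked vectorization of the t-product $\mathcal{A}\tprod\mathcal{B}$, namely the vector $[\operatorname{vec}(C^{(1)});\cdots;\operatorname{vec}(C^{(l)})]$, where $C^{(i)}$ are the frontal slices of $\mathcal{A}\tprod\mathcal{B}$ and $\operatorname{vec}$ of a matrix stacks its columns. Recall that by Definition~\ref{def:t-product} one has $\mathcal{A}\tprod\mathcal{B} = \operatorname{fold}(\operatorname{bcirc}(\mathcal{A})\operatorname{unfold}(\mathcal{B}))$, so the $i$-th frontal slice is $C^{(i)} = \sum_{j=1}^l A^{(s(i,j))}B^{(j)}$, where $s(i,j)=((i-j)\bmod l)+1$ is exactly the block-index pattern of $\operatorname{bcirc}(\mathcal{A})$ (its $(i,j)$ block equals $A^{(s(i,j))}$); this is the same cyclic convolution index $h_i$ appearing in the proof of Lemma~\ref{prop:curve}.

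For the right-hand side, I would first spell out the block structure of $[I_k]_{l\times l}\odot\operatorname{bcirc}(\mathcal{A})$. By the definition of the Khatri--Rao product for partitioned matrices, its $(i,j)$ block is $I_k\otimes A^{(s(i,j))}$, while $\operatorname{vec}(\mathcal{B})$ partitions into the blocks $\operatorname{vec}(B^{(j)})$. Hence the $i$-th block (of length $mk$) of the product is $\sum_{j=1}^l (I_k\otimes A^{(s(i,j))})\operatorname{vec}(B^{(j)})$. Applying the standard identity $(I_k\otimes M)\operatorname{vec}(B)=\operatorname{vec}(MB)$ to each summand and using linearity of $\operatorname{vec}$ turns this into $\operatorname{vec}\big(\sum_{j=1}^l A^{(s(i,j))}B^{(j)}\big)=\operatorname{vec}(C^{(i)})$, so the right-hand side equals $[\operatorname{vec}(C^{(1)});\cdots;\operatorname{vec}(C^{(l)})]$.

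For the left-hand side, I would use the same vectorization identity in the form $(I_{kl}\otimes M)\operatorname{vec}(W)=\operatorname{vec}(MW)$ with $M=[A^{(1)},\cdots,A^{(l)}]$ and $W=\widetilde{\operatorname{bcirc}}(\mathcal{B})$, rewriting the left-hand side as $\operatorname{vec}\big([A^{(1)},\cdots,A^{(l)}]\,\widetilde{\operatorname{bcirc}}(\mathcal{B})\big)$. It then suffices to read off the block columns of this $m\times kl$ matrix: its $j$-th block column (of width $k$) is $[A^{(1)},\cdots,A^{(l)}]$ times the $j$-th block column of $\widetilde{\operatorname{bcirc}}(\mathcal{B})$, whose $p$-th block is $B^{(t(p,j))}$ with $t(p,j)=((j-p)\bmod l)+1$, giving $\sum_{p=1}^l A^{(p)}B^{(t(p,j))}$.

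The decisive step, and the one I expect to be the main obstacle, is the index reconciliation showing $\sum_{p=1}^l A^{(p)}B^{(t(p,j))}=\sum_{q=1}^l A^{(s(j,q))}B^{(q)}=C^{(j)}$. This follows from the substitution $q=t(p,j)$, which is a bijection of $[l]$ for fixed $j$ with inverse $p=s(j,q)$, precisely because $t(p,j)=q \Leftrightarrow (j-p)\equiv(q-1)\ (\mathrm{mod}\ l)\Leftrightarrow p\equiv j-q+1\equiv s(j,q)\ (\mathrm{mod}\ l)$. Care is needed to confirm that the cyclic index maps $s$ (from $\operatorname{bcirc}$) and $t$ (from $\widetilde{\operatorname{bcirc}}$) are mutually inverse in this sense and that both match the frontal-slice index of the t-product; once this bookkeeping is settled, both sides equal $[\operatorname{vec}(C^{(1)});\cdots;\operatorname{vec}(C^{(l)})]$ and the lemma follows. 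Throughout I would keep the $\operatorname{vec}$ convention fixed (first index fastest, slice by slice), since a mismatch there is the most likely source of error.
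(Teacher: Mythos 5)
Your proposal is correct, and it reaches the identity by a slightly different route than the paper. The paper's proof of Lemma \ref{prop:1} is a direct block-matrix computation: it writes out both sides explicitly as partitioned matrices, extracts the $(q,1)$-th block of each (with $q=(p-1)k+j$), and observes that the left side gives $\sum_{i=1}^{l}A^{(i)}B^{(h_i)}_{:j}$ while the right gives $\sum_{i=1}^{l}A^{(h_i)}B^{(i)}_{:j}$, which agree because the cyclic index map $h$ is an involution; notably, the paper's proof of this lemma never mentions the t-product or the vec--Kronecker identity (its Lemma \ref{lem:vec} only enters later, in Lemma \ref{prop:vec}). You instead apply $(I\otimes M)\operatorname{vec}(W)=\operatorname{vec}(MW)$ to both sides at the outset and identify each with $\operatorname{vec}(\mathcal{A}\tprod\mathcal{B})$, reducing everything to the same cyclic reindexing $\sum_{p}A^{(p)}B^{(t(p,j))}=\sum_{q}A^{(s(j,q))}B^{(q)}$ via the bijection $p=s(j,q)\Leftrightarrow q=t(p,j)$, which is exactly the involution fact underlying the paper's block comparison. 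Your index bookkeeping is right: the $(i,j)$ block of $\operatorname{bcirc}(\mathcal A)$ is $A^{((i-j)\bmod l)+1}$, that of $\widetilde{\operatorname{bcirc}}(\mathcal B)$ is $B^{((j-i)\bmod l)+1}$, the Khatri--Rao blocks are $I_k\otimes A^{(s(i,j))}$, and the $\operatorname{vec}$ convention (first index fastest, slice by slice) matches the paper's $\mathcal{C}(:)$. What your organization buys is economy downstream: by proving that the right-hand side equals $\operatorname{vec}(\mathcal{A}\tprod\mathcal{B})$ you have already established the second identity of Lemma \ref{prop:vec}, so the lemma and half of its intended application come out of a single argument, whereas the paper proves the bare matrix identity first and connects it to the t-product only afterwards; the cost is that you must invoke the vec--Kronecker identity twice and keep two index maps $s,t$ straight, which the paper's brute-force layout avoids by displaying the blocks explicitly.
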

	\begin{proof}
		By definition, the left hand side part is 
		\begin{equation*}
			LHS = 
			\begin{bmatrix}
				\begin{smallmatrix}
				[A^{(1)},\cdots,A^{(l)}]
				&  \ddots &  &  &  &  & \\ 
				&  & [A^{(1)},\cdots,A^{(l)}] &  &  &  & \\ 
				&  &  & \ddots &   &  & \\ 
				&  &  &  &  [A^{(1)},\cdots,A^{(l)}] &  & \\ 
				&  &  &  &  & \ddots &  \\ 
				&  &  &  &  &  &  [A^{(1)},\cdots,A^{(l)}]
	\end{smallmatrix}
			\end{bmatrix}_{kl} \cdot  \begin{bmatrix}
			\begin{smallmatrix}
				\begin{bmatrix}
					\begin{smallmatrix}
					B^{(1)}\\ 
					B^{(l)}\\ 
					\vdots \\ 
					B^{(2)}
					\end{smallmatrix}
				\end{bmatrix}_{:1}\\ 
				\vdots 
				\\ \begin{bmatrix}
					\begin{smallmatrix}
					B^{(1)}\\ 
					B^{(l)}\\ 
					\vdots \\ 
					B^{(2)}
					\end{smallmatrix}
				\end{bmatrix}_{:k}
				\\ 
				\vdots \\ 
				\begin{bmatrix}
					\begin{smallmatrix}
					B^{(l)}\\ 
					B^{(l-1)}\\ 
					\vdots \\ 
					B^{(1)}
					\end{smallmatrix}
				\end{bmatrix}_{:1}\\ 
				\vdots 
				\\ \begin{bmatrix}
					\begin{smallmatrix}
					B^{(l)}\\ 
					B^{(l-1)}\\ 
					\vdots \\ 
					B^{(1)}
					\end{smallmatrix}
				\end{bmatrix}_{:k}
	\end{smallmatrix}
			\end{bmatrix},
		\end{equation*}
	where $B^{(i)}_{:j}$ is the $j$th column of  $B^{(i)}, i\in [l]$	and the right hand side part is 
		\begin{equation*}
			RHS = 	\begin{bmatrix}
				\begin{smallmatrix}
				\begin{bmatrix}
					\begin{smallmatrix}
					A^{(1)} &  & \\ 
					& \ddots  & \\ 
					&  & A^{(1)}
					\end{smallmatrix}
				\end{bmatrix}_k & \begin{bmatrix}
				\begin{smallmatrix}
					A^{(l)} &  & \\ 
					& \ddots  & \\ 
					&  & A^{(l)}
					\end{smallmatrix}
				\end{bmatrix}_k & \cdots  & \begin{bmatrix}
				\begin{smallmatrix}
					A^{(2)} &  & \\ 
					& \ddots  & \\ 
					&  & A^{(2)}
					\end{smallmatrix}
				\end{bmatrix}_k\\ 
				\begin{bmatrix}
					\begin{smallmatrix}
					A^{(2)} &  & \\ 
					& \ddots  & \\ 
					&  & A^{(2)}
					\end{smallmatrix}
				\end{bmatrix}_k & \begin{bmatrix}
				\begin{smallmatrix}
					A^{(1)} &  & \\ 
					& \ddots  & \\ 
					&  & A^{(1)}
					\end{smallmatrix}
				\end{bmatrix}_k & \cdots  & \begin{bmatrix}
				\begin{smallmatrix}
					A^{(3)} &  & \\ 
					& \ddots  & \\ 
					&  & A^{(3)}
					\end{smallmatrix}
				\end{bmatrix}_k\\ 
				\vdots & \vdots  & \ddots  & \vdots  \\ 
				\begin{bmatrix}
					\begin{smallmatrix}
					A^{(l)} &  & \\ 
					& \ddots  & \\ 
					&  & A^{(l)}
					\end{smallmatrix}
				\end{bmatrix}_k & \begin{bmatrix}
				\begin{smallmatrix}
					A^{(l-1)} &  & \\ 
					& \ddots  & \\ 
					&  & A^{(l-1)}
					\end{smallmatrix}
				\end{bmatrix}_k & \cdots  & \begin{bmatrix}
				\begin{smallmatrix}
					A^{(1)} &  & \\ 
					& \ddots  & \\ 
					&  & A^{(1)}
					\end{smallmatrix}
				\end{bmatrix}_k
			\end{smallmatrix}
			\end{bmatrix}_l \cdot \begin{bmatrix}
			\begin{smallmatrix}
				\begin{bmatrix}
					\begin{smallmatrix}
					(B^{(1)})_{:1}\\ 
					\vdots \\ 
					(B^{(1)})_{:k}
					\end{smallmatrix}
				\end{bmatrix}\\ 
				\begin{bmatrix}
					\begin{smallmatrix}
					(B^{(2)})_{:1}\\ 
					\vdots \\ 
					(B^{(2)})_{:k}
					\end{smallmatrix}
				\end{bmatrix}\\
				\vdots \\ 
				\begin{bmatrix}
					\begin{smallmatrix}
					(B^{(l)})_{:1}\\ 
					\vdots \\ 
					(B^{(l)})_{:k}
					\end{smallmatrix}
				\end{bmatrix}\\
			\end{smallmatrix}
			\end{bmatrix}, 
		\end{equation*}
    		We observe that the $(q,1)-$th  block of partitioned matrice on LHS is 
    		\begin{equation}\label{eq:LHS}
\sum\nolimits_{i=1}^{l}A^{(i)}B^{(h_i)}_{:j}, \qquad q
		= (p-1)k+j\in[kl]
		, ~ j\in[k],~p\in[l],
			\end{equation}
		where $$h_i = \begin{cases}
			l+p+1-i,& i> p \\ 
		p+1-i,&  i\leq p
		\end{cases}.$$
		While the $(q,1)-$th block of partitioned matrice on RHS is 
	$\sum\nolimits_{i=1}^{l}A^{(h_i)}B^{(i)}_{:j},$ 
		which is equal to \eqref{eq:LHS}.
	\end{proof}
	\begin{lemma}\cite{van2000ubiquitous}\label{lem:vec}
		Let $C\in \M{m}{n},X\in \M{n}{p},B\in \M{k}{p}$. Then 
		\begin{equation*}
			Y = CXB^{\top}\Leftrightarrow \operatorname{vec}({Y}) = (B\otimes C)\operatorname{vec}({X}).
		\end{equation*}
	\end{lemma}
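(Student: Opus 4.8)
The plan is to reduce the identity to the elementary rank-one case and then extend by linearity, exploiting the fact that $\operatorname{vec}$ stacks columns and interacts cleanly with the Kronecker structure. First I would record the two building blocks. The base identity is that for column vectors $u\in\M{m}{1}$ and $v\in\M{k}{1}$ one has $\operatorname{vec}(uv^{\top}) = v\otimes u$; this is immediate from the column-stacking convention, since the $s$-th column of $uv^{\top}$ is $v_s u$, so stacking the columns reproduces exactly the block-scaled vector $v\otimes u$. The second building block is the mixed-product property $(A\otimes B)(C\otimes D) = (AC)\otimes(BD)$ for conformable matrices, a standard Kronecker identity.

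Next I would expand $X$ in the canonical basis. Writing $e_i\in\M{n}{1}$ and $e_j\in\M{p}{1}$ for the standard basis vectors, we have $X = \sum_{i,j} x_{ij}\,e_i e_j^{\top}$, so that the base identity gives $\operatorname{vec}(X) = \sum_{i,j} x_{ij}\,(e_j\otimes e_i)$. Substituting into $CXB^{\top}$ and using $e_j^{\top}B^{\top} = (Be_j)^{\top}$ yields $CXB^{\top} = \sum_{i,j} x_{ij}\,(Ce_i)(Be_j)^{\top}$. Applying $\operatorname{vec}$ term by term, then the base identity, then the mixed-product property, gives
\[
\operatorname{vec}(CXB^{\top}) = \sum_{i,j} x_{ij}\,(Be_j)\otimes(Ce_i) = (B\otimes C)\sum_{i,j} x_{ij}\,(e_j\otimes e_i) = (B\otimes C)\operatorname{vec}(X),
\]
which is precisely the forward implication $Y = CXB^{\top}\Rightarrow \operatorname{vec}(Y) = (B\otimes C)\operatorname{vec}(X)$.

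For the reverse implication I would simply invoke injectivity. The map $\operatorname{vec}:\M{m}{k}\to\M{mk}{1}$ is a linear bijection, so from the computation above, any $Y$ satisfying $\operatorname{vec}(Y) = (B\otimes C)\operatorname{vec}(X) = \operatorname{vec}(CXB^{\top})$ must equal $CXB^{\top}$. This closes the equivalence.

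The computation is entirely routine and I do not expect a genuine obstacle; the only point requiring care is bookkeeping with the column-stacking convention, namely verifying the base identity $\operatorname{vec}(uv^{\top}) = v\otimes u$ and keeping the order of factors in each Kronecker product consistent (so that $B$ sits on the left and $C$ on the right, matching $B\otimes C$ and not the transposed order). As a fallback I would keep a purely block-wise verification: partition $(B\otimes C)\operatorname{vec}(X)$ into $k$ blocks of height $m$ and check that the $r$-th block equals $\sum_{s} b_{rs}\,C X_{:,s}$, which is exactly the $r$-th column of $CXB^{\top}$; but I prefer the basis-expansion argument for its brevity.
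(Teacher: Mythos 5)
Your proof is correct, but there is nothing in the paper to compare it against: the paper states this lemma with a citation to Van Loan's survey on the Kronecker product and gives no proof of its own. Your argument --- reducing to the rank-one identity $\operatorname{vec}(uv^{\top}) = v\otimes u$, extending by linearity over the basis expansion $X=\sum_{i,j}x_{ij}\,e_ie_j^{\top}$ via the mixed-product property, and obtaining the reverse implication from injectivity of $\operatorname{vec}$ --- is the standard textbook derivation, and it is complete; in particular you correctly kept the factor order $B\otimes C$ (rather than $C\otimes B$) and did not forget that the stated equivalence needs the backward direction, which your appeal to $\operatorname{vec}$ being a linear bijection handles cleanly. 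Your fallback block-wise check (the $r$-th block of $(B\otimes C)\operatorname{vec}(X)$ equals $\sum_s b_{rs}\,CX_{:,s}$, the $r$-th column of $CXB^{\top}$) is also sound and is arguably closer in spirit to how the identity is used in the paper's Appendix, where Lemma~\ref{prop:1} and Lemma~\ref{prop:vec} carry out exactly such block-partitioned computations with $\operatorname{bcirc}$ and $\widetilde{\operatorname{bcirc}}$.
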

	\begin{lemma}\label{prop:vec}
		Let $\mathcal{A}\in \mathbb{R}^{m\times n \times l},\mathcal{B}\in \mathbb{R}^{n\times k\times l},\mathcal{C}\in \mathbb{R}^{m\times k\times l}$. Then \begin{equation*}
			\mathcal{C} = \mathcal{A}\ast  \mathcal{B}\Leftrightarrow \operatorname{vec}(\mathcal{C}) = (\widetilde{\operatorname{bcirc}}(\mathcal{B})^{\top}\otimes I_m)\operatorname{vec}(\mathcal{A})=([I_k]_{l\times l}\odot\operatorname{bcirc}(\mathcal{A}) )\operatorname{vec}(\mathcal{B}).
		\end{equation*} 
	\end{lemma}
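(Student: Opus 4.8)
The plan is to reduce everything to a single frontal-slice identity and then read off both Kronecker expressions by invoking the standard vectorization rules already collected in Lemmas \ref{prop:1} and \ref{lem:vec}. First I would unfold the defining relation of the t-product: by Definition \ref{def:t-product}, $\mathcal{C} = \mathcal{A}\ast\mathcal{B}$ is equivalent to $\operatorname{unfold}(\mathcal{C}) = \operatorname{bcirc}(\mathcal{A})\operatorname{unfold}(\mathcal{B})$, and reading off the $p$-th block row gives the frontal-slice formula $C^{(p)} = \sum_{q=1}^{l} A^{(h_q)} B^{(q)}$ with the cyclic index $h_q$ exactly as in Lemma \ref{prop:curve}. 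Re-indexing by $j = h_q$ and matching the resulting coefficient $B^{((p-j)\bmod l + 1)}$ against the $(j,p)$ block of $\widetilde{\operatorname{bcirc}}(\mathcal{B})$ shows that, stacking the slices horizontally,
\begin{equation*}
[C^{(1)},\ldots,C^{(l)}] = [A^{(1)},\ldots,A^{(l)}]\,\widetilde{\operatorname{bcirc}}(\mathcal{B}).
\end{equation*}

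The second ingredient is a bookkeeping observation about the vectorization: with the lexicographic ordering used for $\operatorname{vec}$, the entry of $\mathcal{A}$ at position $(i,c,j)$ and the entry of the wide matrix $[A^{(1)},\ldots,A^{(l)}]$ at row $i$ and column $(j-1)n+c$ occupy the same linear index, so that $\operatorname{vec}(\mathcal{A}) = \operatorname{vec}([A^{(1)},\ldots,A^{(l)}])$ and likewise $\operatorname{vec}(\mathcal{C}) = \operatorname{vec}([C^{(1)},\ldots,C^{(l)}])$ (column-major $\operatorname{vec}$ on the right). Granting this, the first equality is immediate from Lemma \ref{lem:vec} applied to $[C^{(1)},\ldots,C^{(l)}] = I_m\,[A^{(1)},\ldots,A^{(l)}]\,\widetilde{\operatorname{bcirc}}(\mathcal{B})$, taking $C=I_m$, $X=[A^{(1)},\ldots,A^{(l)}]$ and $B^{\top}=\widetilde{\operatorname{bcirc}}(\mathcal{B})$, which yields $\operatorname{vec}(\mathcal{C}) = (\widetilde{\operatorname{bcirc}}(\mathcal{B})^{\top}\otimes I_m)\operatorname{vec}(\mathcal{A})$.

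For the second equality I would instead keep $\mathcal{B}$ as the variable. Using $\operatorname{vec}(MY) = (I_{kl}\otimes M)\operatorname{vec}(Y)$ with $M=[A^{(1)},\ldots,A^{(l)}]$ and $Y=\widetilde{\operatorname{bcirc}}(\mathcal{B})$, the same slice identity gives $\operatorname{vec}(\mathcal{C}) = (I_{kl}\otimes[A^{(1)},\ldots,A^{(l)}])\operatorname{vec}(\widetilde{\operatorname{bcirc}}(\mathcal{B}))$, and Lemma \ref{prop:1} rewrites the right-hand side exactly as $([I_k]_{l\times l}\odot\operatorname{bcirc}(\mathcal{A}))\operatorname{vec}(\mathcal{B})$. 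Finally, since $\operatorname{vec}$ is a bijection between tensors and their coordinate vectors, each displayed identity holds for $\mathcal{C}$ precisely when $\mathcal{C} = \mathcal{A}\ast\mathcal{B}$, giving the asserted equivalences. I expect the main obstacle to be purely combinatorial: carefully verifying that the cyclic index $(p-j)\bmod l + 1$ produced by $\operatorname{bcirc}(\mathcal{A})$ matches the block pattern of $\widetilde{\operatorname{bcirc}}(\mathcal{B})$, and confirming that the lexicographic tensor ordering agrees with the column-major ordering of the slice concatenations, so that the Kronecker identities apply without a hidden commutation (permutation) matrix sneaking in.
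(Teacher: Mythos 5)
Your proposal is correct and follows essentially the same route as the paper's own proof: both rest on the slice identity $[C^{(1)},\ldots,C^{(l)}]=[A^{(1)},\ldots,A^{(l)}]\,\widetilde{\operatorname{bcirc}}(\mathcal{B})$ together with $\operatorname{vec}(\mathcal{A})=\operatorname{vec}([A^{(1)},\ldots,A^{(l)}])$, then apply Lemma \ref{lem:vec} twice (once with $\mathcal{A}$ as the variable, once with $\widetilde{\operatorname{bcirc}}(\mathcal{B})$) and invoke Lemma \ref{prop:1} for the Khatri--Rao form. Your extra verification of the cyclic index matching and of the lexicographic/column-major ordering is detail the paper leaves implicit, but it introduces no new idea.
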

	\begin{proof}
		We observe that $\operatorname{vec}(\mathcal{C}) = \operatorname{vec}([C^{(1)},\cdots,C^{(l)}])$.
		Since $\operatorname{unfold}(\mathcal{C}) = \operatorname{bcirc}(\mathcal{A})\operatorname{unfold}(\mathcal{B}),$ i.e., $[C^{(1)},\cdots,C^{(l)}] = [A^{(1)},\cdots,A^{(l)}]\widetilde{\operatorname{bcirc}}(\mathcal{B})$, we have
		\begin{eqnarray*}
			\operatorname{vec}(\mathcal{C}) &=& \operatorname{vec}([C^{(1)},\cdots,C^{(l)}])\\
			&=& \operatorname{vec}([A^{(1)},\cdots,A^{(l)}]\widetilde{\operatorname{bcirc}}(\mathcal{B}))\\
			&=& (\widetilde{\operatorname{bcirc}}(\mathcal{B})^{\top}\otimes I_m)\operatorname{vec}([A^{(1)},\cdots,A^{(l)}])\\
			&=& (\widetilde{\operatorname{bcirc}}(\mathcal{B})^{\top}\otimes I_m)\operatorname{vec}(\mathcal{A}),
		\end{eqnarray*}
		where the  third equation comes from Lemma \ref{lem:vec}.
		Similarly, by lemma \ref{prop:1}, there holds 
		\begin{eqnarray*}
			\operatorname{vec}(\mathcal{C}) &=& \operatorname{vec}([C^{(1)},\cdots,C^{(l)}])\\
			&=& \operatorname{vec}([A^{(1)},\cdots,A^{(l)}]\widetilde{\operatorname{bcirc}}(\mathcal{B}))\\
			&=& (I_{kl}\otimes [A^{(1)},\cdots,A^{(l)}])\operatorname{vec}(\widetilde{\operatorname{bcirc}}(\mathcal{B}))\\
			%		&=& ([I_k]_{l\times l}\odot \operatorname{bcirc}(\mathcal{A}) )\operatorname{vec}([B^{(1)},\cdots,B^{(l)}])\\
			%		&=& (I_k\otimes\widetilde{\operatorname{bcirc}}(\mathcal{A}) )vec(unfold(\mathcal{B}))\\
			%		&=& (I_k\otimes\operatorname{bcirc}(\mathcal{A}) )vec([B_1,\cdots,B_l])\\
			&=& ([I_k]_{l\times l}\odot\operatorname{bcirc}(\mathcal{A}) )\operatorname{vec}(\mathcal{B}),
		\end{eqnarray*}
		where the  third equation follows from Lemma \ref{lem:vec}.
	\end{proof}
	\begin{proof}
		Applying lemma \ref{prop:vec}, the tensor Sylvester equation \eqref{eq:sylvester} can be rewritten  in the form 
		\begin{equation}
			\operatorname{vec}(\mathcal{C}) = \bigxiaokuohao{ \widetilde{\operatorname{bcirc}}(\mathcal{B})^{\top}\otimes I_k+[I_k]_{l\times l}\odot\operatorname{bcirc}(\mathcal{A}) }\operatorname{vec}(\mathcal{X}).
		\end{equation}
	\end{proof}		
	
	\subsection{The Euclidean gradient $\operatorname{grad}f(\MC{X})$ and the Euclidean directional derivative $Df(\MC{X})[\MC{H}]$      in subsection \ref{subsection:grad Hess}}\label{apx:18}
	Similar to \cite[Def. 4]{zheng2021t}, for third-order tensor $\mathcal{X}\in \T{n}{p}{l}$, we can also introduce the definition of the Euclidean gradient $\operatorname{grad}f(\MC{X})$ and the Euclidean Hessian $\operatorname{Hess}f(\MC{X})$   from the Fréchet differentiable. 
\begin{definition}
Let $f: \MC{U} \subseteq \mathbb{R}^{n \times p \times l} \rightarrow \mathbb{R}$ be a continuous map. Then, we say $f$ is t-differentiable at $\mathcal{X} \in \MC{U}$ if and only if there exists a third-order tensor $\operatorname{grad}f(\mathcal{X})\in\T{n}{p}{l}$ such that
$$
\lim _{\mathcal{H} \rightarrow \mathcal{O}} \frac{\left\|f(\mathcal{X}+\mathcal{H})-f(\mathcal{X})-\left\langle\operatorname{grad}f(\mathcal{X}), \mathcal{H}\right\rangle\right\|_F}{\|\mathcal{H}\|_F}=0,
$$
where $\operatorname{grad}f(\mathcal{X})$ is called the gradient of $f$ at $\mathcal{X}$ and $Df(\MC{X})[\MC{H}]= \left\langle\operatorname{grad}f(\mathcal{X}), \mathcal{H}\right\rangle$  called the directional derivative of $f$ at $\mathcal{X}$ along $\mathcal{H}$.
 And we say $f$ is twice t-differentiable at $\mathcal{X} \in U$ if and only if $f$ is continuously t-differentiable and there exists a bounded linear operator $\operatorname{Hess}f(\mathcal{X}):\T{n}{p}{l}\rightarrow\T{n}{p}{l}$ such that
$$
\lim _{\mathcal{H} \rightarrow \mathcal{O}} \frac{\left\|\operatorname{grad}f(\mathcal{X}+\mathcal{H})-\operatorname{grad}f(\mathcal{X})-\operatorname{Hess}f(\mathcal{X})[\MC{H}]\right\|_F}{\|\mathcal{H}\|_F}=0.
$$
% and we call $\operatorname{Hess}f(\mathcal{X})$ the Hess of $f$ at $\mathcal{X}$ and $\operatorname{Hess}f(\mathcal{X})[\MC{H}] = \operatorname{Hess}f(\mathcal{X}) * \mathcal{H}$.
Furthermore, we say $f$ is t-differentiable 
(twice t-differentiable)
on $\MC{U}$ if and only if $f$ is t-differentiable 
(twice t-differentiable) 
at every $\mathcal{X} \in \MC{U}$.
\end{definition}

\begin{theorem}
Let $f$ be a continuous map from $\MC{U} \subseteq \mathbb{R}^{n \times p \times l}$ to $\mathbb{R}$. Then
% \begin{itemize}
%     \item [(i)]
 $f$ is t-differentiable on $U$ if and only if $\frac{\partial f(\mathcal{X})}{\partial[\operatorname{vec}(\mathcal{X})]}$ exists for every $\mathcal{X} \in \MC{U}$, where $\frac{\partial f(\mathcal{X})}{\partial[\operatorname{vec}(\mathcal{X})]}$ is a vector in $\mathbb{R}^{npl}$ with $\left(\frac{\partial f(\mathcal{X})}{\partial[\operatorname{vec}(\mathcal{X})]}\right)_{i}=\frac{\partial f(\mathcal{X})}{\partial\left([\operatorname{vec}(\mathcal{X})]_{i}\right)}$ for any $i \in[npl]$. Especially, for any $\mathcal{X} \in \MC{U},$
 \begin{equation}\label{eq:Euclidean gradient computation}
 \operatorname{grad}f(\mathcal{X})=\operatorname{vec}^{-1}\bigxiaokuohao{\frac{\partial f(\mathcal{X})}{\partial[\operatorname{vec}(\mathcal{X})]}},
 \end{equation}
 where $\mathbf{v}=\mathrm{vec}(\MC{A})$ denotes the vectorized tensor of $\MC{A}$ and $\mathrm{vec}^{-1}(\mathbf{v})=\MC{A}$ represents  the operator that converts a vector $\mathbf{v}$ back to a tensor $\MC{A}$, which can all be implemented with functions \texttt{reshape}, \texttt{permute} and \texttt{ipermute} of Matlab (cf. \cite{kolda2006multilinear}). 
%  \item [(ii)]
%   $f$ is twice T-differentiable on $\MC{U}$ if and only if is continuously T-differentiable on  $\MC{U}$ and $\frac{\partial\left[\operatorname{vec}\left(\operatorname{grad}f(\mathcal{X})\right)\right]}{\partial[\operatorname{vec}(\mathcal{X})]}$ is a block Toeplitz matrix with each block being of size $n\times n$  matrix for every $\mathcal{X} \in \MC{U}$. In particular, 
%   \begin{equation}\label{eq:Euclidean Hess computation}
% \operatorname{Hess} f(\mathcal{X})=\operatorname{btoep}^{-1}\bigxiaokuohao{\frac{\partial\left[\operatorname{vec}\left(\operatorname{grad}f(\mathcal{X})\right)\right]}{\partial[\operatorname{vec}(\mathcal{X})]}},
%   \end{equation}
%   for any $\mathcal{X} \in \MC{U}$, where $\operatorname{btoep}(\MC{L}):=([I_p]_{l\times l}\odot\operatorname{bcirc}(\mathcal{L}))$ represents the operator that transfroms $\MC{L}$ into a block Toeplitz matrix with each block being of size $p\times p$ matrix.
%  \end{itemize}
\end{theorem}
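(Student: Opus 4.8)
The plan is to transport the whole statement to ordinary Euclidean space $\mathbb{R}^{npl}$ through the vectorization map, where it reduces to the classical characterization of differentiability of a real-valued function, i.e. the vector analogue of \cite[Def. 4]{zheng2021t}. First I would record the two elementary facts that do all the work: the map $\operatorname{vec}:\T{n}{p}{l}\to\mathbb{R}^{npl}$ is a linear bijection which is an isometry for the Frobenius/Euclidean norms, $\bigfnorm{\MC{H}}=\bignorm{\operatorname{vec}(\MC{H})}$, and it turns the tensor inner product into the Euclidean one, $\innerprod{\MC{G}}{\MC{H}}=\operatorname{vec}(\MC{G})^{\top}\operatorname{vec}(\MC{H})$. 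Because $\operatorname{vec}$ is a homeomorphism, the image $\operatorname{vec}(\MC{U})$ is open in $\mathbb{R}^{npl}$, and $\operatorname{vec}^{-1}$ is linear as well.

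Next I would introduce $g:=f\circ\operatorname{vec}^{-1}:\operatorname{vec}(\MC{U})\to\mathbb{R}$ and set $\mathbf{v}=\operatorname{vec}(\MC{X})$, $\mathbf{h}=\operatorname{vec}(\MC{H})$, $\mathbf{w}=\operatorname{vec}(\operatorname{grad}f(\MC{X}))$. By linearity of $\operatorname{vec}^{-1}$ one has $g(\mathbf{v}+\mathbf{h})=f(\MC{X}+\MC{H})$, and the two identities above rewrite the defining quotient of t-differentiability,
\[ \frac{\bigfnorm{f(\MC{X}+\MC{H})-f(\MC{X})-\innerprod{\operatorname{grad}f(\MC{X})}{\MC{H}}}}{\bigfnorm{\MC{H}}}, \]
as exactly
\[ \frac{\bignorm{g(\mathbf{v}+\mathbf{h})-g(\mathbf{v})-\mathbf{w}^{\top}\mathbf{h}}}{\bignorm{\mathbf{h}}}. \]
Since $\MC{H}\to\MC{O}$ iff $\mathbf{h}\to\mathbf{0}$, this shows that $f$ is t-differentiable at $\MC{X}$ with tensor gradient $\operatorname{grad}f(\MC{X})$ if and only if $g$ is Fréchet differentiable at $\mathbf{v}$ with $\nabla g(\mathbf{v})=\mathbf{w}$. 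The classical vector-calculus result then supplies the equivalence with the existence of the partials $\partial g(\mathbf{v})/\partial v_i$ and the identity $\nabla g(\mathbf{v})=\bigxiaokuohao{\partial g(\mathbf{v})/\partial v_i}_{i\in[npl]}$; as $[\operatorname{vec}(\MC{X})]_i=v_i$ gives $\partial g(\mathbf{v})/\partial v_i=\partial f(\MC{X})/\partial\bigxiaokuohao{[\operatorname{vec}(\MC{X})]_i}$, this is precisely $\mathbf{w}=\frac{\partial f(\MC{X})}{\partial[\operatorname{vec}(\MC{X})]}$, and applying $\operatorname{vec}^{-1}$ yields \eqref{eq:Euclidean gradient computation}.

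The step I expect to demand the most care is the converse direction in the ``only if'' reading, since mere pointwise existence of the partial derivatives does not in general force Fréchet differentiability of a multivariable function. The statement is therefore to be understood exactly as its matrix template \cite[Def. 4]{zheng2021t}: existence of the partials throughout the open set $\MC{U}$, together with their continuity (which is implicit in the construction of \eqref{eq:Euclidean gradient computation} via the smooth \texttt{reshape}/\texttt{permute}/\texttt{ipermute} operations), secures differentiability through the standard mean-value argument. In the applications of Section \ref{subsection:grad Hess} the objective $f$ is $C^1$ (indeed smooth), so this caveat is automatically met and \eqref{eq:Euclidean gradient computation} applies verbatim; the remaining content of the theorem is then the purely bookkeeping observation, immediate once the isometry $\operatorname{vec}$ is in place, that the tensor gradient is simply the unflattening of the ordinary gradient vector.
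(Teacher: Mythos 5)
Your approach---transporting everything to $\mathbb{R}^{npl}$ through the linear isometric bijection $\operatorname{vec}$ and invoking the classical Euclidean characterization of differentiability---is exactly what the paper intends: its own proof is omitted with the remark that it is ``similar to that of \cite[Thm.~1]{zheng2021t}'', and that matrix-case argument is precisely this vectorization reduction. Your forward direction (t-differentiability implies existence of the partials, with $\nabla g(\mathbf{v})=\operatorname{vec}(\operatorname{grad}f(\MC{X}))$) and the resulting identity \eqref{eq:Euclidean gradient computation} are correct and complete.

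The caveat you flag in the converse is a genuine one, and it deserves a sharper treatment than you give it. Mere existence of the partials does not imply Fr\'echet differentiability even for continuous $f$: the function $g(x,y)=xy/\sqrt{x^{2}+y^{2}}$, $g(0,0)=0$, pulled back through $\operatorname{vec}^{-1}$, is continuous with partials existing everywhere yet is not differentiable at the origin, so the ``if and only if'' as literally stated is too strong (an imprecision inherited from the cited matrix theorem). However, your proposed repair is not valid as written: continuity of the partials is \emph{not} ``implicit in the construction of \eqref{eq:Euclidean gradient computation} via the smooth \texttt{reshape}/\texttt{permute}/\texttt{ipermute} operations''---those are fixed linear maps and carry no information whatsoever about the regularity of $f$. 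The correct fix is simply to add the hypothesis that the partials exist and are continuous on $\MC{U}$ (equivalently $f\circ\operatorname{vec}^{-1}\in C^{1}$ on the open set $\operatorname{vec}(\MC{U})$) and then run the standard mean-value argument you mention; with that emendation your proof is complete and coincides with the (omitted) argument of the paper.
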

\begin{proof} The proof is similar to that of \cite[Thm. 1]{zheng2021t} and is omitted.
\end{proof}			

\end{document}